\numberwithin{equation}{subsection} 
\numberwithin{figure}{subsection} 
\let\c@equation\c@figure
\newtheorem{theorem}[equation]{Theorem}
\newtheorem{corollary}[equation]{Corollary}
\newtheorem{lemma}[equation]{Lemma}
\newtheorem{prop}[equation]{Proposition}
\newtheorem{propconstr}[equation]{Proposition-Construction}
\theoremstyle{remark}
\newtheorem{remark}[equation]{Remark}
\newtheorem{example}[equation]{Example}
\newtheorem{question}[equation]{Question}
\theoremstyle{definition}
\newtheorem{defn}[equation]{Definition}
\newcommand\nc{\newcommand}
\nc\on{\operatorname}
\newcommand\fp{{\mathfrak p}}
\newcommand\fq{{\mathfrak q}}
\newcommand\fm{{\mathfrak m}}
\nc\Hom{\on{Hom}}
\nc\Sections{\on{Sections}}
\nc\Sym{\on{Sym}}
\nc\Spec{\on{Spec}}
\nc\Specm{\on{Specm}}
\nc\ul{\underline}
\nc{\dfp}{\overset{\cdot}{\fp}}
\nc{\dfq}{\overset{\cdot}{\fq}}
\nc{\dfm}{\overset{\cdot}{\fm}}
\begin{document}

\title{Arithmetic restrictions on geometric monodromy}
\author{Daniel Litt}

\begin{abstract}
Let $X$ be a normal complex algebraic variety, and $p$ a prime.  We show that there exists an integer $N=N(X, p)$ such that: any non-trivial, irreducible representation of the fundamental group of $X$, which arises from geometry, must be non-trivial mod $p^N$.  

The proof involves an analysis of the action of the Galois group of a finitely generated field on the \'etale fundamental group of $X$.  We also prove many arithmetic statements about fundamental groups which are of independent interest, and give several applications.
\end{abstract}
\maketitle

\setcounter{tocdepth}{1}
\tableofcontents

\section{Introduction}
Let $X$ be an algebraic variety over $\mathbb{C}$, and let $\Lambda$ be a commutative ring.  The goal of this paper is to study the possible representations $$\pi_1(X)\to GL_n(\Lambda)$$ which arise from geometry, i.e.~those that arise as monodromy representations associated to locally constant subquotients of $$R^i\pi^{\text{an}}_*\underline{\Lambda},$$ where $\pi: Y\to X$ is a morphism of algebraic varieties and $\underline{\Lambda}$ is the constant sheaf on $Y^{\text{an}}$.  For example, if $\pi: Y\to X$ is an Abelian scheme and $\Lambda=\mathbb{Z}_\ell$, we are asking which monodromy representations can arise from $\ell$-adic Tate modules of Abelian schemes over $X$.

\subsection{Statement of Main Results: Applications}
Our main result in this direction is: 
\begin{theorem}\label{main-theorem-intro}
Let $X$ be a connected normal variety over $\mathbb{C}$, and $\ell$ a prime.  Then there exists an integer $$N=N(X,\ell)$$ such that any representation $$\pi_1(X)\to GL_n(\mathbb{Z})$$ which arises from geometry, and is trivial modulo $\ell^N$, is unipotent.
\end{theorem}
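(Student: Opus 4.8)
First I would pass to the étale, $\ell$-adic picture. A representation $\rho\colon\pi_1(X)\to GL_n(\mathbb{Z})$ arising from geometry is the monodromy of a lisse $\mathbb{Z}_\ell$-subquotient $\mathbb{L}$ of some $R^i\pi_*\underline{\mathbb{Z}_\ell}$; composing with $\mathbb{Z}\hookrightarrow\mathbb{Z}_\ell$ and using $\widehat{\pi_1(X^{\mathrm{an}})}\cong\pi_1^{\text{\'et}}(X_{\mathbb{C}})$, it extends to a continuous $\rho_\ell\colon\pi_1^{\text{\'et}}(X_{\mathbb{C}})\to GL_n(\mathbb{Z}_\ell)$ with the same Zariski closure $H$. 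Then ``trivial mod $\ell^N$'' means $\operatorname{im}\rho_\ell\subseteq\Gamma_N:=\ker(GL_n(\mathbb{Z}_\ell)\to GL_n(\mathbb{Z}/\ell^N))$, and ``unipotent'' means $H$ is unipotent. Filtering $\mathbb{L}\otimes\mathbb{Q}$ by a Jordan--Hölder filtration and taking the induced integral sublattices, we get a filtration of $\mathbb{L}$ whose graded pieces again arise from geometry (being further subquotients of $R^i\pi_*\underline{\mathbb{Z}_\ell}$) and are again trivial mod $\ell^N$; so it suffices to show that an \emph{irreducible} such $\rho$, for $N=N(X,\ell)$, is trivial, since then the original $\rho$ has trivial semisimplification and hence is unipotent. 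For irreducible $\rho$ the local system $\mathbb{L}$ is a weight-graded constituent of a variation of mixed Hodge structure, so it underlies a polarizable $\mathbb{Z}$-variation of Hodge structure, and by Deligne's theorem its algebraic monodromy group $H$ has semisimple identity component; hence $\mathfrak{h}:=\operatorname{Lie}H$ is semisimple, and since $\Gamma_N$ is torsion free it is enough to prove $\mathfrak{h}=0$.

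Next I would install an arithmetic structure by spreading out. Choose a subfield $k\subset\mathbb{C}$ finitely generated over $\mathbb{Q}$ over which $X$, the morphism $\pi$, and $\mathbb{L}$ are all defined, and a model over a finitely generated $\mathbb{Z}[1/\ell]$-algebra. This produces the exact sequence $1\to\pi_1^{\text{\'et}}(X_{\bar k})\to\pi_1^{\text{\'et}}(X_k)\to\operatorname{Gal}(\bar k/k)\to1$, an extension $\tilde\rho_\ell$ of $\rho_\ell$ to $\pi_1^{\text{\'et}}(X_k)$, and (after enlarging $k$) a $\operatorname{Gal}(\bar k/k)$-equivariant lisse sheaf $\mathbb{L}$; in particular $\mathfrak{h}$, viewed as a subsheaf of $\operatorname{End}(\mathbb{L})$, is stable under the arithmetic monodromy. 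Now apply Deligne's Weil~II: the irreducible sheaf $\mathbb{L}$ is pure of some weight, so $\operatorname{End}(\mathbb{L})$ is pure of weight $0$, hence so is its subsheaf $\mathfrak{h}$; concretely, for every closed point $x$ of every reduction, $\operatorname{Ad}(\tilde\rho_\ell(\operatorname{Frob}_x))$ acts on $\mathfrak{h}$ with all eigenvalues of archimedean absolute value $1$. At the same time, since $\mathbb{L}$ is trivial mod $\ell^N$ so is $\operatorname{End}(\mathbb{L})$, and therefore (with the induced integral lattice) the sub-local-system $\mathfrak{h}$ is trivial mod $\ell^N$. The problem has thus become a statement purely about the \emph{fixed} profinite group $\pi_1^{\text{\'et}}(X_{\bar k})$ together with its Galois action: the graded Lie algebra of the lower central series of its maximal pro-$\ell$ quotient is generated in degree one by $H_1(X_{\bar k},\mathbb{Z}_\ell)$, on which the Galois action is mixed of strictly negative weights ($-1$ and $-2$), so every Frobenius lift acts there by a contraction; and one wants to combine this with the weight-$0$ purity of $\mathfrak{h}$ and the deep congruence condition to force $\mathfrak{h}=0$ once $N$ is large.

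The step I expect to be the crux is precisely this last one, and the difficulty is that the two inputs live on different objects: the negative weights are a feature of the lower-central-series grading of $\pi_1^{\text{\'et}}(X_{\bar k})$, whereas $\mathfrak{h}$ is the $\ell$-adic Lie algebra of the image, and a surjection of pro-$\ell$ groups induces a surjection of lower-central-series gradings but \emph{not} a Galois-equivariant map onto $\mathfrak{h}$ --- so one cannot simply invoke ``a negatively-weighted Lie algebra has no semisimple quotient''. Nor can one argue with a single Frobenius and crude archimedean size estimates: those yield bounds of the shape $\ell^N\le C\cdot q_x^{\,O(1)}$ in the residue field size $q_x$, which is worthless because the model, the trace field, and $n$ itself all grow with the auxiliary family. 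Instead I would study directly the action of a Frobenius lift $F\in\pi_1^{\text{\'et}}(X_k)$ on the congruence quotient of $\pi_1^{\text{\'et}}(X_{\bar k})$ through which $\rho_\ell$ factors; because that action is contracting modulo the weight filtration, a convergence (fixed-point) argument up the pro-$\ell$ tower should show that once $N$ exceeds an explicit function of the Weil~II weight bound and of invariants of $\big(\pi_1^{\text{\'et}}(X_{\bar k}),\,\text{Galois}\big)$ \emph{alone} --- uniformly in $n$ and in all spreading-out choices --- the semisimple Lie algebra $\mathfrak{h}$ must vanish, so $\rho$ is unipotent. Pinning down this uniform bound $N(X,\ell)$ is where the arithmetic statements about fundamental groups developed earlier in the paper are meant to be the engine.
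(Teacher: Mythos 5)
Your high-level strategy matches the paper's: pass to the $\ell$-adic picture, spread out over a finitely generated field $k$, reduce to the case of a subquotient that extends over $\pi_1^{\text{\'et}}(X_{k'})$, and then try to exploit the Galois action on $\pi_1^{\text{\'et},\ell}(X_{\bar k})$ together with the congruence condition. Your detour through Deligne's semisimplicity theorem and Weil II purity of $\mathfrak{h}\subset\operatorname{End}(\mathbb{L})$ is a harmless restructuring of the paper's socle argument (Lemma~\ref{subquotient-lemma}), and you correctly diagnose both that a single Frobenius with archimedean estimates is hopeless and that the crux is a bound uniform in $n$ and in the spread-out model.

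However, the argument as written has a genuine gap at exactly the point you flag: you never actually produce the uniform bound $N(X,\ell)$, and the mechanism you gesture at --- ``study the action of a Frobenius lift $F$ on the congruence quotient\ldots a convergence (fixed-point) argument up the pro-$\ell$ tower'' --- would not give the result if taken literally. The paper shows (Remark~\ref{conjecture-bound}, and the surrounding discussion) that for an honest Frobenius $\sigma$, the relevant integral $\ell$-adic periods that measure the denominators of $\sigma$-eigenvectors in $\mathbb{Z}_\ell[[\pi_1^{\text{\'et},\ell}(X_{\bar k},\bar x)]]/\mathscr{I}^n$ can only be bounded by $O(n\log n)$ via Yu's $p$-adic Baker theorem, which is too slow for the convergence argument to close. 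The engine that makes the paper's version of your ``fixed-point argument'' run is the construction of \emph{quasi-scalars} $\sigma\in G_k$ (Theorem~\ref{H1-quasi-scalar}, Corollary~\ref{pi1-quasi-scalar}), built from Bogomolov's open-image theorem, whose action on the weight-graded pieces is literally by powers of a single scalar $\alpha\in\mathbb{Z}_\ell^\times$ close to $1$. For such a $\sigma$ the period bound improves to linear in $n$ (Theorem~\ref{weight-2-basic-lifts} via Corollary~\ref{q-power-sum-bound}), and it is \emph{this} linear bound, packaged as the statement that $\sigma$ acts diagonalizably on the convergent group ring $\overline{\mathbb{Q}_\ell}[[\pi_1^{\text{\'et},\ell}(X_{\bar k},\bar x)]]^{\leq\ell^{-r}}$ (Theorems~\ref{diagonalizing-convergent-rings-pure}, \ref{diagonalizing-convergent-rings-mixed}), that lets one extend $\rho$ to the convergent ring (Proposition~\ref{convergence-prop}) and conclude that $\rho$ kills a power of $\mathscr{I}$ because $\mathfrak{gl}_n(\overline{\mathbb{Q}_\ell})$ has only finitely many $\sigma$-eigenvalues (Theorem~\ref{general-monodromy}). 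Without replacing the Frobenius by a quasi-scalar and without some analogue of the convergent group ring to make ``convergence up the pro-$\ell$ tower'' precise and to control denominators, your final paragraph remains a plan rather than a proof, and the restriction to characteristic zero (forced by the Bogomolov input) would be invisible from your sketch.
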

See Corollary \ref{geometric-monodromy-corollary} and the surrounding remarks for more details.

As a corollary, we obtain the statement in the abstract, as an irreducible unipotent representation must be trivial.

Though this theorem is about complex algebraic varieties, the proof is fundamentally \emph{arithmetic} --- in fact, Theorem \ref{main-theorem-intro} is a consequence of a statement about the \'etale fundamental groups of varieties over finitely generated fields.

Recall that if $$\rho: \pi_1^{\text{\'et}}(X)\to GL_n(\mathbb{Z}_\ell)$$ is an $\ell$-adic representation of the \'etale fundamental group of $X$ which arises from the cohomology of a family of varieties over $X$, then there exists a finitely-generated subfield $k\subset \mathbb{C}$ and a variety $\mathcal{X}/k$ such that
\begin{enumerate}
\item $\mathcal{X}_{\mathbb{C}}\simeq X$, and
\item There exists a representation $$\rho': \pi_1^{\text{\'et}}(\mathcal{X})\to GL_n(\mathbb{Z}_\ell)$$ such that $\rho$ is naturally identified with the restriction of $\rho'$ to the geometric fundamental group of $\mathcal{X}$.
\end{enumerate}
It turns out that this \emph{arithmetic} nature of geometric monodromy representations places serious restrictions on which representations are possible.  An  antecedent to this method of analyzing monodromy representations is Grothendieck's proof of the quasi-unipotent monodromy theorem \cite[Appendix]{serre-tate}. 

Let $k$ be a finitely generated field and $X$ a variety over $k$.  We say that a continuous $\ell$-adic representation $\rho$ of $\pi_1^{\text{\'et}}(X_{\bar k})$ is \emph{arithmetic} if there exists a finite extension $k'$ of $k$ and a representation $\tilde\rho$ of $\pi_1^{\text{\'et}}(\mathcal{X}_{k'})$ such that $\rho$ arises as a subquotient of $\tilde\rho|_{\pi_1^{\text{\'et}}(\mathcal{X}_{\bar k})}$.  Theorem \ref{main-theorem-intro} follows by a standard specialization argument from the main number-theoretic result of this paper:
\begin{theorem}\label{main-theorem-intro-2}
Let $k$ be a finitely generated field of characteristic zero and $X/k$ a geometrically connected, normal variety.  Let $\ell$ be a prime.  Then there exists an integer $$N=N(X,\ell)$$ such that any continuous arithmetic representation $$\pi_1^{\text{\'et}}(X_{\bar k})\to GL_n(\mathbb{Z}_\ell)$$ which is trivial mod $\ell^N$ is unipotent.
\end{theorem}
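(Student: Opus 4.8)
The plan is to reduce the statement, via the theory of weights, to the triviality of irreducible constituents, and then to analyze the action of Frobenius on the congruence filtration of $GL_n(\mathbb{Z}_\ell)$ in the spirit of Grothendieck's proof of local quasi-unipotence. First I would observe that $\rho$ is unipotent as soon as its semisimplification $\rho^{ss}$ is trivial, so it suffices to treat $\rho^{ss}$; choosing a composition series of $\mathbb{Q}_\ell^n$ and saturating its steps inside the $\rho$-stable lattice $\mathbb{Z}_\ell^n$ shows that each Jordan--H\"older constituent is again trivial mod $\ell^N$ and again arithmetic (a subquotient of $\tilde\rho|_{\pi_1^{\text{\'et}}(\mathcal X_{\bar k})}$). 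Spreading $\mathcal X/k'$ out to a smooth scheme over a finitely generated $\mathbb{Z}$-algebra and applying Deligne's purity theorem (Weil~II), the irreducible constituents of a local system of geometric origin are, after a finite extension of $k$, pure of some weight $w$. It therefore suffices to show: a geometrically irreducible arithmetic local system which is pure of weight $w$ and trivial mod $\ell^N$ is trivial. The rank-one case of this is already instructive and essentially elementary: a character is automatically Galois-invariant, hence corresponds to a Galois-equivariant map $\pi_1^{\text{\'et}}(X_{\bar k})^{\mathrm{ab}}\otimes\mathbb{Z}_\ell\to\mathbb{Z}_\ell$, and since $X$ is normal the source has Frobenius weights $-1,-2$ while the target has weight $0$, so the map vanishes after $\otimes\,\mathbb{Q}_\ell$; the character is thus torsion, and being trivial mod $\ell$ it is trivial. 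One can moreover reduce the general irreducible case to the rank-one case by applying the weight-zero case to the geometrically semisimple sheaf $\operatorname{End}(\rho)=\rho\otimes\rho^\vee$: if all its constituents are trivial then $\operatorname{End}(\rho)$ is trivial, the geometric image of $\rho$ is central, and irreducibility forces $\rho$ to have rank one.

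Next I would pass to a finite field. Replacing $X$ by its smooth locus (which has the same fundamental group, since $X$ is normal) we may assume $X$ smooth; since $\rho$ is trivial mod $\ell^N$ its image lies in the pro-$\ell$ congruence subgroup $\mathcal U_N:=\ker\bigl(GL_n(\mathbb{Z}_\ell)\to GL_n(\mathbb{Z}/\ell^N)\bigr)$, so $\rho$ factors through the pro-$\ell$ completion $\pi_1^{\text{\'et}}(X_{\bar k})^{(\ell)}$. Spreading $(\mathcal X,\tilde\rho)$ out over an integral finitely generated $\mathbb{Z}$-scheme $U$ with function field $k'$, and choosing a closed point $s\in U$ with residue field $\mathbb F_q$, $q$ prime to $\ell$, the specialization map induces an isomorphism on pro-$\ell$ completions $\pi_1^{\text{\'et}}(\mathcal X_{\bar s})^{(\ell)}\xrightarrow{\ \sim\ }\pi_1^{\text{\'et}}(X_{\bar k})^{(\ell)}$ (for $s$ in a suitable dense open, after shrinking $U$). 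We are then over $\mathbb F_q$, where Weil~II applies in its sharpest form and the Frobenii $\mathrm{Frob}_v$ are dense in $\pi_1^{\text{\'et}}(\mathcal X_s)$.

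The heart of the argument is then an induction on the congruence level. Suppose $\rho$ is trivial mod $\ell^m$, $m\geq N$; the failure of triviality mod $\ell^{m+1}$ is measured by a class in $H^1_{\text{\'et}}(\mathcal X_{\bar s},\mathbb F_\ell)\otimes\mathfrak{gl}_n(\mathbb F_\ell)$, where $\mathrm{Frob}_s$ acts on $\mathfrak{gl}_n(\mathbb F_\ell)$ through $\operatorname{Ad}\tilde\rho(\mathrm{Frob}_s)$; arithmeticity of $\rho$ forces this class to be Frobenius-equivariant. Now $H^1$ of a normal variety is pure of weights $1$ and $2$, while the eigenvalues of $\operatorname{Ad}\tilde\rho(\mathrm{Frob}_s)$ are ratios of the weight-$w$ Weil numbers occurring in $\tilde\rho(\mathrm{Frob}_s)$, hence of weight $0$; so the relevant space has no Frobenius invariants \emph{rationally}, exactly as in Grothendieck's argument where Frobenius conjugates a monodromy operator to a power of itself. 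Over $\mathbb F_\ell$ this positivity is only a statement about the reductions of the $\mathrm{Frob}_v$, so one must supplement it with a Chebotarev argument: using the companions formalism (Lafforgue, Drinfeld) to bound the possible weight-$0$ Frobenius eigenvalues of $\operatorname{Ad}\tilde\rho$ by roots of unity of bounded order, and using the boundedness of the torsion in $H_\ast(\mathcal X_{\bar s},\mathbb{Z}_\ell)$, one chooses $s$ so that $\mathrm{Frob}_s$ detects the positivity of the weights of $H^1$ modulo $\ell$ as well. This is where the constant $N(X,\ell)$ is produced: once $N$ is large enough in terms of these bounds (which depend only on $X$ and $\ell$), the equivariant class vanishes, the induction continues, and $\rho$ is trivial on $\pi_1^{\text{\'et}}(\mathcal X_{\bar s})^{(\ell)}$, hence trivial.

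The main obstacle is precisely the passage from the rational to the integral statement in the last step. The weight inequality ``weights $1,2$ versus weight $0$'' kills the obstruction classes over $\mathbb{Q}_\ell$ for free, but modulo $\ell$ one must genuinely rule out coincidences of reductions of Frobenius eigenvalues, uniformly over \emph{all} pure arithmetic representations $\rho$ — in particular with constants independent of $n$ and of the geometric origin of $\rho$. Making the ``bounded root of unity'' estimates and the Chebotarev selection of $s$ uniform in this sense, and verifying that the relevant cohomological invariants are controlled by torsion data attached to $X$ alone, is the delicate part; I expect it, together with the care needed for possibly open or singular normal $X$ in the reduction to the smooth affine case and in the specialization isomorphism, to constitute the bulk of the work.
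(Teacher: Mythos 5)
Your proposal takes a genuinely different route from the paper, and unfortunately the route you choose runs into obstructions that the paper's argument is specifically designed to avoid.

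\textbf{The congruence-level induction cannot produce the constant $N$.} You propose to show, for $m\geq N$, that triviality mod $\ell^m$ implies triviality mod $\ell^{m+1}$ by killing a Frobenius-invariant obstruction class in $H^1(\mathcal X_{\bar s},\mathbb F_\ell)\otimes\mathfrak{gl}_n(\mathbb F_\ell)$. But the $\sigma$-module structure on this obstruction space — $\mathrm{Frob}$ acting via $\mathrm{Ad}\,\tilde\rho(\mathrm{Frob})\bmod\ell$ on the $\mathfrak{gl}_n$-factor — does not depend on $m$. So either the invariant space vanishes (and your argument proves the theorem with $N=1$), or it doesn't (and your argument proves nothing for any $N$). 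Since the paper's own Example \ref{ridic-example} (the universal family over $Y(3)$, trivial mod $3$ but non-unipotent) shows $N=1$ is false in general, the induction as stated cannot close. In contrast, the paper's argument works with the full $\ell$-adic group ring and tracks precisely how the denominators of $\sigma$-eigenvectors grow with the $\mathscr{I}$-adic degree (Section \ref{integral-l-adic-periods-section}); the constant $N$ emerges as the ``radius of convergence'' beyond which the eigenvector denominators become negligible relative to $\ell^{-rn}$. That quantitative $\ell$-adic estimate is precisely what is invisible at the mod-$\ell$ level you propose to work at.

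\textbf{Passing to a finite field loses the crucial Galois input.} The key arithmetic ingredient in the paper is the construction of a \emph{quasi-scalar} $\sigma\in G_k$ (Theorem \ref{H1-quasi-scalar}, Corollary \ref{pi1-quasi-scalar}), an element acting by $\alpha^i$ on the weight $-i$ graded piece for any $\alpha\in\mathbb{Z}_\ell^\times$ sufficiently close to $1$. This rests on Bogomolov's open-image theorem, which is a characteristic-zero statement about finitely generated fields — there is no analogue over a finite field, where Galois is topologically cyclic. Your immediate reduction to $\mathbb F_q$ thus discards the tool the paper uses, and replaces it with a single Frobenius, whose eigenvalues one has much less control over (cf. Remark \ref{conjecture-bound}: over finite fields the paper can only prove $O(n\log n)$ bounds via Yu's $p$-adic Baker theorem, which is not sufficient). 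Your attempt to recover control via the companions formalism also fails at a concrete point: the eigenvalues of $\mathrm{Ad}\,\tilde\rho(\mathrm{Frob}_s)$ are ratios $\lambda_i/\lambda_j$ of weight-$w$ Weil numbers, which have absolute value $1$ at every archimedean place but are \emph{not} roots of unity in general (already for a non-CM elliptic curve over $\mathbb F_q$ the ratio $\lambda_1/\lambda_2$ has infinite order). Finally, your reduction to rank one via $\operatorname{End}(\rho)$ is circular: to conclude $\operatorname{End}(\rho)$ has trivial constituents you need the theorem for a local system of rank $n^2\geq n$, so this is not an induction on rank. The paper's Lemma \ref{subquotient-lemma} (the socle argument) plays the role you want here, but it only reduces to the case where $\rho$ itself extends to the arithmetic fundamental group; it does not need or attempt a reduction to rank one. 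You also omit the Lefschetz reduction to curves (\cite{deligne2}), which the paper uses to reduce to a case where the $\mathscr{I}$-adic associated graded is torsion-free.
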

See Theorem \ref{main-arithmetic-result} and the surrounding remarks for more details.

Note that in both Theorems \ref{main-theorem-intro} and \ref{main-theorem-intro-2}, $N$ is independent of $n$, the dimension of the representation in question.  In many cases, (for example, if $X$ is a curve), $N$ depends in an explicit way on the image of the representation $$\on{Gal}(\bar k/k)\to GL(H^1(X_{\bar k}, \mathbb{Z}_\ell)).$$   For example, if $X=\mathbb{P}^1\setminus\{x_1, \cdots, x_n\}$, we have
\begin{theorem}\label{main-theorem-P1}
Let $k$ be a field with prime subfield $k_0$, and let $$X=\mathbb{P}^1_{\bar k}\setminus\{x_1, \cdots, x_n\}.$$ Let $K$ be the field generated by cross-ratios of the $x_i$, that is, $$K=k_0\left(\frac{x_a-x_b}{x_c-x_d}\right)_{1\leq a<b<c<d\leq n}.$$   Let $\ell$ be a prime different from the characteristic of $k$ and $q\in \mathbb{Z}_\ell^\times$ be any element of the image of the cyclotomic character $$\chi: \on{Gal}(\overline{K}/K)\to \mathbb{Z}_\ell^\times;$$ let $s$ be the order of $q$ in $\mathbb{F}_\ell^\times$ if $\ell\not=2$ and in $(\mathbb{Z}/4\mathbb{Z})^\times$ if $\ell=2$.  Let $\epsilon=1$ if $\ell=2$ and $0$ otherwise.  For any basepoint $x$ of $X$, let $$\rho: \pi_1^{\text{\'et}}(X, x)\to GL_m(\mathbb{Z}_\ell)$$ be a continuous representation which is trivial mod $\ell^k$, with $$k>\frac{1}{s}\left(v_\ell(q^s-1)+\frac{1}{\ell-1}+\epsilon\right).$$  Then if $\rho$ arises from geometry, it is unipotent.  
\end{theorem}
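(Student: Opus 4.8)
The plan is to deduce this from Theorem~\ref{main-arithmetic-result} (the general statement behind Theorem~\ref{main-theorem-intro-2}) by tracking its constants in the special case $X=\mathbb{P}^1_{\bar k}\setminus\{x_1,\dots,x_n\}$, where the relevant Galois module is $H^1(X,\mathbb{Z}_\ell)\cong\mathbb{Z}_\ell(-1)^{\oplus(n-1)}$ and the cyclotomic character controls everything. First I would produce an arithmetic model. Since $\rho$ arises from geometry it is arithmetic, so it extends to a representation $\tilde\rho$ of $\pi_1^{\text{\'et}}(\mathcal{X})$ for a model $\mathcal{X}$ over a finitely generated field; because the marked curve $(\mathbb{P}^1;x_1,\dots,x_n)$ is pulled back from the universal curve over $M_{0,n}$, it already descends to the cross-ratio field $K$, and after a specialization/descent one may arrange the model (and the cyclotomic constraint below) over a finitely generated field for which the appropriate element of $\operatorname{Image}(\chi)$ is still available; I suppress the basepoint $x$, which plays no role.

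Next I would record the cyclotomic constraint at the cusps. For a cusp $x_j$ the inertia subgroup of $\pi_1^{\text{\'et}}(X)$ is procyclic, topologically generated by the loop $\gamma_j$, and inside a decomposition subgroup of $\pi_1^{\text{\'et}}(\mathcal{X})$ the Galois group acts on it through $\chi$. Choosing $\sigma$ with $\chi(\sigma)=q$ and lifting it, conjugation by $\tilde\rho$ of the lift carries $M_j:=\rho(\gamma_j)$ to a $GL_m(\mathbb{Z}_\ell)$-conjugate of $M_{j'}^{\,q}$, where $j\mapsto j'$ is the permutation of cusps induced by $\sigma$; iterating until the cusp returns to itself gives that $M_j$ is conjugate in $GL_m(\mathbb{Z}_\ell)$ to $M_j^{\,q^{r}}$ for an integer $r$ depending only on the configuration of the $x_i$ (and on $K$), and in particular that the multiset of eigenvalues of $M_j$ is stable under $\lambda\mapsto\lambda^{q}$. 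The problem then splits into a \emph{local} assertion --- each $M_j$ is unipotent --- and a \emph{global} assertion --- unipotence of all the $M_j$ forces $\rho$ to be unipotent.

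The local assertion is easy and already uniform in $m$: write $M:=M_j=I+\ell^{k}B$ with $B\in M_m(\mathbb{Z}_\ell)$; since $\det(TI-B)$ is monic over $\mathbb{Z}_\ell$ its roots lie in $\overline{\mathbb{Z}}_\ell$, so every eigenvalue of $M$ has the form $1+\ell^{k}\mu$ with $v_\ell(\mu)\ge0$, whence $v_\ell(\lambda-1)\ge k$ for every eigenvalue $\lambda$ of $M$. On the other hand stability of the eigenvalues under $\lambda\mapsto\lambda^{q}$ (the inequality on $k$ forces $v_\ell(q^s-1)<\infty$, i.e. $q^s\ne1$, so $q$ has infinite order in $\mathbb{Z}_\ell^\times$, and iterating $\lambda\mapsto\lambda^q$ on the finite eigenvalue set shows each $\lambda$ is a root of unity) shows each eigenvalue is a root of unity, and a root of unity with $v_\ell(\lambda-1)>0$ has $\ell$-power order $\ell^{a}$ with $v_\ell(\lambda-1)=1/(\ell^{a-1}(\ell-1))\le 1/(\ell-1)$. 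For $\ell$ odd this already contradicts $v_\ell(\lambda-1)\ge k\ge1$ unless $\lambda=1$; for $\ell=2$ the $\epsilon$-term in the hypothesis makes $k\ge2$, and the only root of unity with $v_2(\lambda-1)\ge2$ is $1$. Hence every $M_j$ is unipotent.

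The main obstacle is the global assertion: unipotence of the individual local monodromies does \emph{not} formally imply that $\rho$ is unipotent (it already fails on $\mathbb{P}^1\setminus\{0,1,\infty\}$ without the arithmetic input), and upgrading it is precisely the difficulty overcome in the proof of Theorem~\ref{main-arithmetic-result}. Here one uses that $\tilde\rho$ is geometric, hence carries a weight filtration, and plays this against the Galois action on the lower central series of $\pi_1^{\text{\'et}}(X)$: writing $\Pi=\Pi_1\supseteq\Pi_2\supseteq\cdots$ for the lower central series of the pro-$\ell$ completion (a free pro-$\ell$ group on $n-1$ generators, through which $\rho$ factors since $\rho$ is trivial mod $\ell^k$), one has $\rho(\Pi_w)\subseteq 1+\ell^{wk}M_m(\mathbb{Z}_\ell)$, so each graded piece of $\rho$ lands in a module killed by $\ell^{k}$ on which the Galois group acts through $\chi^{w}$ --- because $\operatorname{gr}^1\Pi=H_1(X,\mathbb{Z}_\ell)$ is, up to the finite character coming from the Galois action on the cusps, $\mathbb{Z}_\ell(1)^{\oplus(n-1)}$ --- while the adjoint action of $\tilde\rho$ on that module is, via the cyclotomic relation of the second paragraph, governed by $q$ and $s$. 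Forcing these two weight pictures to be compatible, uniformly in $m$, is the crux, and it is exactly this matching (together with the $\log$--correction at $\ell=2$ recorded by $\epsilon$, and the precise comparison of $\ell$-adic valuations of $q^{s}-1$ against $\ell^{-(a-1)}(\ell-1)^{-1}$) that produces the threshold $k>\tfrac1s\bigl(v_\ell(q^{s}-1)+\tfrac1{\ell-1}+\epsilon\bigr)$; once $\rho$ is thereby pinned down to a quotient of $\Pi$ on which every element is unipotent, Kolchin's theorem (or direct inspection via $\log$) gives that $\rho$ is unipotent. I expect the two genuinely delicate points to be (i) the field-of-definition bookkeeping ensuring the chosen $q$, or an allowable power of it, survives over the arithmetic model, and (ii) keeping the constants in the global step independent of $m=\dim\rho$.
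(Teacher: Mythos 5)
Your plan---reduce to the arithmetic statement and track the constants in the $\mathbb{P}^1$ case---is the right shape, and the reduction of the model to the cross-ratio field $K$ matches the paper. But the proposal does not actually carry out the step that produces the threshold $\tfrac1s\bigl(v_\ell(q^s-1)+\tfrac1{\ell-1}+\epsilon\bigr)$, and the part you do prove in detail (the ``local assertion'') is not used in the paper's argument at all. The paper never analyzes the individual local monodromies $M_j$ around the cusps in proving this theorem; while your argument that each $M_j$ is unipotent is correct, you correctly note it is insufficient, and it also doesn't yield the stated threshold (it only needs $k\geq 1$, resp.\ $k\geq 2$).

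The ``global assertion'' you flag as the crux is exactly where the paper's machinery lives, and your lower-central-series sketch is missing the key ingredients. The paper proves Theorem~\ref{main-theorem-P1} by reducing (as you do) to the arithmetic statement Theorem~\ref{p1-main-theorem}, whose proof is: (i) pick $\sigma\in G_K$ with $\chi(\sigma)=q$, which acts as a quasi-scalar of degree $q^{-1}$ on $\mathbb{Q}_\ell[[\pi_1^{\text{\'et},\ell}(X_{\bar k},\bar x)]]$ since $H^1(X_{\bar k},\mathbb{Q}_\ell)$ is pure; (ii) invoke Theorem~\ref{diagonalizing-convergent-rings-pure} to conclude that $\sigma$ acts (topologically) diagonalizably on the convergent group ring $\overline{\mathbb{Q}_\ell}[[\pi_1^{\text{\'et},\ell}(X_{\bar k},\bar x)]]^{\leq \ell^{-r}}$ precisely when $r$ exceeds the stated threshold; (iii) invoke Theorem~\ref{general-monodromy}: the induced $\sigma$-equivariant map $\rho_r$ to $\mathfrak{gl}_m(\overline{\mathbb{Q}_\ell})$ must kill all but finitely many eigenspaces, hence annihilates $\mathscr{I}^N$ for $N\gg 0$, giving unipotence. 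The specific threshold is produced in step (ii), coming from the bound on integral $\ell$-adic periods (Theorem~\ref{weight-2-basic-lifts} and Corollary~\ref{q-power-sum-bound}): $\sum_{j=1}^{n} v_\ell(q^j-1)\leq C_{q,\ell,n}$ controls the denominators of the $\sigma$-eigenvectors that split the weight filtration on $\mathbb{Z}_\ell[[\pi_1]]$. Your sketch gestures at comparing the Galois weight $\chi^w$ on $\mathrm{gr}^w$ of the lower central series against the adjoint action on $\mathfrak{gl}_m$, but you do not supply (a) the denominator control on the eigenvector splitting that makes the comparison work in the limit $w\to\infty$, (b) the mechanism making the bound independent of $m$ (which in the paper is exactly the ``finitely many eigenvalues on a finite-dimensional target'' argument in Theorem~\ref{general-monodromy}), nor (c) the socle argument (Lemma~\ref{subquotient-lemma}) needed to pass from an arbitrary subquotient to a representation that actually extends to $\pi_1^{\text{\'et}}(X_{k'})$. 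As written, the proposal identifies where the difficulty lives but leaves the decisive step as a gap.
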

See Theorem \ref{p1-main-theorem} and the surrounding remarks for this and related results.

Let us unpack this a bit.  First of all, observe that if the cyclotomic character $$\chi: \on{Gal}(\overline{K}/K)\to \mathbb{Z}_\ell^\times$$ is surjective and $\ell>2$, then the bound in question becomes $$k>\frac{1}{\ell-1}+\frac{1}{(\ell-1)^2}.$$  That is, we may take $k=1$.  In other words, we have the following corollary.
\begin{corollary}\label{p1-cor-intro}
As before, let $$X=\mathbb{P}^1_{\mathbb{C}}\setminus\{x_1, \cdots, x_n\}$$ and let $K$ be the field generated by cross-ratios of the $x_i$, that is, $$K=\mathbb{Q}\left(\frac{x_a-x_b}{x_c-x_d}\right)_{1\leq a<b<c<d\leq n}.$$   Let $\ell$ be an odd prime and suppose the cyclotomic character $$\chi: \on{Gal}(\bar K/K)\to \mathbb{Z}_\ell^\times$$ is surjective.  For any basepoint $x$ of $X$, let $$\rho: \pi_1^{\text{\'et}}(X, x)\to GL_m(\mathbb{Z}_\ell)$$ be a representation which is trivial mod $\ell$.  Then if $\rho$ arises from geometry, it is unipotent. 
\end{corollary}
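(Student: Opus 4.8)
The plan is to deduce Corollary \ref{p1-cor-intro} directly from Theorem \ref{main-theorem-P1} by choosing the auxiliary element $q$ optimally. Take $k=\mathbb{C}$, so that $\mathbb{P}^1_{\bar k}\setminus\{x_i\}=\mathbb{P}^1_{\mathbb{C}}\setminus\{x_i\}=X$, take $k_0=\mathbb{Q}$, and let $K=\mathbb{Q}\left(\tfrac{x_a-x_b}{x_c-x_d}\right)$ as in the statement. Since $\ell$ is odd we have $\epsilon=0$, and since by hypothesis the cyclotomic character $\chi\colon \on{Gal}(\bar K/K)\to\mathbb{Z}_\ell^\times$ is surjective, any unit of $\mathbb{Z}_\ell$ is available as a choice of $q$.

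Next I would choose $q$ to make the bound on $k$ in Theorem \ref{main-theorem-P1} as small as possible. Writing $s$ for the order of $q$ in $\mathbb{F}_\ell^\times$, Fermat's little theorem gives $q^s\equiv 1\pmod\ell$, so $v_\ell(q^s-1)\geq 1$, while $s\leq \ell-1$; hence the quantity $\tfrac1s\bigl(v_\ell(q^s-1)+\tfrac1{\ell-1}\bigr)$ is minimized by taking $s=\ell-1$ and $v_\ell(q^s-1)=1$, and both can be achieved at once. Indeed, $(\mathbb{Z}/\ell^2\mathbb{Z})^\times$ is cyclic of order $\ell(\ell-1)$; lifting a generator to an element $q\in\mathbb{Z}_\ell^\times$ produces a primitive root mod $\ell$, so that $s=\ell-1$, while the fact that $q$ has exact order $\ell(\ell-1)$ mod $\ell^2$ forces $q^{\ell-1}\not\equiv 1\pmod{\ell^2}$, i.e. $v_\ell(q^{\ell-1}-1)=1$. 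This $q$ lies in the image of $\chi$ by surjectivity.

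With this choice, the hypothesis of Theorem \ref{main-theorem-P1} becomes $k>\tfrac1{\ell-1}\bigl(1+\tfrac1{\ell-1}\bigr)=\tfrac1{\ell-1}+\tfrac1{(\ell-1)^2}$. For every odd prime $\ell$ we have $\ell-1\geq 2$, so the right-hand side is at most $\tfrac12+\tfrac14<1$, and therefore $k=1$ satisfies the inequality. Consequently, any $\rho\colon\pi_1^{\text{\'et}}(X,x)\to GL_m(\mathbb{Z}_\ell)$ that is trivial mod $\ell$ and arises from geometry is unipotent, as claimed. As this is a purely formal consequence of the previously established Theorem \ref{main-theorem-P1}, no genuine difficulty arises; the only point meriting care is the elementary number-theoretic verification above, which is precisely what pushes the numerical bound below $1$ and thus allows $k=1$.
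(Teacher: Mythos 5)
Your proposal is correct and follows essentially the same route as the paper: the text immediately preceding Corollary~\ref{p1-cor-intro} observes that surjectivity of $\chi$ and $\ell>2$ reduce the bound in Theorem~\ref{main-theorem-P1} to $k>\frac{1}{\ell-1}+\frac{1}{(\ell-1)^2}<1$, so $k=1$ suffices. Your choice of $q$ lifting a generator of $(\mathbb{Z}/\ell^2\mathbb{Z})^\times$ is a clean way to make explicit the implicit optimization ($s=\ell-1$, $v_\ell(q^s-1)=1$) that the paper leaves to the reader.
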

In particular, the hypothesis is satisfied for all odd $\ell$ if $\frac{x_a-x_b}{x_c-x_d}\in \mathbb{Q}$ for all $a<b<c<d$.  Thus we have the following corollary:
\begin{corollary}\label{p1-cor-intro-2}
Let $$X=\mathbb{P}^1_{\mathbb{C}}\setminus\{x_1, \cdots, x_n\}$$ and suppose that $$\left(\frac{x_a-x_b}{x_c-x_d}\right)\in \mathbb{Q}$$ for all $a,b,c,d$.   Then any representation $$\rho: \pi_1(X(\mathbb{C})^{\text{an}})\to GL_m(\mathbb{Z})$$ which arises from geometry and is trivial modulo $N$, for some integer $N\not\in \{1,2,4\}$, is unipotent. 
\end{corollary}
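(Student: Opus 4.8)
The plan is to deduce the statement from Corollary \ref{p1-cor-intro} (for the odd part of $N$) together with Theorem \ref{main-theorem-P1} applied at the prime $\ell=2$ (for the $2$-primary part), after reducing the complex-analytic monodromy representation to an arithmetic $\ell$-adic one as in the passage from Theorem \ref{main-theorem-intro} to Theorem \ref{main-theorem-intro-2}.

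\emph{Step 1: a number-theoretic reduction.} An integer $N\geq 1$ fails to lie in $\{1,2,4\}$ precisely when it is divisible by some odd prime $\ell$ or by $8$. If $\rho$ is trivial modulo $N$ and $\ell^a$ is the exact power of $\ell$ dividing $N$, then $\rho$ is trivial modulo $\ell^a$; in particular, in the first case $\rho$ is trivial mod $\ell$ for an odd prime $\ell$, and in the second case $\rho$ is trivial mod $8$. Thus it suffices to treat: (a) every geometric $\rho$ trivial mod an odd prime $\ell$ is unipotent; (b) every geometric $\rho$ trivial mod $8$ is unipotent.

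\emph{Step 2: passage to the arithmetic setting.} As recalled before Corollary \ref{geometric-monodromy-corollary}, a representation $\rho\colon \pi_1(X(\mathbb{C})^{\text{an}})\to GL_m(\mathbb{Z})$ arising from geometry underlies a $\mathbb{Z}$-local system that is a subquotient of some $R^i\pi^{\text{an}}_*\underline{\mathbb{Z}}$; its $\ell$-adic completion corresponds, under the comparison of Betti and \'etale local systems on the scheme $X_{\mathbb{C}}$, to a lisse $\mathbb{Z}_\ell$-sheaf which still arises from geometry, and in particular extends to a continuous representation $\rho_\ell\colon \pi_1^{\text{\'et}}(X_{\mathbb{C}},x)\to GL_m(\mathbb{Z}_\ell)$ restricting to $\rho$ (composed with $GL_m(\mathbb{Z})\hookrightarrow GL_m(\mathbb{Z}_\ell)$) along $\pi_1(X(\mathbb{C})^{\text{an}})\hookrightarrow \pi_1^{\text{\'et}}(X_{\mathbb{C}},x)$. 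Since this subgroup is dense and ``$\equiv I\bmod \ell^a$'' is a closed condition, triviality of $\rho$ mod $\ell^a$ forces triviality of $\rho_\ell$ mod $\ell^a$. Finally, by hypothesis all cross-ratios of the $x_i$ lie in $\mathbb{Q}$, so in the notation of Theorem \ref{main-theorem-P1} we have $K=\mathbb{Q}$ (concretely, after a M\"obius transformation placing $x_1,x_2,x_3$ at $0,1,\infty$ the identity $(0,1;\infty,x_j)=1-x_j^{-1}$ forces every $x_j\in\mathbb{Q}$, so $X$ has the evident model over $\mathbb{Q}$), and the cyclotomic character $\chi\colon \on{Gal}(\bar{\mathbb{Q}}/\mathbb{Q})\to\mathbb{Z}_\ell^\times$ is surjective for every prime $\ell$.

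\emph{Step 3: applying the $\mathbb{P}^1$ results, and conclusion.} In case (a), with $\ell$ odd and $\chi$ surjective, Corollary \ref{p1-cor-intro} applies directly and gives that $\rho_\ell$ is unipotent. In case (b), apply Theorem \ref{main-theorem-P1} with $\ell=2$ and $q=3$, which lies in the image of $\chi$: the order of $3$ in $(\mathbb{Z}/4\mathbb{Z})^\times$ is $s=2$, we have $v_2(q^s-1)=v_2(8)=3$ and $\epsilon=1$, so the bound reads $k>\tfrac12\bigl(3+\tfrac{1}{2-1}+1\bigr)=\tfrac52$; hence $k=3$ is admissible and $\rho_2$, being trivial mod $2^3=8$, is unipotent. (It is precisely because this bound exceeds $2$, so that $k=3$ rather than $k=2$ is the first admissible value, that $N=4$ must be excluded; one checks directly that $q=3$ is the optimal choice at $\ell=2$.) In either case $\rho_\ell$ is a unipotent $\mathbb{Z}_\ell$-representation, so every $\rho(\gamma)$ satisfies $(\rho(\gamma)-I)^m=0$; since this is an identity of integer matrices, $\rho$ itself is unipotent. \textbf{The only genuinely non-formal inputs are Theorem \ref{main-theorem-P1} and the spreading-out argument invoked in Step 2}; the remaining work --- the case split on $N$ and the choice $q=3$ at $\ell=2$ --- is elementary bookkeeping.
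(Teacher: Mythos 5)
Your proof is correct, and it follows essentially the same route as the paper's own treatment. The paper derives Corollary \ref{p1-cor-intro-2} via Corollary \ref{easy-bound-corollary}: split $N\notin\{1,2,4\}$ into "divisible by an odd prime" and "divisible by $8$," $\ell$-adically complete $\rho$ to an arithmetic representation of $\pi_1^{\text{\'et}}$ using density of the topological fundamental group in its profinite completion together with the spreading-out argument of Corollary \ref{geometric-monodromy-corollary}, and then apply Theorem \ref{p1-main-theorem} with $K=\mathbb{Q}$ and cyclotomic character surjective. Your Steps 1--3 reproduce exactly this.

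One small point worth flagging, in your favor. In the proof of Corollary \ref{easy-bound-corollary} the paper invokes the formula $\frac{1}{\ell-1}+\frac{1}{(\ell-1)^2}$ from Remark \ref{dumb-bounds-remark} at $\ell=2$ to get the threshold $2$; but Remark \ref{dumb-bounds-remark} only asserts that formula for $\ell>2$. You instead compute the threshold directly from the statement of Theorem \ref{main-theorem-P1} at $\ell=2$: taking $q=3$ (or any $q\equiv 3\bmod 8$) gives $s=2$, $v_2(q^2-1)=3$, $\epsilon=1$, hence the bound $\tfrac12(3+1+1)=\tfrac52$. This is the true optimum over $q\in\mathbb{Z}_2^\times$, and your observation that this exceeds $2$ (so that $k=3$ rather than $k=2$ is the first admissible exponent, explaining why $N=4$ must be excluded) is exactly right. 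The two versions lead to the same conclusion because $r_0(\rho_2)\ge 3>\tfrac52$, but your computation is the more careful one. The final reduction from unipotence of the $\mathbb{Z}_\ell$-representation back to unipotence of the integral representation --- a polynomial identity over $\mathbb{Z}_\ell$ among integer matrices holds over $\mathbb{Z}$ --- is also correctly handled.
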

The hypothesis on the cyclotomic character in Corollaries \ref{p1-cor-intro}, \ref{p1-cor-intro-2} may seem strange, but they are in fact necessary.
\begin{example}
Let $\ell=3$ or $5$, and consider the (connected) modular curve $Y(\ell)$, parametrizing elliptic curves with full level $\ell$ structure.  $Y(\ell)$ has genus zero.  Let $E\to Y(\ell)$ be the universal family, and $\bar x$ any geometric point of $Y(\ell)$.  Then the tautological representation $$\pi_1^{\text{\'et}}(Y(\ell), \bar x)\to GL(T_\ell(E_{\bar x}))$$ is trivial mod $\ell$.  This does not contradict Corollary \ref{p1-cor-intro} because the field generated by the cross-ratios of the cusps of $Y(\ell)$ is $\mathbb{Q}(\zeta_\ell)$, whose cyclotomic character is not surjective at $\ell$.
\end{example}
We may use these results to construct example of representations of fundamental groups which do not come from geometry --- indeed, the following is an example of a representation which does not come from geometry, and which we do not know how to rule out by other means:
\begin{example}\label{ridic-example}
As before, let $Y(3)$ be the modular curve parametrizing elliptic curves with full level three structure.  Then $$Y(3)_{\mathbb{C}}\simeq \mathbb{P}^1\setminus\{0,1,\infty, \lambda\}$$ where $\lambda\in \mathbb{Q}(\zeta_3)$; let $x\in Y(3)(\mathbb{C})$ be a point.  Let $\rho$ be the tautological representation $$\rho: \pi_1(Y(3)(\mathbb{C})^{\text{an}}, x)\to GL(H^1(E_x(\mathbb{C})^{\text{an}}, \mathbb{Z})).$$  Let $$X=\mathbb{P}^1\setminus \{0,1,\infty, \beta\}$$ where $\beta\in \mathbb{Q}\setminus\{0,1\}$.  Then $X(\mathbb{C})^{\text{an}}$ is homeomorphic to $Y(3)(\mathbb{C})^{\text{an}}$ (indeed, both are homeomorphic to a four-times-punctured sphere); let $$j: X(\mathbb{C})^{\text{an}}\to Y(3)(\mathbb{C})^{\text{an}}$$ be such a homeomorphism.  Then the representation $$\tilde{\rho}: \pi_1(X(\mathbb{C})^{\text{an}}, j^{-1}(x))\overset{j_*}{\longrightarrow} \pi_1(Y(3)(\mathbb{C})^{\text{an}}, x)\overset{\rho}{\longrightarrow}  GL(H^1(E_x(\mathbb{C})^{\text{an}}, \mathbb{Z}))$$ is trivial mod $3$ and thus cannot arise from geometry, by Corollary \ref{p1-cor-intro-2}.  

We do not know a way to see this using pre-existing methods, since any criterion ruling out this representation would have to detect the difference between $X$ and $Y(3)$; for example, the quasi-unipotent local monodromy theorem does not rule out $\tilde \rho$.  This example was suggested to the author by George Boxer.
\end{example}
\begin{example}
Let $X/\mathbb{C}$ be a proper genus two curve; recall that $$\pi_1(X^{\text{an}})=<a_1, b_1, a_2, b_2>/([a_1,b_1][a_2,b_2]=1).$$  Let $p$ be a prime and $A, B$ non-unipotent $n\times n$ integer matrices which are equal to the identity mod $p^N$.  Then for $N\gg 0$, Theorem \ref{main-theorem-intro} implies that the representation $$a_1\mapsto A, b_1\mapsto B, a_2\mapsto B, b_2\mapsto A$$ does not come from geometry.  Again, we do not know how to see this using previously known results.
\end{example}

Part of the  motivation for this paper is the \emph{uniform boundedness conjecture}, also called the \emph{torsion conjecture}. Namely, there exists a function  $N=N(g, d)$ such that if $K$ is a number field and $A$ is a $g$-dimensional Abelian variety over $K$, then  $$\#|A(K)_{\text{tors}}|<N(g, [K:\mathbb{Q}]).$$
This conjecture is known for elliptic curves (the case $g=1$) by work of Merel \cite{merel} (building on work of Mazur \cite{mazur}, Kamienny \cite{kamienny}, and others).

The geometric analogue (the \emph{geometric torsion conjecture}) replaces number fields with function fields.  Namely, let $k$.  Then the conjecture is that there exists $N=N(g,d)$ such that if $A/k(X)$ is an Abelian variety of dimension $g$, then $$\#|A(k(X))_{\text{tors}}|< N(g, \text{gonality}(X))$$ for any curve $X$, unless $A$ has nontrivial $k(X)/k$-trace (see e.g.~\cite[Definition 6.1]{lang-neron} for a definition of the trace of an Abelian variety).  Here $\text{gonality}(X)$ is the minimal degree of a non-constant map  $X\to \mathbb{P}^1_k$.  This conjecture is also known in the case $g=1$; it is simply the statement that the gonality of modular curves tends to infinity with the level (see e.g.~\cite{poonen}).

Thus far the methods used to study the geometric torsion conjecture have been largely complex-analytic or algebro-geometric.  One of the goals of this paper was to use essentially \emph{anabelian} methods, which could conceivably be used to study the analogous questions for number fields.
\subsection{Statement of Main Results: Theoretical Aspects and Proofs}
We now sketch the idea of the proof of Theorem \ref{main-theorem-intro-2}.  The proof requires several technical results on arithmetic fundamental groups which are of independent interest.  For simplicity we assume $X$ is a smooth, geometrically connected curve over a finitely generated field $k$; indeed, one can immediately reduce to this case using an appropriate Lefschetz theorem.

\emph{Step 1  (Section \ref{Ql-galois-section}).}  Let $x\in X(k)$ be a rational point, and choose an embedding $k\hookrightarrow \bar k$; let $\bar x$ be the associated geometric point of $x$.  Let $\pi_1^{\text{\'et}, \ell}(X_{\bar k}, \bar x)$ be the pro-$\ell$ completion of the geometric \'etale fundamental group of $X$.  Then we construct certain special elements $\sigma\in G_k$, called quasi-scalars, whose action on the pro-unipotent completion of $\pi_1^{\text{\'et}, \ell}(X_{\bar k}, \bar x)$ is well-adapted to the weight filtration.
\begin{theorem}\label{pi1-quasi-scalar-intro}
For all $\alpha\in \mathbb{Z}_\ell^\times$ sufficiently close to $1$, there exists $\sigma\in G_k$ such that $\sigma$ acts on $$\on{gr}^{-i}_W \mathbb{Q}_\ell[[\pi_1^{\text{\'et}, \ell}(X_{\bar k}, \bar x)]]$$ by $\alpha^i$.  Here $W$ denotes the weight filtration on the $\mathbb{Q}_\ell$-Mal'cev Hopf algebra $\mathbb{Q}_\ell[[\pi_1^{\text{\'et}, \ell}(X_{\bar k}, \bar x)]]$ associated to $\pi_1^{\text{\'et}, \ell}(X_{\bar k}, \bar x)$.
\end{theorem}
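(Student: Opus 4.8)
The plan is to find, for $\alpha$ sufficiently close to $1$, an element $\sigma\in G_k$ acting on each weight-graded piece $\on{gr}^{-i}_W$ of the $\mathbb{Q}_\ell$-Mal'cev Hopf algebra by the scalar $\alpha^i$. The key structural input is that the weight filtration $W$ on $\mathbb{Q}_\ell[[\pi_1^{\text{\'et},\ell}(X_{\bar k},\bar x)]]$ is $G_k$-stable, and that the associated graded is generated, as a Hopf algebra, in weight $-1$ and $-2$: weight $-1$ is $H^1(X_{\bar k},\mathbb{Q}_\ell)^\vee$ (or its open-curve analogue), on which the Galois action is a concrete $\ell$-adic representation, and weight $-2$ contributes only via the cyclotomic part coming from the fundamental class / the punctures. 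Because the whole associated graded algebra is generated in weights $-1$ and $-2$ by functoriality of the weight filtration, it suffices to produce $\sigma$ acting by $\alpha$ on $\on{gr}^{-1}_W$ and by $\alpha^2$ on $\on{gr}^{-2}_W$; the action on $\on{gr}^{-i}_W$ for all $i$ is then forced to be $\alpha^i$ by multiplicativity.

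First I would reduce the problem to the image of the Galois representation $\bar\rho\colon G_k\to GL(H^1(X_{\bar k},\mathbb{Q}_\ell))$ together with the cyclotomic character $\chi\colon G_k\to\mathbb{Z}_\ell^\times$. The point is that $H^1(X_{\bar k},\mathbb{Q}_\ell)$ is a pure (or mixed, in the open case, but with weights $1$ and $2$ only) Galois representation over the finitely generated field $k$, hence by the theory of such representations its image $G$ in $GL_N(\mathbb{Z}_\ell)$ is an $\ell$-adic Lie subgroup which is open in its Zariski closure, and — crucially — the scalars $\mathbb{Z}_\ell^\times$ appear in the image "up to finite index": more precisely, since $X$ has a rational point one can use the cyclotomic character and the structure of weights to see that for $\alpha$ close enough to $1$ there is $\sigma$ with $\bar\rho(\sigma)=\alpha\cdot\operatorname{id}$ on the weight-$1$ part and $\chi(\sigma)=\alpha^2$. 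Concretely this is where one invokes that $\chi$ has open image and that the image of $\bar\rho$ on each pure graded piece, intersected with the scalars, is open — an argument in the spirit of Bogomolov's theorem on $\ell$-adic Lie algebras of abelian varieties, or more elementarily the observation that a neighborhood of $1$ in $\mathbb{Z}_\ell^\times$ is a pro-$\ell$ group and these scalar actions can be realized on a finite-index subgroup after adjusting the field $k$ by a finite extension (which is harmless).

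Once $\sigma$ is chosen so that it acts by $\alpha$ on $\on{gr}^{-1}_W$ and by $\alpha^2=\chi(\sigma)$ on $\on{gr}^{-2}_W$ (the Tate twist $\mathbb{Q}_\ell(1)$-part), the remaining step is purely formal: the surjection $\Sym^\bullet(\on{gr}^{-1}_W\oplus\on{gr}^{-2}_W)\twoheadrightarrow \on{gr}_W \mathbb{Q}_\ell[[\pi_1^{\text{\'et},\ell}]]$ of graded Hopf algebras is $G_k$-equivariant, so $\sigma$ acts on a generator of weight $-i$ (a product of $a$ weight-$(-1)$ generators and $b$ weight-$(-2)$ generators with $a+2b=i$) by $\alpha^a(\alpha^2)^b=\alpha^i$, and since $\on{gr}^{-i}_W$ is spanned by such elements the action there is exactly $\alpha^i$. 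The main obstacle is the first reduction: showing that the scalars $\alpha^i$ are simultaneously realizable, i.e.\ that the image of $(\bar\rho,\chi)$ meets the relevant "diagonal" $\{(\alpha\cdot\operatorname{id},\alpha^2)\}$ in a set containing a neighborhood of $(1,1)$. This is where the weight structure is essential — the weight-$1$ graded piece and the cyclotomic weight-$2$ piece must be tied together by the single scalar $\alpha$ — and it is exactly the content one expects from the purity of $H^1$ and the nontriviality of the cyclotomic character; I would expect the proof to extract this from a Lie-algebra computation showing the relevant algebraic subgroup of $GL(H^1)\times\mathbb{G}_m$ contains the one-parameter diagonal torus, together with an open-image statement for $\ell$-adic monodromy over finitely generated fields.
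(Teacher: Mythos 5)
Your outline correctly identifies the high-level architecture of the paper's argument: the weight filtration is $G_k$-stable and multiplicative with $\operatorname{gr}_W$ generated in weights $-1$ and $-2$ (dual to the weight-$1$ and weight-$2$ pieces of $H^1$), so it suffices to realize $\alpha$ on $\operatorname{gr}^{-1}_W$ and $\alpha^2$ on $\operatorname{gr}^{-2}_W$, and the latter is controlled by $\chi$. The paper's Corollary~\ref{pi1-quasi-scalar} packages this reduction slightly differently (via the semisimplicity Theorem~\ref{semisimple-pi1}, a $\sigma$-equivariant splitting of $\mathscr{I}\to\mathscr{I}/\mathscr{I}^2$, and a tensor-algebra surjection) but the substance is the same, and your version is fine.

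However, there is a genuine gap at exactly the point you yourself flag as ``the main obstacle.'' You assert that it should follow from ``a Lie-algebra computation showing the relevant algebraic subgroup of $GL(H^1)\times\mathbb{G}_m$ contains the one-parameter diagonal torus,'' and offer as an alternative ``more elementary'' route the observation that a neighborhood of $1$ in $\mathbb{Z}_\ell^\times$ is pro-$\ell$. Neither of these is an argument. The ``elementary'' alternative simply does not apply: the fact that $1+\ell\mathbb{Z}_\ell$ is pro-$\ell$ says nothing about whether scalar matrices on the weight-$1$ piece, tied to $\chi$ on the weight-$2$ piece, lie in the Galois image. And the ``Lie-algebra computation'' is precisely what must be supplied: one must show that the subtorus $T'\subset G$ (the Zariski closure of the image of $G_k$ on $H^1$) consisting of operators acting by $\alpha$ on the weight-$1$ part and $\alpha^2$ on the weight-$2$ part is \emph{positive-dimensional}. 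Without this, the open-image theorem gives only that $\operatorname{im}(G_k)\cap T'(\mathbb{Q}_\ell)$ is open in $T'(\mathbb{Q}_\ell)$, which is useless if $T'$ is finite.

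The paper's Theorem~\ref{H1-quasi-scalar} fills this gap by an arithmetic, not formal, argument. Spread $X$ out over a ring $R$, pick a Frobenius $F$ at a closed point of good reduction, and diagonalize $F$ over a finite extension $L/\mathbb{Q}_\ell$ with eigenvalues $\lambda_1,\dots,\lambda_n$ (Weil numbers of weights $1$ and $2$). The multiplicative relations among the $\lambda_i$ cut out the Frobenius torus $T$ inside $G_L$. Taking norms $\operatorname{Nm}_{E/\mathbb{Q}}(\lambda_i)$ (where $E=\mathbb{Q}(\lambda_1,\dots,\lambda_n)$) preserves all multiplicative relations, so the diagonal matrix $A$ of norms lies in $T$; because the $\lambda_i$ are $q$-Weil numbers, $A$ has infinite order; because $A$ commutes with $F$, it preserves the $\mathbb{Q}_\ell$-rational weight decomposition of $V$ and hence lies in $GL(V)(\mathbb{Q}_\ell)$; and after replacing $A$ by a power it acts by one scalar on the weight-$1$ piece and its square on the weight-$2$ piece. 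This exhibits an infinite-order point of $T'(\mathbb{Q}_\ell)$, proving $\dim T'>0$, after which Bogomolov's open-image theorem (extended to finitely generated fields by Serre's letter) finishes. This Frobenius-norm construction is the content you would need to supply, and it is the novel step of the paper's Theorem~\ref{H1-quasi-scalar}; the rest of your reduction is correct.
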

This is the first of our main results on the structure of the Galois action on $\pi_1^{\text{\'et}, \ell}(X_{\bar k}, \bar x)$.  A key step in the construction of such a $\sigma$ (Theorem \ref{H1-quasi-scalar}) was suggested to the author by Will Sawin \cite{sawin} after reading an earlier version of this paper, and is related to ideas of Bogomolov \cite{bogomolov}.  For a more general statement, see Corollary \ref{pi1-quasi-scalar}.  Another key step in the proof of Theorem \ref{pi1-quasi-scalar-intro} is of additional independent interest:
\begin{theorem}
Let $k$ be a finite field and $X/k$ a smooth variety with a simple normal crossings compactification; let $x\in X(k)$ be a rational point.  Then Frobenius acts semi-simply on $$\mathbb{Q}_\ell[[\pi_1^{\text{\'et}, \ell}(X_{\bar k}, \bar x)]],$$ i.e.~its eigenvectors have dense span.
\end{theorem}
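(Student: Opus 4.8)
The plan is to bootstrap from the classical semisimplicity of Frobenius on $H^1$ of an abelian variety over a finite field, using Deligne's theory of weights to propagate semisimplicity up the lower central series. Write $A := \mathbb{Q}_\ell[[\pi_1^{\text{\'et},\ell}(X_{\bar k},\bar x)]]$ for the Mal'cev Hopf algebra, on which $F:=\on{Frob}_k$ acts honestly using the rational point $x$; let $I\subset A$ be the augmentation ideal, let $W_\bullet A$ be the weight filtration, and set $V:=\on{gr}^1_I A = H_1(X_{\bar k},\mathbb{Q}_\ell)$. The assertion "eigenvectors have dense span'' means $F$ acts semisimply on each finite-dimensional quotient of $A$; this is what I would prove.

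\emph{Step 1 (the abelian base case).} I would first show $F$ acts semisimply on $V$, equivalently on $H^1(X_{\bar k},\mathbb{Q}_\ell)$. Using the simple normal crossings compactification $\bar X\supset D=\bigcup_i D_i$, the weight filtration on $H^1(X_{\bar k},\mathbb{Q}_\ell)$ has $W_1 = \on{image}\bigl(H^1(\bar X_{\bar k},\mathbb{Q}_\ell)\to H^1(X_{\bar k},\mathbb{Q}_\ell)\bigr)$, pure of weight $1$, and this equals $H^1(\bar X_{\bar k},\mathbb{Q}_\ell)$, which is identified (up to twist and dual) with the $\mathbb{Q}_\ell$-Tate module of the abelian variety $\on{Pic}^0_{\bar X}$. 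Frobenius acts semisimply on the Tate module of an abelian variety over $\mathbb{F}_q$: the $q$-Frobenius is central in $\on{End}^0$, hence lies in a product of number fields, so $\mathbb{Q}_\ell[\pi]$ is a semisimple ring. Next, $\on{gr}^W_2 H^1$ embeds into $\bigoplus_i \mathbb{Q}_\ell(-1)$ (classes of loops around the $D_i$), so is of Tate type with $F$ acting as $q$ times a permutation matrix, which is semisimple. Finally, since $W_1$ and $\on{gr}^W_2$ are pure of distinct weights, $\on{Ext}^1_{G_k}(\on{gr}^W_2 H^1,\,W_1H^1)=H^1_{\mathrm{cont}}(\widehat{\ZZ},\,\bullet\,)$ is the cokernel of $F-1$ acting on a module pure of nonzero weight, hence $0$; so the weight filtration splits $G_k$-equivariantly and $V$ has semisimple Frobenius.

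\emph{Steps 2--3 (tensor powers and reassembly).} Each $\on{gr}^W_w A$ is finite-dimensional, since it receives contributions only from those $\on{gr}^m_I A$ with $-2m\le w\le -m$, and each $\on{gr}^m_I A$ is finite-dimensional (a subquotient of $V^{\otimes m}$). By the structure of the weight filtration on fundamental groups — i.e.\ $\on{gr}^W_\bullet$ of the Mal'cev Lie algebra is a quotient of the free Lie algebra on $\on{gr}^W_{-1}V\oplus\on{gr}^W_{-2}V$, strictly compatibly with the $I$-adic filtration — every $\on{gr}^W_w A$ is a subquotient of a finite direct sum of tensor powers $V^{\otimes m}$. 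If $F$ is semisimple on $V$ then, after $\otimes\,\overline{\mathbb{Q}}_\ell$ and decomposing $V$ into Frobenius eigenlines, $F$ acts diagonally on each $V^{\otimes m}$ (by products of eigenvalues), so it is semisimple on $V^{\otimes m}$ and on every subquotient; hence $F$ is semisimple on each $\on{gr}^W_w A$. Now the $\on{gr}^W_w A$ are pure of pairwise distinct nonpositive weights, so (again by vanishing of $\on{Ext}^1_{G_k}$ between pure modules of distinct weights over $\mathbb{F}_q$) the weight filtration on $A$ splits $F$-equivariantly, giving $A\cong \widehat{\bigoplus}_w \on{gr}^W_w A$ as a Frobenius module. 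As $F$ is semisimple on each finite-dimensional summand, its eigenvectors span a dense subspace, which is the assertion.

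I expect the main obstacle to be the structural input in Step 2: that the $\ell$-adic weight filtration on $\pi_1^{\text{\'et},\ell}(X_{\bar k})$ exists and is strictly compatible with the $I$-adic filtration for a \emph{general} smooth $X$ admitting a simple normal crossings compactification — so that the pure pieces $\on{gr}^W_w A$ really are built from tensor powers of $H^1$. This rests on the weight theory for homotopy types together with weight-monodromy considerations near the boundary $D$, rather than being a formal consequence of anything in Step 1. Granting that structural statement, the remainder is the by-now-standard mechanism: tensor powers of a semisimple Frobenius module are semisimple, and extensions between pure modules of distinct weights over a finite field split.
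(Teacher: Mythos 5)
Your Step~1 is correct and coincides with the paper's Lemma~\ref{H1-semisimple}, and your Step~3 mechanism (tensor powers of a semisimple Frobenius module are semisimple; $\on{Ext}^1$ between $G_{\mathbb{F}_q}$-modules pure of distinct weights vanishes since $F-1$ is invertible on a module of nonzero weight) is also sound. The problem is the structural input you isolate in Step~2: the claim that $\on{gr}^W_w A$ is, $F$-equivariantly, a subquotient of tensor powers of $V=\mathscr{I}/\mathscr{I}^2$. This is not a citable consequence of ``weight theory for homotopy types'' in the $\ell$-adic setting over a finite field; it is essentially equivalent to the theorem you are trying to prove. The $\mathscr{I}$-graded pieces $\mathscr{I}^m/\mathscr{I}^{m+1}$ are genuinely quotients of $V^{\otimes m}$ (no choice of section needed on associated graded), hence $F$-semisimple, but they carry weights in the overlapping intervals $[-2m,-m]$. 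So a Frobenius eigenvalue of weight $-2$ can occur both in $\mathscr{I}/\mathscr{I}^2$ (a boundary class) and in $\mathscr{I}^2/\mathscr{I}^3$ (a product of two weight $-1$ eigenvalues), and $\on{gr}^W_{-2}A$ therefore stitches together pieces from different $\mathscr{I}$-degrees. Having a filtration with $F$-semisimple graded pieces all of the same weight does \emph{not} force $F$-semisimplicity of the total; this is precisely where a Jordan block could hide. To realize $\on{gr}^W_w A$ as an $F$-subquotient of $\bigoplus V^{\otimes m}$ you would need an $F$-equivariant section $V\to\mathscr{I}/\mathscr{I}^n$ producing an $F$-equivariant surjection $T^*(V)\to A/\mathscr{I}^n$ — and constructing that section is the actual content of Theorem~\ref{semisimple-pi1}. (The paper's remark after the theorem notes exactly this: the statement is trivial when $H^1$ is pure, and the mixed case — where your Step~2 claim is nonformal — is the whole point.)

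The paper fills this gap with a geometric construction rather than an appeal to abstract weight theory. Writing $V=W_1\oplus W_2$ for the $F$-eigenspace decomposition into weight $-1$ and $-2$ parts, the lift of $W_1$ into $\mathscr{I}/\mathscr{I}^n$ is indeed forced by purity, exactly as you would argue. For $W_2$, the paper observes that $W_2$ is spanned by classes dual to $\bigoplus_i H^0(D_i,\mathbb{Q}_\ell)(-1)$, chooses a rational tangential basepoint $\tilde y_i$ at each boundary component $D_i$, takes the inertia map $\iota_{\tilde y_i}\colon \mathbb{Z}_\ell(1)\to\pi_1^{\text{\'et},\ell}(X_{\bar k},\overline{\tilde y_i})$, forms $\log \iota_{\tilde y_i}(\alpha)$ inside the completed group ring, and transports it to the basepoint $\bar x$ by conjugating with the $\sigma$-canonical path $p_i$ of Proposition-Construction~\ref{canonical-paths-prop-constr}. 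Each step is $F$-equivariant, giving the missing lift of $W_2$; the monomials in a resulting basis of eigenvectors of the lift of $V$ then yield a dense span of eigenvectors of $A/\mathscr{I}^n$. This is the ingredient your argument treats as given, and it genuinely uses the simple normal crossings compactification rather than just the formal existence of a weight filtration.
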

See Theorem \ref{semisimple-pi1} for a more general statement.

\emph{Step 2 (Sections \ref{integral-l-adic-periods-section} and \ref{convergent-section}).}  For each real number $r>0$, we construct certain Galois-stable subalgebras $$\mathbb{Q}_\ell[[\pi_1^{\text{\'et}, \ell}(X_{\bar k}, \bar x)]]^{\ell^{-r}}\subset \mathbb{Q}_\ell[[\pi_1^{\text{\'et}, \ell}(X_{\bar k}, \bar x)]]$$ called ``convergent group rings," which have the following property:  If $$\rho:\pi_1^{\text{\'et}, \ell}(X_{\bar k}, \bar x)\to GL_n(\mathbb{Z}_\ell)$$ is trivial mod $\ell^n$ with $r<n$, then there is a natural commutative diagram of continuous ring maps
$$\xymatrix{
\mathbb{Z}_\ell[[\pi_1^{\text{\'et}, \ell}(X_{\bar k}, \bar x)]]\ar[r]^-{\rho} \ar[d] & \mathfrak{gl}_n(\mathbb{Z}_\ell)\ar[d] \\
\mathbb{Q}_\ell[[\pi_1^{\text{\'et}, \ell}(X_{\bar k}, \bar x)]]^{\ell^{-r}} \ar[r] & \mathfrak{gl}_n(\mathbb{Q}_\ell).
}$$
where $\mathbb{Z}_\ell[[\pi_1^{\text{\'et}, \ell}(X_{\bar k}, \bar x)]]$ is the group ring of the pro-finite group $\pi_1^{\text{\'et}, \ell}(X_{\bar k}, \bar x)$ and $\mathfrak{gl}_n(R)$ is the ring of $n\times n$ matrices with entries in $R$.  Our second main result on the structure of the Galois action on $\pi_1^{\text{\'et}, \ell}(X_{\bar k}, \bar x)$ is:
\begin{theorem}\label{diagonalizing-convergent-rings-intro}
Let $\sigma$ be as in Theorem \ref{pi1-quasi-scalar-intro}.  Then there exists $r>0$ such that $\sigma$ acts (topologically) diagonalizably on $$\mathbb{Q}_\ell[[\pi_1^{\text{\'et}, \ell}(X_{\bar k}, \bar x)]]^{\leq \ell^{-r}},$$ i.e.~the span of the $\sigma$-eigenvectors is dense.
\end{theorem}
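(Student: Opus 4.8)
The plan is to reduce the claim to the construction of a $\sigma$-equivariant splitting of the weight filtration on the convergent group ring. Write $\Lambda:=\mathbb{Q}_\ell[[\pi_1^{\text{\'et},\ell}(X_{\bar k},\bar x)]]$ for the $\mathbb{Q}_\ell$-Mal'cev Hopf algebra, $W$ for its weight filtration, and $\Lambda^{\leq\ell^{-r}}\subseteq\Lambda$ for the convergent group ring of Section \ref{convergent-section}. By Theorem \ref{pi1-quasi-scalar-intro}, $\sigma$ preserves $W$ and induces on $\on{gr}^{-i}_W\Lambda$ the scalar $\alpha^i$; since $\alpha\in\mathbb{Z}_\ell^\times$ is close to but different from $1$ it is not a root of unity, so the $\alpha^i$ are pairwise distinct. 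It therefore suffices to produce, for a suitable $r>0$, a $\sigma$-stable topological grading $\Lambda^{\leq\ell^{-r}}=\bigoplus_i V_i$ (completed direct sum) for which $W_{-m}\Lambda^{\leq\ell^{-r}}$ is the completed sum of the $V_i$ with $i\geq m$: for then the induced isomorphism $V_i\cong\on{gr}^{-i}_W\Lambda$ shows that $\sigma$ acts on $V_i$ by the scalar $\alpha^i$, so every vector of $V_i$ is a $\sigma$-eigenvector, and the span of the $V_i$ is dense.

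To build the grading I would work on the pro-nilpotent Lie algebra $\mathfrak{u}$ of primitives, whose completed enveloping algebra is $\Lambda$: a $\sigma$-stable grading of $\mathfrak{u}$ refining $W$ generates one of $\Lambda$, so it is enough to split $W$ on $\mathfrak{u}^{\leq\ell^{-r}}$ equivariantly for $\sigma$. On each finite-dimensional quotient of $\mathfrak{u}$ the operator $\sigma$ preserves a finite filtration with semisimple graded action by the distinct scalars $\alpha^i$, hence is diagonalizable, and the projector onto its $\alpha^i$-eigenspace is the finite product $P_i^{(N)}=\prod_{0\leq j\leq N,\ j\neq i}\frac{\sigma-\alpha^j}{\alpha^i-\alpha^j}$ (equivalently, an $\ell$-adic contour integral of the resolvent $(z-\sigma)^{-1}$ around $\alpha^i$). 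One then checks that, for $r$ large enough, the $P_i^{(N)}$ converge as $N\to\infty$ to a bounded operator $P_i$ on $\mathfrak{u}^{\leq\ell^{-r}}$; the $P_i$ are orthogonal idempotents summing to the identity, $V_i:=\on{im}P_i$ gives the desired grading, and the remaining points --- that the grading is compatible with the bracket and passes to enveloping algebras, and the resulting density statement --- are formal.

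The one genuinely quantitative point, and the main obstacle, is the convergence of the $P_i^{(N)}$ in the convergent group ring, which is exactly what forces the size of $r$. The operator $\sigma-\alpha^j$ acts on $\on{gr}^{-w}_W$ by the scalar $\alpha^w-\alpha^j$, and lifting the exponent gives $v_\ell(\alpha^w-\alpha^j)=v_\ell(\alpha-1)+v_\ell(w-j)$ for $\ell$ odd, with the usual correction at $\ell=2$; consequently the denominators in $P_i^{(N)}$ have, in weight about $-m$, valuation as negative as $v_\ell(\alpha-1)\cdot m+v_\ell(m!)$, whereas the convergent group ring only tolerates weight $-m$ contributions of valuation down to roughly $-rm$. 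Since $v_\ell(m!)\sim m/(\ell-1)$, the two balance exactly when $r$ exceeds a constant of the shape $v_\ell(\alpha-1)+\frac{1}{\ell-1}$ (plus an $\epsilon$ when $\ell=2$) --- the same numerology that appears in Theorem \ref{main-theorem-P1} --- and since $v_\ell(\alpha-1)$ is a fixed finite number once $\sigma$ is chosen via Theorem \ref{pi1-quasi-scalar-intro}, some such $r>0$ exists. The remaining inputs are bookkeeping: that the weight filtration is comparable to the filtration defining the convergent group ring and that $\sigma$ acts there boundedly (Section \ref{convergent-section}), the relevant structure of $\sigma$ itself --- in particular that its graded action is by the scalars $\alpha^i$ --- being the content of Theorems \ref{pi1-quasi-scalar-intro} and \ref{semisimple-pi1}. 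I expect making this estimate precise, and checking that it suffices to split $W$ compatibly with the algebra structure, to be the crux.
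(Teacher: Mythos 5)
Your proposal is correct in its essential quantitative content and would yield the theorem, but it takes a genuinely different route from the paper, and some of the work you flag as ``formal'' is where the paper does its actual heavy lifting.

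The paper does not build the eigenspace projectors $P_i$ at all, nor does it pass through the Lie algebra of primitives. Instead (Theorems \ref{weight-2-basic-lifts}, \ref{diagonalizing-convergent-rings-pure} and \ref{diagonalizing-convergent-rings-mixed}), it iteratively constructs \emph{eigenvector lifts} of a basis of $\mathscr{I}/\mathscr{I}^2$ (resp.\ of the graded pieces of $W$) inside $\mathscr{I}(x)/\mathscr{I}^n(x)$, bounding at each step of the induction the denominator introduced by the annihilator of
$$\on{Ext}^1_{\overline{\mathbb{Z}_\ell}[\sigma]}\bigl(\overline{\mathbb{Z}_\ell}(q^i),\ \overline{\mathbb{Z}_\ell}(q^{n-1})\bigr),$$
which is controlled by $v_\ell(q^{n-1-i}-1)$. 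Summing these one-step losses gives exactly the telescoping sum $\sum_{k} v_\ell(q^k-1)$ that you obtain from the Lagrange product $\prod_{j\neq i}(\alpha^i-\alpha^j)$, and the estimate $v_\ell(q^k-1)=v_\ell(q-1)+v_\ell(k)$ (Lemma \ref{q-powers-bound}) together with $\sum_{k\leq m} v_\ell(k)\sim m/(\ell-1)$ (Corollary \ref{q-power-sum-bound}) yields the same threshold $r > v_\ell(q-1)+\tfrac{1}{\ell-1}$ (plus the $\epsilon$ correction at $\ell=2$, and an extra factor of $2$ in the genuinely mixed case coming from $W^{-2n-1}\subset\mathscr{I}^n$ in Proposition \ref{weight-prop} --- a factor your sketch elides). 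So the numerology is the same.

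Where the approaches diverge is in what is actually constructed. Once the paper has eigenvector lifts $v_1,\dots,v_N$ of $\mathscr{I}/\mathscr{I}^2$ inside $\mathscr{I}(x)\cap\overline{\mathbb{Q}_\ell}[[\pi_1^{\text{\'et},\ell}(X_{\bar k},\bar x)]]^{\leq\ell^{-r}}$, density of eigenvectors is immediate: the monomials in the $v_i$ (together with $1$) are eigenvectors and their span is dense by comparison with the associated graded of the $\mathscr{I}$-adic filtration. No projectors, no grading, no passage to the Lie algebra, no multiplicativity check. Your plan is more ambitious: you want a full $\sigma$-stable topological grading of the convergent group ring given by bounded projectors $P_i$. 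That is a stronger assertion than the theorem requires, and proving that the $P_i^{(N)}$ converge as \emph{operators} on $\overline{\mathbb{Q}_\ell}[[\pi_1^{\text{\'et},\ell}]]^{\leq\ell^{-r}}$ (uniformly enough to yield idempotents with the right images) involves an estimate on how $P_i^{(N)}$ acts on the tail of weight $>N$, not just on the denominator $\prod_{j\neq i}(\alpha^i-\alpha^j)$. That can be done --- the tail coefficients turn out to be $\ell$-adic integers, essentially generalized binomial coefficients --- but it is an additional argument, and it is not needed if one is content, as the paper is, with producing a dense set of eigenvectors directly from the eigenvector lifts. In short: your valuation estimates are the right ones and do appear in the paper, but the paper's mechanism (iterated $\on{Ext}$ bounds plus monomials) is more economical than the projector/grading mechanism you propose.
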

Loosely speaking, this means that the denominators of the $\sigma$-eigenvectors in $$\mathbb{Q}_\ell[[\pi_1^{\text{\'et}, \ell}(X_{\bar k}, \bar x)]]/\mathscr{I}^n$$ do not grow to quickly in $n$, where $\mathscr{I}$ is the augmentation ideal.

\emph{Step 3 (Section \ref{applications-section}).} Suppose $$\rho: \pi_1^{\text{\'et}, \ell}(X_{\bar k}, \bar x) \to GL(V)$$ is an arithmetic representation on a finite free $\mathbb{Z}_\ell$-module $V$. Then by a socle argument (Lemma \ref{subquotient-lemma}), we may assume that $\rho$ extends to a representation of $\pi_1^{\text{\'et}, \ell}(X_{k'}, \bar x)$ for some $k'/k$ finite.  In particular, for $m$ such that $\sigma^m\in G_{k'}\subset G_k$, $\sigma^m$ acts on $\mathfrak{gl}(V)$ so that the morphism $$\rho: \mathbb{Z}_\ell[[\pi_1^{\text{\'et}, \ell}(X_{\bar k}, \bar x)]]\to \mathfrak{gl}(V)$$ is $\sigma^m$-equivariant.

Let $r$ be as in Theorem \ref{diagonalizing-convergent-rings-intro}, and suppose $\rho$ is trivial mod $\ell^n$ for $n>r$.  Thus by Step $2$, we obtain a $\sigma^m$-equivariant map $$\tilde{\rho}: \mathbb{Q}_\ell[[\pi_1^{\text{\'et}, \ell}(X_{\bar k}, \bar x)]]^{\ell^{-r}}\to \mathfrak{gl}(V\otimes \mathbb{Q}_\ell).$$  But $\mathfrak{gl}(V\otimes \mathbb{Q}_\ell)$ is a finite-dimensional vector space; thus $\sigma^m$ has only finitely many eigenvalues.  But by Theorem \ref{diagonalizing-convergent-rings-intro}, and our choice of $\sigma$ in Theorem \ref{pi1-quasi-scalar-intro}, this implies that for $N\gg 0$, $$\tilde{\rho}(W^{-N}\mathbb{Q}_\ell[[\pi_1^{\text{\'et}, \ell}(X_{\bar k}, \bar x)]]^{\ell^{-r}})=0,$$ where again $W^{\bullet}$ denotes the weight filtration.  It is not hard to see that the $W$-adic topology and the $\mathscr{I}$-adic topology on $\mathbb{Q}_\ell[[\pi_1^{\text{\'et}, \ell}(X_{\bar k}, \bar x)]]$ agree, where $\mathscr{I}$ is the augmentation ideal. Hence for some $N'\gg 0$, $$\tilde{\rho}(\mathscr{I}^{N'}\cap \mathbb{Q}_\ell[[\pi_1^{\text{\'et}, \ell}(X_{\bar k}, \bar x)]]^{\leq \ell^{-r}})=0,$$ which implies $\rho$ is unipotent.

\subsection{Auxiliary Results}
We give some other applications of these techniques to certain very special geometric situations.

In particular, the Lang-N\'eron theorem~\cite[Theorem 2.1]{lang-neron} shows that if the representation $\rho$ arose from a family of Abelian varieties  over $X$, then the trace of the generic fiber of this family must be non-trivial.  This is our contribution to the geometric torsion conjecture.  To be precise:
\begin{corollary}\label{AV-corollary}
Let $X, K, \ell$ be as in Corollary \ref{p1-cor-intro}. Let $\eta$ be the generic point of $X$.  Then if $A$ is a non-zero Abelian scheme over $X$ with full level $\ell$ structure, we have $$\on{Tr}_{\mathbb{C}(X)/\mathbb{C}}A_{\eta}\not=0.$$
\end{corollary}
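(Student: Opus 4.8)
The plan is to extract from $A\to X$ its weight-one monodromy representation, apply Corollary~\ref{p1-cor-intro} to see that it is unipotent, upgrade ``unipotent'' to ``trivial'' using semisimplicity of the monodromy of a polarizable variation of Hodge structure, and then deduce from triviality of the monodromy that $A$ is isotrivial---which forces $\on{Tr}_{\mathbb{C}(X)/\mathbb{C}}A_\eta\neq 0$.

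First I would set up the monodromy representation. Fix a base point $x\in X(\mathbb{C})$ and write $f\colon A\to X$ for the structure map, $g=\dim_X A\geq 1$. The local system $R^1f^{\text{an}}_*\underline{\mathbb{Z}}$ on $X(\mathbb{C})^{\text{an}}$ yields $$\rho\colon \pi_1(X(\mathbb{C})^{\text{an}},x)\longrightarrow GL(H^1(A_x(\mathbb{C})^{\text{an}},\mathbb{Z}))=GL_{2g}(\mathbb{Z}),$$ which arises from geometry by construction; its $\ell$-adic incarnation $\rho\otimes\mathbb{Z}_\ell\colon\pi_1^{\text{\'et}}(X,x)\to GL_{2g}(\mathbb{Z}_\ell)$ comes from $R^1f_*\mathbb{Z}_\ell$. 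A full level $\ell$ structure on $A$ trivializes the local system $A[\ell]$, hence trivializes $R^1f_*\mathbb{Z}/\ell$; that is, $\rho$ is trivial mod $\ell$. Since $\ell$ is odd and the cyclotomic character of $\on{Gal}(\bar K/K)$ is surjective (the standing hypotheses of Corollary~\ref{p1-cor-intro}), that corollary applies to $\rho\otimes\mathbb{Z}_\ell$ and shows $\rho$ is unipotent.

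Next I would bring in Hodge theory. Because $X$ is a curve, $A\to X$ is projective, so it carries a relative polarization and $R^1f_*\mathbb{Q}$ underlies a polarizable $\mathbb{Q}$-variation of Hodge structure on the smooth quasi-projective variety $X$. By the semisimplicity of the monodromy of a polarizable variation of Hodge structure (Deligne), $\rho\otimes\mathbb{Q}$ is semisimple; being also unipotent, it is trivial, hence $\rho$ itself is trivial. By the theorem of the fixed part, a polarizable variation of Hodge structure on $X$ with trivial monodromy is constant, so $R^1f_*\mathbb{Q}$ is constant. A constant weight-one polarizable $\mathbb{Q}$-variation of Hodge structure corresponds, under the relative form of Riemann's theorem (equivalently, via the constancy of the period map to the relevant Siegel modular variety), to an abelian scheme over $X$ isogenous to a constant one; thus $A_\eta$ is isogenous to $(A_0)_{\mathbb{C}(X)}$ for some abelian variety $A_0/\mathbb{C}$, with $A_0\neq 0$ since $A\neq 0$. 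As the $\mathbb{C}(X)/\mathbb{C}$-trace is an isogeny invariant and the trace of a base-changed constant abelian variety is that variety itself (\cite[Definition~6.1]{lang-neron}, \cite[Theorem~2.1]{lang-neron}), we conclude $\on{Tr}_{\mathbb{C}(X)/\mathbb{C}}A_\eta\neq 0$.

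The substance of the argument is entirely in Corollary~\ref{p1-cor-intro}; everything else is standard. The points needing care are purely bookkeeping---that the representation fed to Corollary~\ref{p1-cor-intro} genuinely arises from geometry and is trivial mod $\ell$, which is immediate from the level structure, and that the Hodge-theoretic inputs (semisimplicity of the monodromy, the theorem of the fixed part) are applied over the non-proper base $X$, where they hold for polarizable variations. If there is any mild obstacle it is the final step, extracting a nonzero constant part, hence nonzero trace, from a constant variation of Hodge structure; but since isotriviality of $A$ already suffices, this reduces to the relative Riemann correspondence and the isogeny-invariance of the trace.
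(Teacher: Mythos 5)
Your proof is correct, but it follows a genuinely different path from the paper. The paper's argument is purely arithmetic: unipotence of $\rho$ gives a rank-$\geq 1$ sub-$\mathbb{Z}_\ell$-module of $T_\ell(A_x)$ fixed by $\pi_1(X)$, hence infinitely many $\mathbb{C}(X)$-rational torsion points on $A_\eta$; since the Lang-N\'eron theorem says $A_\eta(\mathbb{C}(X))/\on{Tr}_{\mathbb{C}(X)/\mathbb{C}}(A_\eta)(\mathbb{C})$ is finitely generated, the trace cannot vanish. You instead upgrade unipotence to triviality via Deligne's semisimplicity theorem, invoke the theorem of the fixed part to see that $R^1f_*\mathbb{Q}$ is a constant variation, and conclude that $A_\eta$ is isogenous to a constant abelian variety. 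Both deductions from ``the monodromy is unipotent'' are valid. Your route yields the stronger conclusion of isotriviality directly (the paper obtains a comparable strengthening in Theorem \ref{no-abelian-schemes}, but only after an induction on the trace), at the cost of transcendental inputs---it is confined to characteristic zero, whereas the paper's Lang-N\'eron argument applies unchanged to the positive-characteristic setting of Theorem \ref{p1-main-theorem}. Two small points to keep honest: (i) once you know $\rho\otimes\mathbb{Q}$ is trivial, the integral representation $\rho$ itself is already trivial, so $R^1f_*\mathbb{Z}$---not just $R^1f_*\mathbb{Q}$---is a trivial local system, and with level $\ell\geq 3$ structure the moduli problem is rigid, so in fact $A$ is \emph{constant}, not merely isogenous to a constant scheme; (ii) the semisimplicity of the monodromy and the fixed-part theorem do hold for polarizable variations over a smooth quasi-projective base, which is the form you need, so that worry is resolved.
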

In other words, any family of Abelian schemes over $X$ with trivial $\ell$-torsion for $\ell$ large must have non-trivial trace.  An analysis of Theorem \ref{main-theorem-intro} gives an analogous result for any $X$.

We also give several results on varieties with nilpotent fundamental group.  See Section \ref{nilpotent-pi1} for details.
\subsection{Comparison to existing results} 
We now compare our results to results already in the literature.  To our knowledge, most prior results along these lines employ complex-analytic techniques, which began with Nadel \cite{nadel} and were built on by Noguchi \cite{noguchi} and Hwang-To \cite{hwang-to}.  There has been a recent flurry of interest in this subject, notably two recent beautiful papers by Bakker-Tsimerman \cite{bak2, bak1}, which alerted the author to this subject.

These beautiful results all use the hyperbolicity of $\mathscr{A}_{g,n}$ (the moduli space of principally polarized Abelian varieties with full level $n$ structure) to obstruct maps from curves of low gonality (of course, this description completely elides the difficult and intricate arguments in those papers).  The paper \cite{hwang-to} also proves similar results for maps into other locally symmetric spaces.

The papers \cite{nadel, noguchi, hwang-to} together show that  there exists an integer $N=N(g, d)$ such that if $A$ is a $g$-dimensional Abelian variety over a curve $X/\mathbb{C}$ with gonality $d$, then $A$ cannot have full level $N$ torsion unless it is a constant Abelian scheme.  The main deficiency of our result in comparison to these is that we do not give any uniformity in the gonality of $X$.  For example, if $X=\mathbb{P}^1\setminus \{x_1, \cdots, x_n\}$, our results depend on the cross-ratios of the $x_i$.

On the other hand, our result is uniform in $g$, whereas the best existing results (to our knowledge) are at least quadratic in $g$.  Thus for any given $X$ of genus zero, say, our results improve on those in the literature for $g$ large.  To our knowledge, this sort of uniformity in $g$ was not previously expected, and is quite interesting.  Moreover in many cases our result is sharp.  See Section \ref{sharp-section} for further remarks along these lines.

Our results also hold for arbitrary representations which arise from geometry, rather than just those of weight $1$ (e.g.~those that arise from Abelian varieties).  

Finally, to our knowledge, Theorem \ref{p1-main-theorem} is the first result (apart from \cite{poonen}, which bounds torsion of elliptic curves over function fields) along these lines which works in positive characteristic.  

More arithmetic work in this subject has been done by Abramovich-Madapusi Pera-Varilly-Alvarado \cite{abr1}, and Abramovich-Varilly-Alvarado \cite{abr2}, which relate the geometric versions of the torsion conjecture to the arithmetic torsion conjecture, assuming various standard conjectures relating hyperbolicity and rational points. Cadoret has also proven beautiful related arithmetic results (see e.g.~\cite{cadoret-tamagawa}).

\subsection{Galois action on the fundamental group}

The technical workhorse of this paper is a study of the action of the Galois group of a finitely generated field $k$ on the geometric fundamental group of a variety $X/k$.  We now compare our results on this subject to those in the literature.  In his inspiring paper \cite{deligne}, Deligne studies the fundamental group of $$\mathbb{P}^1\setminus \{0,1,\infty\}$$ and in particular, analyzes the action of $\on{Gal}(\overline{\mathbb{Q}}/\mathbb{Q})$ on a certain metabelian quotient of $\pi_1^{\text{\'et}}(\mathbb{P}^1\setminus \{0,1,\infty\})$.  Various other authors---notably Goncharov \cite{goncharov}, Ihara \cite{ihara2, ihara1}, Anderson-Ihara \cite{anderson-ihara1, anderson-ihara2}, Wojtkowiak \cite{wojtkowiakI, wojtkowiakII, wojtkowiakIII, wojtkowiakIV, wojtkowiakV}---have studied the action of $G_\mathbb{Q}$ on the \emph{pro-unipotent} completion of the fundamental group of $\mathbb{P}^1\setminus D$, for some effective divisor $D\subset \mathbb{P}^1$, and various quotients thereof.  In light of the $p$-adic comparison theorem proven by Andreatta, Kim, and Iovita \cite{kim-andreatta}, the work of Furusho \cite{furosho1, furosho2} also gives some information about the action of $G_{\mathbb{Q}_p}$  on   $\pi_1^{\text{\'et}}(\mathbb{P}^1\setminus \{0,1,\infty\})$.   

Deligne \cite[Section 19]{deligne} studies the action of $\on{Gal}(\overline{\mathbb{Q}}/\mathbb{Q})$ on a certain $\mathbb{Z}_\ell$-Lie algebra associated to a metabelian quotient of $\pi_1^{\text{\'et}}(\mathbb{P}^1\setminus \{0,1,\infty\})$.  In particular, he shows that certain ``polylogarithmic" extension classes are torsion, with order given by the valuations of special values of the Riemann zeta function at negative integers.  Our results are much blunter than these.  On the other hand, we do give results for the entire fundamental group, rather than a (finite rank) metabelian quotient.  Thus we are able to study \emph{integral, non-unipotent} aspects of the representation theory of fundamental groups.

Certain aspects of this work are also intimately related to the so-called $\ell$-adic iterated integrals of Wojtkowiak \cite{wojtkowiakI, wojtkowiakII, wojtkowiakIII, wojtkowiakIV, wojtkowiakV}.  In particular, given two rational points or rational tangential basepoints $x_i, x_j$ (see Section \ref{pi1-preliminaries}) on a normal variety $X$ over a finitely generated field $k$, and a special element $\sigma\in G_k$, we define the  \emph{canonical path} from $x_i$ to $x_j$ (Proposition-Construction \ref{canonical-paths-prop-constr}) to be the unique element $$p_{i,j}^{\sigma}\in \mathbb{Q}_\ell[[\pi_1^{\text{\'et}, \ell}(X_{\bar k}; \bar x_i, \bar x_j)]]$$ which is fixed by $\sigma$ and maps to $1$ under the augmentation map.  Here we define $\mathbb{Q}_\ell[[\pi_1^{\text{\'et}, \ell}(X_{\bar k}; \bar x_i, \bar x_j)]]$ to be a certain completion of the free pro-finite $\mathbb{Q}_\ell$-vector space on the \'etale torsor of paths from $x_i$ to $x_j$ (see Definition \ref{pro-l-groupoid-defn} and the surrounding remarks).  In Section \ref{integral-l-adic-periods-section}, we compute certain ``integral $\ell$-adic periods" associated to these canonical paths; these are related to Wojtkowiak's $\ell$-adic multiple polylogarithms.

In particular, if $p_{i,j}^n$ is the image of $p_{i,j}$ in $$\mathbb{Q}_\ell[[\pi_1^{\text{\'et}, \ell}(X_{\bar k}; \bar x_i, \bar x_j)]]/\mathscr{I}^n$$ (here $\{\mathscr{I}^n\}$ is the $\mathscr{I}$-adic filtration induced by viewing $\mathbb{Q}_\ell[[\pi_1^{\text{\'et}, \ell}(X_{\bar k}; \bar x_i, \bar x_j)]]$ as a left $\mathbb{Q}_\ell[[\pi_1^{\text{\'et}, \ell}(X_{\bar k}, \bar x_i)]]$-module), then there exists some $b^0_n\in \mathbb{Z}_{>0}$ such that $$\ell^{b^0_n}\cdot p_{i,j}^n\in \mathbb{Z}_\ell[[\pi_1^{\text{\'et}, \ell}(X_{\bar k}; \bar x_i, \bar x_j)]]/\mathscr{I}^n.$$  We call these positive integers $b^0_n$ the ``integral $\ell$-adic periods" associated to canonical paths; they are $\ell$-adic analogues of iterated integrals or multiple polylogarithms.  They are also $\ell$-adic analogues of Furusho's $p$-adic multiple zeta values \cite{furosho1, furosho2}.  Bounding these $b_n^0$ (as we do in Section \ref{integral-l-adic-periods-section}) is key to our results in genus zero. 

One may also view this work as an application of anabelian geometry to Diophantine questions (in particular, about function-field valued points of $\mathscr{A}_{g,n}$ and other period domains).  There is a tradition of such applications---see e.g. Kim's beautiful paper \cite{kim} and work of Wickelgren \cite{wickelgren, wickelgren2}, for example.  In particular, we believe our ``integral $\ell$-adic periods" to be related to Wickelgren's work on Massey products.
\subsection{Denominators of associators}
In the case that $$X=\mathbb{P}^1\setminus\{0,1,\infty\}$$ and $x_i, x_j$ are tangential basepoints located at $0$ and $1$, the canonical paths $p_{i,j}$ are also $\ell$-adic analogues of Drinfel'd's KZ associator \cite{drinfeld1, drinfeld2} and Furusho's $p$-adic Drinfel'd associator \cite{furosho1, furosho2}.  Our bounds on the integers $b^0_n$ are closely related to work of Alekseev-Podkopaeva-\v{S}evera \cite{rational-associators} bounding the denominators of coefficients of rational Drinfel'd associators.  It would be interesting to better understand the relationship between the work in this paper and e.g.~Alekseev-Torossian's work on the Kashiwara-Vergne conjecture \cite{alekseev-torossian}.
\subsection{Integral Hodge theory---beyond the pro-unipotent fundamental group}
Aside from our applications of anabelian techniques to questions about monodromy, we believe the main contributions of this paper to be 
\begin{enumerate}
\item the introduction of the rings $$\mathbb{Q}_\ell[[\pi_1^{\text{\'et}, \ell}(X_{\bar k}, \bar x)]]^{\leq \ell^{-r}},$$ and
\item the study of the action of $G_k$, for $k$ a finitely generated field, on these rings.
\end{enumerate}
Let $k$ be a field and $X$ a variety over $k$.  Almost all previous work (with the exception of \cite[Section 19]{deligne}) studying the Galois action on $$G(X):=\pi_1^{\text{\'et}, \ell}(X_{\bar k}, \bar x)$$ has focused on the \emph{pro-unipotent} completion of $G(X)$; this completion is a brutal operation which remembers only the \emph{unipotent} representation theory of $G(X)$, by definition.  The rings $\mathbb{Q}_\ell[[\pi_1^{\text{\'et}, \ell}(X_{\bar k}, \bar x)]]^{\leq \ell^{-r}},$ remember significantly more---indeed, as one ranges over all $r>0$, they remember all continuous representations of $G(X)$, and as evidenced by our applications, they allow one to study the \emph{integral structure} of such representations.  In future work, we hope to elucidate connections to integral structures on the Mumford-Tate group.  We also hope to pursue integral $p$-adic Hodge theory of the fundamental group.
\subsection{Notation}
Throughout, a variety $X/k$ will be a finite-type, separated, integral $k$-scheme.  When we write $H^i(X_{\bar k}, \mathbb{Z}_\ell)$ we will always mean the $\ell$-adic cohomology, i.e.~$$\varprojlim_r H^i(X_{\bar k, \text{\'et}}, \mathbb{Z}/\ell^r\mathbb{Z}),$$ and $$H^i(X_{\bar k}, \mathbb{Q}_\ell):=H^i(X_{\bar k}, \mathbb{Z}_\ell)\otimes \mathbb{Q}_\ell.$$
\subsection{Guide to reading the paper}
In Section \ref{pi1-preliminaries}, we will summarize some preliminaries on the \'etale fundamental group, inertia, and (rational) tangential basepoints.  Section \ref{hopf-preliminaries-section} will summarize standard facts about $\ell$-adic group algebras for pro-$\ell$ groups, and slightly less standard facts about so-called Hopf groupoids associated to pro-$\ell$ groupoids.  Section \ref{Ql-galois-section} will describe the $G_k$-action on the $\mathbb{Q}_\ell$-pro-unipotent fundamental group of a variety. Section \ref{integral-l-adic-periods-section} refines this analysis to an analysis of the $G_k$-action on the $\mathbb{Z}_\ell$-group ring of the pro-$\ell$ quotient of the fundamental group.   Section \ref{convergent-section} defines convergent group rings and Hopf groupoids and analyzes their properties.  Section \ref{applications-section} gives applications to restricting possible geometric monodromy representations, including those mentioned in the introduction.  Finally, Section \ref{hopf-groupoids} is an appendix developing some useful abstract theory of Hopf groupoids.

Expert readers may skip Sections \ref{pi1-preliminaries} and \ref{hopf-preliminaries-section}, referring back to them only as necessary for details or statements.  The reader only interested in applications may read Sections \ref{Ql-galois-section}, \ref{integral-l-adic-periods-section}, \ref{convergent-section}, and \ref{applications-section} for an essentially self-contained presentation, referring only to other sections as necessary.
\subsection{Acknowledgments} 
This work owes a great deal to discussions with Johan de Jong, Ravi Vakil, David Hansen, Daniel Halpern-Leistner, and many others.  Its genesis was a beautiful question by Lisa S.~on Mathoverflow \cite{lisa}, a talk by Benjamin Bakker on his work with Jacob Tsimerman \cite{bak2, bak1}, and some discussions with Akshay Venkatesh and Brian Conrad in the author's final year of graduate school.  I am grateful to George Boxer for suggesting the simple example of a representation that does not come from geometry in Example \ref{ridic-example}. Finally, Will Sawin suggested the use of Theorem \ref{H1-quasi-scalar} after reading an earlier version of this paper, and pointed the author to \cite{bogomolov}, which dramatically improved the main results of the paper --- I am deeply grateful to him.

\section{Preliminaries on fundamental groups}\label{pi1-preliminaries}

We briefly recall some useful notions about the \'etale fundamental group of a variety.  The standard reference for the fundamental group is \cite{SGA1}; the standard reference on tangential basepoints is \cite{deligne}.

Recall that the formalism of \cite[Expos\'e V]{SGA1} associates a profinite group to any Galois category.  A Galois category is a category $\mathscr{C}$ equipped with a functor $F:\mathscr{C}\to \on{Sets}$ satisfying certain axioms \cite[Expos\'e V.4]{SGA1}; the functor $F$ is called a \emph{fiber functor}.  

The typical application of this formalism is the following:  let $S$ be a connected, locally Noetherian scheme, and $\bar s: \on{Spec}(\Omega)\to S$ a geometric point of $S$ (that is, $\Omega$ is an algebraically closed field).  Then the category $\text{F\'et}_S$ of finite \'etale $S$-schemes equipped with the functor $F_{\bar s}: X \mapsto X_{\bar s}$ is a Galois category, and the \'etale fundamental group $\pi_1^{\text{\'et}}(S, \bar s)$ is defined to be the pro-finite group $\on{Aut}(F_{\bar s})$.  Similarly, if $\bar s_1, \bar s_2$ are two geometric points of $S$, one defines the \'etale path torsor $\pi_1^{\text{\'et}}(S; \bar s_1, \bar s_2)$ to be the profinite set $\on{Isom}(F_{\bar s_1}, F_{\bar s_2}).$  

This construction is functorial in $(S, \bar s)$.  If $S$ is a $k$-scheme, $\epsilon: k\to \bar k$ is an embedding of $k$ in an algebraic closure $ \bar k$ of $k$, and $\bar s: \on{Spec}(\bar k)\to S$ is a geometric point, the induced morphism $\pi_1^{\text{\'et}}(S, \bar s)\to \pi_1^{\text{\'et}}(\on{Spec}(k), \epsilon)=\on{Gal}(\bar k/k)$ fits into a fundamental exact sequence \cite[Th\'eor\`eme 6.1]{SGA1}: 

\begin{equation}\label{fundamental-exact-sequence}
1\to \pi_1^{\text{\'et}}(S_{\bar k}, \bar s)\to \pi_1^{\text{\'et}}(S, \bar s)\to \on{Gal}(\bar k/k)\to 1.
\end{equation}

If $s\in S(k)$ is a rational point and $\bar s: \on{Spec}(\bar k)\to S$ is the induced map, functoriality of $\pi_1^\text{\'et}$ gives a splitting of the above sequence, $$\on{Gal}(\bar k/k)\to \pi_1^{\text{\'et}}(S, \bar s).$$

If $S/k$ is not proper but admits a compactification $\overline{S}$ whose boundary is a normal crossings divisor, there is another Galois category with associated fundamental group which will be relevant to us.  Namely, we replace $\text{F\'et}_S$ with $\text{F\'et}_S^{\text{tame}}$, the category of \'etale covers of $S$ tamely ramified along $\overline{S}\setminus S$.  Geometric points $\bar s$ of $S$ again give fiber functors on this category; the associated group $$\pi_1^{\text{tame}}(S, \bar s)=\on{Aut}(F_{\bar s})$$ is called the tame fundamental group \cite{tame-pi1}.  We define the pro-$\ell$ fundamental group of $S$ to be the maximal pro-$\ell$ quotient of the geometric fundamental group $\pi_1^{\text{\'et}}(S_{\bar k}, \bar s)$ and denote it $\pi_1^{\text{\'et}, \ell}(S_{\bar k}, \bar s)$.  Both of these examples come with fundamental exact sequences as in (\ref{fundamental-exact-sequence}) above, constructed by pushing out sequence (\ref{fundamental-exact-sequence}) along the natural map $\pi_1^{\text{\'et}}(S_{\bar k}, \bar s)\to \pi_1^\text{tame}(S_{\bar{k}}, \bar s)$ (resp.~$\pi_1^{\text{\'et}}(S_{\bar k}, \bar s)\to \pi_1^{\text{\'et}, \ell}(S_{\bar{k}}, \bar s)$).  For example, we have
$$\xymatrix{
1\ar[r]&  \pi_1^{\text{\'et}}(S_{\bar k}, \bar s)\ar[r] \ar@{->>}[d]& \pi_1^{\text{\'et}}(S, \bar s)\ar[r] \ar[d]& \on{Gal}(\bar k/k)\ar[r] \ar[d] & 1 \\
1\ar[r]&  \pi_1^{\text{\'et}, \ell}(S_{\bar k}, \bar s)\ar[r]& \pi_1^{\text{\'et}, \ell}(S, \bar s)\ar[r] & \on{Gal}(\bar k/k)\ar[r] & 1.
}$$
Here the left square is a pushout square, and $\pi_1^{\text{\'et}, \ell}(S, \bar s)$ is defined to be the pushout.  Note that it is not in general a pro-$\ell$ group.

\subsection{Tangential Basepoints}

Not all fiber functors $\text{F\'et}_S\to \on{Sets}$ come from $\bar k$-points of $S$, as in the previous paragraph; likewise, not all splittings of the fundamental exact sequence \eqref{fundamental-exact-sequence} above come from rational points. We will find it convenient to consider certain so-called \emph{tangential basepoints}, first constructed by Deligne \cite[15.13-15.27]{deligne}, which are fiber functors whose associated fundamental exact sequences split, as in the case of fiber functors obtained from rational points.

Let $X$ be a scheme, and let $R$ be a discrete valuation ring with fraction field $K$ and residue field $k$; let $\bar K$ be an algebraic closure of $K$.  An $R$-tangential basepoint of $X$ is a morphism $\on{Spec}(K)\to X$; typically we will take $X$ to be a $k$-scheme and $R=k[[t]]$.  In this case we say that the basepoint is a \emph{rational tangential basepoint}.  Choose an embedding $K\hookrightarrow \overline{K}$ and let $\bar x$ be the induced geometric point of $X$.  If we let $\mathscr{C}$ be the category of finite \'etale covers of $X$ (resp. tame covers), then the pair $(\mathscr{C}, F_{\bar x}: U\mapsto U_{\bar{x}})$ is a Galois category.  We define $\pi_1^{\text{\'et}}(X, \bar x),$ (resp.~$\pi_1^{\text{tame}}(X, \bar x)$) as before to be $\on{Aut}(F_{\bar x})$.  Likewise, we define $\pi_1^{\text{\'et}, \ell}(X_{\bar k}, \bar x)$ to be the maximal pro-$\ell$ quotient of $\pi_1^{\text{\'et}}(X, \bar x)$.

If $\bar x_1, \bar x_2$ are two tangential basepoints or geometric points of $X$, one defines the \'etale path torsor $$\pi_1^{\text{\'et}}(X; \bar x_1, \bar x_2):=\on{Isom}(F_{\bar x_1}, F_{\bar x_2}).$$

\subsection{Inertia and Galois Actions}\label{inertia-section}  The utility of tangential basepoints (at least in this paper) comes from the following features:

\begin{enumerate}

\item Let $I\subset \on{Gal}(\bar K/K)$ be the inertia group of $K$.  Then the tangential basepoint $\bar x$ as above induces a natural map $\tau: I\to \pi_1^{\text{\'et}}(X, \bar x)$.

\item If the natural map $\on{Gal}(\bar K/K)\to \on{Gal}(\bar k/k)$  splits (e.g. if $R=k[[t]]$, or if $k$ is finite), and $X$ is a $k$-scheme, then the fundamental exact sequence $$1\to \pi_1^{\text{\'et}}(X_{\bar k}, \bar x)\to \pi_1^{\text{\'et}}(X, \bar x)\to \on{Gal}(\bar k/k)\to 1$$ is split by the composition $$\on{Gal}(\bar k/k)\to \on{Gal}(\bar K/K)\to \pi_1^\text{\'et}(X, \bar x).$$  The action of $\on{Gal}(\bar k/k)$ on (the prime-to-$p$ quotient of) $I$ is well-understood, so this gives us a subgroup of $\pi_1^{\text{\'et}}(X, \bar x)$ on which we know the Galois action, namely the image of $\tau$.

\end{enumerate}

In particular, let $X$ be a $k$-scheme with a normal crossings compactification, and let $$\bar x: \on{Spec}(k((t)))\to X$$ be a tangential basepoint.  Let $L$ be the maximal tame extension of $k((t))$ Let $I^{\text{tame}}\subset \on{Gal}(L/k((t)))$ be the tame inertia group of $k((t))$.  Then there is a diagram of exact sequences

$$\xymatrix{
1\ar[r] & I^{\text{tame}}\ar[r]\ar[d] & \on{Gal}(L/k((t)))\ar[r]\ar[d]& \on{Gal}(\bar k/k)\ar[r] \ar[d]^=& 1\\
1\ar[r] & \pi_1^{\on{tame}}(X_{\bar k}, \bar x)\ar[r] & \pi_1^{\text{tame}}(X, \bar x)\ar[r] & \on{Gal}(\bar k/k)\ar[r] & 1 
}$$

so that the splitting $\on{Gal}(\bar k/k)\to \on{Gal}(L/k((t)))$ induces a splitting $$\on{Gal}(\bar k/k)\to \pi_1^{\text{tame}}(X, \bar x).$$  As a $\on{Gal}(\bar k/k)$-module, $$I^\text{tame}=\prod_\ell \mathbb{Z}_\ell(1),$$ where the product is taken over all $\ell$ prime to the characteristic of $k$.

Composing with the map $\pi_1^{\text{tame}}(X_{\bar k}, \bar x)\to \pi_1^{\text{\'et}, \ell}(X_{\bar k}, \bar x)$, we obtain a diagram 

$$\xymatrix{
1\ar[r] & \mathbb{Z}_\ell(1) \ar[r] \ar[d]^{\iota_x} & \mathbb{Z}_\ell(1)\rtimes \on{Gal}(\bar k/k) \ar[r]\ar[d] & \on{Gal}(\bar k/k) \ar[r] \ar[d] & 1\\
1 \ar[r]& \pi_1^{\text{\'et}, \ell}(X_{\bar k}, \bar x) \ar[r] & \pi_1^{\text{\'et}, \ell}(X_{\bar k}, \bar x) \rtimes \on{Gal}(\bar k/k) \ar[r] & \on{Gal}(\bar k/k) \ar[r] & 1
}$$

of split exact sequences of fundamental groups.  Observe that if the tangential basepoint $\bar x$ extends to a map $$\bar{x}': \on{Spec}(R)\to X,$$ the maps $I^{\text{tame}}\to \pi_1^{\text{tame}}(X_{\bar k}, \bar x)$ and $\iota_x: \mathbb{Z}_\ell(1)\to \pi_1^{\text{\'et}, \ell}(X_{\bar k}, \bar x)$ above are trivial; if no such extension exists, however, they may be non-trivial.  In this latter case, we say that $\bar x$ is a tangential basepoint \emph{at a puncture} of $X$.

\section{Preliminaries on group algebras and Hopf groupoids}\label{hopf-preliminaries-section}

Let $\ell$ be a prime.  The purpose of this section is to fix notation and record some basic facts about the group rings of pro-$\ell$ groups and an analogous notion for groupoids.  We will later apply these results to the pro-$\ell$ \'etale fundamental groupoid of a variety.

\subsection{Group algebras} 

We first collect some results about the completeness of $\ell$-adic group algebras for pro-$\ell$ groups, with respect to the augmentation ideal.

\begin{lemma}\label{augmentation-nilpotent-ell-group}
Let $G$ be a finite $\ell$-group, and $k$ a field of characteristic $\ell$.  Then the augmentation ideal $\mathscr{I}_G$ in $k[G]$ is nilpotent.  
\end{lemma}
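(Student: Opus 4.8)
The plan is to induct on the order of $G$. The base case, $G$ trivial, is immediate: then $k[G]=k$ and $\mathscr{I}_G=0$, which is nilpotent. So assume $|G|>1$ and that the statement is known for all finite $\ell$-groups of smaller order.

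For the inductive step I would use that a nontrivial finite $\ell$-group has nontrivial center, and choose a subgroup $N\leq Z(G)$ with $N\cong\mathbb{Z}/\ell\mathbb{Z}$. The key objects are then the surjection of $k$-algebras $\pi\colon k[G]\to k[G/N]$ induced by $G\to G/N$, and its kernel $J$. Since $N$ is central, $J$ equals $\mathscr{I}_N\cdot k[G]$, where $\mathscr{I}_N\subset k[N]\subset k[G]$ is the augmentation ideal of $k[N]$; moreover $\mathscr{I}_N$ is then a \emph{central} ideal of $k[G]$. Writing $N=\langle t\rangle$, one has $k[N]\cong k[t]/(t^\ell-1)=k[t]/((t-1)^\ell)$ because $\operatorname{char}k=\ell$, so $\mathscr{I}_N=(t-1)$ satisfies $\mathscr{I}_N^\ell=0$. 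Using centrality, $J^m=\mathscr{I}_N^m\,k[G]$ for every $m$, and in particular $J^\ell=\mathscr{I}_N^\ell\,k[G]=0$.

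To conclude, note that the augmentation $k[G]\to k$ factors through $\pi$, so $\mathscr{I}_G=\pi^{-1}(\mathscr{I}_{G/N})$. Applying the inductive hypothesis to the $\ell$-group $G/N$ gives an integer $M$ with $\mathscr{I}_{G/N}^M=0$; then $\pi(\mathscr{I}_G^M)\subseteq\mathscr{I}_{G/N}^M=0$, hence $\mathscr{I}_G^M\subseteq J$, and therefore $\mathscr{I}_G^{M\ell}\subseteq J^\ell=0$. This completes the induction.

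I do not anticipate any serious obstacle, as the statement is elementary; the only points requiring care are the identification $J=\mathscr{I}_N\cdot k[G]$ and the power rule $J^m=\mathscr{I}_N^m\,k[G]$, both of which rely on $N$ being central in $G$ — this is precisely why one must pass to a central subgroup of order $\ell$ rather than an arbitrary one. An alternative, non-inductive route would be to show that every simple $k[G]$-module is trivial (a finite $\ell$-group acting on a nonzero $k$-vector space in characteristic $\ell$ has a nonzero fixed vector), whence $k[G]$ is an Artinian local ring with maximal ideal $\mathscr{I}_G=\operatorname{rad}(k[G])$, and the Jacobson radical of an Artinian ring is nilpotent; but the inductive argument above is more self-contained and is the one I would present.
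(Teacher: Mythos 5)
Your proof is correct, and it takes a route that mirrors the paper's rather than coinciding with it. The paper picks a normal subgroup $H\lhd G$ with $G/H$ nontrivial abelian of exponent $\ell$, observes directly that $\mathscr{I}_{G/H}$ is nilpotent (each $(g-1)^\ell=0$ in the commutative ring $k[G/H]$), and then inducts on the \emph{subgroup} $H$ to kill $\mathscr{I}_H^M$, using $\mathscr{I}_Hk[G]=k[G]\mathscr{I}_H$ to pass from $\mathscr{I}_H^M=0$ to $(\mathscr{I}_Hk[G])^M=0$. You instead pick a central $N\leq Z(G)$ of order $\ell$, observe directly that $\ker(k[G]\to k[G/N])=\mathscr{I}_Nk[G]$ has $\ell$-th power zero (since $\mathscr{I}_N^\ell=0$ in $k[t]/((t-1)^\ell)$ and $\mathscr{I}_N$ is central), and then induct on the \emph{quotient} $G/N$. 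The two arguments are dual: the paper chops off an abelian quotient at the top and inducts downward on subgroups, while you chop off a central subgroup at the bottom and induct upward on quotients. Yours has the minor bonus of yielding the explicit bound $\mathscr{I}_G^{|G|}=0$ with no effort, and it only needs the nontriviality of the center rather than solvability; the paper's version has the minor bonus of not needing centrality (it only uses $H$ normal, which as you note would also suffice in your setup, though centrality makes the power identity $J^m=\mathscr{I}_N^m k[G]$ truly immediate). Your alternative sketch via simple modules and the Jacobson radical is also a standard and correct route.
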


\begin{proof}
Finite $\ell$-groups are solvable, so there exists a normal  subgroup $H\subset G$ with $G/H$ nontrivial and Abelian of exponent $\ell$.  Then for each $g\in G/H$, $(g-1)^\ell=0$.  As the augmentation ideal $\mathscr{I}_{G/H}\subset k[G/H]$ is generated by such elements, and is finitely generated, $\mathscr{I}_{G/H}$ is nilpotent.

Now consider the exact sequence $$0\to \mathscr{I}_Hk[G]\to k[G]\to k[G/H]\to 0$$ (here $\mathscr{I}_H$ is the augmentation ideal in $k[H]$; $\mathscr{I}_Hk[G]$ is a two-sided ideal as $H$ is normal, and indeed $\mathscr{I}_Hk[G]=k[G]\mathscr{I}_H$).  Now for some $N$, $\mathscr{I}_G^N$ maps to zero in $k[G/H]$ by the argument of the previous paragraph; hence $\mathscr{I}_G^N\subset \mathscr{I}_{H}k[G]$.

But $\mathscr{I}_Hk[G]=k[G]\mathscr{I}_H$, so $(\mathscr{I}_Hk[G])^M=\mathscr{I}_H^Mk[G]$.  For sufficiently large $M$, this is zero by induction on $|G|$.
\end{proof}

For $G$ a pro-$\ell$ group, define $$\mathbb{Z}_\ell[[G]]:=\varprojlim \mathbb{Z}_\ell[G/U],$$ where the inverse limit is taken over all open normal subgroups $U\subset G$. There is a natural augmentation map $$\epsilon: \mathbb{Z}_\ell[[G]]\to\mathbb{Z}_\ell,$$ obtained as the inverse limit of the usual augmentation maps $\mathbb{Z}_\ell[G/U]\to \mathbb{Z}_\ell.$

\begin{lemma}\label{IG-complete}
Suppose $G$ is a pro-$\ell$ group.  Let $\mathscr{I}_G\subset \mathbb{Z}_\ell[[G]]$ be the augmentation ideal, $\mathscr{I}_G:=\ker(\epsilon)$.  Then $\mathbb{Z}_\ell[[G]]$ is complete with respect to the $\mathscr{I}_G$-adic topology.
\end{lemma}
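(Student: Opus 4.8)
The plan is to reduce the statement to Lemma~\ref{augmentation-nilpotent-ell-group} by passing to finite quotients and then taking an inverse limit. Write $\mathbb{Z}_\ell[[G]] = \varprojlim_U \mathbb{Z}_\ell[G/U]$, the limit over open normal subgroups $U\subset G$. First I would observe that each $G/U$ is a finite $\ell$-group, so that $\mathbb{Z}_\ell[G/U]$ carries two natural filtrations: the augmentation filtration by powers of $\mathscr{I}_{G/U}$, and the $\ell$-adic filtration. The key finiteness input is that modulo $\ell$, the augmentation ideal of $\mathbb{F}_\ell[G/U]$ is nilpotent by Lemma~\ref{augmentation-nilpotent-ell-group}; combining this with $\ell$-adic completeness of $\mathbb{Z}_\ell[G/U]$ (which holds since it is a finite free $\mathbb{Z}_\ell$-module), one gets that $\mathbb{Z}_\ell[G/U]$ is complete \emph{and separated} with respect to the $\mathscr{I}_{G/U}$-adic topology, and moreover that the $\mathscr{I}_{G/U}$-adic and the ``$(\ell,\mathscr{I}_{G/U})$-adic'' topologies are comparable. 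Concretely, since $\mathscr{I}_{G/U}^{N(U)}\subset \ell\,\mathbb{Z}_\ell[G/U]$ for some $N(U)$ depending on $U$, and $\ell\cdot\mathscr{I}_{G/U} \subset \mathscr{I}_{G/U}^2$ trivially is false but $\ell \in \mathbb{Z}_\ell[G/U]$ is topologically nilpotent in the $\mathscr{I}$-adic sense is \emph{not} automatic — so the precise comparison to extract is: the $\mathscr{I}_{G/U}$-adic topology on $\mathbb{Z}_\ell[G/U]$ is the same as the $\mathfrak{m}$-adic topology for the maximal ideal $\mathfrak{m} = \mathscr{I}_{G/U} + \ell\,\mathbb{Z}_\ell[G/U]$, because $\mathfrak{m}^{N(U)+1}\subset \ell\,\mathbb{Z}_\ell[G/U]\cdot \mathfrak{m} + \mathscr{I}_{G/U}\subset \mathfrak{m}\cdot\mathscr{I}_{G/U} + \ell^{?}\cdots$ — I would sort this out by a clean two-filtration argument, using that $\mathbb{Z}_\ell[G/U]$ is a finitely generated module over the Noetherian local ring $\mathbb{Z}_\ell$ and that $\mathfrak{m}$ is its Jacobson radical, so $\mathfrak{m}$-adic completeness is standard.

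Next I would upgrade this to the inverse limit. For the $\mathscr{I}_G$-adic completeness of $\mathbb{Z}_\ell[[G]]$ itself, the main technical point is that the augmentation ideal $\mathscr{I}_G$ of the inverse limit maps onto $\mathscr{I}_{G/U}$ under each projection, and that $\mathscr{I}_G^n = \varprojlim_U \mathscr{I}_{G/U}^n$ — i.e.\ taking powers commutes with the inverse limit. This requires a Mittag-Leffler / surjectivity argument: the transition maps $\mathbb{Z}_\ell[G/U']\to\mathbb{Z}_\ell[G/U]$ are surjective and carry $\mathscr{I}_{G/U'}^n$ onto $\mathscr{I}_{G/U}^n$, so the system $\{\mathscr{I}_{G/U}^n\}_U$ has surjective transition maps and one gets $\mathbb{Z}_\ell[[G]]/\mathscr{I}_G^n \cong \varprojlim_U \mathbb{Z}_\ell[G/U]/\mathscr{I}_{G/U}^n$ with no $\varprojlim^1$ obstruction. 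Then
$$
\varprojlim_n \mathbb{Z}_\ell[[G]]/\mathscr{I}_G^n \;\cong\; \varprojlim_n\varprojlim_U \mathbb{Z}_\ell[G/U]/\mathscr{I}_{G/U}^n \;\cong\; \varprojlim_U\varprojlim_n \mathbb{Z}_\ell[G/U]/\mathscr{I}_{G/U}^n \;\cong\; \varprojlim_U \mathbb{Z}_\ell[G/U] \;=\; \mathbb{Z}_\ell[[G]],
$$
where the middle step interchanges two inverse limits (always legitimate) and the third uses the finite-level completeness established above. Separatedness, i.e.\ $\bigcap_n \mathscr{I}_G^n = 0$, follows because an element in the intersection projects into $\bigcap_n \mathscr{I}_{G/U}^n = 0$ for every $U$.

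I expect the main obstacle to be the finite-level comparison of topologies, specifically verifying that the $\mathscr{I}_{G/U}$-adic topology on $\mathbb{Z}_\ell[G/U]$ is complete and separated rather than merely that its completion is something reasonable — one must genuinely use that $G/U$ is an $\ell$-group so that $\ell$ lies in the radical of $\mathscr{I}_{G/U}$ after reduction, which is exactly the content of Lemma~\ref{augmentation-nilpotent-ell-group}. Once that is cleanly stated (ideally as its own sub-lemma: ``for $G$ a finite $\ell$-group, $\mathbb{Z}_\ell[G]$ is complete and separated for the $\mathscr{I}_G$-adic topology, and $\mathscr{I}_G$ together with $\ell$ generate the Jacobson radical''), the passage to the limit is formal bookkeeping with inverse limits and Mittag-Leffler. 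A minor point to be careful about: one should check that the augmentation ideal of the limit is the kernel of the limit of the augmentations, which is immediate since $\mathbb{Z}_\ell$ is the limit of the constant system, and that $\mathscr{I}_G$ is \emph{topologically} finitely generated so that the $\mathscr{I}_G$-adic topology is well-behaved — but this too is routine given the finite-level picture.
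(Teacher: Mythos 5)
Your overall strategy---establish $\mathscr{I}$-adic completeness for each finite quotient $\mathbb{Z}_\ell[G/U]$, then pass to the inverse limit and interchange---matches the paper's, and your attention to the Mittag--Leffler point needed to identify $\varprojlim_U \mathbb{Z}_\ell[G/U]/\mathscr{I}_{G/U}^n$ with $\mathbb{Z}_\ell[[G]]/\mathscr{I}_G^n$ is a detail the paper leaves implicit and is worth spelling out.

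However, the finite-level step as you have proposed it does not go through. You want to show that the $\mathscr{I}_{G/U}$-adic topology on $\mathbb{Z}_\ell[G/U]$ coincides with the $\mathfrak{m}$-adic topology for $\mathfrak{m}=\mathscr{I}_{G/U}+(\ell)$, and then cite standard $\mathfrak{m}$-adic completeness. These two topologies are genuinely different: $\ell\in\mathfrak{m}$, but no power $\ell^k\cdot 1$ lies in $\mathscr{I}_{G/U}$, since its augmentation is $\ell^k\neq 0$; hence $\mathfrak{m}^k\not\subset\mathscr{I}_{G/U}$ for any $k$, and the $\mathscr{I}_{G/U}$-adic topology is strictly finer than the $\mathfrak{m}$-adic one. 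You flag exactly this worry but never resolve it, and the proposed equality of topologies is the step that would fail; it is not a matter of tidying up an inclusion.

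The paper sidesteps this by a second interchange of limits at the finite level. Writing $H=G/U$, Lemma~\ref{augmentation-nilpotent-ell-group} gives $\mathscr{I}_H^M\subset(\ell)$ in $\mathbb{Z}_\ell[H]$ for some $M$, so $\mathscr{I}_H$ is nilpotent in $(\mathbb{Z}/\ell^r\mathbb{Z})[H]$ for every $r$; the system $(\mathbb{Z}/\ell^r\mathbb{Z})[H]/\mathscr{I}_H^s$ therefore stabilizes in $s$, and
$$\mathbb{Z}_\ell[H]=\varprojlim_r(\mathbb{Z}/\ell^r\mathbb{Z})[H]=\varprojlim_r\varprojlim_s(\mathbb{Z}/\ell^r\mathbb{Z})[H]/\mathscr{I}_H^s=\varprojlim_s\varprojlim_r(\mathbb{Z}/\ell^r\mathbb{Z})[H]/\mathscr{I}_H^s=\varprojlim_s\mathbb{Z}_\ell[H]/\mathscr{I}_H^s,$$
the last step using that $\mathbb{Z}_\ell[H]/\mathscr{I}_H^s$ is a finitely generated $\mathbb{Z}_\ell$-module and hence $\ell$-adically complete. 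Alternatively, one can argue directly: an $\mathscr{I}_H$-adically Cauchy sequence is $\ell$-adically Cauchy (since $\mathscr{I}_H^{Mn}\subset\ell^n\mathbb{Z}_\ell[H]$), hence converges $\ell$-adically, and the limit lies in the correct $\mathscr{I}_H^n$-coset because each $\mathscr{I}_H^n$ is $\ell$-adically closed in the finite $\mathbb{Z}_\ell$-module $\mathbb{Z}_\ell[H]$. With either of these in place of the topology-comparison claim, the rest of your argument goes through.
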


\begin{proof}
First, observe that if $H$ is a finite $\ell$-group, Lemma \ref{augmentation-nilpotent-ell-group} implies that for any $r$, the augmentation ideal $\mathscr{I}_H$ of $$\mathbb{Z}/\ell^r\mathbb{Z}[H]$$ is nilpotent.  Indeed, the lemma implies that for some $M$, $\mathscr{I}_H^M\subset (\ell)$; but the ideal $(\ell)$ is clearly nilpotent.  Hence $\mathbb{Z}_\ell[H]$ is complete with respect to its augmentation ideal, as 

\begin{align*}
\mathbb{Z}_\ell[H] &=\varprojlim_r \mathbb{Z}/\ell^r\mathbb{Z}[H] \\
&=\varprojlim_r \varprojlim_s \mathbb{Z}/\ell^r\mathbb{Z}[H]/\mathscr{I}_H^s \\
&=\varprojlim_s \varprojlim_r \mathbb{Z}/\ell^r\mathbb{Z}[H]/\mathscr{I}_H^s \\
&=\varprojlim_s \mathbb{Z}_\ell[H]/\mathscr{I}_H^s.
\end{align*}

But now a similar interchange of limits argument implies that $\mathbb{Z}_\ell[[G]]$ is complete with respect to its augmentation ideal.  Indeed, we have

$$\mathbb{Z}_\ell[[G]]=\varprojlim_{G\twoheadrightarrow H}\mathbb{Z}_\ell[H]=\varprojlim_{G\twoheadrightarrow H}\varprojlim_s\mathbb{Z}_\ell[H]/\mathscr{I}_H^s.$$  As before, interchanging the limits gives the result.
\end{proof}

If $R$ is a $\mathbb{Z}_\ell$-algebra , let $$R[[G]]:=\varprojlim_n \left(\mathbb{Z}_\ell[[G]]/\mathscr{I}_G^n\otimes_{\mathbb{Z}_\ell} R\right).$$  By Lemma \ref{IG-complete}, we obtain our original definition of $\mathbb{Z}_\ell[[G]]$ if we set $R=\mathbb{Z}_\ell$, so the overloaded notation is harmless. 

We will primarily study $$\mathbb{Z}_\ell[[G]], \mathbb{Q}_\ell[[G]];$$ observe that the natural injection $$\mathbb{Z}_\ell[[G]]\otimes \mathbb{Q}_\ell\to \mathbb{Q}_\ell[[G]]$$  is not an isomorphism. The image of this injection is the set of elements with ``bounded denominators."

Let $$\epsilon: R[[G]]\to R$$ be the augmentation map, obtained as the inverse limit of the maps $$\epsilon_U: R[G/U]\to R$$ $$gU\mapsto 1$$ for $gU\in G/U$.  The augmentation ideal $\mathscr{I}_G$ is the kernel of $\epsilon$.

\begin{lemma}\label{augmentation-abelianization-group-ring}
Suppose $G$ is a pro-$\ell$ group and $R$ is a $\mathbb{Z}_\ell$-algebra.  The map $g\mapsto g-1$ induces an isomorphism $$G^{ab}\otimes_{\mathbb{Z}_\ell} R\to \mathscr{I}_G/\mathscr{I}_G^2.$$ 
\end{lemma}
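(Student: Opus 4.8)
The plan is to establish the isomorphism first for finite $\ell$-groups with $R = \mathbb{Z}_\ell$, then pass to the pro-$\ell$ limit, and finally extend scalars to a general $\mathbb{Z}_\ell$-algebra $R$. For a finite group $H$ and coefficient ring $\mathbb{Z}_\ell$, there is a classical identification: the map $h \mapsto h - 1$ is additive modulo $\mathscr{I}_H^2$, since $(h_1 - 1)(h_2 - 1) = (h_1 h_2 - 1) - (h_1 - 1) - (h_2 - 1) \in \mathscr{I}_H^2$, so it descends to a group homomorphism $H \to \mathscr{I}_H/\mathscr{I}_H^2$; as the target is abelian it factors through $H^{ab}$. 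To see this is an isomorphism onto $H^{ab} \otimes_{\mathbb{Z}} \mathbb{Z}_\ell$, I would exhibit the inverse: $\mathscr{I}_H$ is $\mathbb{Z}_\ell$-spanned by the elements $h - 1$, and the assignment $h - 1 \mapsto \bar h \in H^{ab} \otimes \mathbb{Z}_\ell$ is well-defined on $\mathbb{Z}_\ell[H]$ because the relations $(h_1 - 1) + (h_2 - 1) + (h_1-1)(h_2-1) = h_1 h_2 - 1$ are respected modulo sending $\mathscr{I}_H^2$ to zero; one checks the two maps are mutually inverse on generators. (For an $\ell$-group one may alternatively invoke that $H^{ab} \otimes \mathbb{Z}_\ell = H^{ab}$ since $H^{ab}$ is already a finite $\ell$-group, so the $\mathbb{Z}_\ell$-coefficient statement is the same as the $\mathbb{Z}$-coefficient one.)

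Next I would pass to the limit. For $G$ pro-$\ell$, write $G = \varprojlim_U G/U$ over open normal $U$. One has $\mathbb{Z}_\ell[[G]] = \varprojlim_U \mathbb{Z}_\ell[G/U]$, and I claim this is compatible with the augmentation filtration: by Lemma \ref{IG-complete} and the interchange-of-limits argument in its proof, $\mathscr{I}_G/\mathscr{I}_G^2 = \varprojlim_U \mathscr{I}_{G/U}/\mathscr{I}_{G/U}^2$. The key point needing care is that the transition maps $\mathscr{I}_{G/U}/\mathscr{I}_{G/U}^2 \to \mathscr{I}_{G/U'}/\mathscr{I}_{G/U'}^2$ (for $U \subset U'$) correspond under the finite-level isomorphism to the natural surjections $(G/U)^{ab} \to (G/U')^{ab}$, so that $\varprojlim_U (G/U)^{ab} = G^{ab}$ (using that $G^{ab}$, as the abelianization of a pro-$\ell$ group, is itself pro-$\ell$ and equals this inverse limit of its finite quotients). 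Assembling these identifications gives the isomorphism $G^{ab} \to \mathscr{I}_G/\mathscr{I}_G^2$ for $R = \mathbb{Z}_\ell$.

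Finally, for a general $\mathbb{Z}_\ell$-algebra $R$, recall $R[[G]] := \varprojlim_n (\mathbb{Z}_\ell[[G]]/\mathscr{I}_G^n \otimes_{\mathbb{Z}_\ell} R)$. Taking $n = 2$ shows directly that $R[[G]]/\mathscr{I}_G^2 \cong (\mathbb{Z}_\ell[[G]]/\mathscr{I}_G^2) \otimes_{\mathbb{Z}_\ell} R$, hence $\mathscr{I}_G/\mathscr{I}_G^2$ over $R$ is obtained from the $\mathbb{Z}_\ell$-version by applying $- \otimes_{\mathbb{Z}_\ell} R$; combined with the previous step this yields $G^{ab} \otimes_{\mathbb{Z}_\ell} R \xrightarrow{\sim} \mathscr{I}_G/\mathscr{I}_G^2$, and one checks this composite is induced by $g \mapsto g - 1$. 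I expect the main obstacle to be the bookkeeping in the pro-$\ell$ limit step: verifying that the finite-level isomorphisms are genuinely natural in $U$ (so the inverse limits can be compared term by term) and that no $\varprojlim^1$ obstruction arises — this is clean because all the groups $(G/U)^{ab}$ are finite, so the system satisfies Mittag–Leffler. Everything else is a routine unwinding of the finite-group computation.
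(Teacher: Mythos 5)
Your proposal is correct, but it takes a somewhat different route from the paper. You reduce to the finite $\ell$-group case and then pass to the pro-$\ell$ limit and extend scalars, carefully tracking compatibility of the finite-level isomorphisms under the transition maps and appealing to Mittag--Leffler to kill $\varprojlim^1$. The paper argues directly on $\mathscr{I}_G\subset R[[G]]$: it checks the same algebraic identity $(g-1)(h-1)=(gh-1)-(g-1)-(h-1)$ to see that $g\mapsto g-1$ descends to $G^{\mathrm{ab}}\otimes_{\mathbb{Z}_\ell}R\to\mathscr{I}_G/\mathscr{I}_G^2$, and then writes down the inverse on generators, $g-1\mapsto g$, checking it annihilates $\mathscr{I}_G^2$ because $(g-1)(h-1)$ is sent to the commutator $ghg^{-1}h^{-1}$. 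The algebraic content is identical; the paper simply avoids the explicit reduction to finite quotients by constructing both maps on topological generators at once, implicitly absorbing all the limit bookkeeping you carry out by hand (in particular the identification $\mathscr{I}_G/\mathscr{I}_G^2=\varprojlim_U\mathscr{I}_{G/U}/\mathscr{I}_{G/U}^2$, which your route makes you verify). Your approach is a bit heavier but is also the one you would need if you wanted to invoke the finite-group statement as a black box; the paper's approach is shorter and self-contained. Either is fine.
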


\begin{proof}
We first check that the map is well-defined.  Observe that if $g,h\in G$, $$(g-1)(h-1)=gh-g-h+1=(gh-1)-(g-1)-(h-1)\in \mathscr{I}_G^2.$$ Hence $$gh-1=(g-1)+(h-1)=hg-1\bmod \mathscr{I}_G^2,$$ so the map $$G\to \mathscr{I}_G/\mathscr{I}_G^2$$ $$g\mapsto g-1$$ does indeed factor through $G^{ab}$; as $\mathscr{I}_G/\mathscr{I}_G^2$ is an $R$-module, we obtain a map $G^{ab}\otimes_{\mathbb{Z}_\ell} R\to \mathscr{I}_G/\mathscr{I}_G^2$, as claimed.

The inverse map is induced by the map $\mathscr{I}_G\to G^{ab}$ which sends the basis elements $g-1$ to $g$; this factors through $\mathscr{I}_G/\mathscr{I}_G^2$ as $(g-1)(h-1)=(gh-1)-(g-1)-(h-1)$ maps to $ghg^{-1}h^{-1}\in [G,G]$.
\end{proof}

\subsection{Groupoids and Hopf groupoids} \label{hopf-groupoids-section} 

Now let $\mathcal{G}$ be a groupoid (a category in which every morphism is an isomorphism).  For objects $a, b\in \mathcal{G}$, we let $\mathcal{G}(a, b)$ denote the set of morphisms from $a$ to $b$.  

\begin{defn}\label{pro-l-groupoid-defn}
A \emph{pro-$\ell$ groupoid} is a groupoid $\mathcal{G}$ such that 

\begin{enumerate}

\item $\mathcal{G}(a,b)$ is a profinite set for each $a, b$, 

\item  the composition maps $\mathcal{G}(a, b)\times \mathcal{G}(b,c)\to \mathcal{G}(a, c)$, and the inversion maps $\mathcal{G}(a,b)\to \mathcal{G}(b,a)$  are continuous with respect to the profinite topology, and
\item the groups $\mathcal{G}(a,a)$ are pro-$\ell$ groups for each $a$.

\end{enumerate}
\end{defn}

For each $a,b$ in a pro-$\ell$ groupoid $\mathcal{G}$, we define 

$$\mathbb{Z}_\ell[[\mathcal{G}(a,b)]]=\varprojlim_{\mathcal{G}(a,b)\twoheadrightarrow H} \mathbb{Z}_\ell[H]$$ 
where the inverse limit is taken over continuous surjections $\mathcal{G}(a, b)\twoheadrightarrow H$, with $H$ a (discrete) finite set.  The composition $\mathcal{G}(a, b)\times \mathcal{G}(b, c)\to \mathcal{G}(a, c)$ induces a continuous map 

$$\nabla: \mathbb{Z}_\ell[[\mathcal{G}(a, b)]]\otimes_{\mathbb{Z}_\ell} \mathbb{Z}_\ell[[\mathcal{G}(b, c)]]\to \mathbb{Z}_\ell[[\mathcal{G}(a, c)]].$$

Similarly, there is a natural inversion map 

$$S: \mathbb{Z}_\ell[[\mathcal{G}(a, b)]]\to \mathbb{Z}_\ell[[\mathcal{G}(b, a)]],$$ 

sending $g$ to $g^{-1}$, and a ``comultiplication"

 $$\Delta: \mathbb{Z}_\ell[[\mathcal{G}(a, b)]]\to \mathbb{Z}_\ell[[\mathcal{G}(a,b)]]\otimes_{\mathbb{Z}_\ell} \mathbb{Z}_\ell[[\mathcal{G}(a,b)]]$$ 
 
 sending $g\in \mathcal{G}(a, b)$ to $g\otimes g$ and extended linearly.  There are obvious co-unit maps  
 
 $$\epsilon: \mathbb{Z}_\ell[[\mathcal{G}(a, b)]]\to \mathbb{Z}_\ell,$$ 
 
 and for $a=b$, unit maps 
 
 $$\eta: \mathbb{Z}_\ell\to \mathbb{Z}_\ell[[\mathcal{G}(a,a)]].$$ 
 
  These maps satisfy axioms analogous to those of a Hopf algebra.  In fact, the $\mathbb{Z}_\ell[[\mathcal{G}(a,b)]]$ form a \emph{Hopf groupoid}; see Section \ref{hopf-groupoids} for an appendix developing the necessary theory of such objects. In particular, the following diagrams commute:

 $$\xymatrix{
 \mathbb{Z}_\ell[[\mathcal{G}(a, b)]]\otimes \mathbb{Z}_\ell[[\mathcal{G}(a, b)]] \ar[rr]^{S\otimes \text{id}}& & \mathbb{Z}_\ell[[\mathcal{G}(b, a)]]\otimes \mathbb{Z}_\ell[[\mathcal{G}(a, b)]] \ar[d]^\nabla\\
\mathbb{Z}_\ell[[\mathcal{G}(a, b)]] \ar[u]^\Delta \ar[r]^\epsilon & \mathbb{Z}_\ell  \ar[r]^\eta & \mathbb{Z}_\ell[[\mathcal{G}(b, b)]]
}$$

 $$\xymatrix{
 \mathbb{Z}_\ell[[\mathcal{G}(a, b)]]\otimes \mathbb{Z}_\ell[[\mathcal{G}(a, b)]] \ar[rr]^{\text{id}\otimes S}& & \mathbb{Z}_\ell[[\mathcal{G}(a, b)]]\otimes \mathbb{Z}_\ell[[\mathcal{G}(b, a)]] \ar[d]^\nabla\\
\mathbb{Z}_\ell[[\mathcal{G}(a, b)]] \ar[u]^\Delta \ar[r]^\epsilon & \mathbb{Z}_\ell  \ar[r]^\eta & \mathbb{Z}_\ell[[\mathcal{G}(a, a)]].
}$$

In Corollary \ref{I-adic-filtration} of the Appendix, we construct an $\mathscr{I}$-adic filtration on each $\mathbb{Z}_\ell[[\mathcal{G}(a, b)]]$, which we may view as either the $\mathscr{I}(a,a)$-adic filtration on $\mathbb{Z}_\ell[[\mathcal{G}(a, b)]]$, viewed as a left $\mathbb{Z}_\ell[[\mathcal{G}(a,a)]]$-module, or as the $\mathscr{I}(b,b)$-adic filtration on $\mathbb{Z}_\ell[[\mathcal{G}(a,b)]]$, viewed as a right $\mathbb{Z}_\ell[[\mathcal{G}(b,b)]]$-module.  As in Lemma \ref{augmentation-abelianization-group-ring}, a proof identical to that of Lemma \ref{augmentation-abelianization-groupoid} shows that $\mathscr{I}(a,b)/\mathscr{I}^2(a,b)$ is canonically isomorphic to $\mathcal{G}(a,a)^{\text{ab}}\otimes \mathbb{Z}_\ell$ if $\mathcal{G}(a,b)$ is non-empty.

As in the group algebra case, if $R$ is a $\mathbb{Z}_\ell$-algebra, we define $R[[\mathcal{G}]]=\{R[[\mathcal{G}(a, b)]]\}$ via $$R[[\mathcal{G}(a, b)]]:=\varprojlim_n \left(\mathbb{Z}_\ell[[\mathcal{G}(a, b)]]/\mathscr{I}^n(a,b)\otimes R\right).$$

\section{Galois action on the $\mathbb{Q}_\ell$-fundamental group}\label{Ql-galois-section}
\subsection{Generalities on the Galois Action on $\pi_1^{\text{\'et}, \ell}$}  Let $X$ be a variety over a field $k$; let $\ell$ be a prime different from $\on{char}(k)$. We now study the action of $G_k:=\on{Gal}(\bar k/k)$ on the geometric (pro-$\ell$) fundamental group of $X$ $$\pi_1^{\text{\'et}, \ell}(X_{\bar k}, \bar x)$$ for some rational point or rational tangential basepoint $x$, and on the group algebra $\mathbb{Q}_\ell[[\pi_1^{\text{\'et}, \ell}(X_{\bar k}, \bar x)]]$.  After some preliminaries on the $\mathscr{I}$-adic filtration of $\mathbb{Q}_\ell[[\pi_1^{\text{\'et}, \ell}(X_{\bar k}, \bar x)]]$, we define the weight filtration $W^\bullet$ on this pro-finite-dimensional vector space.

The main results of this section
\begin{enumerate}
\item prove that the action of certain elements of $G_k$, called \emph{quasi-Frobenii} (including Frobenii at primes of good reduction), on $\mathbb{Q}_\ell[[\pi_1^{\text{\'et}, \ell}(X_{\bar k}, \bar x)]]$ is semi-simple (Theorem \ref{semisimple-pi1}), and
\item  prove the existence of certain special elements in $\on{Gal}(\bar k/k)$, called \emph{quasi-scalars}, whose action on $\mathbb{Q}_\ell[[\pi_1^{\text{\'et}, \ell}(X_{\bar k}, \bar x)]]$ is especially well-adapted to the weight filtration (Corollary \ref{pi1-quasi-scalar}).
\end{enumerate}

Let $\{x_1, \cdots, x_n\}$ be a collection of rational points of $X$, or rational tangential basepoints.  Let $k\to \bar{k}$ be a choice of algebraic closure of $k$, and let $k((t))\to \overline{k((t))}$ be a compatible choice of algebraic closure of $k((t))$.  Then the \emph{geometric fundamental groupoid} of $X$ associated to $\{x_1, \cdots, x_n\}$ has objects $\{x_1, \cdots, x_n\}$ and morphisms between $x_i$ and $x_j$ given by $$\pi_1^{\text{\'et}}(X_{\bar k}; \bar x_i, \bar x_j).$$  The \emph{geometric pro-$\ell$ fundamental groupoid} $\Pi_1(X_{\bar k})$ is the maximal pro-$\ell$ quotient of the geometric fundamental groupoid; we denote the morphisms from $x_i$ to $x_j$ by $$\pi_1^{\text{\'et},\ell}(X_{\bar k}; \bar x_i, \bar x_j).$$

We define the \emph{arithmetic pro-$\ell$ fundamental groupoid} $$\{\pi_1^{\text{\'et}, \ell}(X; \bar x_i, \bar x_j)\}$$ as having objects $\{x_1, \cdots, x_n\}$ as before and with morphisms $x_i$ to $x_j$ defined by the pushout diagram

$$\xymatrix{
\pi_1^{\text{\'et}}(X_{\bar k}; \bar x_i, \bar x_j) \ar[r] \ar[d] & \pi_1^{\text{\'et}}(X;\bar x_i, \bar x_j) \ar[d] \\
\pi_1^{\text{\'et},\ell}(X_{\bar k}; \bar x_i, \bar x_j) \ar[r] & \pi_1^{\text{\'et}, \ell}(X; \bar x_i, \bar x_j).
}$$

While the geometric pro-$\ell$ fundamental groupoid is a pro-$\ell$ groupoid, one should note that the arithmetic pro-$\ell$ groupoid is not a pro-$\ell$ groupoid in general.

As each $x_i$ is a rational point or rational tangential basepoint, the natural maps $$\pi_1^{\text{\'et}}(X, \bar x_i)\to G_k$$ have sections $s_i: G_k\to \pi_1^{\text{\'et}}(X, \bar x_i)$ (induced by the maps $x_i\to X$).  Hence there are also natural sections to the maps $$\pi_1^{\text{\'et}, \ell}(X, \bar x_i)\to G_k.$$    

These sections give an action of $$G_k:=\on{Gal}(\bar k/k)$$ on $\pi_1^{\text{\'et}}(X_{\bar k}; \bar x_i, \bar x_j)$ and hence on $\pi_1^{\text{\'et}, \ell}(X_{\bar k}; \bar x_i, \bar x_j)$, defined as follows.  Let $\sigma\in G_k$ be any element; then $\sigma$ acts on $g\in \pi_1^{\text{\'et}}(X; \bar x_i, \bar x_j)$ via $$g\mapsto s_i(\sigma)\cdot g\cdot s_j(\sigma)^{-1}.$$  One may check directly that this action preserves the subset $$\pi_1^{\text{\'et}}(X_{\bar k}; \bar x_i, \bar x_j)\subset \pi_1^{\text{\'et}}(X; \bar x_i, \bar x_j),$$ and hence gives an action on this subset.  As the quotient map of groupoids $$\{\pi_1^{\text{\'et}}(X_{\bar k}; \bar x_i, \bar x_j)\}\to \{\pi_1^{\text{\'et}, \ell}(X_{\bar k}; \bar x_i, \bar x_j)\}$$ is canonical, there is an induced action of $G_k$ on $\pi_1^{\text{\'et},\ell}(X_{\bar k}; \bar x_i, \bar x_j).$

The Hopf groupoid associated to the geometric pro-$\ell$ fundamental groupoid will be denoted $\mathbb{Z}_\ell[[\Pi_1(X_{\bar k})]]$; its objects are the rational or rational tangential basepoints $\{x_1, \cdots, x_n\}$ above, with morphisms between $x_i$ and $x_j$ given by $$\mathbb{Z}_\ell[[\pi_1^{\text{\'et},\ell}(X_{\bar k}; \bar x_i, \bar x_j)]].$$  As in subsection \ref{hopf-groupoids-section} or the appendix (Section \ref{hopf-groupoids}), we denote the filtration by powers of the augmentation ideal by $\{\mathscr{I}^n(x_i, x_j)\}$.  If $i=j$, we denote powers of the augmentation ideal by $\mathscr{I}^n(x_i)$.
\begin{prop}
Under the action of $G_k$ described above, the Galois module $\mathscr{I}(x_i, x_j)/\mathscr{I}^2(x_i,x_j)$ is canonically identified with $\pi_1^{\text{\'et}, \ell}(X_{\bar k}, \bar x_i)^{\text{ab}}$ through the map described in Lemma \ref{augmentation-abelianization-groupoid}.
\end{prop}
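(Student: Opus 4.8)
The plan is to unwind the construction of the isomorphism of Lemma~\ref{augmentation-abelianization-groupoid} and then verify $G_k$-equivariance by a short direct computation; the one feature that makes this computation clean is the independence of that isomorphism from the auxiliary base path used to build it.

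\textbf{The construction, recalled.} Since $X_{\bar k}$ is connected, $\pi_1^{\text{\'et},\ell}(X_{\bar k}; \bar x_i, \bar x_j)$ is non-empty; choose an element $p_0$. Right composition with $p_0^{-1}$, i.e.\ $p\mapsto \nabla(p,p_0^{-1})$, is a homeomorphism $\pi_1^{\text{\'et},\ell}(X_{\bar k}; \bar x_i, \bar x_j)\xrightarrow{\sim}\pi_1^{\text{\'et},\ell}(X_{\bar k}, \bar x_i)$ of left $\pi_1^{\text{\'et},\ell}(X_{\bar k},\bar x_i)$-sets, hence induces an isomorphism of left $\mathbb{Z}_\ell[[\pi_1^{\text{\'et},\ell}(X_{\bar k},\bar x_i)]]$-modules $\mathbb{Z}_\ell[[\pi_1^{\text{\'et},\ell}(X_{\bar k}; \bar x_i, \bar x_j)]]\xrightarrow{\sim}\mathbb{Z}_\ell[[\pi_1^{\text{\'et},\ell}(X_{\bar k},\bar x_i)]]$ carrying $\mathscr{I}^n(x_i,x_j)$ onto $\mathscr{I}^n(x_i)$. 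Composing with the isomorphism of Lemma~\ref{augmentation-abelianization-group-ring} gives the map of Lemma~\ref{augmentation-abelianization-groupoid}, which sends the class of $p-p_0$ to the class of $p\,p_0^{-1}$. It is independent of $p_0$: replacing $p_0$ by $g_0 p_0$ post-composes with the right translation $\eta\mapsto \eta g_0^{-1}$, and $\eta g_0^{-1}-\eta=\eta(g_0^{-1}-1)\in\mathscr{I}^2(x_i)$ for $\eta\in\mathscr{I}(x_i)$.

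\textbf{The actions are compatible.} For $\sigma\in G_k$ the action on $\pi_1^{\text{\'et},\ell}(X_{\bar k}; \bar x_i, \bar x_j)$ is $p\mapsto s_i(\sigma)\,p\,s_j(\sigma)^{-1}$ and that on $\pi_1^{\text{\'et},\ell}(X_{\bar k},\bar x_i)$ is $h\mapsto s_i(\sigma)\,h\,s_i(\sigma)^{-1}$. One checks $\nabla(\sigma\cdot h,\sigma\cdot p)=\sigma\cdot\nabla(h,p)$, so $\sigma$ acts by an automorphism of the module $\mathbb{Z}_\ell[[\pi_1^{\text{\'et},\ell}(X_{\bar k}; \bar x_i, \bar x_j)]]$ commuting with the augmentation; in particular it preserves each $\mathscr{I}^n(x_i,x_j)$, acts on the subquotients, and acts on $\pi_1^{\text{\'et},\ell}(X_{\bar k},\bar x_i)^{\text{ab}}$.

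\textbf{Equivariance.} Fix $\sigma$ and $p_0$, and compute the isomorphism of Lemma~\ref{augmentation-abelianization-groupoid} using the base path $\sigma\cdot p_0=s_i(\sigma)\,p_0\,s_j(\sigma)^{-1}$ (allowed by the independence just noted). The class of $p-p_0$ maps to the class of $p\,p_0^{-1}$; applying $\sigma$ yields the class of $(\sigma\cdot p)-(\sigma\cdot p_0)$, which the isomorphism computed with base path $\sigma\cdot p_0$ sends to the class of
$$(\sigma\cdot p)(\sigma\cdot p_0)^{-1}=s_i(\sigma)\,p\,s_j(\sigma)^{-1}s_j(\sigma)\,p_0^{-1}\,s_i(\sigma)^{-1}=s_i(\sigma)(p\,p_0^{-1})s_i(\sigma)^{-1}=\sigma\cdot(p\,p_0^{-1}),$$
i.e.\ to $\sigma$ applied to the image of $p-p_0$. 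Since the classes of the elements $p-p_0$ topologically generate $\mathscr{I}(x_i,x_j)/\mathscr{I}^2(x_i,x_j)$ and $\sigma$ is continuous, $G_k$-equivariance follows. There is no real obstacle here; the only point to watch is the bookkeeping with base paths, and the identity $(\sigma\cdot p)(\sigma\cdot p_0)^{-1}=s_i(\sigma)(p\,p_0^{-1})s_i(\sigma)^{-1}$ that makes it work is precisely where the freedom to pick a $\sigma$-dependent base path, licensed by Lemma~\ref{augmentation-abelianization-groupoid}, is used.
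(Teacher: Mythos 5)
Your proof is correct and is precisely the "direct computation" the paper invokes; the paper's own proof is the one-line assertion that the map of Lemma~\ref{augmentation-abelianization-groupoid} is Galois-equivariant by direct computation, and your write-up supplies exactly the bookkeeping — base-path independence of that lemma's isomorphism plus the identity $(\sigma\cdot p)(\sigma\cdot p_0)^{-1}=\sigma\cdot(p\,p_0^{-1})$ — that the computation consists of.
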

\begin{proof}
A direct computation shows that the map defined in Lemma \ref{augmentation-abelianization-groupoid} is Galois-equivariant.
\end{proof}
\begin{remark}\label{galois-action-remark}
If $X$ is geometrically connected, there is a canonical (Galois-equivariant) identification $$\text{Hom}(\pi_1^{\text{\'et}, \ell}(X_{\bar k}, \bar x_i), \mathbb{Z}_\ell)\simeq H^1(X_{\bar k}, \mathbb{Z}_\ell).$$  Thus if $\pi_1^{\text{\'et}, \ell}(X_{\bar k}, \bar x_i)^\text{ab}$ is torsion-free, the Galois action on $\mathscr{I}(x_i, x_j)/\mathscr{I}^2(x_i, x_j)$ is completely determined by the Galois action on $H^1(X_{\bar k}, \mathbb{Z}_\ell).$  Since the natural multiplication map $$(\mathscr{I}(x_i)/\mathscr{I}^2(x_i))^{\otimes {n-1}}\otimes (\mathscr{I}(x_i, x_j)/\mathscr{I}^2(x_i, x_j))\to \mathscr{I}^n(x_i, x_j)/\mathscr{I}^{n+1}(x_i, x_j)$$ is surjective and Galois-equivariant, knowledge of the Galois action on $H^1(X_{\bar k}, \mathbb{Z}_\ell)$ and a presentation of the fundamental group essentially determines the action of the Galois group on the graded module associated to the $\mathscr{I}$-adic filtration on the geometric pro-$\ell$ fundamental groupoid.
\end{remark}

\begin{example}\label{general-curve-example}
Let $X$ be a smooth, geometrically connected $k$-curve, and let $\overline{X}$ be the unique smooth, proper, geometrically connected $k$-curve containing $X$ as an open subscheme.  Let $\ell$ be  a prime different from $\on{char}(k)$ and let $g$ be the genus of $\overline{X}$, and let $d=\#|\overline{X}(\bar k)\setminus X(\bar k)|$. Then for any geometric point or geometric tangential basepoint $\bar x$ of $X$, $\pi_1^{\text{\'et}, \ell}(X_{\bar k}, \bar x)$ is the free pro-$\ell$ group on generators $$a_1, b_1, \cdots, a_g, b_g, c_1, c_2, \cdots, c_d$$ subject to the relation $$\left(\prod_{i=1}^g [a_i, b_i]\right)\left(\prod_{j=1}^d c_j\right)=1.$$  Suppose $d\geq 1$, so this group is in fact a free pro-$\ell$ group.  Thus the completed group algebra is isomorphic to the algebra of (non-commutative) power series in $2g+d-1$ variables.  So the multiplication map $$(\mathscr{I}(x_i)/\mathscr{I}^2(x_i))^{\otimes {n-1}}\otimes (\mathscr{I}(x_i, x_j)/\mathscr{I}^2(x_i, x_j))\to \mathscr{I}^n(x_i, x_j)/\mathscr{I}^{n+1}(x_i, x_j)$$ is an isomorphism for any $x_i, x_j$, so Remark \ref{galois-action-remark} completely determines the Galois action on the graded module associated to the $\mathscr{I}$-adic filtration on $\mathbb{Z}_\ell[[\pi_1^{\text{\'et}, \ell}(X_{\bar k}; \bar x_i, \bar x_j)]].$
\end{example}

\begin{example}
We make Example \ref{general-curve-example} explicit in the case that $X=\mathbb{P}^1\setminus\{0,1, \infty\}$.  In this case $$H^1(X_{\bar k}, \mathbb{Z}_\ell)\simeq \mathbb{Z}_\ell(-1)^{\oplus 2}$$ as a $G_k$-module.  Hence $$\mathscr{I}^n(x_i, x_j)/\mathscr{I}^{n+1}(x_i, x_j)\simeq \mathbb{Z}_\ell(n)^{\oplus 2^n}.$$
\end{example}
Having described the $G_k$-module structure on $\mathscr{I}^n(x_i, x_j)/\mathscr{I}^{n+1}(x_i, x_j)$ in terms of the $\ell$-adic cohomology of $X$, we spend the rest of the section describing studying the extension data $$0\to \mathscr{I}^{n+1}(x_i, x_j)/\mathscr{I}^{n+2}(x_i, x_j)\to \mathscr{I}^n(x_i, x_j)/\mathscr{I}^{n+2}(x_i, x_j)\to \mathscr{I}^n(x_i, x_j)/\mathscr{I}^{n+1}(x_i, x_j)\to 0,$$ where we view the above as an exact sequence of Galois modules. 
\subsection{The weight filtration}
In this section we define and record some basic facts about the weight filtration on $$\mathbb{Q}_\ell[[\Pi_1(X_{\bar k})]]:=\varprojlim_n \mathbb{Z}_\ell[[\Pi_1(X_{\bar k})]]/\mathscr{I}^n\otimes \mathbb{Q}_\ell.$$  
\begin{defn}
Suppose that $k$ is a finite field of characteristic prime to $\ell$, and let $\sigma\in G_k$ be the Frobenius element.  Let $\rho: G_k\to GL_n(\mathbb{Q}_\ell)$ be a continuous representation.  For $i\in \mathbb{Z}$, we say that $\rho$ is \emph{pure of weight $i$} if the eigenvalues of $\rho(\sigma)$ are algebraic integers with (archimedean) absolute value $q^{-i/2}$, for all embeddings $\overline{\mathbb{Q}}\to \mathbb{C}$.  We say it is \emph{mixed} if it is a direct sum of pure representations of any integer weight.

Now suppose that $k$ is a finitely generated field.  We say that a representation $\rho: G_k\to GL_n(\mathbb{Q}_\ell)$ is pure of weight $i$ if there exists a ring $R\subset k$, finitely generated over $\mathbb{Z}$, with $k=\on{Frac}(R)$, such that $\rho$ is unramified over $R$ (i.e. $\rho$ factors through the natural map $G_k\to \pi_1^{\text{\'et}}(\on{Spec}(R))$) and such that for all closed points $\mathfrak{p}$ of $\on{Spec}(R)$, the restriction of $\rho$ to $G_{\kappa(\mathfrak{p})}$ (which is well-defined up to conjugacy) is pure of weight $i$.  We say $\rho$ is mixed if it admits an increasing filtration $W^\bullet$ such that $W^i/W^{i-1}$ is pure of weight $i$; such a filtration is unique, and we call it the \emph{weight filtration}..
\end{defn}

We now define a filtration on $\mathbb{Q}_\ell[[\Pi_1(X_{\bar k})]]$, which is the weight filtration if $k$ is a finitely generated field.

We suppose $X$ is smooth with a simple normal crossings compactification $\overline{X}$.  As usual, we let $$\mathscr{I}\subset \mathbb{Q}_\ell[[\Pi_1(X_{\bar k})]]$$ be the augmentation ideal and we let $$\mathscr{J}\subset \mathbb{Q}_\ell[[\Pi_1(X_{\bar k})]]$$ be the kernel of the natural map $$\mathbb{Q}_\ell[[\Pi_1(X_{\bar k})]]\to\mathbb{Q}_\ell[[\Pi_1(\overline{X}_{\bar k})]]$$ induced by the inclusion $X\hookrightarrow \overline{X}$.  For $i\leq 0$, we set $$W^{-i}\mathbb{Q}_\ell[[\Pi_1(X_{\bar k})]]=\mathbb{Q}_\ell[[\Pi_1(X_{\bar k})]].$$ For $i>0$, we set $$W^{-i}\mathbb{Q}_\ell[[\Pi_1(X_{\bar k})]]=\mathscr{I}\cdot W^{-i+1}\mathbb{Q}_\ell[[\Pi_1(X_{\bar k})]]+\mathscr{J}\cdot W^{-i+2}\mathbb{Q}_\ell[[\Pi_1(X_{\bar k})]].$$  
\begin{prop}\label{weight-prop}
If $k$ is a finitely generated field, $$W^{-i}/W^{-i-1}$$ is a pure $G_k$-representation of weight $-i$.  Moreover, the $W$-adic topology equals the $\mathscr{I}$-adic topology: we have 
 $$W^{-1}=\mathscr{I}, W^{-2}=\mathscr{J}+\mathscr{I}^2,$$
$$\mathscr{I}^n\subset W^{-n}$$ and $$W^{-2n-1}\subset \mathscr{I}^n.$$
\end{prop}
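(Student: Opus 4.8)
The plan is to separate the two assertions: (a) the purity of the graded pieces $W^{-i}/W^{-i-1}$, and (b) the comparison of the $W$-adic and $\mathscr{I}$-adic filtrations. I would do (b) first, since it is essentially formal and sets up the bookkeeping needed for (a).

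For (b), the starting point is to identify the low-degree pieces of the weight filtration. The inclusions $\mathscr{I}^n\subset W^{-n}$ follow immediately by induction from the defining recursion $W^{-i}=\mathscr{I}\cdot W^{-i+1}+\mathscr{J}\cdot W^{-i+2}$, together with $W^0=\mathbb{Q}_\ell[[\Pi_1(X_{\bar k})]]$ and the observation that $\mathscr{J}\subset \mathscr{I}$ (the kernel $\mathscr{J}$ of the map to $\mathbb{Q}_\ell[[\Pi_1(\overline{X}_{\bar k})]]$ sits inside the augmentation ideal, as the augmentation factors through the compactification). The identification $W^{-1}=\mathscr{I}$ is the $i=1$ case. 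For $W^{-2}=\mathscr{J}+\mathscr{I}^2$, unwind: $W^{-2}=\mathscr{I}\cdot W^{-1}+\mathscr{J}\cdot W^0=\mathscr{I}^2+\mathscr{J}$. The reverse-type inclusion $W^{-2n-1}\subset \mathscr{I}^n$ is the only one requiring real input: I would prove by induction on $n$ that each time we apply two more steps of the recursion we pick up one more power of $\mathscr{I}$. The key point is that $\mathscr{J}\subset \mathscr{I}$, so $\mathscr{J}\cdot W^{-i+2}\subset \mathscr{I}\cdot W^{-i+2}$; more precisely one wants to show $\mathscr{J}\subset \mathscr{I}^2 + (\text{something killed after finitely many more steps})$ — actually the cleanest route is to observe that after passing to the associated graded, $\mathscr{J}$ maps into $\mathscr{I}^2/\mathscr{I}^3$ in the first relevant degree, because $\mathscr{J}$ is generated by the loops around the boundary divisor and these lie in $\mathscr{I}$ but their images in $\mathscr{I}/\mathscr{I}^2$ can be computed: in fact one wants $\mathscr{J}\subset \mathscr{I}^2$ is \emph{false} in general (the boundary loops are nonzero in $\mathscr{I}/\mathscr{I}^2$), so instead one shows that $W^{-2n-1}$, after $2n+1$ applications of the recursion with at most $n$ uses of a "$\mathscr{J}$" step and the rest "$\mathscr{I}$" steps, lands in $\mathscr{I}^{n}$ — counting: a term in $W^{-2n-1}$ is a product of $n_1$ factors from $\mathscr{I}$ and $n_2$ factors "of type $\mathscr{J}$" with $n_1+2n_2\geq 2n+1$, hence $n_1+n_2\geq n+1 > n$... wait, we only get $n_1 + n_2 \ge n$ when $n_2$ is large. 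The honest statement to prove is $n_1+n_2\geq \lceil(2n+1)/2\rceil = n+1$, which holds since $n_1+n_2\geq \frac{n_1+2n_2}{2}\geq \frac{2n+1}{2}$, so $n_1+n_2\geq n+1$ as integers; since each such factor lies in $\mathscr{I}$, the product lies in $\mathscr{I}^{n+1}\subset \mathscr{I}^n$. This gives $W^{-2n-1}\subset\mathscr{I}^n$, and the two filtrations are cofinal, so the topologies agree.

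For (a), the purity statement, I would argue as follows. The recursive definition exhibits $\on{gr}^{-i}_W := W^{-i}/W^{-i-1}$ as a quotient of $\bigl(\mathscr{I}/\mathscr{I}^2 \otimes \on{gr}^{-i+1}_W\bigr)\oplus\bigl(\mathscr{J}/(\mathscr{I}\mathscr{J}) \otimes \on{gr}^{-i+2}_W\bigr)$ — roughly, the associated graded of a product is a quotient of the tensor product of associated gradeds. By Remark \ref{galois-action-remark} and the preceding Proposition, $\mathscr{I}/\mathscr{I}^2 \simeq \pi_1^{\text{\'et},\ell}(X_{\bar k})^{\text{ab}}\otimes\mathbb{Q}_\ell$, which is dual to $H^1(X_{\bar k},\mathbb{Q}_\ell)$, hence pure of weight $-1$ (this is Deligne's weight bound for $H^1$ of a smooth variety: $H^1$ of a smooth variety has weights in $\{1,2\}$, but for a smooth variety with snc compactification the relevant graded pieces are controlled — actually one uses that $H^1(X_{\bar k})$ surjects onto $H^1$ of the compactification, weight $1$, with kernel the "boundary part" of weight $2$). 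The subtlety is precisely that $\mathscr{I}/\mathscr{I}^2$ is \emph{not} pure — it is mixed of weights $-1$ and $-2$, the weight-$-2$ part coming from loops around the boundary — and the role of the term $\mathscr{J}\cdot W^{-i+2}$ in the recursion is exactly to absorb this weight-$-2$ contribution at the right filtration level. So the induction must be set up carefully: I would show simultaneously that $\mathscr{J}/(\mathscr{I}\cap\mathscr{J})$ contributes weight $-2$ (it is spanned by boundary loops, each a $\mathbb{Q}_\ell(1)$), that $\mathscr{I}/(\mathscr{I}^2+\mathscr{J})\simeq H^1(\overline{X}_{\bar k},\mathbb{Q}_\ell)^\vee$ is pure of weight $-1$, and then that the recursion produces $\on{gr}^{-i}_W$ as a subquotient of sums of tensor products $\on{gr}^{-1}\otimes\cdots$ whose weights add up correctly to $-i$. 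Purity is then preserved under tensor products and subquotients of pure representations of the same weight.

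The main obstacle is (a), and within it the careful identification of $\mathscr{J}/(\mathscr{I}\mathscr{J})$ (or the appropriate graded piece) as pure of weight $-2$, together with verifying that the two-term recursion interacts correctly with the associated graded — i.e., that the natural surjection from the tensor products onto $\on{gr}^{-i}_W$ does not mix weights. This requires knowing the structure of $\mathbb{Z}_\ell[[\Pi_1(X_{\bar k})]]$ well enough (as in Examples \ref{general-curve-example} and the $\mathbb{P}^1\setminus\{0,1,\infty\}$ example, where $\mathscr{I}^n/\mathscr{I}^{n+1}\simeq\mathbb{Z}_\ell(n)^{\oplus 2^n}$ — note this is the \emph{open} curve, where $\mathscr{I}/\mathscr{I}^2$ already has weight $-2$ entirely, because $\overline{X}=\mathbb{P}^1$ has no $H^1$; this is the extreme case where $\mathscr{J}=\mathscr{I}$ up to higher order and $W^{-i}=W^{-2i}$-ish). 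The cleanest framing is probably: the natural map $\on{gr}_{\mathscr{I}}^\bullet \mathbb{Q}_\ell[[\Pi_1(X_{\bar k})]]\to \on{gr}_W^\bullet$ refines the grading, and on $\on{gr}_{\mathscr{I}}$ one has an explicit description via generators and relations (the free pro-$\ell$ algebra modulo one relation in the curve case) on which $G_k$ acts through $H^1$, whose weights are understood by Deligne. I would then deduce purity degree by degree, with the $W$-filtration precisely reorganizing the $\mathscr{I}$-grading so that each step drops weight by exactly one.
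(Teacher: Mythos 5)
Your dispatch of part (b) is correct and matches the paper's own one-line treatment; the word-count in fact gives the slightly stronger inclusion $W^{-2n-1}\subset\mathscr{I}^{n+1}$. For part (a) you have also located the right strategy --- purity via multiplicativity of $W^\bullet$ --- which is exactly what the paper means by ``this follows from the fact that the weight filtration we've defined is multiplicative,'' so at the level of approach you and the paper agree.

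Where the proposal is soft is precisely the spot you flag as needing to be ``set up carefully,'' and it is worth naming the issue sharply. Since $\mathscr{I}=W^{-1}$ and $\mathscr{J}\subset W^{-2}$, the recursion gives $W^{-i}=W^{-1}W^{-i+1}+W^{-2}W^{-i+2}$, and multiplicativity ($W^{-a}W^{-b}\subset W^{-a-b}$) then exhibits $\on{gr}^{-i}_W$ as a quotient of $\on{gr}^{-1}_W\otimes\on{gr}^{-i+1}_W\;\oplus\;\on{gr}^{-2}_W\otimes\on{gr}^{-i+2}_W$; the first tensor factor must be $\on{gr}^{-1}_W=\mathscr{I}/(\mathscr{J}+\mathscr{I}^2)\simeq H^1(\overline{X}_{\bar k},\mathbb{Q}_\ell)^\vee$ (pure of weight $-1$), \emph{not} the mixed $\mathscr{I}/\mathscr{I}^2$, which is exactly what makes each summand pure of weight $-i$ once the induction hypothesis is in hand. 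But this decomposition is circular at $i=2$, where $\on{gr}^{-2}_W$ appears on both sides, so $\on{gr}^{-2}_W$ is the genuine base case and must be handled directly; this is the one step with real content and your sketch glosses over it. Concretely $W^{-2}/W^{-3}=(\mathscr{J}+\mathscr{I}^2)/(\mathscr{I}^3+\mathscr{I}\mathscr{J}+\mathscr{J}\mathscr{I})$ is covered by (i) $\mathscr{J}/(\mathscr{I}\mathscr{J}+\mathscr{J}\mathscr{I})$ --- note your $\mathscr{J}/(\mathscr{I}\cap\mathscr{J})$ is $0$, since $\mathscr{J}\subset\mathscr{I}$ --- which is spanned by the boundary loops and hence pure of weight $-2$, and (ii) the image of $\mathscr{I}^2/\mathscr{I}^3$, for which one must check that the composite $(\mathscr{I}/\mathscr{I}^2)^{\otimes 2}\to\mathscr{I}^2/\mathscr{I}^3\to W^{-2}/W^{-3}$ kills $\overline{\mathscr{J}}\otimes(\mathscr{I}/\mathscr{I}^2)+(\mathscr{I}/\mathscr{I}^2)\otimes\overline{\mathscr{J}}$ and therefore factors through $(\on{gr}^{-1}_W)^{\otimes 2}$; it does, precisely because $\mathscr{I}\mathscr{J}+\mathscr{J}\mathscr{I}\subset W^{-3}$ by construction. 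Finally, the closing ``cleanest framing'' via a map $\on{gr}_{\mathscr{I}}^\bullet\to\on{gr}_W^\bullet$ does not work as stated: the map $\mathscr{I}^n/\mathscr{I}^{n+1}\to W^{-n}/W^{-n-1}$ induced by $\mathscr{I}^n\subset W^{-n}$ is neither surjective nor a refinement of gradings, so it cannot be substituted for the argument above.
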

\begin{proof}
The statements comparing the weight filtration to the $\mathscr{I}$-adic filtrations are immediate from the definitions.

To see that the former statement holds, observe that for $i
\geq -1$, this is immediate from the comparison of $\mathscr{I}/\mathscr{I}^2$ to $H^1(X_{\bar k}, \mathbb{Z}_\ell)$ of Remark \ref{galois-action-remark}.  In general, the follows from the fact that the weight filtration we've defined is multiplicative.
\end{proof}
\subsection{Varieties over finite and finitely generated fields}
We first consider the case where $k$ is a finite field.  Let $X/k$ be a geometrically connected variety. The goal of this section is to study the action of $\on{Gal}(\overline{k}/k)$ on $$\mathbb{Q}_\ell[[\Pi_1(X_{\bar k}]]:=\varprojlim_n \mathbb{Z}_\ell[[\Pi_1(X_{\bar k})]]/\mathscr{I}^n\otimes \mathbb{Q}_\ell.$$  The main result of this section is that the $G_k$-action on $$\overline{\mathbb{Q}_\ell}[[\Pi_1(X_{\bar k})]]/\mathscr{I}^n$$ is semisimple in broad generality (i.e. if $X$ is smooth and admits a smooth compactification with simple normal crossings boundary). Indeed, we prove that if $k$ is any field and $\sigma\in G_k$ is a \emph{quasi-Frobenius} (Definition \ref{quasi-frobenii-defn}), then its action on $\mathbb{Q}_\ell[[\Pi_1(X_{\bar k})]]$ is semi-simple.  In the next section we will analyze the extent to which the action on $$\overline{\mathbb{Z}_\ell}[[\Pi_1(X_{\bar k})]]/\mathscr{I}^n$$ fails to be semisimple.

We begin by studying the action of $G_k$ on $H^1(X, \mathbb{Q}_\ell)$.
\begin{lemma}\label{H1-semisimple}
Let $k$ be a finite field, let $X/k$ be a smooth, geometrically connected variety, and let $\overline{X}$ be a smooth compactification of $X$ with simple normal crossings boundary.  Then $G_k$ acts semi-simply on $$H^1(X_{\bar k}, \mathbb{Q}_\ell).$$  Furthermore, this $G_k$-representation is mixed of weights $1$ and $2$.
\end{lemma}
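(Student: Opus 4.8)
The plan is to realise $H^1(X_{\bar k},\mathbb{Q}_\ell)$ as an extension of a ``Tate-type'' representation by $H^1(\overline{X}_{\bar k},\mathbb{Q}_\ell)$ via the Gysin sequence, to check purity and semisimplicity of each of the two pieces separately, and then to split the extension for weight reasons. Throughout I will use that $G_k$ is topologically generated by the Frobenius $\sigma$, so that for a continuous finite-dimensional $\mathbb{Q}_\ell$-representation $V$ of $G_k$ the $G_k$-stable subspaces are exactly the $\sigma$-stable ones; in particular $V$ is a semisimple $G_k$-module if and only if $\sigma$ acts semisimply on $V$, i.e.\ with squarefree minimal polynomial.

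Write $D=\overline{X}\setminus X$, a simple normal crossings divisor, and let $D^{(1)}\to D$ be its normalisation, so that $D^{(1)}$ is smooth and proper over $k$. The weight (localisation) spectral sequence for the smooth variety $X$ gives, in low degree, an exact sequence of $G_k$-modules
$$0\to H^1(\overline{X}_{\bar k},\mathbb{Q}_\ell)\to H^1(X_{\bar k},\mathbb{Q}_\ell)\overset{\on{Res}}{\longrightarrow} H^0(D^{(1)}_{\bar k},\mathbb{Q}_\ell(-1))\overset{\on{Gys}}{\longrightarrow} H^2(\overline{X}_{\bar k},\mathbb{Q}_\ell).$$
Setting $A:=H^1(\overline{X}_{\bar k},\mathbb{Q}_\ell)$ and $B:=\ker(\on{Gys})\subseteq H^0(D^{(1)}_{\bar k},\mathbb{Q}_\ell(-1))$, we obtain a short exact sequence $0\to A\to H^1(X_{\bar k},\mathbb{Q}_\ell)\to B\to 0$.

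Next I would treat $A$ and $B$ in turn. For $A$: purity of weight $1$ is Deligne's theorem, $\overline{X}$ being smooth and proper over the finite field $k$; and the Kummer sequence (using $\mathcal O^\times(\overline X_{\bar k})=\bar k^\times$ is divisible, and $\on{NS}(\overline X_{\bar k})$ is finitely generated) identifies $H^1(\overline{X}_{\bar k},\mathbb{Z}_\ell(1))$ with $T_\ell\on{Pic}^0_{\overline{X}/k}$, so $A\cong V_\ell(\on{Pic}^0_{\overline{X}/k})(-1)$; hence $\sigma$ acts semisimply on $A$ by the theorem of Weil (see also Tate) that the Frobenius endomorphism acts semisimply on the rational Tate module of an abelian variety over a finite field, twisting by the one-dimensional $\mathbb{Q}_\ell(-1)$ being harmless. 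For $B$: $H^0(D^{(1)}_{\bar k},\mathbb{Q}_\ell)$ is a permutation representation of $G_k$ (the action factors through a finite quotient permuting the geometric connected components of $D^{(1)}$), hence semisimple, so $H^0(D^{(1)}_{\bar k},\mathbb{Q}_\ell(-1))$ is semisimple and pure of weight $2$ (its Frobenius eigenvalues are $q$ times roots of unity); its submodule $B$ inherits semisimplicity and purity of weight $2$.

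Finally I would split the sequence. Since $A$ is pure of weight $1$ and $B$ pure of weight $2$, the representation $\on{Hom}_{\mathbb{Q}_\ell}(B,A)\cong B^\vee\otimes A$ is pure of weight $-1$, so $1$ is not among its Frobenius eigenvalues and $\sigma-1$ acts invertibly on it; therefore $\on{Ext}^1_{G_k}(B,A)=H^1(G_k,\on{Hom}(B,A))=\on{Hom}(B,A)/(\sigma-1)=0$, and the extension $0\to A\to H^1(X_{\bar k},\mathbb{Q}_\ell)\to B\to 0$ splits. Hence $H^1(X_{\bar k},\mathbb{Q}_\ell)\cong A\oplus B$ is semisimple, and being an extension of something pure of weight $2$ by something pure of weight $1$, it is mixed of weights $1$ and $2$. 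The genuinely nontrivial input is the semisimplicity statement in the $A$-step, which rests on the Weil--Tate semisimplicity theorem for abelian varieties over finite fields (together with Deligne's purity theorem); everything else is formal. The one point to handle with a little care is that, when $D$ has several components meeting each other, the clean statement to use is the one above — that $\on{gr}^W_2 H^1(X_{\bar k},\mathbb{Q}_\ell)$ embeds via the residue map into $H^0$ of the normalised boundary twisted by $\mathbb{Q}_\ell(-1)$ — which is the standard description of the weight-two part of the cohomology of a smooth variety and is all that is needed.
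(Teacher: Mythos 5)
Your proposal is correct and takes essentially the same route as the paper: the Gysin/localisation sequence you use is exactly the low-degree exact sequence coming from the Leray spectral sequence for $Rj_*$ that the paper writes down (with $H^0(D^{(1)}_{\bar k},\mathbb{Q}_\ell)=\bigoplus_i H^0(D_{i,\bar k},\mathbb{Q}_\ell)$), and both arguments then cite purity plus Weil--Tate semisimplicity for $H^1(\overline{X}_{\bar k},\mathbb{Q}_\ell)$ and split the extension by the weight gap. You merely spell out two steps the paper leaves implicit (the reduction of semisimplicity of $H^1(\overline{X}_{\bar k},\mathbb{Q}_\ell)$ to abelian varieties via $\mathrm{Pic}^0$, and the $\on{Ext}^1=H^1(G_k,-)$ vanishing computation), which is a matter of exposition rather than a different proof.
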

\begin{proof}
Let $j: X\to \overline{X}$ be the embedding.  Let $D_1, \cdots, D_n$ be the components of $\overline{X}\setminus X$.  Then the Leray spectral sequence for $Rj_*$ gives $$0\to H^1(\overline{X}_{\bar k}, \mathbb{Q}_\ell)\to H^1(X_{\bar k}, \mathbb{Q}_\ell)\to \bigoplus_{i\in \{1, \cdots, n\}} H^0(D_i, \mathbb{Q}_\ell)\otimes \mathbb{Q}_\ell(-1)\to H^2(\overline{X}_{\bar k}, \mathbb{Q}_\ell)\to \cdots$$
Now $H^1(\overline{X}_{\bar k}, \mathbb{Q}_\ell)$ is pure of weight $1$ by the Weil conjectures, and $G_k$ acts on it semisimply \cite{tate2}.  On the other hand, let $$V=\ker\left(\bigoplus_{i\in \{1, \cdots, n\}} H^0(D_i, \mathbb{Q}_\ell)\otimes \mathbb{Q}_\ell(-1)\to H^2(\overline{X}_{\bar k}, \mathbb{Q}_\ell)\right).$$  $V$ is manifestly pure of weight $2$, with semisimple $G_k$-action.  But the short exact sequence $$0\to  H^1(\overline{X}_{\bar k}, \mathbb{Q}_\ell)\to H^1(X_{\bar k}, \mathbb{Q}_\ell)\to V\to 0$$ splits (canonically), as the first and last term have different weights.
\end{proof}
The main theorem describing the action of $G_k$ on $\overline{\mathbb{Q}_\ell}[[\Pi_1(X_{\bar k})]]/\mathscr{I}^n$ is a ``non-Abelian" generalization of the above lemma.  Before stating it, we axiomatize the properties of the Frobenius action on $H^1(X_{\bar k}, \mathbb{Q}_\ell)$, as we will later apply these results to elements of $G_k$ which are not Frobenii.
\begin{defn}[Quasi-Frobenii]\label{quasi-frobenii-defn}
Let $(V, F^\bullet)$ be a  vector space over a field $k$ with an increasing filtration indexed by $\mathbb{Z}$, and $\sigma: V\to V$ a semisimple automorphism preserving the filtration.  Let $(\on{Sym}^*(V), F^\bullet)$ be the induced filtration on the symmetric algebra of $V$.  Then we say that $\sigma$ is a \emph{quasi-Frobenius} if the set of $\sigma$ equivariant morphisms $$\on{Hom}_{\sigma}(\on{gr}^i(\on{Sym}^*(V)), \on{gr}^j(\on{Sym}^*(V))=0$$ for all $i\not=j$.  If $\lambda\in \bar k$ is an eigenvalue of $\sigma$ acting on $\on{gr}^i(\on{Sym}^*(V))\otimes \bar k$, we say that $\lambda$ has \emph{weight} $i$; likewise, if $v$ is an eigenvector with eigenvalue $\lambda$, we say that $v$ has \emph{weight} $i$.
\end{defn}
In other words, if $\sigma\in GL(V)$ is a quasi-Frobenius with respect to a filtration $F^\bullet$, no $\sigma$-eigenvalue appears in two different associated-graded pieces of the filtration induced on $\on{Sym}^*(V)$.  Concretely, this means that if $S_i$ is the set of eigenvalues of $\sigma$ appearing in $\on{gr}^iV$, we have that $$\prod_i \prod_{\lambda\in S_i} \lambda^{a_{\lambda, i}}\not=\prod_i \prod_{\lambda\in S_i} \lambda^{b_{\lambda, i}}$$ unless $$\sum ia_{\lambda, i}=\sum ib_{\lambda, i}.$$
We say that the monomial $$\prod_i \prod_{\lambda\in S_i} \lambda^{a_{\lambda, i}}$$ has weight $$\sum ia_{\lambda, i}.$$
\begin{example}
The motivating example for this definition is the following:  Let $k$ be a finite field of cardinality $q$, $X/k$ a variety, and $\ell$ a prime different from the characteristic of $k$.  Then the Frobenius automorphism of $$(H^*(X_{\bar k}, \mathbb{Q}_\ell), W^\bullet)$$ is a quasi-Frobenius, where $W^\bullet$ is the weight filtration, as long as Frobenius acts semisimply on $H^*(X_{\bar k}, \mathbb{Q}_\ell)$.  Indeed, the eigenvalues of Frobenius on $\on{gr}^iH^*(X_{\bar k}, \mathbb{Q}_\ell)$ are $q$-Weil numbers of weight $i$ (by the definition of the weight filtration), so there can be no Frobenius-equivariant maps between pieces of $H^*$ with distinct weights.

In particular, by Lemma \ref{H1-semisimple}, the Frobenius automorphism of $H^1(X_{\bar k}, \mathbb{Q}_\ell)$, with $X$ smooth, geometrically connected, and admitting a simple normal crossings compactification, is a quasi-Frobenius relative to the weight filtration.
\end{example}

\begin{theorem}\label{semisimple-pi1}
Let $k$ be a field, let $X/k$ be a smooth, geometrically connected variety, and let $\overline{X}$ be a smooth compactification of $X$ with simple normal crossings boundary.  Let $x_i, x_j$ be a rational points or rational tangential basepoints of $X$.  Suppose that $\sigma\in G_k$ is a quasi-Frobenius for $H^1(X_{\bar k}, \mathbb{Q}_\ell)$, relative to the weight filtration.  Then $\sigma$ acts semi-simply on $$\overline{\mathbb{Q}_\ell}[[\pi_1^{\text{\'et}, \ell}(X_{\bar k}; \bar x_i, \bar x_j)]]/\mathscr{I}^n(x_i, x_j).$$
\end{theorem}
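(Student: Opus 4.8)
The plan is to analyze $\sigma$ by means of the weight filtration $W^\bullet$ on $A_n := \overline{\mathbb{Q}_\ell}[[\pi_1^{\text{\'et},\ell}(X_{\bar k};\bar x_i,\bar x_j)]]/\mathscr{I}^n(x_i,x_j)$ from Proposition \ref{weight-prop}. First I would replace $k$ by a finitely generated subfield over which $X$, its chosen compactification, and the basepoints are defined; this changes neither the geometric pro-$\ell$ groupoid nor the hypothesis that $\sigma$ is a quasi-Frobenius, and it lets us invoke Proposition \ref{weight-prop}. The filtration $W^\bullet$ is $\sigma$-stable, multiplicative, and finite on $A_n$, since $\mathscr{I}^n(x_i,x_j)=0$ there while $W^{-2n-1}\subseteq\mathscr{I}^n(x_i,x_j)$. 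I would then prove two things: (a) $\sigma$ acts semisimply on each $\on{gr}^{-m}_W A_n$, and (b) the sets of $\sigma$-eigenvalues occurring in $\on{gr}^{-m}_W A_n$ and $\on{gr}^{-m'}_W A_n$ are disjoint whenever $m\neq m'$. These imply the theorem by the following elementary fact: a finite-dimensional module over $\overline{\mathbb{Q}_\ell}[\sigma]$ equipped with a finite $\sigma$-stable filtration whose graded pieces are $\sigma$-semisimple with pairwise disjoint eigenvalue sets is itself $\sigma$-semisimple — indeed, since every term of the filtration is $\sigma$-stable it is a sum of generalized eigenspaces, so by (b) each generalized $\sigma$-eigenspace of $A_n$ is supported in a single graded piece, where by (a) it is a genuine eigenspace.

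For (a): the associated graded $\on{gr}^\bullet_W A_n$ is a graded algebra (in the case $x_i=x_j$; in general it carries the analogous module structure over the graded algebras at $x_i$ and $x_j$), and the defining recursion $W^{-m}=\mathscr{I}\cdot W^{-m+1}+\mathscr{J}\cdot W^{-m+2}$ shows, via multiplicativity, that it is generated in weights $-1$ and $-2$: by $\on{gr}^{-1}_W A_n$ together with the image of $\mathscr{J}$. Now $\on{gr}^{-1}_W A_n$ is a quotient of $\mathscr{I}(x_i,x_j)/\mathscr{I}^2(x_i,x_j)\cong \pi_1^{\text{\'et},\ell}(X_{\bar k},\bar x_i)^{\mathrm{ab}}\otimes\overline{\mathbb{Q}_\ell}\cong H^1(X_{\bar k},\overline{\mathbb{Q}_\ell})^*$ (cf.\ Remark \ref{galois-action-remark}), on which $\sigma$ is semisimple — this is part of the hypothesis that $\sigma$ is a quasi-Frobenius (and is Lemma \ref{H1-semisimple} when $k$ is finite). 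The new weight $-2$ generators from $\mathscr{J}$ lie, modulo products of weight $-1$ generators, in the weight $-2$ part of $H^1(X_{\bar k},\overline{\mathbb{Q}_\ell})^*$, i.e.\ the dual of $\on{gr}^W_2 H^1(X_{\bar k},\overline{\mathbb{Q}_\ell})$; since $X$ admits a simple normal crossings compactification, this is built from Tate twists $\overline{\mathbb{Q}_\ell}(-1)$ and the $H^0$'s of boundary strata, on which $\sigma$ again acts semisimply. Hence $\on{gr}^\bullet_W A_n$ is a $\sigma$-equivariant quotient of the (truncated) tensor algebra on a finite-dimensional $\sigma$-semisimple module; as $\sigma$ is diagonalizable on every tensor power of such a module, it is semisimple on this tensor algebra and therefore on each $\on{gr}^{-m}_W A_n$.

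For (b): by the description in (a), the $\sigma$-eigenvalues appearing in $\on{gr}^{-m}_W A_n$ are products of eigenvalues of $\sigma$ on $H^1(X_{\bar k},\overline{\mathbb{Q}_\ell})^*$ of total weight $-m$, i.e.\ inverses of the weight-$m$ eigenvalues of $\sigma$ on $\on{Sym}^*(H^1(X_{\bar k},\overline{\mathbb{Q}_\ell}))$ (the eigenvalue set on a fixed weight-graded piece of the tensor algebra agrees with that on the corresponding piece of the symmetric algebra, since one can reorder tensor factors). The assertion that these weight-$m$ eigenvalue sets are pairwise disjoint for distinct $m$ is precisely the defining property of a quasi-Frobenius for $H^1(X_{\bar k},\overline{\mathbb{Q}_\ell})$ relative to the weight filtration, so (b) holds.

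The main obstacle is step (a), and within it the assertion that $\on{gr}^\bullet_W A_n$ is generated in weights $-1$ and $-2$ by $\sigma$-semisimple pieces: this forces one to unwind the construction of $W^\bullet$ alongside the structure of $H^1$ of a smooth variety with simple normal crossings compactification (weights $\leq 2$, and the identification of the weight-$2$ quotient with boundary data), in order to see semisimplicity of $\sigma$ on that weight-$2$ quotient. The reduction to a finitely generated base field, the bookkeeping needed to pass between the group and the groupoid (path-torsor) statements, and the concluding linear-algebra argument are all routine.
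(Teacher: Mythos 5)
Your overall strategy is genuinely different from the paper's: rather than constructing an eigenbasis via a $\sigma$-equivariant section of $\mathscr{I}(x)/\mathscr{I}^n(x)\to\mathscr{I}(x)/\mathscr{I}^2(x)$, you reduce to (a) semisimplicity of $\sigma$ on each $\on{gr}^{-m}_W$ plus (b) disjointness of the eigenvalue sets across weight graded pieces, and conclude by the filtration lemma. The lemma is correct, and (b) is precisely the quasi-Frobenius condition transported from $\on{Sym}^*H^1$ to the tensor algebra; those parts are fine, and the reduction is a nice structural reorganization. However, there is a genuine gap in step (a), and it is located exactly where the paper deploys its one piece of non-formal input.

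The gap: you assert that $\on{gr}^\bullet_W A_n$ is a $\sigma$-equivariant quotient of the tensor algebra on the $\sigma$-semisimple module $H^1(X_{\bar k},\overline{\mathbb{Q}_\ell})^\vee=W_{-1}\oplus W_{-2}$. To produce such a surjection one needs a $\sigma$-equivariant graded map $W_{-1}\oplus W_{-2}\to\on{gr}^{-1}_W\oplus\on{gr}^{-2}_W$. The weight $-1$ component is free: $W_{-1}\xrightarrow{\sim}\mathscr{I}/(\mathscr{J}+\mathscr{I}^2)=\on{gr}^{-1}_W$. The weight $-2$ component is not. What you actually establish is a $\sigma$-equivariant surjection onto a \emph{quotient} of $\on{gr}^{-2}_W$, namely $\on{gr}^{-2}_W$ modulo the image of $(W_{-1})^{\otimes 2}$ — this is your ``new generators from $\mathscr{J}$, modulo products of weight $-1$ generators." But to exhibit $\on{gr}^\bullet_W$ as a quotient of the tensor algebra you need a $\sigma$-equivariant \emph{section} $W_{-2}\to\on{gr}^{-2}_W$ of that quotient map, and there is no formal reason for such a section to exist: the kernel (the image of $(W_{-1})^{\otimes 2}$) has weight $-2$ just like $W_{-2}$, so the quasi-Frobenius hypothesis says nothing about the eigenvalues of one being disjoint from the other. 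Concretely, for a CM elliptic curve $E$ over $\mathbb{F}_q$ minus two points, a weight $1$ Frobenius eigenvalue $\alpha$ satisfies $\alpha\bar\alpha=q$, so a product of two weight $-1$ eigenvalues can literally equal the weight $-2$ eigenvalue $q^{-1}$; the extension $0\to(\text{image of }(W_{-1})^{\otimes 2})\to\on{gr}^{-2}_W\to W_{-2}\to0$ can therefore have a nonzero $\sigma$-equivariant Ext class a priori. You gesture at this in your closing paragraph (``identification of the weight-$2$ quotient with boundary data''), but you phrase the needed input as semisimplicity of $\sigma$ on $W_{-2}$, which is already given by the quasi-Frobenius hypothesis and does not close the gap.

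This is exactly the point the paper's proof addresses with its only non-formal ingredient: for each boundary component $D_i$, it chooses a rational tangential basepoint $\tilde y_i$ transverse to $D_i$, forms the $\sigma$-equivariant inertia map $\iota_{\tilde y_i}\colon\mathbb{Z}_\ell(1)\to\pi_1^{\text{\'et},\ell}(X_{\bar k},\overline{\tilde y_i})$, transports via the canonical path $p_i$ of Proposition-Construction \ref{canonical-paths-prop-constr}, and takes $\log$. The resulting elements $p_i^{-1}\log\iota_{\tilde y_i}(\alpha)p_i$ are primitives in $\mathscr{I}(x)/\mathscr{I}^n(x)$ lifting a basis of $W_{-2}$, and $\sigma$-equivariance is built in because every piece of the construction (inertia, canonical paths, $\log$) is canonical. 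That construction supplies precisely the $\sigma$-equivariant section $W_{-2}\to\on{gr}^{-2}_W$ (and in fact a lift all the way to $\mathscr{I}(x)/\mathscr{I}^n(x)$) that your argument needs and does not provide. Once you have it, your tensor-algebra surjection exists, (a) holds, and your filtration lemma finishes the proof — so the two approaches really are one geometric construction apart.
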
 

\begin{remark}
Theorem \ref{semisimple-pi1} is trivial if $H^1(X_{\bar k}, \mathbb{Q}_\ell)$ is pure, e.g.~if $X$ is proper or if $H^1(\overline{X}_{\bar k}, \mathbb{Q}_\ell)=0$.  Indeed, in this case, let $\iota$ be the weight of $H^1(X_{\bar k}, \mathbb{Q}_\ell)$.  Then $$\mathscr{I}(x_i, x_j)/\mathscr{I}^2(x_i, x_j)$$ is pure of weight $-\iota$, and hence $$\mathscr{I}^n(x_i, x_j)/\mathscr{I}^{n+1}(x_i, x_j)$$ is pure of weight $-n\iota$ (see Remark \ref{galois-action-remark}).  Thus the short exact sequences of $\sigma$-modules $$0\to \mathscr{I}^{n+1}(x_i, x_j)/\mathscr{I}^{n+2}(x_i, x_j)\to \mathscr{I}^n(x_i, x_j)/\mathscr{I}^{n+2}(x_i, x_j)\to \mathscr{I}^n(x_i, x_j)/\mathscr{I}^{n+1}(x_i, x_j)\to 0$$ split $\sigma$-equivariantly.  But $\sigma$ acts semisimply on each $$\mathscr{I}^{n}(x_i, x_j)/\mathscr{I}^{n+1}(x_i, x_j),$$ as it does so on $$\mathscr{I}(x_i, x_j)/\mathscr{I}^2(x_i, x_j)=H^1(X_{\bar k}, \mathbb{Q}_\ell)^\vee$$ by assumption, as each of these $\sigma$-modules is a quotient of a tensor power of $\mathscr{I}/\mathscr{I}^2$.
\end{remark}

Before beginning the proof of Theorem \ref{semisimple-pi1}, we need to introduce a new notion.  
\begin{propconstr}[Canonical Paths]\label{canonical-paths-prop-constr}
Let $k$ be a field, let $X/k$ be a smooth, geometrically connected variety, and let $\overline{X}$ be a smooth compactification of $X$ with simple normal crossings boundary.  Let $x_i, x_j$ be a rational points or rational tangential basepoints of $X$.  Suppose that $\sigma\in G_k$ acts as a quasi-Frobenius on $H^1(X_{\bar k}, \mathbb{Q}_\ell)$ relative to the weight filtration.  Then there is a unique element $$p_{i,j}\in \mathbb{Q}_\ell[[\pi_1^{\text{\'et}, \ell}(X_{\bar k}; \bar x_i, \bar x_j)]]$$ such that 
\begin{enumerate}
\item $p_{i,j}$ is $\sigma$-invariant, and
\item $\epsilon(p_{i,j})=1$.  
\end{enumerate}
(Here $\epsilon: \mathbb{Q}_\ell[[\pi_1^{\text{\'et}, \ell}(X_{\bar k}; \bar x_i, \bar x_j)]]\to \mathbb{Q}_\ell$ is the augmentation map.)
\end{propconstr}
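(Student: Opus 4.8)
The plan is to produce $p_{i,j}$ as the unique $\sigma$-fixed point of the affine subspace
$P = \{p\in \mathbb{Q}_\ell[[\pi_1^{\text{\'et}, \ell}(X_{\bar k}; \bar x_i, \bar x_j)]] : \epsilon(p)=1\}$. This set is non-empty — it contains the image of any \'etale path, and $\pi_1^{\text{\'et}, \ell}(X_{\bar k}; \bar x_i, \bar x_j)$ is non-empty because $X$ is geometrically connected — and it is a torsor under the (closed, hence complete) augmentation ideal $\mathscr{I}(x_i,x_j):=\ker(\epsilon)$. Since $\sigma$ acts $\mathbb{Q}_\ell$-linearly via $g\mapsto s_i(\sigma)\,g\,s_j(\sigma)^{-1}$ and commutes with $\epsilon$, it preserves $P$ and $\mathscr{I}(x_i,x_j)$. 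Thus the whole statement reduces to the single assertion that $\sigma-1$ is a bijection of $\mathscr{I}(x_i,x_j)$ onto itself: granting this, for any $p_0\in P$ the element $p_{i,j}:=p_0-(\sigma-1)^{-1}(\sigma p_0 - p_0)$ lies in $P$ (as $\epsilon$ kills the correction term) and satisfies $(\sigma-1)p_{i,j}=0$; and if $p,p'$ both satisfy (1) and (2) then $p-p'\in\mathscr{I}(x_i,x_j)$ is killed by $\sigma-1$, hence is $0$.

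Next I would reduce invertibility of $\sigma-1$ on $\mathscr{I}(x_i,x_j)$ to the graded pieces of the $\mathscr{I}$-adic filtration. Because $\mathbb{Q}_\ell[[\pi_1^{\text{\'et}, \ell}(X_{\bar k}; \bar x_i, \bar x_j)]]$ is $\mathscr{I}$-adically complete and separated by construction and $\mathscr{I}(x_i,x_j)$ is closed, we have $\mathscr{I}(x_i,x_j)=\varprojlim_n \mathscr{I}(x_i,x_j)/\mathscr{I}^n(x_i,x_j)$, compatibly with $\sigma$. Hence it suffices to show $\sigma-1$ is bijective on each $\on{gr}^n := \mathscr{I}^n(x_i,x_j)/\mathscr{I}^{n+1}(x_i,x_j)$ for $n\geq 1$: then it is bijective on every finite quotient $\mathscr{I}(x_i,x_j)/\mathscr{I}^n(x_i,x_j)$ (its matrix in a filtered basis is block-triangular with bijective diagonal blocks), and the inverses assemble into a continuous inverse on the limit. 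One could run the same argument with the weight filtration $W^\bullet$ of Proposition \ref{weight-prop} and purity of $\on{gr}^{-n}_W$ instead; nothing essential changes.

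The crux is the behaviour of $\sigma$ on $\on{gr}^n$. By the Proposition preceding Remark \ref{galois-action-remark} together with that remark, $\mathscr{I}(x_i,x_j)/\mathscr{I}^2(x_i,x_j)$ is $G_k$-equivariantly identified with $\pi_1^{\text{\'et}, \ell}(X_{\bar k}, \bar x_i)^{\text{ab}}\otimes\mathbb{Q}_\ell\cong H^1(X_{\bar k},\mathbb{Q}_\ell)^\vee$ (using geometric connectedness), and the multiplication map $(\mathscr{I}(x_i)/\mathscr{I}^2(x_i))^{\otimes(n-1)}\otimes \mathscr{I}(x_i,x_j)/\mathscr{I}^2(x_i,x_j)\twoheadrightarrow \on{gr}^n$ is a $\sigma$-equivariant surjection, so $\on{gr}^n$ is a subquotient of $(H^1(X_{\bar k},\mathbb{Q}_\ell)^\vee)^{\otimes n}$. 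Now the weight filtration on $H^1(X_{\bar k},\mathbb{Q}_\ell)$, relative to which $\sigma$ is a quasi-Frobenius, has $\on{gr}^j=0$ for $j\notin\{1,2\}$ (the residue sequence $0\to H^1(\overline{X}_{\bar k},\mathbb{Q}_\ell)\to H^1(X_{\bar k},\mathbb{Q}_\ell)\to \bigoplus_i H^0(D_i,\mathbb{Q}_\ell)(-1)$ exhibits $\on{gr}^1$ and $\on{gr}^2$; cf.\ the proof of Lemma \ref{H1-semisimple}). Since $\sigma$ is semisimple, $H^1(X_{\bar k},\mathbb{Q}_\ell)^\vee$ splits $\sigma$-stably into its weight $-1$ and $-2$ parts, so any $\sigma$-eigenvalue occurring in $(H^1(X_{\bar k},\mathbb{Q}_\ell)^\vee)^{\otimes n}$ has the form $\mu_1\cdots\mu_n$ with each $\mu_k^{-1}$ a $\sigma$-eigenvalue on $\on{gr}^{w_k}H^1(X_{\bar k},\mathbb{Q}_\ell)$ for some $w_k\in\{1,2\}$. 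If such a product were $1$, then $\prod_k\mu_k^{-1}=1$ would be a $\sigma$-eigenvalue on the weight-$(\sum_k w_k)$ graded piece of $\on{Sym}^*(H^1(X_{\bar k},\mathbb{Q}_\ell))$ (the eigenvalues there coincide with those of the corresponding weight piece of the tensor algebra, via a $\sigma$-eigenbasis adapted to the weight decomposition); but $\sum_k w_k\geq n\geq 1$, whereas $1$ is also an eigenvalue of $\sigma$ on $\on{gr}^0\on{Sym}^*=\mathbb{Q}_\ell$ (all weights of $H^1$ being positive), contradicting the quasi-Frobenius property that no $\sigma$-eigenvalue appears in two distinct associated-graded pieces of $\on{Sym}^*(H^1(X_{\bar k},\mathbb{Q}_\ell))$. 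Hence $1$ is not a $\sigma$-eigenvalue on $\on{gr}^n$ for $n\geq 1$, so $\sigma-1$ is bijective there, completing the argument.

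The main obstacle is exactly this last eigenvalue bookkeeping: translating ``$\on{gr}^n$ is a subquotient of a tensor power of $H^1(X_{\bar k},\mathbb{Q}_\ell)^\vee$'' into the symmetric-algebra language of Definition \ref{quasi-frobenii-defn} so as to conclude that $1$ never occurs in positive weight. Everything else — non-emptiness of $P$, $\mathbb{Q}_\ell$-linearity of the $\sigma$-action, the passage to the inverse limit, and solving $(\sigma-1)d=\sigma p_0-p_0$ — is formal, and the notion of quasi-Frobenius is engineered precisely to make the crucial step go through.
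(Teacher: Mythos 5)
Your proof is correct and follows the same strategy as the paper's: reduce to the finite-dimensional quotients $\mathbb{Q}_\ell[[\pi_1^{\text{\'et},\ell}(X_{\bar k};\bar x_i,\bar x_j)]]/\mathscr{I}^n$ and observe that, by the quasi-Frobenius hypothesis, the kernel $\mathscr{I}/\mathscr{I}^n$ of the augmentation carries only eigenvalues of strictly negative weight while the target $\mathbb{Q}_\ell$ has weight zero, so no eigenvalue is shared. Your torsor/$(\sigma-1)$-invertibility framing is just an equivalent repackaging of the paper's statement that the augmentation admits a unique $\sigma$-equivariant splitting, so the two arguments are essentially identical.
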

\begin{proof}
As $\mathbb{Q}_\ell[[\pi_1^{\text{\'et}, \ell}(X_{\bar k}; \bar x_i, \bar x_j)]]$ is defined so that $$\mathbb{Q}_\ell[[\pi_1^{\text{\'et}, \ell}(X_{\bar k}; \bar x_i, \bar x_j)]]=\varprojlim_n \mathbb{Q}_\ell[[\pi_1^{\text{\'et}, \ell}(X_{\bar k}; \bar x_i, \bar x_j)]]/\mathscr{I}^n(x_i, x_j),$$ it is enough to show that there are unique  elements $$p_{i,j}^n\in \mathbb{Q}_\ell[[\pi_1^{\text{\'et}, \ell}(X_{\bar k}; \bar x_i, \bar x_j)]]/\mathscr{I}^n(x_i, x_j)$$ satisfying the desired conditions; that is, we wish to find a canonical $\sigma$-equivariant splitting of the augmentation map $$\epsilon_n: \mathbb{Q}_\ell[[\pi_1^{\text{\'et}, \ell}(X_{\bar k}; \bar x_i, \bar x_j)]]/\mathscr{I}^n(x_i, x_j)\to \mathbb{Q}_\ell.$$  But the kernel of this map is $$\mathscr{I}(x_i, x_j)/\mathscr{I}^n(x_i, x_j).$$  By Lemma \ref{H1-semisimple}, the weights of $$\mathscr{I}(x_i, x_j)/\mathscr{I}^2(x_i, x_j)$$ are in the interval $[-2, -1]$; hence the weights of $$\mathscr{I}^r(x_i, x_j)/\mathscr{I}^{r+1}(x_i, x_j)$$ are in the interval $[-r, -2r]$.  Thus the kernel of the augmentation map $\epsilon_n$ has strictly negative weight.  But the target of the augmentation map has weight zero; hence the augmentation map splits uniquely, as desired.
\end{proof}
We call the element $p_{i,j}$ above \emph{the $\sigma$-canonical path} between $x_i$ and $x_j$.
\begin{remark}
If $x_i=x_j$, $p_{i,j}$ is just the identity of $\mathbb{Q}_\ell[[\pi_1^{\text{\'et}, \ell}(X_{\bar k}, \bar x_i)]].$  Thus $$p_{i,j}=p_{i,j}^{-1}$$ $$p_{i,j}\circ p_{j,k}=p_{i,k}$$ (by the unicity of the canonical paths, and Galois-equivariance of the composition maps).  Moreover, it is not hard to see that the $p_{i,j}$ are \emph{group-like} in the sense of Definition \ref{grouplike}.
\end{remark}
\begin{proof}[Proof of Theorem \ref{semisimple-pi1}]
We first observe that it suffices to prove Theorem \ref{semisimple-pi1} in the case $x_i=x_j$.  Indeed, composition with $p_{i,j}$ induces a canonical $\sigma$-equivariant isomorphism $$\mathbb{Q}_\ell[[\pi_1^{\text{\'et}, \ell}(X_{\bar k}, \bar x_i)]]/\mathscr{I}^n(x_i)\to \mathbb{Q}_\ell[[\pi_1^{\text{\'et}, \ell}(X_{\bar k}; \bar x_i, \bar x_j)]]/\mathscr{I}^n(x_i, x_j).$$  Thus without loss of generality, $i=j$, and we denote $x_i$ by simply $x$.  Moreover, it is harmless to extend scalars to $\overline{\mathbb{Q}_\ell}$ everywhere.

Next, we claim it suffices to find a $\sigma$-equivariant splitting $s$ of the natural map $$\mathscr{I}(x)/\mathscr{I}^n(x)\to \mathscr{I}(x)/\mathscr{I}^2(x).$$

Indeed, $\sigma$ acts semisimply on $\mathscr{I}(x)/\mathscr{I}^2(x)$ by assumption, so we may choose a basis of eigenvectors $\{v_1, \cdots, v_n\}$.  Then the monomials in $s(v_i)$ give a basis of eigenvectors for $$\mathbb{Q}_\ell[[\pi_1^{\text{\'et}, \ell}(X_{\bar k}, \bar x)]]/\mathscr{I}^n(x),$$ which proves semisimplicity.

We now construct a splitting $$s:\mathscr{I}(x)/\mathscr{I}^2(x)\to \mathscr{I}(x)/\mathscr{I}^n(x)$$ as desired above.  Observe that as $\sigma$ is a quasi-Frobenius relative to the weight filtration, we have a  splitting of the weight filtration $$\mathscr{I}(x)/\mathscr{I}^2(x)=W_1\oplus W_2$$ canonically where $W_1$ has weight $-1$ and $W_2$ has weight $-2$, using the eigenspace decomposition of $\sigma$.  The projection $$\mathscr{I}(x)/\mathscr{I}^n(x)\to \mathscr{I}(x)/\mathscr{I}^2(x)\to W_1$$ has kernel with weights in $[-2n, -2]$ (again by Lemma \ref{H1-semisimple}), so there is a canonical splitting $$s_1: W_1\to \mathscr{I}(x)/\mathscr{I}^n(x).$$  It remains to construct a splitting $s_2: W_2\to \mathscr{I}(x)/\mathscr{I}^n(x)$; if $W_2=0$, the proof is complete.

Let $D_1, \cdots, D_n$ be the components of the simple normal crossings boundary $\overline{X}\setminus X$.  Let $j: X\to \overline{X}$ be the embedding, and recall that the Leray spectral sequence for $Rj_*$ gives an exact sequence
$$0\to H^1(\overline{X}_{\bar k}, \mathbb{Q}_\ell)\to H^1(X_{\bar k}, \mathbb{Q}_\ell)\to \bigoplus_{i\in \{1, \cdots, n\}} H^0(D_i, \mathbb{Q}_\ell)\otimes \mathbb{Q}_\ell(-1)\to \cdots.$$

 Dualizing gives an exact sequence $$\bigoplus_{i\in \{1, \cdots, n\}}H^0(D_i, \mathbb{Q}_\ell)^{\vee}\otimes \mathbb{Q}_\ell(1)\overset{\iota}{\to} \mathscr{I}(x)/\mathscr{I}^2(x)\to H^1(\overline{X}_{\bar k}, \mathbb{Q}_\ell)^\vee\to 0.$$  Comparing weights gives that $W_2$ is precisely the image of $\iota$ above.  Thus it suffices to lift $\iota$ to a map into $\mathscr{I}(x)/\mathscr{I}^n(x)$.  
 
See Figure \ref{weight2-schematic} for a schematic depiction of this lift.  The statement of the theorem is insensitive to replacing $k$ with a finite extension (by replacing $\sigma$ with a power), so we may assume that each $$D_i\setminus \bigcup_{j\not=i} D_j$$ contains a rational point $y_i$.  Let $y_i':\on{Spec}(k[[t]])\to \overline{X}$ be the germ of a curve in $\overline{X}$ with the closed point mapping to $y_i$ and transverse to $D_i$.  Let $\tilde y_i: \on{Spec}(k((t)))\to X$ be the associated rational tangential basepoint, and $\overline{\tilde{y_i}}$ a geometric point obtained by choosing an embedding $k((t))\hookrightarrow \overline{k((t))}$.  In Section \ref{inertia-section}, we constructed natural Galois-equivariant maps $$\iota_{\tilde y_i}: \mathbb{Z}_\ell(1)\to \pi_1^{\text{\'et}, \ell}(X_{\bar k}, \overline{\tilde y_i}).$$  Thus there are natural maps $$(\iota_{\tilde y_i}-1)\otimes \mathbb{Q}_\ell: \mathbb{Q}_\ell(1)\to \mathscr{I}(\tilde y_i)/\mathscr{I}^2(\tilde y_i).$$  Letting $r_i: \mathscr{I}(\tilde y_i)/\mathscr{I}^2(\tilde y_i)\to \mathscr{I}(x)/\mathscr{I}^2(x)$ be the canonical identification from Lemma \ref{augmentation-abelianization-groupoid}, (that is, the identification induced from conjugation by any element of $\pi_1^{\text{\'et}, \ell}(X_{\bar k}; \bar x, \overline{\tilde y_i})$), we obtain a map $$\bigoplus_{i\in \{1, \cdots, n\}} r_i \circ (\iota_{\tilde y_i}-1)\otimes \mathbb{Q}_\ell: \bigoplus_{i\in\{1, \cdots, n\}}\mathbb{Q}_\ell(1)\to \mathscr{I}(x)/\mathscr{I}^2(x).$$  The image of this map is may be canonically identified with the image of $\iota$ above; we rename this map $\iota'$.  
\begin{figure}[h]
\centering{
\resizebox{130mm}{!}{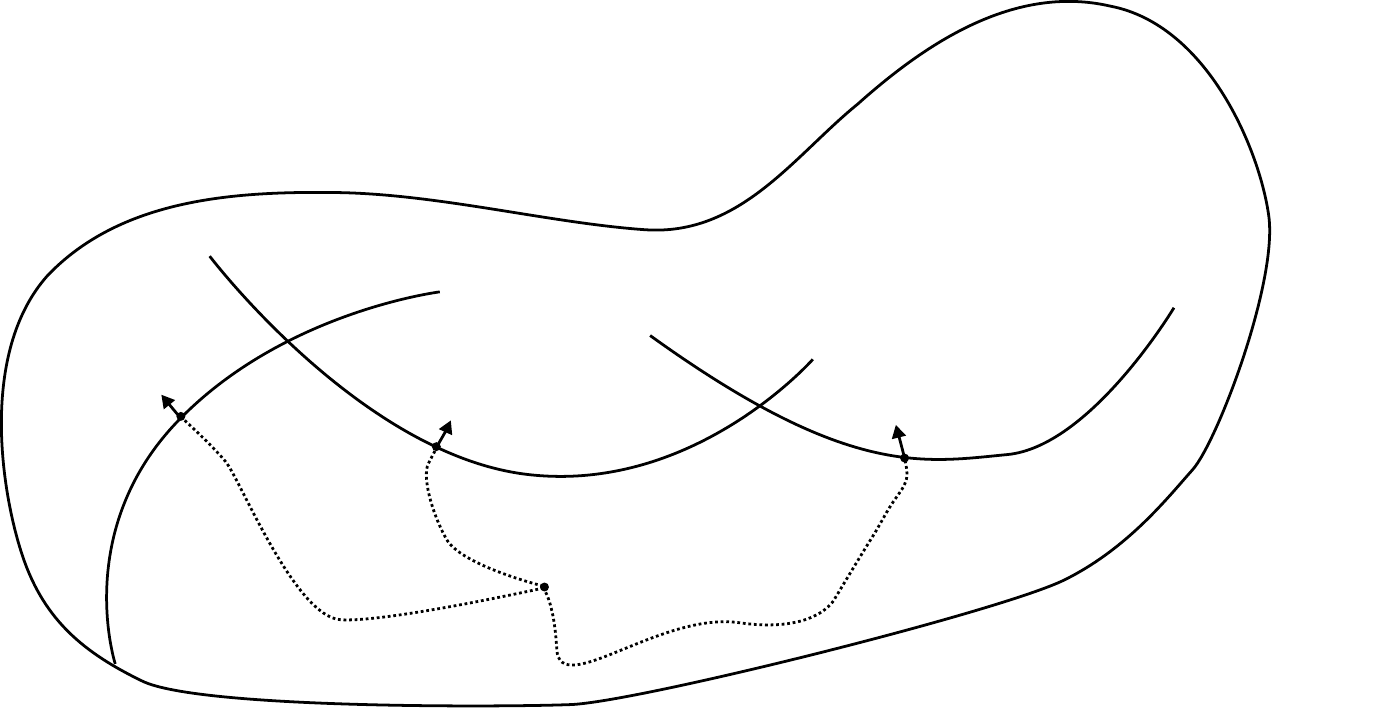}
\caption{A schematic depiction of the lift of the weight $-2$ part of $\mathscr{I}(x)/\mathscr{I}^2(x)$ to $\mathbb{Q}_\ell[[\pi_1^{\text{\'et}, \ell}(X_{\bar k}, \bar x)]]$}\label{weight2-schematic}
}
\end{figure} 
 
 Now it suffices to lift $\iota'$ to a map into $\mathscr{I}(x)/\mathscr{I}^n(x)$.  Let $p_i\in \mathbb{Q}_\ell[[\pi_1^{\text{\'et}, \ell}(X_{\bar k}; \bar x, \overline{\tilde y_i})]]$ be the $\sigma$-canonical path.  Then the map $$s_i: \mathbb{Q}_\ell(1)\to \mathscr{I}(x)/\mathscr{I}^n(x)$$ defined by $$s_i: \alpha\mapsto p_i^{-1}\log \iota_{\tilde y_i}(\alpha)p_i$$
 is $\sigma$-equivariant, and the map $$\bigoplus_{i\in \{1, \cdots, n\}} s_i: \bigoplus \mathbb{Q}_\ell(1)\to \mathscr{I}(x)/\mathscr{I}^n(x)$$ lifts $\iota'$, as desired.  Here $\log$ is defined by the usual power series $$\log(1+x)=\sum (-1)^{n+1} \frac{x^n}{n},$$ which converges as $\iota_{\tilde{ y}}(\alpha)-1$ is in the augmentation ideal of $\mathbb{Q}_\ell[[\pi_1^{\text{\'et}, \ell}(X_{\bar k}, \bar x)]].$
\end{proof}
\begin{corollary}\label{splitting-I-adic-filtration}
Let $X/k$ and $\sigma$ be as in Theorem \ref{semisimple-pi1}.  Then there is a $\sigma$-equivariant isomorphism $$\overline{\mathbb{Q}_\ell}[[\pi_1^{\text{\'et}, \ell}(X_{\bar k}; \bar x_i, \bar x_j)]]/\mathscr{I}^n(x_i, x_j)\simeq \bigoplus_{r=0}^{n-1} \mathscr{I}^r(x_i, x_j)/\mathscr{I}^{r+1}(x_i, x_j)\otimes \overline{\mathbb{Q}_\ell}.$$ 
\end{corollary}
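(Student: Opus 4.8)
The plan is to deduce this formally from Theorem~\ref{semisimple-pi1} together with a standard fact about semisimple operators. Set
$$M_n := \overline{\mathbb{Q}_\ell}[[\pi_1^{\text{\'et}, \ell}(X_{\bar k}; \bar x_i, \bar x_j)]]/\mathscr{I}^n(x_i, x_j),$$
filtered decreasingly by the images $F^r := \mathscr{I}^r(x_i,x_j)/\mathscr{I}^n(x_i,x_j)$ for $0 \le r \le n$, so that $\on{gr}^r_F M_n = \mathscr{I}^r(x_i,x_j)/\mathscr{I}^{r+1}(x_i,x_j) \otimes \overline{\mathbb{Q}_\ell}$. First I would record that $M_n$ is finite-dimensional over $\overline{\mathbb{Q}_\ell}$: since $X$ is smooth with a simple normal crossings compactification, $\pi_1^{\text{\'et}, \ell}(X_{\bar k}, \bar x_i)$ is topologically finitely generated, so $\mathscr{I}(x_i)/\mathscr{I}^2(x_i)$ is finite-dimensional by Lemma~\ref{augmentation-abelianization-group-ring}; each $\mathscr{I}^r(x_i,x_j)/\mathscr{I}^{r+1}(x_i,x_j)$ is a quotient of a tensor power of it (cf.\ Remark~\ref{galois-action-remark}), hence finite-dimensional, and $M_n$ is a successive extension of finitely many such pieces.

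Next I would check that $\sigma$ preserves the filtration $F^\bullet$. When $i = j$ this is immediate, as $\sigma$ acts on $\mathbb{Q}_\ell[[\pi_1^{\text{\'et}, \ell}(X_{\bar k}, \bar x_i)]]$ through the ring automorphism $g \mapsto s_i(\sigma)\,g\,s_i(\sigma)^{-1}$, which fixes the identity and therefore preserves the augmentation ideal and all its powers. For general $i, j$ one reduces to the case $i = j$ exactly as in the proof of Theorem~\ref{semisimple-pi1}: composition with the $\sigma$-canonical path $p_{i,j}$ of Proposition-Construction~\ref{canonical-paths-prop-constr} is a $\sigma$-equivariant isomorphism $M_n(x_i) \to M_n(x_i, x_j)$ carrying $\mathscr{I}^r(x_i)$ onto $\mathscr{I}^r(x_i, x_j)$, since $\epsilon(p_{i,j}) = 1$.

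With these two points in hand, the corollary reduces to the elementary observation that a semisimple automorphism $\sigma$ of a finite-dimensional vector space $V$ preserving a finite decreasing filtration $V = F^0 \supseteq F^1 \supseteq \cdots \supseteq F^N = 0$ admits a $\sigma$-equivariant isomorphism $V \simeq \bigoplus_r \on{gr}^r_F V$. Indeed, each $F^r$ is a $\sigma$-stable subspace of the semisimple $\overline{\mathbb{Q}_\ell}[\sigma]$-module $V$, hence a direct summand; choosing a $\sigma$-stable complement $C^r$ to $F^{r+1}$ inside $F^r$ gives $C^r \simeq \on{gr}^r_F V$, and $V = \bigoplus_r C^r$ by descending induction on $r$. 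Applying this with $V = M_n$, whose semisimplicity as a $\sigma$-module is precisely Theorem~\ref{semisimple-pi1}, yields the claim.

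I do not expect a substantive obstacle here beyond this bookkeeping; the one point worth flagging is that the isomorphism is not canonical (the complements $C^r$ are not unique), so it should not be expected to be compatible with composition of paths, and no such compatibility is asserted. As a cross-check, one can alternatively extract the decomposition directly from the proof of Theorem~\ref{semisimple-pi1}, which already produces an eigenbasis of $M_n(x)$ consisting of monomials of prescribed $\mathscr{I}$-adic order in a $\sigma$-equivariant lift of an eigenbasis of $\mathscr{I}(x)/\mathscr{I}^2(x)$.
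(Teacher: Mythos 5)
Your proof is correct. The paper takes a more explicit route: it picks a basis of $\sigma$-eigenvectors $\{\bar v_r\}$ of $\mathscr{I}(x_i,x_j)/\mathscr{I}^2(x_i,x_j)$, lifts them to $\sigma$-eigenvectors $\{v_r\}$ in $\mathscr{I}(x_i,x_j)/\mathscr{I}^n(x_i,x_j)$ (possible by semisimplicity from Theorem~\ref{semisimple-pi1}), and shows that the elements $p_{i,j}\cdot m$, for $m$ ranging over monomials in the $v_r p_{i,j}^{-1}$, form a diagonalizing basis respecting the $\mathscr{I}$-adic grading; the isomorphism then sends $p_{i,j}m$ of degree $r$ to its image $\bar m$ in $\mathscr{I}^r/\mathscr{I}^{r+1}$. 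You instead treat Theorem~\ref{semisimple-pi1} as a black box and invoke the general fact that a semisimple automorphism of a finite-dimensional space preserving a finite filtration splits it equivariantly, by descending induction on $\sigma$-stable complements. Your argument is cleaner and more modular (it does not need to re-enter the construction from the proof of Theorem~\ref{semisimple-pi1}), and you correctly flag the two ingredients that make it work---finite-dimensionality of $M_n$ and $\sigma$-stability of the $\mathscr{I}$-adic filtration---neither of which the paper states explicitly. What the paper's version buys is an explicit eigenbasis with known $\mathscr{I}$-adic degrees and weights, which is the form actually reused later (e.g.\ in Theorems~\ref{diagonalizing-convergent-rings-pure} and~\ref{diagonalizing-convergent-rings-mixed}, where the integrality of the monomial eigenvectors matters). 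One small economy available to you: for $i\neq j$ the $\sigma$-stability of the filtration is just as immediate as for $i=j$, since the action $g\mapsto s_i(\sigma)\,g\,s_j(\sigma)^{-1}$ commutes with the augmentation and respects the Hopf groupoid structure, so the detour through $p_{i,j}$ is not needed for that step (though it is harmless).
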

\begin{proof}
Let $\{\bar v_r\}$ be a basis of $\sigma$-eigenvectors for $\mathscr{I}(x_i, x_j)/\mathscr{I}^2(x_i, x_j),$ and let $\{v_r\}$ be lifts to $\sigma$-eigenvectors in $\mathscr{I}(x_i, x_j)/\mathscr{I}^n(x_i, x_j)$.  Let $$p_{i, j}\in \overline{\mathbb{Q}_\ell}[[\pi_1^{\text{\'et}, \ell}(X_{\bar k}; \bar x_i, \bar x_j)]]/\mathscr{I}^n(x_i, x_j)$$ be the image of the $\sigma$-canonical path.  Then each $v_rp_{i,j}^{-1}$ is a $\sigma$-eigenvector in $\overline{\mathbb{Q}_\ell}[[\pi_1^{\text{\'et}, \ell}(X_{\bar k}, \bar x_i)]]/\mathscr{I}^n(x_i)$.  Then if $m$ is a monomial in the $v_rp_{i,j}^{-1}$, we have that $p_{i,j}m$ is a $\sigma$-eigenvector in $$\overline{\mathbb{Q}_\ell}[[\pi_1^{\text{\'et}, \ell}(X_{\bar k}; \bar x_i, \bar x_j)]]/\mathscr{I}^n(x_i, x_j);$$ as we range over all monomials $m$, this gives a diagonal basis for the action of $\sigma$.  If $m$ is a monomial of degree $r$, then $p_{i,j}m\in \mathscr{I}^r(x_i, x_j)$.  Let $\overline{m}$ denote its image in $\mathscr{I}^r(x_i, x_j)/\mathscr{I}^{r+1}(x_i, x_j)$.  Now the desired isomorphism is induced by the map sending $p_{i,j}m$ to $\overline{m}$.
\end{proof}
\subsection{Quasi-scalars}
The purpose of this subsection is to produce certain elements of $G_k$ which are quasi-Frobenii for $\mathbb{Q}_\ell[[\pi_1^{\text{\'et}, \ell}(X_{\bar k}; \bar x_i, \bar x_j)]]$ relative to the weight filtration; moreover, they act by scalars on associated graded pieces of the weight filtration.  The ideas in this section are an adaptation of \cite[Corollaire 2]{bogomolov} and were suggested to the author by Will Sawin \cite{sawin} after an earlier version of this manuscript was circulated.

\begin{defn}[Quasi-Scalar]
Let $(V, F^\bullet)$ be a vector space with an increasing filtration, and let $\sigma$ be an automorphism of $V$ preserving $F^\bullet$.  Then we say that $\sigma$ is a \emph{quasi-scalar} if there exists a non-zero, non-root-of-unity scalar $\alpha$ such that $\alpha$ acts on $\on{gr}^i(V)$ via $\alpha^i$.  We call $\alpha$ the \emph{degree} of the quasi-scalar $\sigma$.
\end{defn}

\begin{remark}
Note that all quasi-scalars are quasi-Frobenii. 
\end{remark}

\begin{example}
The motivation for this definition again comes from the Frobenius action on the weight filtration, in this case in the mixed Tate setting.  Suppose that $k$ is a finite field and $X/k$ a variety such that $H^*(X_{\bar k}, \mathbb{Q}_\ell)$ is mixed Tate.  Then if Frobenius acts semisimply on $H^*(X_{\bar k}, \mathbb{Q}_\ell)$, some power of Frobenius acts a quasiscalar on $H^*$ relative to the weight filtration. 
\end{example}

We now show that in many cases (i.e. if $X$ is smooth over a finitely generated field $k$ of characteristic zero) there exist $\sigma \in G_k$ such that $\sigma$ acts on $H^1(X_{\bar k}, \mathbb{Q}_\ell)$ via a quasi-scalar, relative to the weight filtration.  It will follow that $\sigma$ acts on $\mathbb{Q}_\ell[[\Pi_1(X_{\bar k})]]$ via a quasi-scalar as well.

\begin{theorem}\label{H1-quasi-scalar}
Let $k$ be a finitely generated field of characteristic zero, and let $X/k$ be a smooth, geometrically connected variety. Then there for all $\alpha\in \mathbb{Z}_\ell^\times$ sufficiently close to $1$, there exists $\sigma\in G_k$ such that $\sigma$ acts on $H^1(X_{\bar k}, \mathbb{Q}_\ell)$ as a quasi-scalar of degree $\alpha$ with respect to the weight filtration.
\end{theorem}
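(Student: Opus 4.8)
The plan is to deduce the theorem from F.~Bogomolov's theorem that the image of Galois in the automorphism group of the rational Tate module of an abelian variety contains an open subgroup of the homotheties \cite[Corollaire 2]{bogomolov}, combined with the explicit two--step shape of the weight filtration on $H^1(X_{\bar k},\mathbb{Q}_\ell)$ recorded in Lemma \ref{H1-semisimple} and the self--duality of $H^1$ of a smooth projective variety. Recall from the proof of Lemma \ref{H1-semisimple} that, for $j\colon X\hookrightarrow\overline X$ a smooth compactification with simple normal crossings boundary $D=D_1\cup\cdots\cup D_n$, the weight filtration on $H^1(X_{\bar k},\mathbb{Q}_\ell)$ has exactly two nonzero graded pieces: $\on{gr}^1_W=H^1(\overline X_{\bar k},\mathbb{Q}_\ell)$, pure of weight $1$, and $\on{gr}^2_W\subseteq\bigoplus_i H^0(D_i,\mathbb{Q}_\ell)\otimes\mathbb{Q}_\ell(-1)$, pure of weight $2$. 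Since the conclusion only concerns $G_k$ up to replacing it by an open subgroup, I may pass freely to a finite extension of $k$; after doing so I may assume that $\overline X$ has a $k$--rational point and that every geometric irreducible component of each $D_i$ is defined over $k$, so that $G_k$ acts trivially on each $H^0(D_i,\mathbb{Q}_\ell)$, $\on{gr}^2_W$ is a direct sum of copies of $\mathbb{Q}_\ell(-1)$, and any $\sigma\in G_k$ acts on $\on{gr}^2_W$ by the scalar $\chi(\sigma)^{-1}$. Thus it suffices to produce, for each $\alpha\in\mathbb{Z}_\ell^\times$ sufficiently close to $1$, an element $\sigma\in G_k$ with
$$\sigma|_{\on{gr}^1_W}=\alpha\cdot\on{id}\qquad\text{and}\qquad\chi(\sigma)=\alpha^{-2};$$
such a $\sigma$ then acts on $\on{gr}^2_W$ by $\alpha^2$ and so is a quasi--scalar of degree $\alpha$ for the weight filtration.

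For the first condition, note that under the present hypotheses $H^1(\overline X_{\bar k},\mathbb{Q}_\ell)$ is $G_k$--equivariantly isomorphic to $H^1$ of the abelian variety $A:=\on{Alb}(\overline X)$ over $k$, equivalently to the $G_k$--dual of the rational Tate module $V_\ell A$. Bogomolov's theorem then gives an open subgroup $U\subseteq\mathbb{Z}_\ell^\times$ with $1\in U$ such that the image of $G_k$ in $GL(H^1(\overline X_{\bar k},\mathbb{Q}_\ell))$ contains $\{u\cdot\on{id}:u\in U\}$. Bogomolov proves this over number fields; to obtain it for the finitely generated field $k$ I would spread $A$ out to an abelian scheme over a finite type $\mathbb{Z}[1/\ell]$--algebra $R$ with $\on{Frac}(R)=k$, pick a closed point $x\in\on{Spec}R$ (whose residue field $\kappa(x)$ is a number field), and observe that the representation of $G_k$ on $V_\ell A$ factors through $\pi_1^{\text{\'et}}(\on{Spec}R)$ and therefore has image containing the image of $G_{\kappa(x)}$, to which Bogomolov's theorem applies. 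In particular, for every $\alpha\in U$ there is $\sigma\in G_k$ with $\sigma|_{H^1(\overline X_{\bar k},\mathbb{Q}_\ell)}=\alpha\cdot\on{id}$; since every $\alpha$ sufficiently close to $1$ lies in $U$, this is all that is needed for the first condition.

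The second condition is then automatic. An ample line bundle on $\overline X$ defined over $k$ provides, via Hard Lefschetz and Poincar\'e duality, a $G_k$--equivariant perfect alternating pairing $H^1(\overline X_{\bar k},\mathbb{Q}_\ell)\otimes H^1(\overline X_{\bar k},\mathbb{Q}_\ell)\to\mathbb{Q}_\ell(-1)$, i.e.\ a $G_k$--equivariant isomorphism $H^1(\overline X_{\bar k},\mathbb{Q}_\ell)\cong H^1(\overline X_{\bar k},\mathbb{Q}_\ell)^\vee(-1)$. Comparing the action of the $\sigma$ produced above on the two sides of this isomorphism forces $\alpha=\alpha^{-1}\chi(\sigma)^{-1}$, i.e.\ $\chi(\sigma)=\alpha^{-2}$, exactly as required; hence $\sigma$ is a quasi--scalar of degree $\alpha$. (If $H^1(\overline X_{\bar k},\mathbb{Q}_\ell)=0$, so that $H^1(X_{\bar k},\mathbb{Q}_\ell)$ is pure of weight $2$, there is nothing to impose on $\on{gr}^1_W$ and one only needs $\sigma$ with $\chi(\sigma)=\alpha^{-2}$; such $\sigma$ exists for all $\alpha$ close to $1$ because $\chi\colon G_k\to\mathbb{Z}_\ell^\times$ has open image.)

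I expect the genuine difficulty to be concentrated entirely in the Bogomolov input: forcing the homotheties into the Galois image is the deep step, resting on Faltings's semisimplicity and Tate theorems together with Sen theory, as in \cite{bogomolov}, and although the descent of that statement from number fields to finitely generated fields of characteristic zero via spreading out and specialization is routine, it must be set up with some care. Everything else --- the two--step weight filtration, the harmless finite base changes ensuring the boundary components are rational, and the self--duality bookkeeping that promotes the weight--$1$ behavior of $\sigma$ to the desired weight--$2$ behavior --- is purely formal.
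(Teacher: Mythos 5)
Your proof is correct in substance and takes a genuinely different route from the paper's, so let me compare the two and also flag one small slip.

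\textbf{A small error to fix.} You spread $A$ out to an abelian scheme over a finite-type $\mathbb{Z}[1/\ell]$-algebra $R$ and then say that a closed point of $\on{Spec}R$ has a number field residue field. This is false: by the Nullstellensatz for finite-type $\mathbb{Z}$-algebras, closed points of $\on{Spec}R$ have \emph{finite} residue fields. Since $k$ has characteristic zero, you should instead spread out over a finite-type $\mathbb{Q}$-algebra $R$ (equivalently, realize $k$ as the function field of a variety over a number field); then closed points genuinely have number-field residue fields, $\pi_1^{\text{\'et}}(\eta_S)\twoheadrightarrow\pi_1^{\text{\'et}}(S)$ for $S$ normal identifies $\on{im}(G_k)$ with $\on{im}(\pi_1^{\text{\'et}}(S))$, and the decomposition group at a closed point gives $\on{im}(G_{\kappa(s)})\subseteq\on{im}(G_k)$, which is exactly what you need to import Bogomolov.

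\textbf{Comparison with the paper's proof.} The paper also passes through Bogomolov, but uses it differently. It lets $G\subset GL(H^1(X_{\bar k},\mathbb{Q}_\ell))$ be the Zariski closure of the image of $G_k$, and the real work is showing that the one-parameter subgroup $T'$ of ``quasi-scalars'' (matrices acting by $\alpha$ on $W_1$ and $\alpha^2$ on $W_2$) inside $G$ is positive-dimensional; this is done by picking a Frobenius at a good closed point, diagonalizing it, applying the norm map $\on{Nm}_{E/\mathbb{Q}}$ to its eigenvalues to manufacture a $\mathbb{Q}_\ell$-rational element of infinite order in the Frobenius torus which is a quasi-scalar, and observing it lies in $G(\mathbb{Q}_\ell)$. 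Only then is Bogomolov's open-image theorem invoked, to see that $\on{im}(G_k)\cap T'(\mathbb{Q}_\ell)$ is nonempty open. Your argument bypasses the Frobenius-torus/norm construction entirely: you quote Bogomolov's homothety corollary directly (on $V_\ell\on{Alb}(\overline X)$, hence on $\on{gr}^1_W$), and then observe that the Weil-pairing self-duality $H^1(\overline X_{\bar k},\mathbb{Q}_\ell)\cong H^1(\overline X_{\bar k},\mathbb{Q}_\ell)^\vee(-1)$ \emph{forces} $\chi(\sigma)=\alpha^{-2}$, so the weight-$2$ behavior comes for free once the components $D_i$ are made geometrically irreducible over $k$. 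This is shorter and cleanly separates the ``deep'' input (Bogomolov) from the formal bookkeeping. What the paper's approach buys in exchange is that it works directly with the Zariski closure $G$ rather than detouring through the Albanese, and it exhibits an explicit rational point of the torus $T'$ built from Frobenius eigenvalue norms---a construction the author hints (Remark \ref{bound-observation-1} and the discussion of Frobenius tori) is wanted for further structural statements. Both arguments are valid; yours is arguably the tidier proof of the theorem as stated.
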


Before giving the proof, we observe the following corollary:

\begin{corollary}\label{pi1-quasi-scalar}
Let $k$ be a finitely generated field of characteristic zero, and let $X/k$ be a smooth, geometrically connected variety.   Then there for all $\alpha\in \mathbb{Z}_\ell^\times$ sufficiently close to $1$, there exists $\sigma\in G_k$ such that $\sigma$ acts on $\mathbb{Q}_\ell[[\pi_1^{\text{\'et}, \ell}(X_{\bar k}; \bar x_i, \bar x_j)]]$ as a quasi-scalar of degree $\alpha$ with respect to the weight filtration, for any two rational points or rational tangential basepoints $x_i, x_j$.
\end{corollary}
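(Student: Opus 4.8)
The plan is to deduce the corollary from Theorem~\ref{H1-quasi-scalar} by an essentially formal argument: the weight filtration $W^\bullet$ on $\mathbb{Q}_\ell[[\Pi_1(X_{\bar k})]]$ is multiplicative and lives in non-positive degrees, so a quasi-scalar action on the degree-one generators propagates to the whole Hopf groupoid. Fix $\alpha\in \mathbb{Z}_\ell^\times$ sufficiently close to $1$; then $\alpha\neq 1$, so $\alpha$ is not a root of unity (the only root of unity in $1+\ell\mathbb{Z}_\ell$ for $\ell$ odd, and in $1+4\mathbb{Z}_2$ for $\ell=2$, is $1$). Let $\sigma\in G_k$ be the element produced by Theorem~\ref{H1-quasi-scalar}, so that $\sigma$ acts on $\on{gr}^i_W H^1(X_{\bar k},\mathbb{Q}_\ell)$ by $\alpha^i$. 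In particular $\sigma$ is a quasi-scalar, hence a quasi-Frobenius, on $H^1(X_{\bar k},\mathbb{Q}_\ell)$ relative to the weight filtration, so every construction of the preceding subsections applies to it.

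First I would reduce to the case $x_i=x_j$. Since $\sigma$ is a quasi-Frobenius on $H^1$, Proposition-Construction~\ref{canonical-paths-prop-constr} produces the $\sigma$-canonical path $p_{i,j}$, which is group-like, $\sigma$-invariant, and satisfies $\epsilon(p_{i,j})=1$; hence $p_{i,j}\in W^0\setminus W^{-1}$, i.e.\ it has weight exactly $0$, and likewise $p_{j,i}=p_{i,j}^{-1}\in W^0$. Because $W^\bullet$ is multiplicative on the Hopf groupoid (Proposition~\ref{weight-prop}, applied componentwise: $W^{-a}(x_i,x_j)\cdot W^{-b}(x_j,x_k)\subseteq W^{-a-b}(x_i,x_k)$), composition with $p_{i,j}$ is a $\sigma$-equivariant, filtration-preserving isomorphism $\mathbb{Q}_\ell[[\pi_1^{\text{\'et},\ell}(X_{\bar k},\bar x_i)]]\overset{\sim}{\longrightarrow}\mathbb{Q}_\ell[[\pi_1^{\text{\'et},\ell}(X_{\bar k};\bar x_i,\bar x_j)]]$, with inverse given by composition with $p_{j,i}$. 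So it suffices to show that $\sigma$ is a quasi-scalar of degree $\alpha$ on $\mathbb{Q}_\ell[[\pi_1^{\text{\'et},\ell}(X_{\bar k},\bar x)]]$ for a single rational point or rational tangential basepoint $x$; here the target is a complete filtered Hopf algebra and $\on{gr}^\bullet_W$ is a graded ring.

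Next I would pin down the $\sigma$-action on the graded pieces. On $\on{gr}^0_W=\mathbb{Q}_\ell[[\pi_1^{\text{\'et},\ell}(X_{\bar k},\bar x)]]/\mathscr{I}(x)\cong \mathbb{Q}_\ell$ the element $\sigma$ acts trivially, i.e.\ by $\alpha^0$. By Remark~\ref{galois-action-remark}, $\mathscr{I}(x)/\mathscr{I}^2(x)\cong H^1(X_{\bar k},\mathbb{Q}_\ell)^\vee$ as $G_k$-modules; by Lemma~\ref{H1-semisimple} and the construction of $W^\bullet$ its weights lie in $\{-1,-2\}$, with $\on{gr}^{-1}_W\cong(\on{gr}^1_W H^1)^\vee$ and $\on{gr}^{-2}_W\cong(\on{gr}^2_W H^1)^\vee$, so $\sigma$ acts on $\on{gr}^{-w}_W(\mathscr{I}(x)/\mathscr{I}^2(x))$ by $\alpha^{-w}$ for $w\in\{1,2\}$. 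Now, exactly as in the remark following Theorem~\ref{semisimple-pi1}, the surjective Galois-equivariant multiplication maps $(\mathscr{I}(x)/\mathscr{I}^2(x))^{\otimes m}\twoheadrightarrow \mathscr{I}^m(x)/\mathscr{I}^{m+1}(x)$ together with the comparison $\mathscr{I}^m(x)\subseteq W^{-m}$ and $W^{-2m-1}\subseteq \mathscr{I}^m(x)$ of Proposition~\ref{weight-prop} exhibit $\on{gr}^{-i}_W\mathbb{Q}_\ell[[\pi_1^{\text{\'et},\ell}(X_{\bar k},\bar x)]]$, for each $i\geq 0$, as a $\sigma$-equivariant subquotient of a finite direct sum of weight-$(-i)$ parts of tensor powers $(\mathscr{I}(x)/\mathscr{I}^2(x))^{\otimes m}$. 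On such a weight-$(-i)$ part $\sigma$ acts by the product of the eigenvalues of its $\sigma$-eigenvector factors, whose weights sum to $-i$, namely by $\alpha^{-i}$. Hence
$$\sigma\text{ acts on }\on{gr}^{-i}_W\mathbb{Q}_\ell[[\pi_1^{\text{\'et},\ell}(X_{\bar k},\bar x)]]\text{ by }\alpha^{-i}\text{ for all }i\geq 0;$$
since $\alpha$ is not a root of unity, this is precisely the assertion that $\sigma$ is a quasi-scalar of degree $\alpha$, and (unwinding the reduction of the second paragraph) the corollary follows.

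I do not expect a genuine obstacle here: the corollary is formal once Theorem~\ref{H1-quasi-scalar} is in hand. The only point requiring a moment's care is the claim that composition with $p_{i,j}$ respects the weight filtrations on the two hom-spaces, which amounts to recording that $W^\bullet$ is a filtration of the Hopf groupoid by two-sided ideals compatible with composition, together with the weight-$0$ bound on $p_{i,j}$ and $p_{j,i}$; this is immediate from Proposition~\ref{weight-prop} and the definition of $W^\bullet$. The substantive content lies entirely in Theorem~\ref{H1-quasi-scalar}.
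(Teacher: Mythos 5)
Your proof is correct and takes essentially the same approach as the paper: reduce to a single basepoint, then propagate the quasi-scalar structure from $\mathscr{I}(x)/\mathscr{I}^2(x)$ to the whole group algebra using the semisimplicity furnished by Theorem~\ref{semisimple-pi1} together with the surjection from the tensor algebra on $\mathscr{I}(x)/\mathscr{I}^2(x)$. The paper packages this via the unique $\sigma$-equivariant splitting $s:\mathscr{I}/\mathscr{I}^2\to\mathscr{I}$ and the induced map $T^*(s)$ (and simply asserts that the case $x_i\neq x_j$ is identical), whereas you argue directly on the weight-graded pieces and spell out the reduction via the canonical path $p_{i,j}$; these are stylistic variants of the same argument.
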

\begin{proof}
For notational simplicity we assume $x_i=x_j$; the proof is identical in the case the two basepoints are distinct.

Let $\sigma$ be as in Theorem \ref{H1-quasi-scalar}, i.e. an element of $G_k$ acting via a quasi-scalar of degree $\alpha$ on $H^1(X_{\bar k}, \mathbb{Q}_\ell)$.  We claim that $\sigma$ also acts via a quasi-scalar of degree $\alpha$ on $\mathbb{Q}_\ell[[\pi_1^{\text{\'et}, \ell}(X_{\bar k}, \bar x)]]$.

Indeed, $\sigma$ is a quasi-Frobenius relative to the weight filtration on $H^1$; hence by Theorem \ref{semisimple-pi1}, $\sigma$ acts semisimply on $\mathbb{Q}_\ell[[\pi_1^{\text{\'et}, \ell}(X_{\bar k}, \bar x)]]/\mathscr{I}^n$ for all $n$.  Thus there exists a unique $\sigma$-equivariant splitting $s$ of the map $$\mathscr{I}\to \mathscr{I}/\mathscr{I}^2;$$  Let $$T^*(s): T^*(\mathscr{I}/\mathscr{I}^2)\to \mathbb{Q}_\ell[[\pi_1^{\text{\'et}, \ell}(X_{\bar k}, \bar x)]]$$ be the induced $\sigma$-equivariant map from the tensor algebra on $\mathscr{I}/\mathscr{I}^2$ to the $\mathbb{Q}_\ell$-group algebra.  This map is surjective, respects the weight filtration, and $\sigma$ acts on the tensor algebra via a quasi-scalar of degree $\alpha$, so the proof is complete.
\end{proof}

We now prove the theorem.

\begin{proof}[Proof of Theorem \ref{H1-quasi-scalar}]
Let $V=H^1(X_{\bar k}, \mathbb{Q}_\ell)$, and let $n=\dim(V)$.  Let $G\subset GL(V)$ be the Zariski closure of the image of the natural map $$\rho: G_k\to GL(V).$$  We first show that there exists a quasi-scalar in $G(\mathbb{Q}_\ell)$ adapted to the weight filtration on $V$.  We let $\overline{X}$ be a compactification of $X$ with simple normal crossings boundary $D$; spread the pair $(\overline{X}, D)$ out over a finite-type $\mathbb{Z}$-algebra $R$.  Let $\mathfrak{p}\in \on{Spec}(R)$ be a closed point at which the pair $(\overline{X}, D)$ has good reduction, i.e. $\overline{X}_{\mathfrak{p}}$ is smooth and $D_{\mathfrak{p}}$ is a simple normal crossings divisor.  Let $q=\#|R/\mathfrak{p}|$.

We let $F\in G_k$ be a Frobenius at $\mathfrak{p}$.  By Lemma \ref{H1-semisimple}, there exists some finite extension $L\supset \mathbb{Q}_\ell$ so that $F$ acts diagonalizably on $V_L=H^1(X_{\bar k}, L).$  Let $\{v_i\}\subset V_L$ be a diagonal basis for $F$, with eigenvalues $\lambda_i$, so that $$F(v_i)=\lambda_i\cdot v_i.$$  Let $T\subset G_L$ be the Zariski-closure of $\{F^n\}$.  We now describe $T$ in terms of the $\{\lambda_i\}$.  Let $S \subset \mathbb{Z}^{n}$ be the set of $n$-tuples of integers $(\alpha_1, \cdots, \alpha_n)$ such that $$\prod \lambda_i^{\alpha_i}=1.$$  Then in the basis $\{v_i\}$, $T$ is the subgroup scheme of $GL(V)$ consisting of diagonal matrices
$$\begin{pmatrix}
\gamma_1 & 0 & 0 &\cdots & 0\\
0 & \gamma_2 & 0 & \ddots & 0\\
0 & 0 & \gamma_3 & \ddots & 0\\
\vdots & \ddots & \ddots &  \ddots & \vdots\\
0 & 0 & 0 & \cdots & \gamma_n
\end{pmatrix}$$
such that for $(\alpha_1, \cdots, \alpha_n)\in S,$  $$\prod \gamma_i^{\alpha_i}=1.$$   (That is, the $\gamma_i$ satisfy the same multiplicative relations as the $\lambda_i$.)  In particular, we have that if $E/\mathbb{Q}$ is the (finite) extension of $\mathbb{Q}$ generated by the $\lambda_i$, then for any $(\alpha_1, \cdots, \alpha_n)\in S,$  $$\prod \on{Nm}_{E/\mathbb{Q}}(\lambda_i)^{\alpha_i}=1$$  (as the norm map is a multiplicative homomorphism), and hence the matrix 
$$A:=\begin{pmatrix}
\on{Nm}_{E/\mathbb{Q}}(\lambda_1) & 0 & 0 &\cdots & 0\\
0 & \on{Nm}_{E/\mathbb{Q}}(\lambda_2)  & 0 & \ddots & 0\\
0 & 0 & \on{Nm}_{E/\mathbb{Q}}(\lambda_3)  & \ddots & 0\\
\vdots & \ddots & \ddots &  \ddots & \vdots\\
0 & 0 & 0 & \cdots & \on{Nm}_{E/\mathbb{Q}}(\lambda_n) 
\end{pmatrix}$$
is in $T$.  Then some some power $A^N$ of this matrix is a quasi-scalar in $T(L)$, adapted to the weight filtration.  Observe that $A$ has infinite order, as the diagonal entries are norms of $q$-Weil integers. 

But this matrix in fact lies in $$T(L)\cap GL(V)(\mathbb{Q}_\ell)\subset GL(V)(L).$$  Indeed, $A^N$ is in $T(L)$ by definition.  The choice of Frobenius $F$ splits the weight filtration on $V$ over $\mathbb{Q}_\ell$ (into generalized eigenspaces).  Let $$V=W_1\oplus W_2$$ be this splitting.  As $A^N$ commutes with $F$ by definition, $A^N$ is just the operator which acts by one scalar on $W_1$ and another on $W_2$; hence it lies in $GL(V)(\mathbb{Q}_\ell)$ as desired.  Hence $A^N$ in fact lies in $G(\mathbb{Q}_\ell)$.

Now, we produce the desired $\sigma\in G_k$.  Let $V=W_1\oplus W_2$ be the splitting defined previously, and consider the subtorus $T'$ of $G$ consisting of matrices in $G$ which act by a scalar $\alpha$ on $W_1$ and by the scalar $\alpha^2$ on $W_2$.  We have just shown that this subtorus $T'$ of $G$ is positive-dimensional, as it contains an element of infinite order.  But by the main result of \cite{bogomolov} (which in fact holds for any finitely generated field of characteristic zero --- see e.g. Serre's argument \cite[Letter to Ribet, 1/1/1981]{serre} for a reduction to the number field case proven in \cite{bogomolov}), the image of $G_k$ in $G(\mathbb{Q}_\ell)$ is open.  Hence, in particular, $\on{im}(G_k)\cap T'(\mathbb{Q}_\ell)$ is non-empty and open, proving the theorem.
\end{proof}
\begin{remark}\label{bound-observation-1}
Observe that the degree of the quasi-scalar we construct depends only on the index of the image of the representation $$G_k\to GL(H^1(X_{\bar k}, \mathbb{Z}_\ell))$$ in its Zariski closure.  In particular, if one believes standard adelic open image conjectures for Galois representations, one may find, for almost all $\ell$, a quasi-scalar of degree $q$, where $q$ is a topological generator of $\mathbb{Z}_\ell^\times$.
\end{remark}

\section{Integral $\ell$-adic periods}\label{integral-l-adic-periods-section}
Let $k$ be a finitely generated field, $X/k$ a smooth variety with simple normal crossings compactification, and $x$ a rational point or rational tangential basepoint of $X$.  We have shown (in Theorem \ref{semisimple-pi1}) that if $\sigma\in G_k$ is a quasi-Frobenius for $H^1(X_{\bar k}, \mathbb{Q}_\ell)$, then the weight filtration on $\mathbb{Q}_\ell[[\pi_1^{\text{\'et}, \ell}(X_{\bar k}, \bar x)]]$ splits $\sigma$-equivariantly.  However, the weight filtration of  $\mathbb{Z}_\ell[[\pi_1^{\text{\'et}, \ell}(X_{\bar k}, \bar x)]]$ does not split $\sigma$-equivariantly---the purpose of this section is to study its failure to do so.
\subsection{Generalities on integral $\ell$-adic periods}\label{integral-l-adic-generalities}
Combining Remark \ref{galois-action-remark} and Corollary \ref{splitting-I-adic-filtration}, we have a complete description of the action of a quasi-Frobenius $\sigma \in G_k$ on $$\overline{\mathbb{Q}_\ell}[[\Pi_1(X_{\bar k})]],$$ for $X$ a smooth variety with simple normal crossings compactification.  We now analyze the situation with $\overline{\mathbb{Q}_\ell}$ replaced with $\overline{\mathbb{Z}_\ell}$ (the normalization of $\mathbb{Z}_\ell$ in $\overline{\mathbb{Q}_\ell}$).  Here, the situation is more subtle.  In the previous section, we heavily used the fact that $\mathbb{Q}_\ell[\sigma]$-modules split into generalized eigenspaces; this is no longer true for $\mathbb{Z}_\ell[\sigma]$-modules.

\begin{example}\label{non-split-sequence}
Let $k$ be a positive integer.  Consider the following short exact sequence of $\mathbb{Z}_\ell$-modules with endomorphism:
$$\xymatrixcolsep{5pc}\xymatrixrowsep{6pc}\xymatrix{
0 \ar[r] & \mathbb{Z}_\ell \ar[r]^{\begin{pmatrix} 1 \\ 0\end{pmatrix}} \ar[d]^{\cdot 1} & \mathbb{Z}_\ell^2 \ar[r]^{\begin{pmatrix} 0 & 1\end{pmatrix}} \ar[d]^{\begin{pmatrix} 1 & 1 \\ 0 & 1+\ell^r \end{pmatrix}} & \mathbb{Z}_\ell \ar[r] \ar[d]^{\cdot (1+\ell^r)} & 0\\
0 \ar[r] & \mathbb{Z}_\ell \ar[r]^{\begin{pmatrix} 1 \\ 0\end{pmatrix}}  & \mathbb{Z}_\ell^2 \ar[r]^{\begin{pmatrix} 0 & 1\end{pmatrix}}  & \mathbb{Z}_\ell \ar[r]  & 0.
}$$
This sequence does not split; on tensoring with $\mathbb{Q}_\ell$ it does split, via the map $$\begin{pmatrix} \frac{1}{\ell^r} \\ 1 \end{pmatrix}: \mathbb{Q}_\ell\to\mathbb{Q}_\ell^2.$$
\end{example}
\begin{defn}\label{integral-periods-defn}
Let $$0\to W\to V\overset{\pi}{\to} V/W\to 0$$ be a short exact sequence of free $\overline{\mathbb{Z}_\ell}$-modules, and let $F: V\to V$ be an endomorphism preserving $W$.  Then we define the \emph{integral $\ell$-adic period} $b_{V, W}$ of the triple $(V, W, F)$ to be the infimum among all $v_\ell(a)\in \mathbb{Q}$, $a\in \overline{\mathbb{Z}_\ell}$ such that there exists an $F$-equivariant map $s: a \cdot V/W\to V$ with $\pi\circ s=\on{id}$, or $\infty$ if no such $a$ exists.  That is, we are taking the minimum valuation of $a$ such that there exists a commutative diagram 
$$\xymatrix{
 		&		&				& V/W \ar[d]^{\cdot a} \ar[dl]_s &\\
0\ar[r] & W\ar[r] & V\ar[r]^\pi & V/W \ar[r] & 0.
}$$
\end{defn}
For example, the integral $\ell$-adic period of the sequence in Example \ref{non-split-sequence} is $r$.  For any split sequence of modules with endomorphism, the integral $\ell$-adic period is zero. 

The point of this definition is that if $b_{V,W}$ is finite, a choice of $s$ as Definition \ref{integral-periods-defn} induces an isomorphism $$\tilde s: (W\oplus V/W)\otimes \overline{\mathbb{Q}_\ell}\overset{\sim}{\to}V\otimes \overline{\mathbb{Q}_\ell}.$$ The matrix entries defining this isomorphism with respect to  $\overline{\mathbb{Z}_\ell}$-bases of both sides have valuation bounded below by $-b_{V, W}$.  We view $\tilde s$ as a kind of ``comparison isomorphism" analogous to Hodge-theoretic comparison isomorphisms, and so these matrix entries are analogous to periods.
\begin{remark}
Suppose that $F$ acts semisimply on $V\otimes \overline{\mathbb{Q}_\ell}$, so that the Zariski-closure of $\langle F^n\rangle$ in $GL_n(V\otimes \overline{\mathbb{Q}_\ell})$ is a torus $T$.  Then as one runs over all $F$-stable submodules of $V$, the associated $\ell$-adic periods control the reduction type of $T$.  For example, in Example \ref{non-split-sequence}, we see that for $k>0$, the torus in question has additive reduction.  If $(V, F)$ arises from geometry (e.g.~$F$ encodes the action of Frobenius on some $\ell$-adic cohomology group or fundamental group), then the torus $T$ is referred to as a Frobenius torus.  

We plan to elaborate on the integral structure of Frobenius tori and Mumford-Tate groups in future work; we will not require these observations in this paper, but we feel they strengthen the analogy between integral $\ell$-adic periods and periods arising from other notions of Hodge theory.  In particular, one expects integral $\ell$-adic periods to be controlled by an integral structure on the Mumford-Tate group, at least in cases when the integral Tate conjecture is satisfied.
\end{remark}

Suppose that $V$ is a mixed representation of $G_k$ over $\mathbb{Q}_\ell$, and that $T\subset V$ is a $G_k$-stable sublattice.  Then there is a weight filtration $W^\bullet V$, and we denote its intersection with $T$ by $W^\bullet T$.  

The main purpose of this section is to fix an element $\sigma\in G_k$ and study the integral $\ell$-adic periods associated to the short exact sequences (for $s>t>u$)
\begin{equation}\label{weight-sequence}
0\to W^tT/W^{u}T\to W^sT/W^{u}T\to W^sT/W^tT\to 0
 \end{equation}
  with respect to the action of $\sigma$.  In our case, we will take $$T=\mathbb{Z}_\ell[[\pi_1^{\text{\'et}, \ell}(X_{\bar k}; x_i, x_j)]]/W^{-N}\mathbb{Z}_\ell[[\pi_1^{\text{\'et}, \ell}(X_{\bar k}; \bar x_i, \bar x_j)]]$$ to be a quotient of the free pro-finitely generated module on the path torsor of a variety, and $\sigma\in G_k$ to be an element which acts on $T$ via a quasi-scalar relative to the weight filtration.  See the next subsection (\ref{weight-2-subsection}) for a definition of the weight filtration on $\mathbb{Z}_\ell[[\pi_1^{\text{\'et}, \ell}(X_{\bar k}; \bar x_i, \bar x_j)]]$.
  \begin{remark}
  The integral $\ell$-adic periods associated to sequence \ref{weight-sequence} in the case $s=0, t=-1$ are related to Wojtkowiak's $\ell$-adic iterated integrals \cite{wojtkowiakI, wojtkowiakII, wojtkowiakIII, wojtkowiakIV, wojtkowiakV}.  These periods measure the extent to which $\ell$-power denominators are required to construct the canonical paths of Proposition-Construction \ref{canonical-paths-prop-constr}.  As in the proof of Theorem \ref{semisimple-pi1}, these periods control the weight $-2$ part of $\overline{\mathbb{Z}_\ell}[[\Pi_1(X_{\bar k})]]/\mathscr{I}^n$. 
  \end{remark}
  \begin{remark}
  The main results of \cite[Section 19]{deligne} may be viewed as a computation of the periods associated to sequence \ref{weight-sequence} with $X=\mathbb{P}^1\setminus\{0,1,\infty\}$ and $\Pi_1^{\text{\'et}, \ell}(X_{\bar k})$ replaced with a certain metabelian quotient thereof.
  \end{remark}
  We discuss some generalities about iterated $\ell$-adic periods before proceeding to bound the periods associated to sequences \ref{weight-sequence}. First, we characterize integral $\ell$-adic periods in terms of $\on{Ext}^1$ groups.
\begin{lemma}\label{ext-lemma}
Let $R$ be a ring and let $M, N$ be right $R$-modules.  Suppose $\alpha\in \text{Ext}^1_R(M, N)$ is a class representing a short exact sequence $$0\to N\to E\to M\to 0.$$   Then for $r\in  \on{End}_R(M)$, the short exact sequence in the top row of  
$$\xymatrix{
0 \ar[r] & N\ar[r] \ar[d]& M\times_M E\ar[r] \ar[d]& M\ar[r]\ar[d]^{r}& 0\\
0 \ar[r] & N\ar[r] & E\ar[r] & M\ar[r] &0
}$$
splits if and only if $r\cdot \alpha=0$.  Here the right square in the diagram above is Cartesian.
\end{lemma}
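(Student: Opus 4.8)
The plan is to recognize the top row of the diagram as the pullback of the extension $\alpha$ along $r$, and then invoke the standard correspondence between elements of $\text{Ext}^1_R(M,N)$ and isomorphism classes of extensions of $M$ by $N$.

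First I would unwind the fiber product. Writing $\iota\colon N\hookrightarrow E$ and $\pi\colon E\twoheadrightarrow M$ for the maps in the extension $\alpha$, the Cartesian square identifies $M\times_M E$ with the submodule $\{(m,e)\in M\oplus E : r(m)=\pi(e)\}$, with the map to $M$ given by $(m,e)\mapsto m$ and with $N$ embedded via $n\mapsto(0,\iota(n))$. A direct check shows these maps are $R$-linear and that $0\to N\to M\times_M E\to M\to 0$ is exact; this is exactly the standard construction of the pullback extension $r^{*}\alpha$.

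Next comes the key input: the Yoneda--Baer formalism shows that the class in $\text{Ext}^1_R(M,N)$ of the pullback extension $r^{*}\alpha$ is the image of $\alpha$ under the endomorphism of $\text{Ext}^1_R(M,N)$ induced by applying the contravariant functor $\text{Ext}^1_R(-,N)$ to $r\colon M\to M$; in the notation of the statement this image is precisely $r\cdot\alpha$, for the natural $\text{End}_R(M)$-action on $\text{Ext}^1_R(M,N)$. Concretely one can see this by a short diagram chase, or simply cite a standard homological algebra reference: precomposition of Yoneda cocycles realizes this module action.

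Finally, I would invoke the defining property of $\text{Ext}^1$: a short exact sequence $0\to N\to E'\to M\to 0$ admits an $R$-linear section of $E'\to M$ if and only if its Yoneda class in $\text{Ext}^1_R(M,N)$ vanishes. Applying this to $E'=M\times_M E$, whose class is $r\cdot\alpha$ by the previous paragraph, gives the asserted equivalence. I do not expect a genuine obstacle: the lemma is a formal consequence of standard properties of $\text{Ext}^1$, and the only point deserving care is a bookkeeping one, namely confirming that the contravariant action of $r$ on $\text{Ext}^1_R(-,N)$ is computed by \emph{pullback} of extensions along $r$ (rather than pushout), so that it matches the notation $r\cdot\alpha$ used in the statement.
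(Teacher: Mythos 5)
Your proof is correct and takes essentially the same approach as the paper, which simply cites ``the Yoneda description of the $\on{End}_R(M)$-module structure on $\on{Ext}^1_R(M,N)$''; you have merely spelled out the details (identifying the top row as the pullback extension $r^{*}\alpha$, matching its Yoneda class to $r\cdot\alpha$, and invoking the vanishing criterion for splitting) that the paper leaves implicit.
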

\begin{proof}
This is immediate from the Yoneda description of the $\on{End}_R(M)$-module structure on $\on{Ext}^1_R(M,N)$. 
\end{proof}
  \begin{corollary}\label{ext-interpretation}
  Let $V$ be a finitely generated, free $\overline{\mathbb{Z}_\ell}$-module, $W\subset V$ a free submodule, and $F: V\to V$ an endomorphism preserving $W$ (so that $V, W$ are $\overline{\mathbb{Z}_\ell}[F]$-modules).  Then the short exact sequence $$0\to W\to V\to V/W\to 0$$ corresponds to a class $$\alpha\in \on{Ext}^1_{\overline{\mathbb{Z}_\ell}[F]}(V/W, W).$$  If $\alpha$ is not a $\overline{\mathbb{Z}_\ell}$-torsion class, the integral $\ell$-adic period of the triple $(V, W, F)$ is infinite; otherwise, it is the infimum of all $v_\ell(b)$ such that $b\cdot\alpha=0$, with $b\in \overline{\mathbb{Z}_\ell}$.
  \end{corollary}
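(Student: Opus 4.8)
The plan is to deduce the corollary from Lemma \ref{ext-lemma}, applied over the commutative ring $R := \overline{\mathbb{Z}_\ell}[F]$ (the image of the polynomial ring $\overline{\mathbb{Z}_\ell}[t]$ in $\on{End}_{\overline{\mathbb{Z}_\ell}}(V)$ with $t\mapsto F$), over which $V$, $W$, and $V/W$ are modules and the given sequence $0\to W\to V\to V/W\to 0$ is a short exact sequence of $R$-modules; its Yoneda class is the desired $\alpha\in\on{Ext}^1_R(V/W,W)$. Since $\overline{\mathbb{Z}_\ell}$ is central in $R$, the group $\on{Ext}^1_R(V/W,W)$ is a $\overline{\mathbb{Z}_\ell}$-module, and for $a\in\overline{\mathbb{Z}_\ell}$ the scalar action of $a$ coincides with the action of the endomorphism ``multiplication by $a$'' of $V/W$, viewed as an element of $\on{End}_R(V/W)$. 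Note also that being $\overline{\mathbb{Z}_\ell}$-linear and $F$-equivariant is the same as being $R$-linear, so the maps $s$ in Definition \ref{integral-periods-defn} are exactly $R$-module maps.

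First I would translate the condition defining $b_{V,W}$ into a statement about pullbacks. Fix $a\in\overline{\mathbb{Z}_\ell}$ with $a\neq 0$. Division by $a$ is an isomorphism $V/W\xrightarrow{\ \sim\ }a\cdot V/W$ of $R$-modules under which the inclusion $a\cdot V/W\hookrightarrow V/W$ becomes the endomorphism ``multiplication by $a$'' of $V/W$. Hence an $F$-equivariant section $s\colon a\cdot V/W\to V$ of $\pi$ over $a\cdot V/W$ is the same datum as a splitting of the pullback of $0\to W\to V\to V/W\to 0$ along multiplication by $a$ on $V/W$; under the above identification this pulled-back sequence is precisely the top row of the diagram in Lemma \ref{ext-lemma} with $M=V/W$, $N=W$, $E=V$, and $r=$ multiplication by $a$. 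The remaining case $a=0$ contributes only the (vacuously split) zero sequence and the valuation $v_\ell(0)=\infty$, which does not affect the infimum.

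Next I would invoke Lemma \ref{ext-lemma}: this pulled-back sequence splits if and only if $a\cdot\alpha=0$ in $\on{Ext}^1_R(V/W,W)$. Combining the two steps, the set of valuations $v_\ell(a)$ over which the infimum in Definition \ref{integral-periods-defn} is taken is exactly $\{v_\ell(a)\colon a\in\overline{\mathbb{Z}_\ell},\ a\cdot\alpha=0\}$ (the value $\infty$, from $a=0$, being harmless). Therefore $b_{V,W}$ equals the infimum of this set. If $\alpha$ is not $\overline{\mathbb{Z}_\ell}$-torsion, no nonzero $a$ annihilates it, the set collapses to $\{\infty\}$, and $b_{V,W}=\infty$; if $\alpha$ is $\overline{\mathbb{Z}_\ell}$-torsion, the infimum runs over the nonempty set of $v_\ell(b)$ with $b\cdot\alpha=0$, giving the stated formula.

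I do not expect any real obstacle here; the corollary is a formal consequence of Lemma \ref{ext-lemma}. The only points requiring care are bookkeeping: identifying the $\on{End}_R(V/W)$-action on $\on{Ext}^1$ appearing in Lemma \ref{ext-lemma} with ordinary scalar multiplication by $a$ (immediate, as $a$ is central), correctly matching the ``scaling the quotient'' description of Definition \ref{integral-periods-defn} with ``pulling back along multiplication by $a$,'' and noting that $V/W$ is free so that Definition \ref{integral-periods-defn} applies at all — which we may take as part of the hypotheses, and which holds automatically in our applications, where $W$ is a step of the weight filtration on a lattice in a mixed representation and hence is saturated, making $V/W$ a finitely generated torsion-free, thus free, $\overline{\mathbb{Z}_\ell}$-module.
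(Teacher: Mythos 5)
Your proof is correct and is essentially the paper's argument spelled out in full: the paper simply says the corollary follows from Lemma \ref{ext-lemma} by taking $r$ to be multiplication by $b$, and you supply exactly the bookkeeping needed to see why — passing to $R=\overline{\mathbb{Z}_\ell}[F]$, identifying the scalar action of $\overline{\mathbb{Z}_\ell}$ on $\on{Ext}^1_R(V/W,W)$ with the Yoneda action of multiplication-by-$a$ as an $R$-endomorphism, and matching the ``scaled section $s\colon a\cdot V/W\to V$'' of Definition \ref{integral-periods-defn} with a splitting of the pullback along $\cdot a$. (Two harmless quibbles: the arrow labeled ``division by $a$'' should go $a\cdot V/W\to V/W$ rather than the reverse, and your closing remark that one must assume $V/W$ free is really a remark about the hypotheses of Definition \ref{integral-periods-defn} that the corollary inherits, not a gap in the proof.)
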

  \begin{proof}
  This is follows from Lemma \ref{ext-lemma}, setting $r=b$.
  \end{proof}
  Our main technique to bound integral $\ell$-adic periods associated to $(V, W, F)$ will be to bound the annihilator of $$\on{Ext}^1_{\overline{\mathbb{Z}_\ell}[F]}(V/W, W)$$ in $\overline{\mathbb{Z}_\ell}$.  We briefly record two lemmas which allow us to compute these annihilators.
  \begin{lemma}\label{ext-computation}
  Let $W$ be a $\overline{\mathbb{Z}_\ell}[F]$-module.  Then there is an exact sequence $$W\overset{1-F}{\longrightarrow} W\to \on{Ext}^1_{\overline{\mathbb{Z}_\ell}[F]}(\overline{\mathbb{Z}_\ell}, W)\to 0$$ where $F$ acts on $\overline{\mathbb{Z}_\ell}$ by the identity. 
  \end{lemma}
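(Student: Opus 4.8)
The plan is to compute $\on{Ext}^1_{\overline{\mathbb{Z}_\ell}[F]}(\overline{\mathbb{Z}_\ell}, W)$ from an explicit length-one free resolution of $\overline{\mathbb{Z}_\ell}$. First I would note that the data of a $\overline{\mathbb{Z}_\ell}[F]$-module is precisely the data of a $\overline{\mathbb{Z}_\ell}$-module equipped with an endomorphism, i.e.\ a module over the polynomial ring $R:=\overline{\mathbb{Z}_\ell}[t]$ with $t$ acting as $F$; under this identification, $\overline{\mathbb{Z}_\ell}$ with $F$ acting by the identity is the cyclic $R$-module $R/(t-1)$.

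Next I would exhibit the two-term free resolution
$$0\to R\xrightarrow{\ \cdot(t-1)\ } R\to R/(t-1)\to 0,$$
which is exact since the augmentation $R\to R/(t-1)$ is surjective with kernel the ideal $(t-1)$, and $t-1$ is a non-zero-divisor in $R$ (because $\overline{\mathbb{Z}_\ell}$ is a domain, hence so is $\overline{\mathbb{Z}_\ell}[t]$). Thus this is a free resolution of $\overline{\mathbb{Z}_\ell}$ over $R=\overline{\mathbb{Z}_\ell}[F]$ of length one.

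Then I would apply $\Hom_R(-,W)$ and use the canonical identification $\Hom_R(R,W)\cong W$, $\phi\mapsto\phi(1)$. Tracking a homomorphism $\phi$ through this identification, the map induced by precomposition with multiplication by $t-1$ sends $w=\phi(1)$ to $\phi(t-1)=(t-1)\cdot w=(F-1)w$. Hence $\on{Ext}^{\bullet}_{\overline{\mathbb{Z}_\ell}[F]}(\overline{\mathbb{Z}_\ell}, W)$ is computed by the complex $W\xrightarrow{F-1}W$ placed in degrees $0$ and $1$; in particular $\on{Ext}^1_{\overline{\mathbb{Z}_\ell}[F]}(\overline{\mathbb{Z}_\ell}, W)=\operatorname{coker}(F-1\colon W\to W)$. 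Since $1-F=-(F-1)$ has the same image as $F-1$, this cokernel equals $\operatorname{coker}(1-F)$, which yields exactly the asserted exact sequence $W\xrightarrow{1-F}W\to\on{Ext}^1_{\overline{\mathbb{Z}_\ell}[F]}(\overline{\mathbb{Z}_\ell},W)\to 0$. There is no real obstacle here; the only points needing (minor) care are checking that $t-1$ is a non-zero-divisor in $\overline{\mathbb{Z}_\ell}[t]$, so that the displayed complex is genuinely a resolution, and the bookkeeping of the identification $\Hom_R(R,W)\cong W$ so that the differential comes out as $1-F$ (up to sign).
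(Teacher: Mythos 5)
Your proof is correct and is essentially the same as the paper's: both use the two-term free resolution of $\overline{\mathbb{Z}_\ell}$ by multiplication by $1-F$ (or equivalently $F-1$) on $\overline{\mathbb{Z}_\ell}[F]$ and then apply $\Hom_R(-,W)$. The only differences are cosmetic (your $t-1$ versus the paper's $1-F$, which differ by a sign that you correctly note is harmless).
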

  \begin{proof} 
  The sequence $$0\to \overline{\mathbb{Z}_\ell}[F]\overset{1-F}{\longrightarrow} \overline{\mathbb{Z}_\ell}[F]\to \overline{\mathbb{Z}_\ell}\to 0$$ is a free resolution of $\overline{\mathbb{Z}_\ell}$ as a $\overline{\mathbb{Z}_\ell}[F]$-module; using it to compute the $\on{Ext}$ group in question gives the result.
  \end{proof}
  \begin{lemma}
  Let $W'$ be an $\ell^k$-torsion $\overline{\mathbb{Z}_\ell}[F]$-module.  Then $\on{Ext}^1_{\overline{\mathbb{Z}_\ell}[F]}(\overline{\mathbb{Z}_\ell}, W')$ is $\ell^k$-torsion as well.
  \end{lemma}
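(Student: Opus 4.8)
The statement follows immediately from the preceding lemma (Lemma \ref{ext-computation}). That lemma provides an exact sequence
$$W'\overset{1-F}{\longrightarrow} W'\to \on{Ext}^1_{\overline{\mathbb{Z}_\ell}[F]}(\overline{\mathbb{Z}_\ell}, W')\to 0,$$
exhibiting $\on{Ext}^1_{\overline{\mathbb{Z}_\ell}[F]}(\overline{\mathbb{Z}_\ell}, W')$ as a quotient (the cokernel of $1-F$) of the module $W'$. A quotient of an $\ell^k$-torsion module is again $\ell^k$-torsion, so multiplication by $\ell^k$ annihilates $\on{Ext}^1_{\overline{\mathbb{Z}_\ell}[F]}(\overline{\mathbb{Z}_\ell}, W')$, which is what we wanted.

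If one prefers to argue directly (in case the quotient description is to be avoided), one can instead use the free resolution $0\to \overline{\mathbb{Z}_\ell}[F]\overset{1-F}{\to}\overline{\mathbb{Z}_\ell}[F]\to\overline{\mathbb{Z}_\ell}\to 0$ of $\overline{\mathbb{Z}_\ell}$ over $\overline{\mathbb{Z}_\ell}[F]$ from the proof of Lemma \ref{ext-computation}; applying $\on{Hom}_{\overline{\mathbb{Z}_\ell}[F]}(-,W')$ and noting that each term $\on{Hom}_{\overline{\mathbb{Z}_\ell}[F]}(\overline{\mathbb{Z}_\ell}[F],W')\cong W'$ is $\ell^k$-torsion shows that all cohomology of the resulting complex is $\ell^k$-torsion, and in particular so is $\on{Ext}^1$.

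There is essentially no obstacle here: this lemma is a one-line corollary of the previous one, recorded separately only because it will be invoked in the sequel (presumably to reduce bounding annihilators of $\on{Ext}^1$ of honest $\overline{\mathbb{Z}_\ell}$-modules to the torsion case via dévissage). The only thing to be slightly careful about is that "quotient of an $\ell^k$-torsion module" is used, which is trivially true; no finiteness or flatness hypothesis on $W'$ is needed.
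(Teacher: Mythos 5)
Your proof is correct. The paper's own proof is even terser: it simply invokes functoriality of $\on{Ext}^1$ in the second variable. Since multiplication by $\ell^k$ is the zero endomorphism of $W'$, and multiplication by $\ell^k$ on $\on{Ext}^1_{\overline{\mathbb{Z}_\ell}[F]}(\overline{\mathbb{Z}_\ell}, W')$ is the map induced by this endomorphism (because $\overline{\mathbb{Z}_\ell}$ is central in $\overline{\mathbb{Z}_\ell}[F]$), the $\on{Ext}^1$ group is annihilated by $\ell^k$. Your route instead leans on the explicit exact sequence of Lemma \ref{ext-computation}, realizing $\on{Ext}^1$ as the cokernel of $1-F$ on $W'$, hence a quotient of an $\ell^k$-torsion module. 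Both arguments are sound one-liners; the functoriality argument is slightly more robust (it doesn't depend on having already computed a resolution), while yours is more concrete and makes it transparent that the result is a direct byproduct of the preceding lemma. Either is perfectly acceptable here.
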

  \begin{proof}
  This is immediate from the functoriality of $\on{Ext}^1$.
  \end{proof}
  \subsection{Periods of the fundamental group}\label{weight-2-subsection}  We assume for this section that $k$ is a field, and $X$ is a smooth, geometrically connected $k$-variety with simple normal crossings compactification $\overline{X}$, and $\sigma \in G_k$ is a quasi-scalar of degree $q$ with respect to the weight filtration on $\mathbb{Q}_\ell[[\pi_1^{\text{\'et}, \ell}(X_{\bar k}; x_i, x_j)]]$.  Our results require that:
  \begin{enumerate}[label=(A\arabic*)]
  \item \label{torsion-free-graded} $\mathscr{I}^{n-1}(x_i, x_j)/\mathscr{I}^{n}(x_i, x_j)$ is a torsion-free $\mathbb{Z}_\ell$-module for all $n$.
  \end{enumerate}
   Assumption \ref{torsion-free-graded} holds, for example, if $X$ is a curve and $\ell$ is different from the characteristic. It implies that the map $$\mathbb{Z}_\ell[[\pi_1^{\text{\'et}, \ell}(X_{\bar k}; x_i, x_j)]]\to \mathbb{Q}_\ell[[\pi_1^{\text{\'et}, \ell}(X_{\bar k}; x_i, x_j)]]$$ is injective.  If assumption \ref{torsion-free-graded} is satisfied, we define $$W^{-n}\mathbb{Z}_\ell[[\pi_1^{\text{\'et}, \ell}(X_{\bar k}; x_i, x_j)]]=\mathbb{Z}_\ell[[\pi_1^{\text{\'et}, \ell}(X_{\bar k}; x_i, x_j)]]\cap W^{-n}\mathbb{Q}_\ell[[\pi_1^{\text{\'et}, \ell}(X_{\bar k}; x_i, x_j)]].$$  By definition, then, $$W^{-n}\mathbb{Z}_\ell[[\pi_1^{\text{\'et}, \ell}(X_{\bar k}; x_i, x_j)]]/W^{-n-1}\mathbb{Z}_\ell[[\pi_1^{\text{\'et}, \ell}(X_{\bar k}; x_i, x_j)]]$$ is torsion-free for all $n$.  Often (for example, in the rest of this paper) questions about fundamental groups can be reduced to the case of curves, where Assumption \ref{torsion-free-graded} is satisfied, so we view it as harmless.

  We now study the integral $\ell$-adic periods associated to sequence \ref{weight-sequence}.  The idea of the argument is as follows; we will give it in detail later in this section. Let $$\overline{\mathbb{Z}_\ell}[[\pi_1]]_n=\overline{\mathbb{Z}_\ell}[[\pi_1^{\text{\'et}, \ell}(X_{\bar k}; \bar x_i, \bar x_j)]]/W^{-n}$$ and assume we have already computed the integral $\ell$-adic period $b_{n-1}^0$ associated to the sequence $$0\to W^{-1}/W^{-n}\to \overline{\mathbb{Z}_\ell}[[\pi_1]]_{n-1}\to \overline{\mathbb{Z}_\ell}\to 0.$$  That is, we have found a $\sigma$-equivariant map $$p_{i, j}^{n-1}: \overline{\mathbb{Z}_\ell}\to\overline{\mathbb{Z}_\ell}[[\pi_1]]_{n-1}$$ so that $$\epsilon_{n-1}\circ p_{i,j}^{n-1}=\ell^{\lceil b^0_{n-1} \rceil}\cdot \on{id},$$ where $\epsilon_{n-1}: \overline{\mathbb{Z}_\ell}[[\pi_1]]_{n-1}\to \overline{\mathbb{Z}_\ell}$ is the augmentation map.  Then we consider the pullback diagram of short exact sequences
  \begin{equation}\label{iterative-path-period} \xymatrix{
  0\ar[r] & W^{-n+1}/W^{-n}\ar[r] \ar@{=}[d] & V	\ar[r] \ar[d] & \overline{\mathbb{Z}_\ell}\ar[r] \ar[d]^{p_{i,j}^{n-1}} & 0\\
  0 \ar[r] & W^{-n+1}/W^{-n}\ar[r] 		& \overline{\mathbb{Z}_\ell}[[\pi_1]]_n \ar[r] &  \overline{\mathbb{Z}_\ell}[[\pi_1]]_{n-1}\ar[r] & 0.
  }\end{equation} 
  where    $$V=\overline{\mathbb{Z}_\ell}[[\pi_1]]_n\times_{\overline{\mathbb{Z}_\ell}[[\pi_1]]_{n-1}} \overline{\mathbb{Z}_\ell}.$$  
  We then bound the integral $\ell$-adic period $b_{n, n-1}$ associated to the top row of diagram \ref{iterative-path-period} by the annihilator of $$\on{Ext}^1_{\overline{\mathbb{Z}_\ell}[F]}(\overline{\mathbb{Z}_\ell}, W^{-n+1}/W^{-n}).$$  The integral $\ell$-adic period $b_n$ associated to sequence \ref{weight-sequence} will satisfy $$b_n\leq b_{n-1}+b_{n, n-1}.$$
  
We will need an auxiliary lemma bounding $v_\ell(q^k-1)$, for $q\in \mathbb{Z}_\ell$.

\begin{lemma}\label{q-powers-bound}
Let $\ell$ be a prime and $q$ an $\ell$-adic integer such that $\ell\mid q-1$ (if $\ell>2$) or $\ell^2\mid q-1$ if $\ell=2$.  Then $$|q^k-1|_\ell=|(q-1)k|_\ell=|q-1|_\ell|k|_\ell.$$  Equivalently,
$$v_\ell(q^k-1)=v_\ell(q-1)+v_\ell(k).$$
\end{lemma}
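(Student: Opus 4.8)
The plan is to expand $q^k-1$ binomially and exhibit a single dominating term $\ell$-adically.  Set $a=v_\ell(q-1)$, so that the hypothesis says $a\geq 1$ in general and $a\geq 2$ when $\ell=2$, and write $q=1+\ell^a u$ with $u\in\mathbb{Z}_\ell^\times$.  Then
$$q^k-1=(1+\ell^a u)^k-1=k\ell^a u+\sum_{i=2}^{k}\binom{k}{i}\ell^{ai}u^i.$$
The first summand has valuation exactly $a+v_\ell(k)=v_\ell(q-1)+v_\ell(k)$, so by the ultrametric (strong triangle) inequality it suffices to show that every term with $i\geq 2$ has valuation strictly greater than $a+v_\ell(k)$; the lemma then follows immediately.

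For the key estimate I would use the identity $\binom{k}{i}=\frac{k}{i}\binom{k-1}{i-1}$, which gives $v_\ell\binom{k}{i}\geq v_\ell(k)-v_\ell(i)$, hence
$$v_\ell\!\left(\binom{k}{i}\ell^{ai}u^i\right)\geq ai+v_\ell(k)-v_\ell(i).$$
So it is enough to check $a(i-1)>v_\ell(i)$ for every $i\geq 2$.  If $\ell\nmid i$ this is clear, since $a\geq 1$ and $i-1\geq 1$.  If $m:=v_\ell(i)\geq 1$, then $i\geq \ell^m$, so $i-1\geq \ell^m-1$, and one reduces to the elementary inequality $a(\ell^m-1)>m$: for $\ell\geq 3$ this follows from $\ell^m-1\geq 3^m-1>m$ together with $a\geq 1$, and for $\ell=2$ it follows from $a\geq 2$ and $2(2^m-1)=2^{m+1}-2>m$.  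This establishes the term-by-term bound and completes the proof.

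The only delicate point is the case $\ell=2$, where the inequality $a(\ell^m-1)>m$ fails for $a=1,\ m=1$; this is exactly why the hypothesis $\ell^2\mid q-1$ (equivalently $a\geq 2$) is required, and it is sharp, as $q=3,\ k=2$ shows ($v_2(q^2-1)=3\neq v_2(q-1)+v_2(k)=2$).  Everything else is a routine application of the ultrametric inequality, so I expect no further obstacles.
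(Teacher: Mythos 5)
Your proof is correct and takes the same route as the paper: expand $q^k-1$ binomially in $(q-1)$ and show that the $i=1$ term strictly dominates $\ell$-adically, then invoke the ultrametric inequality. The paper simply asserts the dominance $\left|\binom{k}{i}(q-1)^i\right|_\ell<|(q-1)k|_\ell$ for $i>1$ without justification; your write-up supplies the missing details (via $\binom{k}{i}=\frac{k}{i}\binom{k-1}{i-1}$ and the elementary estimate $a(\ell^m-1)>m$), and also correctly pinpoints where the extra hypothesis $\ell^2\mid q-1$ for $\ell=2$ enters.
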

\begin{proof}
We have $$q^k-1=((q-1)+1)^k-1=\sum_{i=1}^k {k\choose i}(q-1)^i.$$  Now $$\left|{k\choose i}(q-1)^i\right|_\ell<|(q-1)k|_\ell$$ for $i>1$, so the result holds by the ultrametric inequality.
\end{proof}
\begin{corollary}\label{better-q-power-bound}
Let $\ell$ be a prime and $q$ an $\ell$-adic integer prime to $\ell$.  Let $r$ be the order of $q$ in $\mathbb{F}_\ell^*$ if $\ell\not=2$ and in $(\mathbb{Z}/4\mathbb{Z}^*)$ if $\ell=2$.  Then $$v_\ell(q^k-1)=\begin{cases} v_\ell(q^r-1)+v_\ell(k/r) \text{ if $r\mid k$}\\ 0 \text{ if $r\nmid k$ and $\ell\not=2$}\\ 1 \text{ if $r\nmid k$ and $\ell=2$}.\end{cases}$$ 
\end{corollary}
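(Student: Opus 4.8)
The plan is to split into the three cases of the statement: when $r \mid k$ I would reduce to Lemma \ref{q-powers-bound}, and when $r \nmid k$ I would compute $v_\ell(q^k-1)$ directly by a congruence, so that no new idea beyond Lemma \ref{q-powers-bound} is needed.

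For the divisible case, write $k = rm$. By the definition of $r$ we have $q^r \equiv 1 \pmod{\ell}$ when $\ell$ is odd and $q^r \equiv 1 \pmod{4}$ when $\ell = 2$; in either case $q^r$ satisfies the hypothesis of Lemma \ref{q-powers-bound}, which I would apply with $q^r$ in place of $q$ and $m$ in place of $k$. That lemma then gives $v_\ell(q^k - 1) = v_\ell((q^r)^m - 1) = v_\ell(q^r - 1) + v_\ell(m) = v_\ell(q^r - 1) + v_\ell(k/r)$, which is the first case.

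For the non-divisible cases: if $\ell$ is odd and $r \nmid k$, then since $r$ is by definition the multiplicative order of $q$ in $\mathbb{F}_\ell^\times$ we have $q^k \not\equiv 1 \pmod{\ell}$, hence $\ell \nmid q^k - 1$ and $v_\ell(q^k - 1) = 0$. If $\ell = 2$ and $r \nmid k$, then since $r$ is the order of $q$ in $(\mathbb{Z}/4\mathbb{Z})^\times$, a group of order $2$, we must have $r = 2$ (the value $r = 1$ being incompatible with $r \nmid k$); thus $q \equiv 3 \pmod{4}$ and $k$ is odd, so $q^k \equiv 3^k \equiv 3 \pmod{4}$ and $v_2(q^k - 1) = 1$.

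I do not anticipate any real obstacle here; the one subtlety worth flagging is that the definition of $r$ in the statement (order modulo $4$, not modulo $2$, when $\ell = 2$) is precisely what makes the hypothesis $\ell^2 \mid q^r - 1$ of Lemma \ref{q-powers-bound} available, which is why the two conventions for $r$ appear.
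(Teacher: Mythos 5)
Your proof is correct and follows exactly the paper's approach: the divisible case reduces to Lemma \ref{q-powers-bound} applied to $q^r$ and $k/r$, and the two non-divisible cases follow directly from the definition of $r$. Your remark about why $r$ is taken modulo $4$ rather than modulo $2$ when $\ell=2$ (to secure the hypothesis $\ell^2\mid q^r-1$ of the lemma) correctly identifies the one subtlety the paper leaves implicit.
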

\begin{proof}
As in the statement, we consider the three cases $$r\mid k,$$ $$r\nmid k \text{ and }\ell\not=2,$$ and $$r\nmid k \text{ and }\ell=2.$$  The first case is immediate from Lemma \ref{q-powers-bound}, replacing $q$ with $q^r$ and $k$ with $k/r$.  The other two cases follow from the definition of $r$.
\end{proof}
\begin{corollary}\label{q-power-sum-bound}
Let $q, \ell, r$ be as in Corollary \ref{better-q-power-bound}.  Let $$C_{q, \ell, k}=\frac{k}{r}\left(v_\ell(q^r-1)+\frac{1}{\ell-1}\right)$$ if $\ell\not=2$ and $$C_{q, \ell, k}= \frac{k}{r} \left( v_\ell(q^r-1)+\frac{1}{\ell-1}+1\right)+\frac{1}{r}$$ if $\ell=2$.  Then $$\sum_{i=1}^k v_\ell(q^i-1)\leq C_{q, \ell, k}.$$
\end{corollary}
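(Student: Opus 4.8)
The plan is to deduce this directly from Corollary~\ref{better-q-power-bound} together with Legendre's formula for the $\ell$-adic valuation of a factorial.

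First I would split the sum $\sum_{i=1}^{k} v_\ell(q^i-1)$ according to whether $r\mid i$, evaluating each term by means of Corollary~\ref{better-q-power-bound}. Writing $m:=\lfloor k/r\rfloor$ and $i=rj$ for the terms with $r\mid i$ (so that $j$ runs from $1$ to $m$), these contribute $\sum_{j=1}^{m}\bigl(v_\ell(q^r-1)+v_\ell(j)\bigr)=m\,v_\ell(q^r-1)+v_\ell(m!)$. The remaining $k-m$ terms, those with $r\nmid i$, contribute $0$ each when $\ell\neq 2$ and $1$ each when $\ell=2$. Hence
$$\sum_{i=1}^{k} v_\ell(q^i-1)=m\,v_\ell(q^r-1)+v_\ell(m!)+\delta\cdot(k-m),$$
where $\delta=1$ if $\ell=2$ and $\delta=0$ otherwise.

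Next I would bound the three pieces separately. Since $m\le k/r$ and $v_\ell(q^r-1)\ge 0$, the first term is at most $\tfrac{k}{r}v_\ell(q^r-1)$. By Legendre's formula, $v_\ell(m!)=\sum_{t\ge 1}\lfloor m/\ell^t\rfloor\le\sum_{t\ge 1} m/\ell^t=\tfrac{m}{\ell-1}\le\tfrac{k}{r(\ell-1)}$. When $\ell\neq 2$ this already finishes the argument, as $\delta=0$ and the two bounds sum to exactly $C_{q,\ell,k}$. When $\ell=2$, the leftover term is $\delta(k-m)=k-\lfloor k/r\rfloor$; here $r$ is the order of $q$ in $(\mathbb{Z}/4\mathbb{Z})^\times$, so $r\in\{1,2\}$. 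If $r=1$ this term vanishes, and if $r=2$ it equals $\lceil k/2\rceil\le\tfrac{k}{2}+\tfrac12=\tfrac{k}{r}+\tfrac1r$. Using $\ell-1=1$, the total is then at most $\tfrac{k}{r}v_2(q^r-1)+\tfrac{k}{r}+\tfrac{k}{r}+\tfrac1r=C_{q,2,k}$, as claimed.

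There is no genuine obstacle: the whole argument is bookkeeping. The only point requiring care is the $\ell=2$ case, where one must verify that the extra $+1$ inside the parenthesis defining $C_{q,2,k}$ absorbs the contribution $\delta(k-m)$ and that the trailing $+\tfrac1r$ absorbs the ceiling $\lceil k/2\rceil$; the edge cases $k<r$ (so $m=0$ and $v_\ell(m!)=0$) and $r\nmid k$ are handled automatically by the floor and ceiling functions appearing above.
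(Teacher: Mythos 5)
Your proof is correct and follows essentially the same route as the paper: split the sum according to whether $r \mid i$, apply Corollary~\ref{better-q-power-bound} termwise, and bound the resulting $v_\ell(\lfloor k/r\rfloor!)$ by the geometric series $\sum_{t\ge 1} k/(r\ell^t) = \tfrac{k}{r(\ell-1)}$, with the odd terms in the $\ell=2,\ r=2$ case absorbed by the extra ``$+1$'' and ``$+\tfrac{1}{r}$'' in $C_{q,2,k}$. Your use of the indicator $\delta$ merely streamlines the paper's separate treatment of $\ell=2,\ r=1$ versus $\ell=2,\ r=2$; nothing of substance differs.
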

\begin{proof}
In the case $\ell\not=2$, we have by Lemma \ref{better-q-power-bound} that $v_\ell(q^i-1)=v_\ell(q^r-1)+v_\ell(i/r)$ if $r\mid k$ and $0$ otherwise.  Thus we have $$\sum_{i=0}^k v_\ell(q^i-1)=\sum_{i=1}^{\lfloor k/r \rfloor} v_\ell(q^{ri}-1)=\sum_{i=1}^{\lfloor k/r \rfloor} (v_\ell(q^r-1)+v_\ell(i)).$$  But this last satisfies
\begin{align*}
\sum_{i=1}^{\lfloor k/r \rfloor} (v_\ell(q^r-1)+v_\ell(i)) &=\lfloor k/r\rfloor v_\ell(q^r-1)+\lfloor \frac{k}{r\ell}\rfloor+\lfloor \frac{k}{r\ell^2}\rfloor+\lfloor \frac{k}{r\ell^3}\rfloor+\cdots\\
&\leq \frac{k}{r} v_\ell(q^r-1)+\frac{k}{r\ell(1-\frac{1}{\ell})}\\
&=\frac{k}{r}\left(v_\ell(q^r-1)+\frac{1}{\ell-1}\right)
\end{align*}
In the case $\ell=2$, and $r=1$, an identical argument shows that $$\sum_{i=0}^k v_\ell(q^i-1)\leq \frac{k}{r}\left(v_\ell(q^r-1)+\frac{1}{\ell-1}\right).$$  If $\ell=2$ and $r=2$, we have 
\begin{align*}
\sum_{i=1}^k v_\ell(q^i-1) &= \sum_{i \text{ odd}, 1\leq i\leq k} 1 +\sum_{i=1}^{\lfloor k/2\rfloor} (v_\ell(q^r-1)+v_\ell(i))\\
&\leq \frac{k+1}{r} +\frac{k}{r}v_\ell(q^r-1)+\frac{k}{\ell r}+\frac{k}{\ell^2 r}+\cdots\\
&= \frac{k+1}{r}+\frac{k}{r}v_\ell(q^r-1)+\frac{k}{ r(\ell-1)}\\
&= \frac{k}{r} \left( v_\ell(q^r-1)+\frac{1}{\ell-1}+1\right)+\frac{1}{r},
\end{align*}
which concludes the proof.
\end{proof}
  \begin{theorem}\label{weight-2-basic-lifts}
Let $k$ be a field, and $X/k$ a smooth variety with simple normal crossings compactification.  Let $x_1, x_2$ be $k$-rational points or rational tangential basepoints of $X$. Let $\sigma\in G_k$ be a quasi-scalar of degree $q^{-1}\in \mathbb{Z}_\ell^\times$ with respect to the weight filtration on $\mathbb{Q}_\ell[[\pi_1^{\text{\'et}, \ell}(X_{\bar k}; x_1, x_2)]]$.  Suppose that $X$ satisfies assumption \ref{torsion-free-graded}.     Then the integral $\ell$-adic period $b_n^i$ associated to the sequence of $\sigma$-modules $$0\to W^{-i-1}/W^{-n}\to W^{-i}/W^{-n}\to W^{-i}/W^{-i-1}\to 0$$  satisfies $$b_n^i\leq C_{q, \ell, n-i-1}.$$   Here $C_{q, \ell, n}$ is defined as in Corollary \ref{q-power-sum-bound}, and $W^{-i}$ is the $i$-th piece of the weight filtration on $\overline{\mathbb{Z}_\ell}[[\pi_1^{\text{\'et}, \ell}(X_{\bar k}; x_i, x_j)]]$, inherited from the weight filtration on $\mathbb{Z}_\ell[[\pi_1^{\text{\'et}, \ell}(X_{\bar k}; x_i, x_j)]]$ defined above.
\end{theorem}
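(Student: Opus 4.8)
The plan is to prove the bound by induction on $n$ (with $i$ fixed), at each stage extending a $\sigma$-equivariant ``approximate splitting'' of the projection $W^{-i}/W^{-n}\to W^{-i}/W^{-i-1}$ across one more graded piece of the weight filtration, while controlling the valuation of the denominator introduced at that stage by an $\on{Ext}^1$ computation. I would write $M:=W^{-i}/W^{-i-1}$ and, for each $m$, $\on{gr}^{-m}_W:=W^{-m}/W^{-m-1}$ for the graded pieces of the integral weight filtration on $\overline{\mathbb{Z}_\ell}[[\pi_1^{\text{\'et}, \ell}(X_{\bar k};\bar x_1,\bar x_2)]]$; under Assumption \ref{torsion-free-graded} these are finite free $\overline{\mathbb{Z}_\ell}$-modules (torsion-free by construction, and finitely generated since $\mathscr{I}^m\subseteq W^{-m}$ by Proposition \ref{weight-prop}, so that $W^{-i}/W^{-n}$ is a quotient of $\mathbb{Z}_\ell[[\pi_1]]/\mathscr{I}^n\otimes\overline{\mathbb{Z}_\ell}$), and consequently $W^{-i}/W^{-n}$ and its subobjects are free as well. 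The one structural input needed is that $\sigma$ acts on $\on{gr}^{-m}_W$ by the single scalar $q^{m}$: indeed $\sigma$ preserves $\mathbb{Z}_\ell[[\pi_1^{\text{\'et}, \ell}(X_{\bar k};\bar x_1,\bar x_2)]]$ and the rational weight filtration, hence the integral weight filtration, and on $\on{gr}^{-m}_W\otimes\mathbb{Q}_\ell$ it acts by the scalar $q^{m}$ because $\sigma$ is a quasi-scalar of degree $q^{-1}$; restricting this scalar to the lattice gives the claim (the sign of the exponent is immaterial below, since $v_\ell(q^{k}-1)=v_\ell(q^{-k}-1)$ as $q\in\mathbb{Z}_\ell^\times$). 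The base case is $n=i+1$, where $W^{-i-1}/W^{-n}=0$, so $b^i_{i+1}=0\le C_{q,\ell,0}$.

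For the inductive step, suppose one has already produced a $\sigma$-equivariant map $s\colon M\to W^{-i}/W^{-(n-1)}$ whose composite with the projection $W^{-i}/W^{-(n-1)}\to M$ is multiplication by an element $a_{n-1}\in\overline{\mathbb{Z}_\ell}$ with $v_\ell(a_{n-1})=b^i_{n-1}\le\sum_{j=1}^{n-i-2}v_\ell(q^{j}-1)$. One then pulls back the weight-filtration exact sequence
$$0\to\on{gr}^{-(n-1)}_W\to W^{-i}/W^{-n}\to W^{-i}/W^{-(n-1)}\to 0$$
along $s$, obtaining an extension $0\to\on{gr}^{-(n-1)}_W\to V\to M\to 0$ of $\overline{\mathbb{Z}_\ell}[\sigma]$-modules, classified by a class $\alpha\in\on{Ext}^1_{\overline{\mathbb{Z}_\ell}[\sigma]}(M,\on{gr}^{-(n-1)}_W)$. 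Here $\sigma$ acts by $q^{i}$ on $M$ and by $q^{n-1}$ on $\on{gr}^{-(n-1)}_W$; choosing a $\overline{\mathbb{Z}_\ell}$-basis of $M$ and using, exactly as in the proof of Lemma \ref{ext-computation}, the free resolution $0\to\overline{\mathbb{Z}_\ell}[\sigma]^{\oplus a}\overset{\sigma-q^{i}}{\longrightarrow}\overline{\mathbb{Z}_\ell}[\sigma]^{\oplus a}\to M\to 0$, one computes $\on{Ext}^1_{\overline{\mathbb{Z}_\ell}[\sigma]}(M,\on{gr}^{-(n-1)}_W)\cong\big(\on{gr}^{-(n-1)}_W/(q^{n-1}-q^{i})\big)^{\oplus a}$, which is annihilated by $q^{n-1}-q^{i}$, an element of valuation $v_\ell(q^{n-1}-q^{i})=v_\ell(q^{\,n-1-i}-1)$. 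Hence, by Lemma \ref{ext-lemma} applied with $r\in\overline{\mathbb{Z}_\ell}$ of valuation $v_\ell(q^{\,n-1-i}-1)$ acting on $M$ by multiplication, the pullback of $V$ along $r\colon M\to M$ splits $\sigma$-equivariantly; composing the resulting section with $V\to W^{-i}/W^{-n}$ and tracing through the commuting pullback square (using that the projection $W^{-i}/W^{-n}\to M$ factors through $W^{-i}/W^{-(n-1)}$, where $s$ composes to $a_{n-1}$) yields a $\sigma$-equivariant map $M\to W^{-i}/W^{-n}$ whose composite with the projection to $M$ is multiplication by $a_{n-1}r$, of valuation $b^i_{n-1}+v_\ell(q^{\,n-1-i}-1)\le\sum_{j=1}^{n-i-1}v_\ell(q^{j}-1)$. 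This shows $b^i_n\le\sum_{j=1}^{n-i-1}v_\ell(q^{j}-1)$, and Corollary \ref{q-power-sum-bound} then gives $b^i_n\le C_{q,\ell,n-i-1}$, completing the induction.

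The main point requiring care is the $\on{Ext}^1$ computation over the (non-Noetherian) ring $\overline{\mathbb{Z}_\ell}[\sigma]$ together with the diagram chase in the pullback step: one must check that the denominator needed to split the sequence at level $n$ is governed precisely by the annihilator of $\on{Ext}^1_{\overline{\mathbb{Z}_\ell}[\sigma]}(M,\on{gr}^{-(n-1)}_W)$, and that this annihilator is $q^{n-1}-q^{i}$ up to a unit --- which rests entirely on $\sigma$ acting by a single scalar on each graded piece, i.e.\ on the quasi-scalar hypothesis. Everything else is routine bookkeeping: the weight-filtration comparisons come from Proposition \ref{weight-prop}, the $\on{Ext}$-theoretic description of integral $\ell$-adic periods from Lemmas \ref{ext-lemma} and \ref{ext-computation} and Corollary \ref{ext-interpretation}, and the combinatorial estimate $\sum_{j=1}^{n-i-1}v_\ell(q^{j}-1)\le C_{q,\ell,n-i-1}$ from Corollary \ref{q-power-sum-bound}. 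I would also note that since each $v_\ell(q^{j}-1)$ is a nonnegative integer --- finite because $q^{-1}$, hence $q$, is not a root of unity --- no rounding intervenes and the intermediate bound $\sum_{j=1}^{n-i-1}v_\ell(q^{j}-1)$ is an honest integer.
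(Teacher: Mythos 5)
Your proof is correct and follows essentially the same route as the paper: the same induction on $n$ with $i$ fixed, the same reduction via a pullback square to computing the annihilator of $\on{Ext}^1_{\overline{\mathbb{Z}_\ell}[\sigma]}(W^{-i}/W^{-i-1},\, W^{-n+1}/W^{-n})$, and the same conclusion that this annihilator has valuation $v_\ell(q^{n-1}-q^i)=v_\ell(q^{n-1-i}-1)$ because $\sigma$ acts by the scalars $q^i$ and $q^{n-1}$ on the two graded pieces. You supply a bit more detail than the paper does (explicit freeness of the integral graded pieces, the base case $n=i+1$, the explicit free resolution behind Lemma \ref{ext-computation}), but the underlying argument is identical.
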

\begin{proof}
By Corollary \ref{q-power-sum-bound}, it is enough to show that $$b_n^i\leq \sum_{j=1}^{n-i-1} v_\ell(q^j-1).$$  Let $\overline{\mathbb{Z}_\ell}(q)$ be the $\overline{\mathbb{Z}_\ell}[\sigma]$-module which is free of rank one as a $\overline{\mathbb{Z}_\ell}$-module, and on which $\sigma$ acts via the scalar $q$.  Assume we have already proven this for $b_{n-1}^i$, that is, that we have found a commutative diagram of $\sigma$-modules $$
\xymatrix{ & & &W^{-i}/W^{-i-1} \ar[d]^{\cdot a} \ar[ld]_{p_{n-1}}& \\
 0\ar[r] & W^{-i-1}/W^{-n+1}\ar[r] & W^{-i}/W^{-n+1}\ar[r] & W^{-i}/W^{-i-1}\ar[r] & 0
 }$$
 with $$v_\ell(a)\leq \sum_{j=1}^{n-i-2} v_\ell(q^j-1).$$
 
 Then by induction, it is enough to show the period $b^i_{n, n-1}$ associated to the top sequence in the diagram
 \begin{equation}\label{sequence-period-0}\xymatrix{
  0\ar[r] & W^{-n+1}/W^{-n}\ar[r] \ar@{=}[d] & V	\ar[r] \ar[d] & W^{-i}/W^{-i-1} \ar[r] \ar[d]^{p_{n-1}} & 0\\
  0 \ar[r] & W^{-n+1}/W^{-n} \ar[r] 		& W^{-i}/W^{-n} \ar[r] &  W^{-i}/W^{-n+1} \ar[r] & 0
  }\end{equation}
  satisfies $$b^i_{n, n-1}\leq v_\ell(q^{n-i-1}-1)$$ where $$V=W^{-i}/W^{-i-1}\times_{W^{-i}/W^{-n+1}} W^{-i}/W^{-n},$$ that is, the right square is cartesian.  Indeed, $$b^i_n\leq b^i_{n-1}+b^i_{n, n-1}.$$
  But by the assumption that $\sigma$ is a quasi-scalar of degree $q^{-1}$, $\sigma$ acts on $W^{-n+1}/W^{-n}$ via $q^{n-1}$, and on $W^{-i}/W^{-i-1}$ by $q^i$.
   Hence by Corollary \ref{ext-interpretation}, $b^0_{n, n-1}$ is bounded by the least power of $\ell$ annihilating $$\on{Ext}^1_{\overline{\mathbb{Z}_\ell}[\sigma]}(W^{-i}/W^{-i-1}, W^{-n+1}/W^{-n})=\bigoplus_N \on{Ext}^1_{\overline{\mathbb{Z}_\ell}[\sigma]}(\overline{\mathbb{Z}_\ell}(q^i), \overline{\mathbb{Z}_\ell}(q^{n-1})).$$ By Lemma \ref{ext-computation}, this latter Ext group is the cokernel of the map $(1-q^{n-i-1})$ and hence is annihilated by $\ell^{v_\ell(1-q^{n-i-1})}$ as desired.
 \end{proof}
Thus we have bounded all of the periods of  associated to the action of a quasi-scalar with respect to the weight filtration of $$\mathbb{Z}_\ell[[\Pi_1(X_{\bar k})]].$$ We will  reinterpret this result in Theorem \ref{diagonalizing-convergent-rings-mixed}.
\begin{remark}\label{conjecture-bound}
The estimates above required that $\sigma$ be a quasi-scalar.  For quasi-Frobenii $\sigma$, one may estimate that for almost all $\ell$, the integral $\ell$-adic periods above satisfy $$b^i_n=O(n\log n),$$ using Yu's $p$-adic Baker theorem on linear forms in logarithms \cite{yu-baker}.  We believe it would be very interesting if this could be improved to e.g.~a linear estimate, for example if $\sigma$ was a Frobenius element.  In particular, this would allow one to generalize the results of this paper to positive characteristic.
\end{remark}
\begin{remark}
One may imitate many of the methods here over $p$-adic fields (rather than finitely generated fields), taking $\ell=p$ and substituting some mild integral $p$-adic Hodge theory for the estimates proven in this section.  In particular, one may define $p$-adic analogues of integral $\ell$-adic periods and imitate many of the rest of the arguments in the paper to obtain similar restrictions on geometric monodromy to those obtained in the next sections.  However, we were unable to improve our results using these methods, so we omit them here.
\end{remark}

\section{Convergent group rings and Hopf groupoids}\label{convergent-section}
\subsection{Generalities}
Let $k$ be a field and $X$ a variety over $k$.  We now introduce certain Galois-stable sub-objects of $$\overline{\mathbb{Q}_\ell}[[\Pi_1(X_{\bar k})]]$$ on which $\on{Gal}(\overline k/k)$ acts; we will use the results of the previous sections to show that, in favorable circumstances, quasi-scalars in $\on{Gal}(\overline k/k)$ will act \emph{diagonalizably} on these objects.  

We first define a valuation $v_n$ on the groups $$\overline{\mathbb{Q}_\ell}[[\pi_1^{\text{\'et}, \ell}(X_{\bar k}; \bar x_i, \bar x_j)]]/\mathscr{I}^n(x_i, x_j).$$  Namely, if $g\in \overline{\mathbb{Q}_\ell}[[\pi_1^{\text{\'et}, \ell}(X_{\bar k}; \bar x_i, \bar x_j)]]/\mathscr{I}^n(x_i, x_j)$, we let $$v_n(g)=-\inf\{v(s)\mid s\in \overline{\mathbb{Q}_\ell} \text{ such that } s\cdot g \in \overline{\mathbb{Z}_\ell}[[\pi_1^{\text{\'et}, \ell}(X_{\bar k}; \bar x_i, \bar x_j)]]/\mathscr{I}^n(x_i, x_j)\}.$$  Here we write $$s\cdot g \in \overline{\mathbb{Z}_\ell}[[\pi_1^{\text{\'et}, \ell}(X_{\bar k}; \bar x_i, \bar x_j)]]/\mathscr{I}^n(x_i, x_j)$$ to indicate that $s\cdot g$ is in the image of the natural map $$  \overline{\mathbb{Z}_\ell}[[\pi_1^{\text{\'et}, \ell}(X_{\bar k}; \bar x_i, \bar x_j)]]/\mathscr{I}^n(x_i, x_j)\to  \overline{\mathbb{Q}_\ell}[[\pi_1^{\text{\'et}, \ell}(X_{\bar k}; \bar x_i, \bar x_j)]]/\mathscr{I}^n(x_i, x_j).$$  One may check that $v_n$ satisfies the usual properties of a non-archimedean valuation.  

We also let $$\pi_n: \overline{\mathbb{Q}_\ell}[[\pi_1^{\text{\'et}, \ell}(X_{\bar k}; \bar x_i, \bar x_j)]]\to \overline{\mathbb{Q}_\ell}[[\pi_1^{\text{\'et}, \ell}(X_{\bar k}; \bar x_i, \bar x_j)]]/\mathscr{I}^n(x_i, x_j)$$ be the natural quotient map.

Note  that for any $x$,
\begin{equation}\label{valuation-inequality}
v_n(\pi_n(x))\geq v_{n+1}(\pi_{n+1}(x)).
\end{equation}
\begin{defn}\label{convergent-defn}
Fix $r\in \mathbb{R}_{>0}$.  The \emph{convergent Hopf groupoid of radius $\ell^{-r}$} $$\overline{\mathbb{Q}_\ell}[[\Pi_1(X_{\bar k})]]^{\leq \ell^{-r}}\subset \overline{\mathbb{Q}_\ell}[[\Pi_1(X_{\bar k})]]$$ has the same objects $\{x_1, \cdots, x_n\}$ as $\mathbb{Q}_\ell[[\Pi_1(X_{\bar k})]]$, with morphisms $x_i$ to $x_j$, denoted  $$\overline{\mathbb{Q}_\ell}[[\pi_1^{\text{\'et}, \ell}(X_{\bar k}; \bar x_i, \bar x_j)]]^{\leq \ell^{-r}}$$ given by the collection of $g\in \overline{\mathbb{Q}_\ell}[[\pi_1^{\text{\'et}, \ell}(X_{\bar k}; \bar x_i, \bar x_j)]]$ such that $$v_n(\pi_n(g))+rn\to \infty$$ as $n$ tends to infinity.  If $x_i=x_j$, we call the Hopf algebra $$\overline{\mathbb{Q}_\ell}[[\pi_1^{\text{\'et}, \ell}(X_{\bar k}; \bar x_i)]]^{\leq \ell^{-r}}$$ the \emph{convergent group ring of radius $\ell^{-r}$}.  If $R\subset \overline{\mathbb{Q}_\ell}$ is a subring, we define $$R[[\Pi_1(X_{\bar k})]]^{\leq \ell^{-r}}=\overline{\mathbb{Q}_\ell}[[\Pi_1(X_{\bar k})]]^{\leq \ell^{-r}}\cap R[[\Pi_1(X_{\bar k})]].$$
\end{defn}
In other words, the convergent Hopf groupoid consists of those elements whose denominators ``do not grow too quickly" with respect to the $\mathscr{I}$-adic filtration.  One may check that $\overline{\mathbb{Q}_\ell}[[\Pi_1(X_{\bar k})]]^{\leq \ell^{-r}}$ does in fact satisfy the axioms of a Hopf groupoid, justifying its name.  We explicitly check that the composition law inherited from $\overline{\mathbb{Q}_\ell}[[\Pi_1(X_{\bar k})]]$ preserves $\overline{\mathbb{Q}_\ell}[[\Pi_1(X_{\bar k})]]^{\leq \ell^{-r}}$, and leave checking the rest of the axioms (Definition \ref{hopf-groupoid-definition}) to the interested reader.
\begin{prop}\label{multiplication-convergent}
Suppose $R\subset \overline{\mathbb{Q}_\ell}$. Let $$x\in R[[\pi_1^{\text{\'et}, \ell}(X_{\bar k}; \bar x_i, \bar x_j)]]^{\leq \ell^{-r}}, y\in R[[\pi_1^{\text{\'et}, \ell}(X_{\bar k}; \bar x_j, \bar x_k)]]^{\leq \ell^{-r}}$$ be two elements.  Then $$x\circ y\in R[[\pi_1^{\text{\'et}, \ell}(X_{\bar k}; \bar x_i, \bar x_k)]]^{\leq \ell^{-r}}.$$
\end{prop}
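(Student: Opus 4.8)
The plan is to reduce the statement to a purely ``graded'' estimate on denominators with respect to the $\mathscr{I}$-adic filtration --- the same bookkeeping one uses for non-commutative power series in finitely many variables --- and then to feed in the fact that membership in the convergent Hopf groupoid is an \emph{asymptotic} condition. For $g\in\overline{\mathbb{Q}_\ell}[[\pi_1^{\text{\'et},\ell}(X_{\bar k};\bar x_i,\bar x_j)]]$, write $d_n(g):=-v_n(\pi_n(g))$ for the ``denominator exponent'' of $\pi_n(g)$; by \eqref{valuation-inequality} the sequence $d_n(g)$ is non-decreasing, and by Definition \ref{convergent-defn} the hypotheses on $x$ and $y$ say precisely that $f_x(n):=d_n(x)-rn$ and $f_y(n):=d_n(y)-rn$ tend to $-\infty$. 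I will use two structural facts. First, $\nabla$ carries $\overline{\mathbb{Z}_\ell}[[\pi_1^{\text{\'et},\ell}(X_{\bar k};\bar x_i,\bar x_j)]]\otimes\overline{\mathbb{Z}_\ell}[[\pi_1^{\text{\'et},\ell}(X_{\bar k};\bar x_j,\bar x_k)]]$ into $\overline{\mathbb{Z}_\ell}[[\pi_1^{\text{\'et},\ell}(X_{\bar k};\bar x_i,\bar x_k)]]$, since it is the composition law of the integral Hopf groupoid. Second, using the description of $\mathscr{I}^n(\cdot,\cdot)$ as the left $\mathscr{I}(x_i)$-adic (equivalently right $\mathscr{I}(x_k)$-adic) filtration from Corollary \ref{I-adic-filtration}, one gets $\nabla(\mathscr{I}^a(x_i,x_j)\otimes\mathscr{I}^b(x_j,x_k))\subseteq\mathscr{I}^{a+b}(x_i,x_k)$. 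Finally, since $\nabla$ preserves the $R[[-]]$-structures, it suffices to prove $x\circ y\in\overline{\mathbb{Q}_\ell}[[\pi_1^{\text{\'et},\ell}(X_{\bar k};\bar x_i,\bar x_k)]]^{\leq\ell^{-r}}$.

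Next I would produce the basic denominator estimate. Since $\overline{\mathbb{Z}_\ell}[[\pi_1^{\text{\'et},\ell}(X_{\bar k};\bar x_i,\bar x_j)]]$ surjects onto each quotient by $\mathscr{I}^m$, choose for every $m$ a lift $\tilde x_m\in\overline{\mathbb{Q}_\ell}[[\pi_1^{\text{\'et},\ell}(X_{\bar k};\bar x_i,\bar x_j)]]$ of $\pi_m(x)$ whose denominator at \emph{every} level is $\leq d_m(x)$ (clear denominators in $\pi_m(x)$, lift an integral element, divide back), and set $\xi_m:=\tilde x_{m+1}-\tilde x_m$. Then $\xi_m$ lies in $\mathscr{I}^m$, has denominator $\leq d_{m+1}(x)$, and $\sum_{m<n}\xi_m=\tilde x_n$ is a lift of $\pi_n(x)$; construct $\eta_m$ for $y$ in the same way. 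Because $x-\tilde x_n$ and $y-\tilde y_n$ lie in $\mathscr{I}^n$, we get $\pi_n(x\circ y)=\pi_n(\tilde x_n\circ\tilde y_n)=\sum_{a,b<n}\pi_n(\xi_a\circ\eta_b)$, and the terms with $a+b\geq n$ vanish because $\xi_a\circ\eta_b\in\mathscr{I}^{a+b}$. Since $\nabla$ is integral, $\xi_a\circ\eta_b$ has denominator $\leq d_{a+1}(x)+d_{b+1}(y)$, so the ultrametric inequality for $v_n$ yields
\[
d_n(x\circ y)\ \leq\ \max_{a+b<n}\bigl(d_{a+1}(x)+d_{b+1}(y)\bigr).
\]

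The final step is the asymptotic analysis. Substituting $a'=a+1$, $b'=b+1$ and using that $a'+b'\leq n+1$ forces $rn\geq r(a'+b')-r$, the displayed inequality rearranges to
\[
f_{x\circ y}(n)=d_n(x\circ y)-rn\ \leq\ r+\max_{a'+b'\leq n+1}\bigl(f_x(a')+f_y(b')\bigr).
\]
Now $f_x$ and $f_y$, tending to $-\infty$, are bounded above, say by $F_x,F_y$, and $\phi_x(t):=\sup_{a'\geq t}f_x(a')$ and $\phi_y(t):=\sup_{b'\geq t}f_y(b')$ tend to $-\infty$. Since any $a',b'$ with $a'+b'\leq n+1$ satisfies $\max(a',b')\geq (n+1)/2$, one of $f_x(a'),f_y(b')$ is at most $\phi_x((n+1)/2)$ (resp.\ $\phi_y((n+1)/2)$) while the other is at most $F_y$ (resp.\ $F_x$); hence $f_x(a')+f_y(b')\leq\max\bigl(\phi_x((n+1)/2)+F_y,\ F_x+\phi_y((n+1)/2)\bigr)$, which tends to $-\infty$ with $n$. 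Therefore $f_{x\circ y}(n)\to-\infty$, i.e.\ $v_n(\pi_n(x\circ y))+rn\to\infty$, which is exactly the defining condition for membership in $\overline{\mathbb{Q}_\ell}[[\pi_1^{\text{\'et},\ell}(X_{\bar k};\bar x_i,\bar x_k)]]^{\leq\ell^{-r}}$. Combined with $x\circ y\in R[[\pi_1^{\text{\'et},\ell}(X_{\bar k};\bar x_i,\bar x_k)]]$, this gives the claim.

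The main point --- and essentially the only place care is required --- is that the naive submultiplicativity $v_n(\pi_n(x\circ y))\geq v_n(\pi_n(x))+v_n(\pi_n(y))$ is far too weak (it gives only $f_{x\circ y}(n)\gtrsim f_x(n)+f_y(n)+rn$, which need not tend to $-\infty$): one must split the filtration degree $n$ as $a+b$ and exploit simultaneously that one of $a,b$ exceeds $n/2$ and that radius-$\ell^{-r}$ convergence is the \emph{asymptotic} statement ``$rn-d_n(\cdot)\to\infty$'', not a uniform bound. A minor technical nuisance, that the value group of $\overline{\mathbb{Q}_\ell}$ is $\mathbb{Q}$ so the infimum defining $v_n$ need not be attained, is harmless: one simply replaces each $d_m$ above by, say, $d_m+1$, which changes nothing asymptotically. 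Verifying the remaining Hopf-groupoid axioms for $\overline{\mathbb{Q}_\ell}[[\Pi_1(X_{\bar k})]]^{\leq\ell^{-r}}$ (Definition \ref{hopf-groupoid-definition}) is routine along the same lines and I would leave it to the reader, as indicated.
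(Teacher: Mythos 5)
Your proof is correct and takes essentially the same route as the paper: reduce to $R=\overline{\mathbb{Q}_\ell}$, bound $v_n(\pi_n(x\circ y))$ below by $\min_{a+b\approx n}\bigl(v_a(\pi_a(x))+v_b(\pi_b(y))\bigr)$ using $\mathscr{I}^a\cdot\mathscr{I}^b\subset\mathscr{I}^{a+b}$ and integrality of $\nabla$, and then exploit that one of $a,b$ must be at least $n/2$. You actually supply more detail than the paper on the key denominator inequality (the telescoping-lift decomposition $x=\sum\xi_m$, which the paper leaves implicit), and your closing asymptotic step --- bounding via tail suprema $\phi_x,\phi_y$ of $f_x,f_y$ --- is a mild cosmetic variant of the paper's device of first normalizing so that $v_n(\pi_n(x))+rn$, $v_n(\pi_n(y))+rn>0$, choosing a positive increasing minorant $f$, and concluding $\min_{a+b=n}(f(a)+f(b))\ge f(\lfloor n/2\rfloor)$.
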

\begin{proof}
Clearly it suffices to check this for $R=\overline{\mathbb{Q}_\ell}$.

There exists $N$ such that $v_n(\pi_n(\ell^Nx)) +rn, v_n(\pi_n(\ell^Ny))+rn>0$ for all $n$.  Without loss of generality, we may replace $x, y$ with $\ell^Nx, \ell^Ny$, so we assume that $v_n(\pi_n(x))+rn, v_n(\pi_n(y))+rn>0$ for all $n$.  Let $f: \mathbb{N}\to \mathbb{R}$ be a positive, increasing function tending to infinity such that $$f(n)\leq v_n(\pi_n(x))+rn, v_n(\pi_n(y))+rn$$ for all $n$; such a function exists by hypothesis.

We have that $$v_n(\pi_n(x\circ y))\geq \min_{a+b=n}\left(v_a(\pi_a(x))+v_b(\pi_b(y))\right),$$ using that $\mathscr{I}^a\cdot\mathscr{I}^b\subset \mathscr{I}^{a+b}$, by definition.  Thus 
\begin{align*}
v_n(\pi_n(x\circ y))+rn &\geq \min_{a+b=n}\left(v_a(\pi_a(x))+v_b(\pi_b(y))+r(a+b))\right)\\
&=\min_{a+b=n}\left(v_a(\pi_a(x))+ra+ v_b(\pi_b(y))+rb))\right)\\
&\geq \min_{a+b=n} (f(a)+f(b))\\
&\geq \min_{a+b=n}(\max(f(a), f(b)))\\
&\geq f(\lfloor n/2\rfloor).
\end{align*}
This last clearly tends to infinity with $n$.
\end{proof}
In particular, we have that the sets $$\mathbb{Q}_\ell[[\pi_1^{\text{\'et}, \ell}(X_{\bar k}; \bar x)]]^{\leq \ell^{-r}}$$ are rings, with multiplication inherited from the usual group ring.
\begin{remark}
As the Galois action on $\overline{\mathbb{Q}_\ell}[[\Pi_1(X_{\bar k})]]$ preserves $\overline{\mathbb{Z}_\ell}[[\Pi_1(X_{\bar k})]]$, it also preserves $\overline{\mathbb{Q}_\ell}[[\Pi_1(X_{\bar k})]]^{\leq \ell^{-r}}$.  Thus the convergent Hopf groupoid/convergent group ring inherits a Galois action.
\end{remark}

Our main reason for defining convergent group rings and Hopf groupoids is that for $X$ satisfying Assumption \ref{torsion-free-graded}, continuous representations $$\rho: \pi_1^{\text{\'et}, \ell}(X_{\bar k}, \bar x)\to GL_n(\mathbb{Z}_\ell)$$ will induce natural ring homomorphisms  $${\mathbb{Q}_\ell}[[\pi_1^{\text{\'et}, \ell}(X_{\bar k}, \bar x)]]^{\leq \ell^{-r}}\to \mathfrak{gl}_n({\mathbb{Q}_\ell})$$ for for $r\gg 0$; $r$ depends on the image of $\rho$.  More generally, if $\mathscr{F}$ is a lisse $\ell$-adic sheaf on $X_{\bar k}$, we obtain a representation of $$\mathbb{Q}_\ell[[\Pi_1(X_{\bar k})]]^{\leq \ell^{-r}}$$ for $r\gg0$, in the sense of Definition \ref{hopf-groupoid-reps}.
\begin{defn}\label{r-0-defn}
Let $$\rho: \pi_1^{\text{\'et}, \ell}(X_{\bar k}, \bar x)\to GL_n(\mathbb{Z}_\ell)$$ be a continuous representation.  Let $${\mathbb{Z}_\ell}[[\rho]]: {\mathbb{Z}_\ell}[[\pi_1^{\text{\'et}, \ell}(X_{\bar k}, \bar x)]] \to \mathfrak{gl}_n({\mathbb{Z}_\ell})$$  be the map induced by $\rho$.   Then we define $r_0(\rho)$ be the supremum over all $r\in \mathbb{R}_{>0}$ such that $$({\mathbb{Z}_\ell}[[\rho]])(\mathscr{I}^s)\equiv 0 \bmod \ell^{\lfloor sr\rfloor}\text{ for $s\gg0$}.$$
Here $\mathscr{I}$ is the augmentation ideal of $\mathbb{Z}_\ell[[\pi_1^{\text{\'et}, \ell}(X_{\bar k}, \bar x)]].$
\end{defn}
\begin{prop}\label{convergence-prop}
Let $$\rho: \pi_1^{\text{\'et}, \ell}(X_{\bar k}, \bar x)\to GL_n(\mathbb{Z}_\ell)$$ be a continuous representation. Suppose that $X$ satisfies Assumption \ref{torsion-free-graded}.  Then for any $0< r<r_0(\rho)$,   ${\mathbb{Z}_\ell}[[\rho]]$ extends to a unique commutative diagram
$$\xymatrix{
{\mathbb{Z}_\ell}[[\pi_1^{\text{\'et}, \ell}(X_{\bar k}, \bar x)]] \ar[r]^-{{\mathbb{Z}_\ell}[[\rho]]} \ar[d] & \mathfrak{gl}_n({\mathbb{Z}_\ell})\ar[d]\\
{\mathbb{Q}_\ell}[[\pi_1^{\text{\'et}, \ell}(X_{\bar k}, \bar x)]]^{\leq \ell^{-r}} \ar@{.>}[r]^-{\rho_r} & \mathfrak{gl}_n({\mathbb{Q}_\ell})
}$$
where $\rho_r$ is a continuous map of ${\mathbb{Q}_\ell}$-algebras, and the vertical maps are the obvious inclusions.
\end{prop}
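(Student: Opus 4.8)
The plan is to build $\rho_r$ as a limit of the maps that $\mathbb{Z}_\ell[[\rho]]$ induces on the finite-level quotients, using the strict inequality $r<r_0(\rho)$ (together with Assumption \ref{torsion-free-graded}) to control $\ell$-denominators, and then to deduce that $\rho_r$ is a continuous $\mathbb{Q}_\ell$-algebra homomorphism, and is unique, by a density argument.

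First I would unwind $r_0(\rho)$. Since $\lfloor sr\rfloor$ is weakly decreasing in $r$, the set of $r$ for which $\mathbb{Z}_\ell[[\rho]](\mathscr{I}^s)\subseteq \ell^{\lfloor sr\rfloor}\mathfrak{gl}_n(\mathbb{Z}_\ell)$ for all $s\gg 0$ is downward closed, so $r<r_0(\rho)$ yields an $s_0$ with
\[
\mathbb{Z}_\ell[[\rho]](\mathscr{I}^s)\subseteq \ell^{\lfloor sr\rfloor}\mathfrak{gl}_n(\mathbb{Z}_\ell)\qquad\text{for all }s\ge s_0;
\]
call this bound $(\star)$. (One could even take $r<r'<r_0(\rho)$ and replace $\lfloor sr\rfloor$ by $\lfloor sr'\rfloor$, gaining an unbounded margin, but $(\star)$ is enough.) By Assumption \ref{torsion-free-graded} the $\mathscr{I}$-adic filtration on $\mathbb{Z}_\ell[[\pi_1^{\text{\'et},\ell}(X_{\bar k},\bar x)]]$ splits over $\mathbb{Z}_\ell$; fixing a splitting gives every $g\in\mathbb{Q}_\ell[[\pi_1^{\text{\'et},\ell}(X_{\bar k},\bar x)]]$ a decomposition $g=\sum_m g^{(m)}$ into homogeneous pieces of $\mathscr{I}$-adic degree $m$, with $g^{(m)}=\ell^{-c_m}h_m$ for $h_m\in\mathscr{I}^m\cap\mathbb{Z}_\ell[[\pi_1^{\text{\'et},\ell}(X_{\bar k},\bar x)]]$ primitive. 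Since $v_n(\pi_n(g))=-\max_{m<n}c_m$ (up to the harmless $\max(0,-)$ coming from the degree-$0$ term), membership of $g$ in the convergent ring of radius $\ell^{-r}$ is equivalent to $c_m-rm\to-\infty$.

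Next I would set $\rho_r(g):=\sum_m \mathbb{Z}_\ell[[\rho]](g^{(m)})$ for $g$ in the convergent ring, where $\mathbb{Z}_\ell[[\rho]]$ is extended $\mathbb{Q}_\ell$-linearly to $\mathbb{Z}_\ell[[\pi_1^{\text{\'et},\ell}(X_{\bar k},\bar x)]]\otimes\mathbb{Q}_\ell$. For $m\ge s_0$, bound $(\star)$ gives $\mathbb{Z}_\ell[[\rho]](g^{(m)})=\ell^{-c_m}\mathbb{Z}_\ell[[\rho]](h_m)\in\ell^{\lfloor rm\rfloor-c_m}\mathfrak{gl}_n(\mathbb{Z}_\ell)$, and $\lfloor rm\rfloor-c_m\to\infty$ by the previous paragraph, so the series converges in $\mathfrak{gl}_n(\mathbb{Q}_\ell)$. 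A short computation — two choices of splitting differ by maps $\mathscr{I}^m\to\mathscr{I}^{m+1}\otimes\mathbb{Q}_\ell$, whose contribution to $\rho_r(g)$ is controlled by the same estimate, using inequality \eqref{valuation-inequality} to track how denominators propagate through the filtration — shows $\rho_r(g)$ is independent of the splitting, and $\mathbb{Q}_\ell$-linearity of $\rho_r$ is immediate. Applying the construction to the degree decomposition of an element of $\mathbb{Z}_\ell[[\pi_1^{\text{\'et},\ell}(X_{\bar k},\bar x)]]$ (resp.\ of its $\mathbb{Q}_\ell$-span) shows $\rho_r$ restricts there to $\mathbb{Z}_\ell[[\rho]]$ (resp.\ $\mathbb{Z}_\ell[[\rho]]\otimes\mathbb{Q}_\ell$), so the square commutes.

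It then remains to prove continuity, multiplicativity, and uniqueness. From $c_m-rm\to-\infty$ and $(\star)$ one reads off that $\rho_r$ carries the basic neighborhood $\{g: v_n(\pi_n(g))+rn\ge M\ \forall n\}$ of $0$ into $\ell^{M-C}\mathfrak{gl}_n(\mathbb{Z}_\ell)$ for a constant $C$ depending only on $r$ and $s_0$; hence $\rho_r$ is continuous. Since the truncations $\sum_{m<N}g^{(m)}$ have bounded denominator and converge to $g$, the bounded-denominator elements $\mathbb{Z}_\ell[[\pi_1^{\text{\'et},\ell}(X_{\bar k},\bar x)]]\otimes\mathbb{Q}_\ell$ are dense in the convergent ring. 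As $\mathfrak{gl}_n(\mathbb{Q}_\ell)$ is Hausdorff with continuous multiplication, the locus where $\rho_r(xy)=\rho_r(x)\rho_r(y)$ is closed and contains the dense subset $\bigl(\mathbb{Z}_\ell[[\pi_1^{\text{\'et},\ell}(X_{\bar k},\bar x)]]\otimes\mathbb{Q}_\ell\bigr)^2$, on which $\rho_r$ agrees with the ring homomorphism $\mathbb{Z}_\ell[[\rho]]\otimes\mathbb{Q}_\ell$; hence $\rho_r$ is a $\mathbb{Q}_\ell$-algebra homomorphism. Uniqueness follows because a continuous $\mathbb{Q}_\ell$-algebra map completing the diagram is determined on $\mathbb{Z}_\ell[[\pi_1^{\text{\'et},\ell}(X_{\bar k},\bar x)]]$, hence on its dense $\mathbb{Q}_\ell$-span, hence everywhere. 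The main obstacle throughout is the denominator bookkeeping: one must verify that ``denominators grow strictly slower than $\ell^{rm}$'', which is exactly what membership in the convergent ring provides, is matched against ``$\mathbb{Z}_\ell[[\rho]]$ annihilates $\mathscr{I}^m$ modulo $\ell^{\lfloor rm\rfloor}$'', which is exactly what $r<r_0(\rho)$ provides — it is the strict inequality that supplies the margin $\lfloor rm\rfloor-c_m\to\infty$ driving every convergence estimate. Once this is in hand, continuity, multiplicativity, and uniqueness are formal.
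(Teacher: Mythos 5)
Your proof is correct and converges on the same denominator estimate as the paper, but the bookkeeping is organized differently. The paper defines $\rho_r(x)$ directly as $\lim_n \ell^{-\lceil v_n(\pi_n(x))\rceil}\,\mathbb{Z}_\ell[[\rho]](x_n)$ over integral lifts $x_n$ of the truncations, and establishes convergence by a telescoping decomposition $\ell^{-\lceil v_n\rceil}x_n=\sum_j\ell^{-\lceil v_j\rceil}y_j$ with $y_j\in\mathscr{I}^{j-1}$, to which it applies your estimate $(\star)$ termwise; Assumption~\ref{torsion-free-graded} enters only so that $\ell^{\lceil v_n\rceil}\pi_n(x)$ genuinely lies in $\mathbb{Z}_\ell[[\pi_1^{\text{\'et},\ell}(X_{\bar k},\bar x)]]/\mathscr{I}^n$ (injectivity of the map to the $\mathbb{Q}_\ell$-version), so the lifts $x_n$ exist. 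You instead fix a $\mathbb{Z}_\ell$-linear splitting of the whole $\mathscr{I}$-adic filtration --- which does exist under~\ref{torsion-free-graded}, since each finitely generated torsion-free graded piece is free over $\mathbb{Z}_\ell$ and the finite-level splittings can be chosen compatibly in the inverse limit --- and sum $\mathbb{Z}_\ell[[\rho]]$ over the homogeneous components $g^{(m)}$. These are at bottom the same series: your truncation $\ell^{\lceil v_n\rceil}\sum_{m<n}g^{(m)}$ is one admissible choice of the paper's $x_n$. Your route buys an explicit formula and a more complete account of why $\rho_r$ is a continuous $\mathbb{Q}_\ell$-algebra map and is unique (via density of bounded-denominator elements), which the paper leaves largely implicit after proving the limit formula. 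The cost is the extra step of showing independence of the splitting, which you only gesture at and phrase a bit imprecisely --- two $\mathbb{Z}_\ell$-splittings differ by a filtration-preserving automorphism of the graded product inducing the identity on graded pieces, not literally by maps $\mathscr{I}^m\to\mathscr{I}^{m+1}\otimes\mathbb{Q}_\ell$. A cleaner version: for two splittings the truncations $\sum_{m<N}g^{(m)}$ and $\sum_{m<N}\tilde g^{(m)}$ are both lifts of $\pi_N(g)$ with denominator $\ell^{-v_N(\pi_N(g))}$, hence differ by an element of $\ell^{v_N(\pi_N(g))}\mathscr{I}^N$, and the image of that difference under $\mathbb{Z}_\ell[[\rho]]\otimes\mathbb{Q}_\ell$ tends to zero by $(\star)$ together with $v_N(\pi_N(g))+rN\to\infty$ --- exactly the estimate you already use.
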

\begin{proof}
Let $x\in {\mathbb{Q}_\ell}[[\pi_1^{\text{\'et}, \ell}(X_{\bar k}, \bar x)]]^{\leq \ell^{-r}}$ be any element.  For each $n$, let $x_n$ be any lift of $\ell^{\lceil v_n(\pi_n(x))\rceil}\pi_n(x)$ to ${\mathbb{Z}_\ell}[[\pi_1^{\text{\'et}, \ell}(X_{\bar k}, \bar x)]]\subset {\mathbb{Q}_\ell}[[\pi_1^{\text{\'et}, \ell}(X_{\bar k}, \bar x)]]$.  By continuity of $\rho_r$, we must have that $$\rho_r(x)=\lim_{n\to \infty} \ell^{-\lceil v_n(\pi_n(x))\rceil} \cdot({\mathbb{Z}_\ell}[[\rho]])(x_n)$$ if the limit exists; this proves uniqueness of $\rho_r$.  (Observe that for this limit formula to make sense, we need Assumption \ref{torsion-free-graded}.)

We now show that the limit does in fact exist.   Let $m$ be a positive integer.  Then $$y_{m+1}:=x_{m+1}-\ell^{\lceil v_{m+1}(\pi_{m+1}(x)))\rceil-  \lceil v_m(\pi_m(x))\rceil}x_m\in \mathscr{I}^m,$$ because  $x_{m+1}$ and $\ell^{\lceil v_{m+1}(\pi_{m+1}(x)))\rceil-  \lceil v_m(\pi_m(x))\rceil}x_m$ are both lifts of $$\ell^{\lceil v_{m+1}(\pi_{m+1}(x)))\rceil}\pi_m(x)$$ to $${\mathbb{Z}_\ell}[[\pi_1^{\text{\'et}, \ell}(X_{\bar k}, \bar x)]].$$  (Here $\mathscr{I}^m$ refers to the $m$-th power of the augmentation ideal in ${\mathbb{Z}_\ell}[[\pi_1^{\text{\'et}, \ell}(X_{\bar k}, \bar x)]]$).  Hence  we  have $$x_{m+1}=\ell^{\lceil v_{m+1}(\pi_{m+1}(x)))\rceil-  \lceil v_m(\pi_m(x))\rceil}x_m+y_{m+1}$$ and thus $$x_n=y_n+\ell^{\lceil v_n(\pi_n(x))\rceil}(\sum_{j=1}^{n-1} \ell^{-\lceil v_j(\pi_j(x))\rceil}y_j)$$ where $y_j\in \mathscr{I}^j$.  Thus $$\ell^{-\lceil v_n(\pi_n(x))\rceil} \cdot({\mathbb{Z}_\ell}[[\rho]])(x_n)=\sum_{j=1}^{n} \ell^{-\lceil v_j(\pi_j(x))\rceil}(\mathbb{Z}_\ell[[\rho]])(y_j).$$

By definition of ${\mathbb{Q}_\ell}[[\pi_1^{\text{\'et}, \ell}(X_{\bar k}, \bar x)]]^{\leq \ell^{-r}}$, $v_j(\pi_j(x))+jr\to \infty$; moreover, by our choice of $r_0$ and the fact that $y_j \in \mathscr{I}^j$, we have $(\mathbb{Z}_\ell[[\rho]])(y_j)\equiv 0\bmod \ell^{\lfloor jr \rfloor}.$  Thus  $$\ell^{-\lceil v_j(\pi_j(x))\rceil}(\mathbb{Z}_\ell[[\rho]])(y_j)\to 0$$ and the sequence converges as desired.
\end{proof}
If $\rho$ is a representation of $\mathbb{Z}_\ell[[\Pi_1(X_{\bar k})]]$, an identical theorem with identical proofs holds for extending representations to $\mathbb{Q}_\ell[[\Pi_1(X_{\bar k})]]^{\leq p^{-r}}$; we omit the precise statement and proof as we will not require them.

Observe that for any continuous representation $$\rho:\pi_1^{\text{\'et}, \ell}(X_{\bar k}, \bar x)\to GL_n(\mathbb{Z}_\ell),$$ the real number $r_0$ defined in Definition \ref{r-0-defn} is always non-zero.  Indeed, the residual representation $$\bar \rho: \pi_1^{\text{\'et}, \ell}(X_{\bar k}, \bar x)\to GL_n(\mathbb{F}_\ell)$$ is necessarily unipotent, so the augmentation ideal of $\mathbb{Z}_\ell[[\pi_1^{\text{\'et}, \ell}(X_{\bar k}, \bar x)]]$ acts topologically nilpotently on $\mathbb{Z}_\ell^n$.
\begin{prop}\label{r0-bound}
Let $$\rho: \pi_1^{\text{\'et}, \ell}(X_{\bar k}, \bar x)\to GL_n(\mathbb{Z}_\ell)$$ be a continuous representation.  Then
\begin{enumerate}
\item  if $$\rho\equiv \on{triv}\bmod ~\ell^k,$$ we have $r_0(\rho)\geq k$.
\item Let $$G= \on{im}(\bar \rho: \pi_1^{\text{\'et}, \ell}(X_{\bar k}, \bar x)\to GL_n(\mathbb{F}_\ell)).$$ If $\mathscr{I}_G$ is the augmentation ideal of $\mathbb{F}_\ell[G]$, let $c$ be the least integer such that $\mathscr{I}_G^{c}=0$.  Then $r_0(\rho)\geq 1/c$.
\end{enumerate}
In particular, for any $\rho$, $r_0(\rho)>0$.
\end{prop}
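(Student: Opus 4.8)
The plan is to deduce both inequalities directly from the definition of $r_0(\rho)$ in Definition \ref{r-0-defn}, using only that $\mathbb{Z}_\ell[[\rho]]$ is a continuous homomorphism of $\mathbb{Z}_\ell$-algebras and that the augmentation ideal $\mathscr{I}\subset \mathbb{Z}_\ell[[\pi_1^{\text{\'et}, \ell}(X_{\bar k}, \bar x)]]$ is topologically generated by the elements $g-1$, $g\in \pi_1^{\text{\'et}, \ell}(X_{\bar k}, \bar x)$ (both facts being immediate from the construction of the completed group ring as an inverse limit of finite group algebras, together with the description of $\mathscr{I}/\mathscr{I}^2$ in Lemma \ref{augmentation-abelianization-group-ring}).

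For part (1), I would first note that the hypothesis $\rho\equiv \on{triv}\bmod \ell^k$ says exactly that $\mathbb{Z}_\ell[[\rho]](g-1)=\rho(g)-1\in \ell^k\mathfrak{gl}_n(\mathbb{Z}_\ell)$ for every $g$. Since $\ell^k\mathfrak{gl}_n(\mathbb{Z}_\ell)$ is a closed two-sided ideal and $\mathbb{Z}_\ell[[\rho]]$ is continuous and $\mathbb{Z}_\ell$-linear, this forces $\mathbb{Z}_\ell[[\rho]](\mathscr{I})\subseteq \ell^k\mathfrak{gl}_n(\mathbb{Z}_\ell)$; and then, since $\mathscr{I}^s$ is topologically generated by products of $s$ elements of $\mathscr{I}$ and $\mathbb{Z}_\ell[[\rho]]$ is multiplicative (and $\ell$ is central), one gets $\mathbb{Z}_\ell[[\rho]](\mathscr{I}^s)\subseteq \ell^{ks}\mathfrak{gl}_n(\mathbb{Z}_\ell)$ for all $s$. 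Because $ks\geq \lfloor sr\rfloor$ whenever $r\leq k$, the condition in Definition \ref{r-0-defn} holds for every such $r$, hence $r_0(\rho)\geq k$.

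For part (2), I would reduce everything modulo $\ell$: the reduction of $\mathbb{Z}_\ell[[\rho]]$ modulo $\ell$ factors as $\mathbb{F}_\ell[[\pi_1^{\text{\'et}, \ell}(X_{\bar k}, \bar x)]]\to \mathbb{F}_\ell[G]\to \mathfrak{gl}_n(\mathbb{F}_\ell)$, since $\bar\rho$ factors through $G=\on{im}(\bar\rho)$ by definition. As the augmentation is compatible with the quotient $\pi_1^{\text{\'et}, \ell}(X_{\bar k}, \bar x)\twoheadrightarrow G$, the image of $\mathscr{I}$ in $\mathbb{F}_\ell[G]$ lies in $\mathscr{I}_G$, so the image of $\mathscr{I}^c$ lies in $\mathscr{I}_G^c=0$; equivalently $\mathbb{Z}_\ell[[\rho]](\mathscr{I}^c)\subseteq \ell\mathfrak{gl}_n(\mathbb{Z}_\ell)$. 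Multiplicativity then gives $\mathbb{Z}_\ell[[\rho]](\mathscr{I}^{mc})\subseteq \ell^m\mathfrak{gl}_n(\mathbb{Z}_\ell)$ for all $m$, and for arbitrary $s$, writing $s=mc+t$ with $0\leq t<c$ and using $\mathscr{I}^s\subseteq \mathscr{I}^{mc}$ yields $\mathbb{Z}_\ell[[\rho]](\mathscr{I}^s)\subseteq \ell^{\lfloor s/c\rfloor}\mathfrak{gl}_n(\mathbb{Z}_\ell)$. Since $\lfloor s/c\rfloor=\lfloor s\cdot\tfrac{1}{c}\rfloor$, this is precisely the condition of Definition \ref{r-0-defn} for $r=1/c$, so $r_0(\rho)\geq 1/c$. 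The ``in particular'' is then immediate: $G$ is a finite $\ell$-group, being a continuous image of a pro-$\ell$ group in $GL_n(\mathbb{F}_\ell)$, so by Lemma \ref{augmentation-nilpotent-ell-group} the ideal $\mathscr{I}_G$ is nilpotent and $c<\infty$, whence $r_0(\rho)\geq 1/c>0$.

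I do not anticipate any substantive obstacle; the only points demanding a line of care are the two soft facts about $\mathbb{Z}_\ell[[\pi_1^{\text{\'et}, \ell}(X_{\bar k}, \bar x)]]$ cited at the outset (topological generation of $\mathscr{I}$ by the $g-1$, and continuity of $\mathbb{Z}_\ell[[\rho]]$) and the bookkeeping matching the exponent $\lfloor s/c\rfloor$ to the floor function in Definition \ref{r-0-defn}. One should also observe that part (1) is not subsumed by part (2): when $\rho\equiv\on{triv}\bmod\ell^k$ the group $G$ is trivial and part (2) only returns $r_0(\rho)\geq 1$, whereas part (1) gives the sharper bound $r_0(\rho)\geq k$.
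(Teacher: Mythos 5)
Your proof is correct and takes the same route as the paper's: read off $\mathbb{Z}_\ell[[\rho]](\mathscr{I})\subseteq\ell^k\mathfrak{gl}_n(\mathbb{Z}_\ell)$ (resp.\ $\mathbb{Z}_\ell[[\rho]](\mathscr{I}^c)\subseteq\ell\mathfrak{gl}_n(\mathbb{Z}_\ell)$) from the hypothesis, and bootstrap by multiplicativity. You've filled in a couple of soft details the paper leaves implicit (topological generation of $\mathscr{I}$ by the $g-1$, and the floor-function bookkeeping in Definition~\ref{r-0-defn}), and you correctly observe that the ``in particular'' follows because $G$ is a finite $\ell$-group so Lemma~\ref{augmentation-nilpotent-ell-group} forces $c<\infty$.
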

\begin{proof}
We begin with a proof of (1).  We have $\mathbb{Z}_\ell[[\rho]](\mathscr{I})\equiv 0\bmod \ell^k$, hence $\mathbb{Z}_\ell[[\rho]](\mathscr{I}^n)\equiv 0\bmod \ell^{nk}$, giving the result immediately.

To prove (2), note that $\mathscr{I}_G^{c}=0$ implies that $\mathbb{Z}_\ell[[\rho]](\mathscr{I}^{c})\equiv 0\bmod \ell$.  Hence $$\mathbb{Z}_\ell[[\rho]](\mathscr{I}^{mc})\equiv 0\bmod \ell^m$$ giving the desired inequality.
\end{proof}
\subsection{Examples and relation to integral $\ell$-adic periods}
We now move on to studying specific examples of $\mathbb{Q}_\ell[[\Pi_1(X_{\bar k})]]^{\leq p^{-r}}$, and in particular to analyzing the Galois action on these convergent Hopf groupoids/group rings.

The following example justifies the word ``convergent" in the name ``convergent Hopf groupoid."
\begin{example}
Let $k$ be a field of characteristic different from $\ell$, and let $X=\mathbb{G}_{m, k}$.  Let $x$ be the rational basepoint $1\in \mathbb{G}_m(k)$.  Fixing an embedding $k\hookrightarrow \bar k$, we obtain a geometric point $\bar x$ of $X$.  Then $$\pi_1^{\text{\'et}, \ell}(X_{\bar k}, \bar x)=\mathbb{Z}_\ell(1).$$  Let $\Gamma=\mathbb{Z}_\ell(1)$ .  It is well-known that $$\mathbb{Z}_\ell[[\Gamma]]=\mathbb{Z}_\ell[[T]]$$ and that the $\mathscr{I}$-adically completed group ring satisfies $$\mathbb{Q}_\ell[[\Gamma]]\simeq \mathbb{Q}_\ell[[T]]$$ under the isomorphism $T\mapsto \gamma-1$, where $\gamma$ is a topological generator of $\Gamma$.  We have that $$W^{-n}=\mathscr{I}^n=(T^n).$$ Under this isomorphism, the subring of $\mathbb{Q}_\ell[[\Gamma]]^{\leq \ell^{-r}}\subset \mathbb{Q}_\ell[[\Gamma]]$ is identified precisely with the ring of convergent power series on the closed ball of radius $\ell^{-r}$ in $\mathbb{Q}_\ell[[T]]$, for $r>0$.  If $k$ is a field and $\sigma\in G_k$, we have that for $\gamma\in \Gamma$, $$\sigma(\gamma)=\gamma^{\chi(\sigma)},$$ where $\chi: G_k\to \mathbb{Z}_\ell^\times$ is the cyclotomic character.  (In particular, all $\sigma$ act via quasi-scalars.)

 Equivalently, $$\gamma(1+T)=(1+T)^{\chi(\sigma)}.$$ Hence  $\log(1+T)$ is sent to $\chi(\sigma)\log(1+T)$ by $\sigma$.  For any $r > 0$, $\log(1+T)\in \mathbb{Q}_\ell[[\Gamma]]^{\leq \ell^{-r}}$.  Indeed, the elements $$\log(1+T)^n, n\in \mathbb{Z}_{\geq 0}$$ are a basis of $\sigma$-eigenvectors in $\mathbb{Q}_\ell[[\Gamma]]^{\leq \ell^{-r}}$ with eigenvalues $\chi(\sigma)^n$, i.e.~they are linearly independent over $\mathbb{Q}_\ell$ and their span is dense.
\end{example}
In fact, we have a similar statement for any variety with abelian or nilpotent fundamental group (there exist varieties with non-abelian torsion-free nilpotent fundamental group---see e.g.~\cite{sommese-vandeven} or \cite{campana} for examples).
\begin{theorem}\label{diagonalizing-nilpotent-pi1}
Let $k$ be a field of characteristic different from $\ell$, and let $X/k$ be a smooth variety admitting a simple normal crossings compactification.  Let $x$ be a rational point or rational tangential basepoint of $X$, and $k\hookrightarrow \bar k$ an algebraic closure of $X$.  Suppose that $\sigma\in G_k$ acts on $\mathbb{Q}_\ell[[\pi_1^{\text{\'et}, \ell}(X_{\bar k}, \bar x)]]$ via a quasi-Frobenius with respect to the weight filtration.  Suppose further that $$\pi_1^{\text{\'et}, \ell}(X_{\bar k}, \bar x)$$ is finitely generated and nilpotent.  Then for any $r>0$, $$\overline{\mathbb{Q}_\ell}[[\pi_1^{\text{\'et}, \ell}(X_{\bar k}, \bar x)]]^{\leq \ell^{-r}}$$ admits a  set of  $\sigma$-eigenvectors whose span is dense.  Moreover, $$\mathscr{I}^n(x)\cap \overline{\mathbb{Q}_\ell}[[\pi_1^{\text{\'et}, \ell}(X_{\bar k}, \bar x)]]^{\leq \ell^{-r}}$$ admits a set of $\sigma$-eigenvectors of weight $\leq -n$ whose span is dense.
\end{theorem}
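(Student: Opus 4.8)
The plan is to reduce everything to the structure of the $\overline{\mathbb{Q}_\ell}$-Mal'cev Hopf algebra of $\Gamma:=\pi_1^{\text{\'et},\ell}(X_{\bar k},\bar x)$ and to exploit the one special feature of the nilpotent case: the Mal'cev Lie algebra $\mathfrak g$ (the space of primitive elements of $\overline{\mathbb{Q}_\ell}[[\Gamma]]$) is \emph{finite-dimensional}, and $\overline{\mathbb{Q}_\ell}[[\Gamma]]$ is its completed enveloping algebra. This is exactly where nilpotency is used, and it is what fails for, say, free $\Gamma$, where the quasi-scalar hypothesis and the period bounds of Section \ref{integral-l-adic-periods-section} become unavoidable. (Torsion in $\Gamma$ washes out after $\otimes\,\mathbb{Q}_\ell$, so I would assume $\Gamma$ torsion-free.)

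First I would record that, since $\sigma$ is a quasi-Frobenius with respect to the weight filtration, Theorem \ref{semisimple-pi1} gives that $\sigma$ acts semisimply on $\overline{\mathbb{Q}_\ell}[[\pi_1^{\text{\'et},\ell}(X_{\bar k},\bar x)]]/\mathscr{I}^n(x)$ for every $n$. If $c$ is the nilpotency class, then $\mathfrak g\cap\mathscr{I}^{c+1}(x)=0$, so $\mathfrak g$ embeds as a $\sigma$-stable subspace of $\overline{\mathbb{Q}_\ell}[[\Gamma]]/\mathscr{I}^{c+1}(x)$ and hence $\sigma$ acts semisimply on $\mathfrak g$. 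Choose a basis $v_1,\dots,v_d$ of $\mathfrak g$ by $\sigma$-eigenvectors, with eigenvalues $\mu_i$, adapted to the lower central filtration $\mathfrak g=\mathfrak g_1\supset\mathfrak g_2\supset\cdots$; write $d_i$ for the filtration degree of $v_i$. Since $\mathfrak g_j\subset\mathscr{I}^j\subset W^{-j}$ (Proposition \ref{weight-prop}), each $v_i$ has weight $\le -d_i$. By the Poincar\'e--Birkhoff--Witt theorem the ordered monomials $m=v_1^{a_1}\cdots v_d^{a_d}$ form a topological $\overline{\mathbb{Q}_\ell}$-basis of $\overline{\mathbb{Q}_\ell}[[\Gamma]]$; each such $m$ is a $\sigma$-eigenvector of eigenvalue $\prod_i\mu_i^{a_i}$, lies in $\mathscr{I}^{\sum a_id_i}(x)$, and, by multiplicativity of the weight filtration, has weight $\le-\sum a_id_i$. (Corollary \ref{splitting-I-adic-filtration} is the finite-level shadow of this statement.)

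Next I would check that every such monomial lies in $\overline{\mathbb{Q}_\ell}[[\pi_1^{\text{\'et},\ell}(X_{\bar k},\bar x)]]^{\le\ell^{-r}}$ for all $r>0$. As the convergent group ring is a $\overline{\mathbb{Q}_\ell}$-subspace closed under multiplication (Proposition \ref{multiplication-convergent}), and $\mathfrak g$ is $\overline{\mathbb{Q}_\ell}$-spanned by logarithms $\log\gamma$ with $\gamma\in\Gamma$, it suffices to treat $\log\gamma$. But modulo $\mathscr{I}^n(x)$ the series $\log\gamma=\sum_{k\ge1}(-1)^{k+1}(\gamma-1)^k/k$ has $\ell$-power denominator at most $\ell^{v_\ell(\mathrm{lcm}(1,\dots,n-1))}=\ell^{\lfloor\log_\ell(n-1)\rfloor}$, so $v_n(\pi_n(\log\gamma))+rn\ge rn-\log_\ell n\to\infty$. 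Hence all PBW monomials are $\sigma$-eigenvectors lying in every convergent group ring, and their $\overline{\mathbb{Q}_\ell}$-span is dense in $\overline{\mathbb{Q}_\ell}[[\Gamma]]$ for the $\mathscr{I}$-adic topology.

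The remaining and, I expect, main point is to upgrade this to density in $\overline{\mathbb{Q}_\ell}[[\pi_1^{\text{\'et},\ell}(X_{\bar k},\bar x)]]^{\le\ell^{-r}}$ for its own (finer) topology. Here one uses that the $v_i$ are \emph{genuine} elements of $\overline{\mathbb{Q}_\ell}[[\Gamma]]$, not merely representatives in $\on{gr}_{\mathscr{I}}$: PBW then gives a $\sigma$-equivariant topological identification $\overline{\mathbb{Q}_\ell}[[\Gamma]]\cong\prod_m\overline{\mathbb{Q}_\ell}\cdot m$ under which the convergent group ring becomes the space of series $\sum_mc_mm$ satisfying a convergence condition of the form $v_\ell(c_m)+r\deg(m)\to\infty$ (the $O(\log n)$ denominators of the $v_i$ themselves do not affect this), exactly as in the $\mathbb{G}_m$ example above. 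With this explicit description, finite partial sums of a convergent expansion converge in the convergent-ring topology, so the PBW monomials span densely there. For the ``moreover'' clause, the PBW monomials of $\mathscr{I}$-degree $\ge n$ span $\mathscr{I}^n$, lie in $\mathscr{I}^n(x)\cap\overline{\mathbb{Q}_\ell}[[\Gamma]]^{\le\ell^{-r}}$, have weight $\le-n$ by the computation above, and are dense in $\mathscr{I}^n(x)\cap\overline{\mathbb{Q}_\ell}[[\Gamma]]^{\le\ell^{-r}}$ by the same identification. The genuine work is thus the careful bookkeeping of the topology of the convergent group ring needed to turn ``topological basis of the ambient ring, sitting inside the convergent ring'' into ``topological basis of the convergent ring'' — and this is cleaner here than in the general (quasi-scalar) setting precisely because, $\mathfrak g$ being finite-dimensional, no denominators beyond $O(\log n)$ ever occur.
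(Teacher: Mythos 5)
Your proposal is essentially the same proof as the paper's. The structural engine in both is the finite-dimensionality of the Mal'cev Lie algebra $\mathfrak g$ of $\Gamma$, the semisimplicity of $\sigma$ on $\mathfrak g$ deduced from Theorem~\ref{semisimple-pi1} by embedding $\mathfrak g$ into a finite-level quotient, the inclusion $\mathfrak g\subset\overline{\mathbb{Q}_\ell}[[\Gamma]]^{\leq\ell^{-r}}$ for all $r>0$ via the $O(\log n)$ bound on the $\log$ series together with Proposition~\ref{multiplication-convergent}, and then monomials in eigenvectors of $\mathfrak g$ as the desired dense set of $\sigma$-eigenvectors.

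The one variation worth noting: the paper lifts only a basis of eigenvectors of $\mathfrak g^{\mathrm{ab}}$ and uses that these generate $\mathfrak g$ (nilpotency), so that arbitrary monomials in them give a dense spanning set whose density is read off the associated graded of the $\mathscr{I}$-adic filtration. You instead take a full basis of $\mathfrak g$ adapted to the lower central series and invoke PBW, getting an actual topological basis rather than a spanning set. Both are valid; PBW is a mild refinement that buys you nothing extra for the statement at hand, while the abelianization route sidesteps even the PBW theorem and is a bit leaner. Regarding your final paragraph on the ``finer topology'': the explicit characterization of $\overline{\mathbb{Q}_\ell}[[\Gamma]]^{\leq\ell^{-r}}$ as $\{\sum_m c_m m : v_\ell(c_m)+r\deg m\to\infty\}$ is asserted rather than proven, and it is not obviously exact once one accounts for the $O(\deg(m)\log n)$ denominators of $\pi_n(m)$ accumulating across monomials of degree comparable to $n$; but for the density claim in the statement (density in the $\mathscr{I}$-adic subspace topology, which is what the downstream applications actually use) this refinement is not needed, and the paper's own proof does not attempt it either. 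Finally, your side remark about discarding torsion in $\Gamma$ is harmless, since passage to the Mal'cev algebra over $\mathbb{Q}_\ell$ already kills it and the statement only concerns the rational group algebra.
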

Recall that we defined the weight of a $\sigma$-eigenvector in Definition \ref{quasi-frobenii-defn}.

Before proving the theorem, we need a brief lemma.
\begin{lemma}\label{i-adic-weights}
Let $k$ be a field and $X$ a $k$-variety; let $x_i, x_j$ be rational points or rational tangential basepoints of $X$ such that for some $r>0$, $\overline{\mathbb{Q}_\ell}[[\pi_1^{\text{\'et}, \ell}(X_{\bar k}; \bar x_i, \bar x_j)]]^{\leq \ell^{-r}}$ admits a set of $\sigma$-eigenvectors whose span is dense, for some $\sigma\in G_k$ which is a quasi-Frobenius with respect to the weight filtration.  Let $w$ be the minimum weight appearing in the $G_k$-representation $H^1(X_{\bar k}, \mathbb{Q}_\ell)$.  Then $$\mathscr{I}^n\cap \overline{\mathbb{Q}_\ell}[[\pi_1^{\text{\'et}, \ell}(X_{\bar k}; \bar x_i, \bar x_j)]]^{\leq \ell^{-r}}$$ is (topologically) spanned by $\sigma$-eigenvectors of weight at most $-nw$.
\end{lemma}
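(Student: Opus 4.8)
The plan is to produce, for each $n$, a continuous $\sigma$-equivariant projection of $\overline{\mathbb{Q}_\ell}[[\pi_1^{\text{\'et}, \ell}(X_{\bar k}; \bar x_i, \bar x_j)]]^{\leq \ell^{-r}}$ onto its closed subspace $\mathscr{I}^n(x_i,x_j)\cap \overline{\mathbb{Q}_\ell}[[\pi_1^{\text{\'et}, \ell}(X_{\bar k}; \bar x_i, \bar x_j)]]^{\leq \ell^{-r}}$, to apply it to the given family of $\sigma$-eigenvectors with dense span, and then to check that any $\sigma$-eigenvector lying in $\mathscr{I}^n(x_i,x_j)$ automatically has weight at most $-nw$. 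Throughout write $V = \overline{\mathbb{Q}_\ell}[[\pi_1^{\text{\'et}, \ell}(X_{\bar k}; \bar x_i, \bar x_j)]]$, $V^{\leq} = V^{\leq \ell^{-r}}$, and $\mathscr{I}^n = \mathscr{I}^n(x_i,x_j)$, and let $\{v_\alpha\}$ be the hypothesized $\sigma$-eigenvectors, so $\overline{\on{span}\{v_\alpha\}} = V^{\leq}$.

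First I would record the inputs. Since $\pi_1^{\text{\'et}, \ell}(X_{\bar k}, \bar x_i)$ is topologically finitely generated, each $V/\mathscr{I}^n$ is finite-dimensional over $\overline{\mathbb{Q}_\ell}$, and since $V = \varprojlim_n V/\mathscr{I}^n$ the quotient map $q_n\colon V^{\leq}\to V/\mathscr{I}^n$ is continuous and surjective (a rescaled lift from the integral subring $\overline{\mathbb{Z}_\ell}[[\pi_1^{\text{\'et}, \ell}(X_{\bar k}; \bar x_i, \bar x_j)]]\subseteq V^{\leq}$ witnesses surjectivity). As $\sigma$ is a quasi-Frobenius for $H^1(X_{\bar k}, \mathbb{Q}_\ell)$ with respect to the weight filtration, Theorem \ref{semisimple-pi1} shows $\sigma$ acts semisimply on $V/\mathscr{I}^n$.

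Next I would build the projection. The images $q_n(v_\alpha)$ span a dense, hence --- as $V/\mathscr{I}^n$ is finite-dimensional --- full $\overline{\mathbb{Q}_\ell}$-subspace. Using that eigenvectors with distinct eigenvalues are linearly independent together with semisimplicity of $\sigma$ on $V/\mathscr{I}^n$, one lifts a basis of $\sigma$-eigenvectors of $V/\mathscr{I}^n$ to $\sigma$-eigenvectors of $V^{\leq}$: given an eigenvector $\bar e$ of eigenvalue $\lambda$, pick a finite combination $w$ of the $v_\alpha$ with $q_n(w)$ close to $\bar e$, decompose $w$ into $\sigma$-eigencomponents, and keep its $\lambda$-component (still mapping near $\bar e$); carrying this out for a basis and using openness yields, for close enough approximations, a $\sigma$-equivariant linear section $s\colon V/\mathscr{I}^n\to V^{\leq}$ of $q_n$. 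Since any linear map out of a finite-dimensional topological vector space is continuous, $s$ --- hence $p_n := \on{id} - s\circ q_n\colon V^{\leq}\to \mathscr{I}^n\cap V^{\leq}$ --- is continuous, and $p_n$ is visibly a $\sigma$-equivariant retraction onto $\mathscr{I}^n\cap V^{\leq}$. Applying $p_n$ to $\{v_\alpha\}$ then produces a family $\{p_n(v_\alpha)\}$ of $\sigma$-eigenvectors (some possibly zero) lying in $\mathscr{I}^n\cap V^{\leq}$ and with dense span there.

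Finally I would extract the weight bound, which is where I expect least trouble. By Remark \ref{galois-action-remark}, both $\mathscr{I}(x_i,x_j)/\mathscr{I}^2(x_i,x_j)$ and $\mathscr{I}(x_i)/\mathscr{I}^2(x_i)$ are $\sigma$-equivariantly isomorphic to $H^1(X_{\bar k}, \mathbb{Q}_\ell)^{\vee}$, and the multiplication map $(\mathscr{I}(x_i)/\mathscr{I}^2(x_i))^{\otimes(j-1)}\otimes \mathscr{I}(x_i,x_j)/\mathscr{I}^2(x_i,x_j)\to \mathscr{I}^j(x_i,x_j)/\mathscr{I}^{j+1}(x_i,x_j)$ is a $\sigma$-equivariant surjection; since $\sigma$, being a quasi-Frobenius for $H^1(X_{\bar k}, \mathbb{Q}_\ell)$, is also one for $H^1(X_{\bar k}, \mathbb{Q}_\ell)^{\vee}$, whose $\sigma$-weights are the negatives of those of $H^1$ and hence all at most $-w$, and since weights add across tensor products, every $\sigma$-eigenvalue occurring in $\mathscr{I}^j(x_i,x_j)/\mathscr{I}^{j+1}(x_i,x_j)$ has weight at most $-jw$. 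A nonzero $\sigma$-eigenvector $v\in\mathscr{I}^n$ has nonzero image in $\mathscr{I}^n/\mathscr{I}^m$ for some $m>n$, and that module is an iterated $\sigma$-equivariant extension of the graded pieces $\mathscr{I}^j/\mathscr{I}^{j+1}$ with $j\geq n$; as $w>0$, the eigenvalue of $v$ therefore has weight at most $-nw$ (equivalently, $\mathscr{I}^n\subseteq W^{-nw}V$). Applying this to the $\{p_n(v_\alpha)\}$ of the previous paragraph completes the proof. The step deserving the most care is the construction of the section $s$: one must check that the approximation genuinely lifts a full basis of eigenvectors and that the resulting $p_n$ is continuous --- both routine given the finite-dimensionality of the $V/\mathscr{I}^n$ and the semisimplicity supplied by Theorem \ref{semisimple-pi1}, while the weight estimate is immediate from Remark \ref{galois-action-remark}.
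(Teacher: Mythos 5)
Your proof is correct but takes a genuinely different route from the paper's. Both begin by lifting $\sigma$-eigenvectors from a finite-dimensional quotient, then diverge: the paper lifts only a basis of eigenvectors of $\mathscr{I}/\mathscr{I}^2$ to eigenvectors $v_1,\dots,v_N\in\mathscr{I}\cap V^{\leq\ell^{-r}}$ and invokes the multiplicativity of the convergent group ring (Proposition \ref{multiplication-convergent}) so that the monomials of degree $\geq n$ in the $v_i$ furnish a dense family of eigenvectors in $\mathscr{I}^n\cap V^{\leq\ell^{-r}}$, with density read off the associated graded of the $\mathscr{I}$-adic filtration. You instead lift a full basis of eigenvectors of $V/\mathscr{I}^n$, assemble a $\sigma$-equivariant continuous section $s$ of $q_n$, and push the given dense family through the retraction $p_n=\on{id}-s\circ q_n$. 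Your argument is purely linear-algebraic (the ring structure enters only on the associated graded, for weight bookkeeping, and Proposition \ref{multiplication-convergent} is not needed); the paper's lifts fewer vectors and produces a concrete monomial spanning set. One streamlining is available to you: the approximate-then-correct construction of $s$ can be replaced by noting that, for each eigenvalue $\lambda$, the image under $q_n$ of the $\lambda$-eigenvectors among $\{v_\alpha\}$ is dense in --- hence, by finite dimensionality, equal to --- the $\lambda$-eigenspace of $V/\mathscr{I}^n$, so one may select a finite subfamily mapping onto a basis and extend linearly. Finally, your weight bound is phrased in the logically correct direction (a nonzero $\sigma$-eigenvector in $\mathscr{I}^n$ has weight $\leq -nw$, i.e.~$\mathscr{I}^n\subseteq W^{-nw}$); the paper's text momentarily asserts the converse implication at this point before applying the correct one.
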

\begin{proof}
Let $W_i\subset \overline{\mathbb{Q}_\ell}[[\pi_1^{\text{\'et}, \ell}(X_{\bar k}; \bar x_i, \bar x_j)]]^{\leq \ell^{-r}}$ be the span of the weight $i$ $\sigma$-eigenvectors.  Then if $i\not=0$, the augmentation map $W_i\to \overline{\mathbb{Q}_\ell}$ is zero, so $W_i\subset \mathscr{I}$; moreover, as the $W_i$ span $\overline{\mathbb{Q}_\ell}[[\pi_1^{\text{\'et}, \ell}(X_{\bar k}; \bar x_i, \bar x_j)]]^{\leq \ell^{-r}}$, the map $W_0\to \mathbb{Q}_\ell$ must be an isomorphism.   In general, comparison to the associated graded of the $\mathscr{I}$-adic filtration shows that any $\sigma$-eigenvector of weight $\leq -nw$ must be contained in $\mathscr{I}^n$.

Let us now prove that $\mathscr{I}^n\cap \overline{\mathbb{Q}_\ell}[[\pi_1^{\text{\'et}, \ell}(X_{\bar k}; \bar x_i, \bar x_j)]]^{\leq \ell^{-r}}$ is spanned by $\sigma$-eigenvectors, which by the previous paragraph necessarily have weight $\leq -wn$.  As the $\sigma$-eigenvectors have dense span in $\overline{\mathbb{Q}_\ell}[[\pi_1^{\text{\'et}, \ell}(X_{\bar k}; \bar x_i, \bar x_j)]]^{\leq \ell^{-r}}$, the $\sigma$-eigenvectors of weight at most $-1$ must span $\mathscr{I}$.  Let $v_1, \cdots, v_n$ be a set of lifts of the $\sigma$-eigenvectors of $\mathscr{I}/\mathscr{I}^2$ to $\mathscr{I}$.  Then the monomials in the $v_i$ of degree $\geq n$ have span which is dense in $\mathscr{I}^n$, e.g.~by considering the associated graded ring of the $\mathscr{I}$-adic filtration.
\end{proof}
\begin{proof}[Proof of Theorem \ref{diagonalizing-nilpotent-pi1}]
We will freely use facts about Mal'cev Lie algebras in this argument---see \cite[Appendix A3]{quillen-rational} for a reference.  We define $$\on{Lie} \pi_1^{\text{\'et}, \ell}(X_{\bar k})\subset \overline{\mathbb{Q}_\ell}[[\pi_1^{\text{\'et}, \ell}(X_{\bar k}, \bar x)]]$$ to be the set of primitive elements, namely those $x$ such that $$\Delta(x)=x\otimes1+1\otimes x.$$  Then the bracket $$[-, -]: x, y\mapsto xy-yx$$ turns $\on{Lie} \pi_1^{\text{\'et}, \ell}(X_{\bar k})$ into a $\mathbb{Q}_\ell$-Lie algebra, as the name would suggest.  As $ \pi_1^{\text{\'et}, \ell}(X_{\bar k})$ is finitely generated and nilpotent, $\on{Lie} \pi_1^{\text{\'et}, \ell}(X_{\bar k})$ is finite-dimensional over $\mathbb{Q}_\ell$ and generated over $\mathbb{Q}_\ell$ by $$\{\log x, x\in \on{im}(\pi_1^{\text{\'et}, \ell}(X_{\bar k})\to \overline{\mathbb{Q}_\ell}[[\pi_1^{\text{\'et}, \ell}(X_{\bar k}, \bar x)]])\}.$$  Hence $$\on{Lie} \pi_1^{\text{\'et}, \ell}(X_{\bar k})\subset \overline{\mathbb{Q}_\ell}[[\pi_1^{\text{\'et}, \ell}(X_{\bar k}, \bar x)]]^{\leq \ell^{-r}}$$ for any $r>0$.  For $N\gg0$, the natural map $$\on{Lie} \pi_1^{\text{\'et}, \ell}(X_{\bar k})\to \overline{\mathbb{Q}_\ell}[[\pi_1^{\text{\'et}, \ell}(X_{\bar k}, \bar x)]]/\mathscr{I}^N(x)$$ is injective, so it follows from Theorem \ref{semisimple-pi1} that $\sigma$ acts semisimply on $\on{Lie} \pi_1^{\text{\'et}, \ell}(X_{\bar k})$.  Thus $\on{Lie} \pi_1^{\text{\'et}, \ell}(X_{\bar k})$ admits a basis of eigenvectors.  

Let $\{v_1, \cdots, v_n\}$ be a set of linearly independent $\sigma$-eigenvectors in $\on{Lie} \pi_1^{\text{\'et}, \ell}(X_{\bar k})$ lifting a basis of eigenvectors of $\on{Lie} \pi_1^{\text{\'et}, \ell}(X_{\bar k})^{\text{ab}}$.  The map $$\on{Lie} \pi_1^{\text{\'et}, \ell}(X_{\bar k})\to \overline{\mathbb{Q}_\ell}[[\pi_1^{\text{\'et}, \ell}(X_{\bar k}, \bar x)]]$$ factors through $\mathscr{I}(x)$, and the induced map $$\on{Lie} \pi_1^{\text{\'et}, \ell}(X_{\bar k})^{\text{ab}} \to \mathscr{I}(x)/\mathscr{I}^2(x)$$ is an isomorphism.  Thus the diagram
$$\xymatrix{
\on{Span}(v_i)\simeq \on{Lie} \pi_1^{\text{\'et}, \ell}(X_{\bar k})^{\text{ab}}  \ar[r] \ar[d] & \mathscr{I}(x)\ar@{=}[d]\\
\on{Lie} \pi_1^{\text{\'et}, \ell}(X_{\bar k})\ar[r] \ar[d] &  \mathscr{I}(x) \ar[d]\\
\on{Lie} \pi_1^{\text{\'et}, \ell}(X_{\bar k})^{\text{ab}} \ar[r]^\sim & \mathscr{I}(x)/\mathscr{I}^2(x)
}$$
gives a $\sigma$-equivariant splitting of the map $$\mathscr{I}(x)\to \mathscr{I}(x)/\mathscr{I}^2(x).$$  Let $\{\bar v_1, \cdots, \bar v_n\}$ be the images of the $v_i$ in $\mathscr{I}(x)$.  Then the monomials in the $\bar v_i$ (including the empty monomial $1$) provide a set of $\sigma$-eigenvectors in $\overline{\mathbb{Q}_\ell}[[\pi_1^{\text{\'et}, \ell}(X_{\bar k}, \bar x)]]^{\leq \ell^{-r}}$ whose span is dense, as desired.  The span of a monomial in the $v_i$ of total degree $n$ has weight $-n$, giving the claim about the weights of $$\mathscr{I}^n(x)\cap \overline{\mathbb{Q}_\ell}[[\pi_1^{\text{\'et}, \ell}(X_{\bar k}, \bar x)]]^{\leq \ell^{-r}}.$$  (Alternately, we may observe that the weights of $H^1(X_{\bar k}, \mathbb{Q}_\ell)$ are in $\{1,2\}$, so the claim about weights follows from Lemma \ref{i-adic-weights}.)
\end{proof}

We now recast the results of Section \ref{integral-l-adic-periods-section} in terms of convergent group rings and convergent Hopf groupoids.  
\begin{theorem}\label{diagonalizing-convergent-rings-pure}
Let $k$ be a field and $X/k$ a smooth variety satisfying Assumption \ref{torsion-free-graded} of Section \ref{integral-l-adic-periods-section} with $H^1(X_{\bar k}, \mathbb{Q}_\ell)$ pure of weight $\iota$.  Let $x_i, x_j$ be rational points or rational tangential basepoints of $X$; choosing an embedding $k\hookrightarrow \bar k$ gives geometric basepoints $\bar x_i, \bar x_j$.  Suppose that $\sigma\in G_k$ acts via a quasi-scalar of degree $q^{-1}$ on $\mathbb{Q}_\ell[[\pi_1^{\text{\'et}, \ell}(X_{\bar k}; \bar x_i, \bar x_j)]]$ with respect to the weight filtration.  Let $s$ be the order of $q$ in $\mathbb{F}_\ell^\times$ if $\ell\neq 2$ and in $(\mathbb{Z}/4\mathbb{Z})^\times$ if $\ell=2$.  Let $\epsilon=1$ if $\ell=2$ and $\epsilon=0$ otherwise.  Then for any $$r> \frac{1}{s}\left(v_\ell(q^s-1)+\frac{1}{\ell-1}+\epsilon\right),$$ the topological vector space $$\overline{\mathbb{Q}_\ell}[[\pi_1^{\text{\'et}, \ell}(X_{\bar k}; \bar x_i, \bar x_j)]]^{\leq \ell^{-r}}$$ is spanned by $\sigma$-eigenvectors.  Moreover, $$\mathscr{I}^n(x_i, x_j)\cap \overline{\mathbb{Q}_\ell}[[\pi_1^{\text{\'et}, \ell}(X_{\bar k}; \bar x_i, \bar x_j)]]^{\leq \ell^{-r}}$$ admits a set of $\sigma$-eigenvectors of weight $\leq -n$ whose span is dense.
\end{theorem}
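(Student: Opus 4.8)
The plan is to refine the construction behind Theorem~\ref{semisimple-pi1} and Corollary~\ref{splitting-I-adic-filtration}, replacing the purely $\overline{\mathbb{Q}_\ell}$-linear splittings used there by the denominator-controlled ones produced in Section~\ref{integral-l-adic-periods-section}. First I would reduce to the case $x_i=x_j=:x$. As in the proof of Theorem~\ref{semisimple-pi1}, composition with the $\sigma$-canonical path $p_{i,j}$ of Proposition-Construction~\ref{canonical-paths-prop-constr} (and with $p_{j,i}$) gives a $\sigma$-equivariant isomorphism between $\overline{\mathbb{Q}_\ell}[[\pi_1^{\text{\'et},\ell}(X_{\bar k},\bar x_i)]]$ and $\overline{\mathbb{Q}_\ell}[[\pi_1^{\text{\'et},\ell}(X_{\bar k};\bar x_i,\bar x_j)]]$; since multiplication preserves the convergent radius (Proposition~\ref{multiplication-convergent}), this carries the convergent Hopf groupoid of radius $\ell^{-r}$ onto itself as soon as $p_{i,j}$ and $p_{j,i}$ themselves lie in it. Using Assumption~\ref{torsion-free-graded}, the reduction of $p_{i,j}$ mod $\mathscr{I}^n(x_i,x_j)$ is the $\sigma$-equivariant splitting of $0\to W^{-1}/W^{-n}\to\overline{\mathbb{Z}_\ell}[[\pi_1]]/W^{-n}\to\overline{\mathbb{Z}_\ell}\to 0$, so its denominator is at most the associated integral $\ell$-adic period; by Theorem~\ref{weight-2-basic-lifts} (case $i=0$) this period is $\le C_{q,\ell,\,\cdot}$, and Corollary~\ref{q-power-sum-bound} (applied with the observation that in the pure case the nonzero weight-graded pieces occur in an arithmetic progression of common difference $\iota$) bounds it linearly in $n$ with slope below $\tfrac1s\!\left(v_\ell(q^s-1)+\tfrac1{\ell-1}+\epsilon\right)$. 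Since $r$ exceeds this slope, $v_n(\pi_n(p_{i,j}))+rn\to\infty$, i.e.\ $p_{i,j}\in\overline{\mathbb{Q}_\ell}[[\pi_1]]^{\le\ell^{-r}}$, and symmetrically $p_{j,i}$.

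Now take $x_i=x_j=x$. By purity each $\mathscr{I}^d(x)/\mathscr{I}^{d+1}(x)$ is a pure $\sigma$-module on which $\sigma$ acts through a single scalar, so every nonzero element is a $\sigma$-eigenvector and the weight and $\mathscr{I}$-adic filtrations agree up to the factor $\iota$. Fix a $\overline{\mathbb{Z}_\ell}$-basis $\bar v_1,\dots,\bar v_N$ of $\mathscr{I}(x)/\mathscr{I}^2(x)$, automatically a basis of eigenvectors. The inductive lifting in the proof of Theorem~\ref{weight-2-basic-lifts}, applied to the weight graded piece containing $\mathscr{I}/\mathscr{I}^2$, lifts each $\bar v_a$ compatibly in $n$ to $\sigma$-eigenvectors $v^{(n)}_a\in\overline{\mathbb{Q}_\ell}[[\pi_1]]/\mathscr{I}^n(x)$ with denominator controlled exactly as for $p_{i,j}$ above, hence to $\tilde v_a\in\overline{\mathbb{Q}_\ell}[[\pi_1]]$ with $v_n(\pi_n(\tilde v_a))+rn\to\infty$; thus $\tilde v_a\in\overline{\mathbb{Q}_\ell}[[\pi_1]]^{\le\ell^{-r}}$. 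Every monomial $\tilde v_{a_1}\cdots\tilde v_{a_d}$ is then a $\sigma$-eigenvector of weight $-d\iota\le -d$ lying in $\overline{\mathbb{Q}_\ell}[[\pi_1]]^{\le\ell^{-r}}$ by Proposition~\ref{multiplication-convergent}, and modulo $\mathscr{I}^n(x)$ the monomials of degree $<n$ form a $\overline{\mathbb{Q}_\ell}$-basis, since on passing to the $\mathscr{I}$-adic associated graded they become a monomial basis of $\bigoplus_{d<n}\mathscr{I}^d(x)/\mathscr{I}^{d+1}(x)$; this is the integral counterpart of Corollary~\ref{splitting-I-adic-filtration}.

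It remains to see that the $\overline{\mathbb{Q}_\ell}$-span of these monomials is dense in the \emph{fine} topology of $\overline{\mathbb{Q}_\ell}[[\pi_1]]^{\le\ell^{-r}}$, not merely $\mathscr{I}$-adically. Expanding an arbitrary element $y$ of the convergent ring as $y=\sum_m c_m m$ by solving degree by degree, and using that at each degree the leading terms of the $m$ form a primitive $\overline{\mathbb{Z}_\ell}$-basis of $\mathscr{I}^d(x)/\mathscr{I}^{d+1}(x)$ together with the denominator control on the $\tilde v_a$, a direct estimate with the valuations $v_n$ and the inequality \eqref{valuation-inequality} gives $v(c_m)+r\deg(m)\to\infty$, so the series converges inside the convergent ring. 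This proves the first assertion for $x_i=x_j$, and the general case follows by transport along the $p_{i,j}$-isomorphism, which, being given by multiplication, respects the weight filtration and hence the weights of $\sigma$-eigenvectors. The last assertion of the theorem is then immediate from Lemma~\ref{i-adic-weights}: the minimum weight in $H^1(X_{\bar k},\mathbb{Q}_\ell)$ is $\iota\ge1$, so $\mathscr{I}^n(x_i,x_j)$ meets the convergent ring in a subspace topologically spanned by $\sigma$-eigenvectors of weight $\le -n\iota\le -n$ (or directly by the degree-$\ge n$ monomials above). The main obstacle is exactly this density step: individual eigenvectors lie in the convergent ring for a wide range of $r$, but ensuring that the monomial expansion of a \emph{general} convergent element converges in the fine topology is what pins down the threshold $r>\tfrac1s\!\left(v_\ell(q^s-1)+\tfrac1{\ell-1}+\epsilon\right)$, and it rests essentially on the period bounds of Theorem~\ref{weight-2-basic-lifts} together with the $\ell$-adic estimates of Corollaries~\ref{better-q-power-bound}--\ref{q-power-sum-bound}.
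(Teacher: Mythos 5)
Your proposal follows essentially the same route as the paper: reduce the two-basepoint case to the one-basepoint case via the $\sigma$-canonical path $p_{i,j}$, show that $p_{i,j}$ lies in the convergent Hopf groupoid using the $b^0_n$ bounds of Theorem~\ref{weight-2-basic-lifts}, use the $b^1_n$ bounds to lift a basis of $\mathscr{I}(x)/\mathscr{I}^2(x)$ to $\sigma$-eigenvectors $\tilde v_a$ in the convergent group ring, and then take monomials (using Proposition~\ref{multiplication-convergent}). The paper does exactly this, treating $x_i=x_j$ first and then the groupoid case by composing with $p_{i,j}$, whereas you carry out the reduction first; this is immaterial.

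The one place you diverge is the closing paragraph, where you introduce a ``fine topology'' on $\overline{\mathbb{Q}_\ell}[[\pi_1]]^{\leq\ell^{-r}}$ and argue that an arbitrary convergent element has a monomial expansion whose coefficients satisfy $v(c_m)+r\deg(m)\to\infty$. The paper makes no such claim and does not need to: as formulated, the theorem asserts density of the span of $\sigma$-eigenvectors in the topology that $\overline{\mathbb{Q}_\ell}[[\pi_1]]^{\leq\ell^{-r}}$ inherits from the ambient inverse-limit (i.e.\ $\mathscr{I}$-adic) topology of $\overline{\mathbb{Q}_\ell}[[\pi_1]]$, and this is immediate once the degree-$d$ monomials reduce to a basis of $\mathscr{I}^d/\mathscr{I}^{d+1}$. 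Your stronger convergence claim is also not actually established --- ``solving degree by degree'' does not control the denominators of the $c_m$ in the way you assert, because the degree-$d$ coefficient is computed after subtracting lower-degree monomial tails whose mod-$\mathscr{I}^{d+1}$ images carry the denominators of the $\tilde v_a$, so the naive bookkeeping can produce $v(c_m)$ decreasing linearly in $\deg(m)$ with slope comparable to $r$. It is also misleading to say the density step is ``what pins down the threshold'': the threshold $r>\tfrac1s\bigl(v_\ell(q^s-1)+\tfrac1{\ell-1}+\epsilon\bigr)$ already appears as the condition for the lifted eigenvectors $\tilde v_a$ (and $p_{i,j}$) to lie in the convergent ring at all, since the available upper bound on $b^1_n$ is $C_{q,\ell,n-2}$ which grows with exactly that slope. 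You should cut or substantially repair the final paragraph; the first two paragraphs already constitute the paper's argument.
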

\begin{proof}
Under the assumption that $H^1(X_{\bar k}, \mathbb{Q}_\ell)$ is pure, the weight filtration on $\mathbb{Q}_\ell[[\pi_1^{\text{\'et}, \ell}(X_{\bar k}; \bar x_i, \bar x_j)]]$ is the same as the $\mathscr{I}$-adic filtration.  We use this fact freely below.

We first consider the case $x_i=x_j$; denote this basepoint or tangential basepoint simply by $x$.  The bounds on the integral $\ell$-adic periods $b^1_n$ in Theorem \ref{weight-2-basic-lifts} precisely show that the natural map $$\mathscr{I}(x)\cap \overline{\mathbb{Q}_\ell}[[\pi_1^{\text{\'et}, \ell}(X_{\bar k}, \bar x)]]^{\leq \ell^{-r}}\to \mathscr{I}(x)/\mathscr{I}^2(x)$$ splits in a canonical, $\sigma$-equivariant manner.  Indeed, the bounds of that theorem show exactly that if $\alpha \in \mathscr{I}(x)/\mathscr{I}^2(x)$ is such that $v_2(\alpha)=0$, the unique $\alpha_n\in \mathscr{I}(x)/\mathscr{I}^n(x)$ such that $\alpha_n$ lifts $\alpha$ as a $\sigma$-eigenvector satisfies: $$v_n(\alpha_n)> - \frac{n}{s}\left(v_\ell(q^s-1)+\frac{1}{\ell-1}+\epsilon\right).$$   
The $(\alpha_n)$ give exactly the desired $\sigma$-equivariant splitting.  Let $\{v_1, \cdots, v_N\}$ be a basis of the image of this splitting.  Then the monomials in the $v_i$ (including the empty monomial $1$) are a spanning set of $\sigma$-eigenvectors (they are all contained in $\overline{\mathbb{Q}_\ell}[[\pi_1^{\text{\'et}, \ell}(X_{\bar k}, \bar x)]]^{\leq \ell^{-r}}$ by Proposition \ref{multiplication-convergent}); the monomials of degree $\geq n$ have weight $\leq -n\iota$ and span $\mathscr{I}^n\cap \overline{\mathbb{Q}_\ell}[[\pi_1^{\text{\'et}, \ell}(X_{\bar k}, \bar x)]]^{\leq \ell^{-r}}$.

Now, we consider the case $x_i\not=x_j$.  The bounds on the integral $\ell$-adic periods $b^0_n$ in Theorem \ref{weight-2-basic-lifts} precisely show that the natural augmentation map $$\overline{\mathbb{Q}_\ell}[[\pi_1^{\text{\'et}, \ell}(X_{\bar k}; \bar x_i, \bar x_j)]]^{\leq \ell^{-r}}\to \overline{\mathbb{Q}_\ell}$$ splits $\sigma$-equivariantly, that is, the canonical path $p_{i,j}$ constructed in Proposition-Construction \ref{canonical-paths-prop-constr} lies in $\overline{\mathbb{Q}_\ell}[[\pi_1^{\text{\'et}, \ell}(X_{\bar k}; \bar x_i, \bar x_j)]]^{\leq \ell^{-r}}$.  Now if $\{w_m\}_{m\in I}$ is a spanning set of $\sigma$-eigenvectors in $\overline{\mathbb{Q}_\ell}[[\pi_1^{\text{\'et}, \ell}(X_{\bar k}, \bar x_i)]]^{\leq \ell^{-r}}$, the elements $\{w_m\circ p_{i,j}\}_{m\in I}$ are a spanning set of $
\sigma$-eigenvectors in $\overline{\mathbb{Q}_\ell}[[\pi_1^{\text{\'et}, \ell}(X_{\bar k}; \bar x_i, \bar x_j)]]^{\leq \ell^{-r}}$.
\end{proof}
We now consider the case where $H^1(X_{\bar k}, \mathbb{Z}_\ell)$ is mixed.
\begin{theorem}\label{diagonalizing-convergent-rings-mixed}
Let $k$ be a field and $X/k$ a smooth variety satisfying Assumption \ref{torsion-free-graded} of Section \ref{integral-l-adic-periods-section}.  Let $x_i, x_j$ be rational points or rational tangential basepoints of $X$; choosing an embedding $k\hookrightarrow \bar k$ gives geometric basepoints $\bar x_i, \bar x_j$.  Suppose that $\sigma\in G_k$ acts via a quasi-scalar of degree $q^{-1}$ on $\mathbb{Q}_\ell[[\pi_1^{\text{\'et}, \ell}(X_{\bar k}; \bar x_i, \bar x_j)]]$ with respect to the weight filtration.  Let $s$ be the order of $q$ in $\mathbb{F}_\ell^\times$ if $\ell\not=2$, and in $(\mathbb{Z}/4\mathbb{Z})^\times$ if $\ell=2$.  Let $\epsilon=1$ if $\ell=2$ and $\epsilon=0$ otherwise.  Then for any $$r>\frac{2}{s}\left(v_\ell(q^{s}-1)+\frac{1}{\ell-1}+\epsilon\right),$$ we have that  $$\overline{\mathbb{Q}_\ell}[[\pi_1^{\text{\'et}, \ell}(X_{\bar k}; \bar x_i, \bar x_j)]]^{\leq \ell^{-r}}$$ is spanned by $\sigma$-eigenvectors.  Moreover, $$\mathscr{I}^n(x_i, x_j)\cap \overline{\mathbb{Q}_\ell}[[\pi_1^{\text{\'et}, \ell}(X_{\bar k}; \bar x_i, \bar x_j)]]^{\leq \ell^{-r}}$$ admits a set of $\sigma$-eigenvectors of weight $\leq -n$ whose span is dense.
\end{theorem}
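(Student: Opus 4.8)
The plan is to run the argument that proved Theorem \ref{diagonalizing-convergent-rings-pure}, the one extra ingredient being a careful bookkeeping of the gap between the weight filtration and the augmentation filtration, which is exactly what produces the factor of $2$ in the hypothesis.

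First I would reduce to the case $x_i=x_j=:x$. The $\sigma$-canonical path $p_{i,j}$ of Proposition-Construction \ref{canonical-paths-prop-constr} is $\sigma$-invariant, group-like, and satisfies $p_{i,j}\circ p_{j,i}=1$; I claim it lies in $\overline{\mathbb{Q}_\ell}[[\pi_1^{\text{\'et}, \ell}(X_{\bar k}; \bar x_i, \bar x_j)]]^{\leq \ell^{-r}}$. Indeed, Theorem \ref{weight-2-basic-lifts} applied with $i=0$ to the sequence $0\to W^{-1}/W^{-m}\to W^0/W^{-m}\to \overline{\mathbb{Z}_\ell}\to 0$ bounds the valuation of the image of $p_{i,j}$ in $\overline{\mathbb{Q}_\ell}[[\pi_1^{\text{\'et}, \ell}(X_{\bar k}; \bar x_i, \bar x_j)]]/W^{-m}$ below by $-C_{q,\ell,m-1}$; since $W^{-2n-1}\subseteq\mathscr{I}^n$ by Proposition \ref{weight-prop}, the image of $p_{i,j}$ in $\overline{\mathbb{Q}_\ell}[[\pi_1^{\text{\'et}, \ell}(X_{\bar k}; \bar x_i, \bar x_j)]]/\mathscr{I}^n(x_i,x_j)$ is a coarser quotient of the one modulo $W^{-2n-1}$, so $v_n(\pi_n(p_{i,j}))\geq -C_{q,\ell,2n+1}$. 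By Corollary \ref{q-power-sum-bound}, $C_{q,\ell,2n+1}=\tfrac{2n}{s}\bigl(v_\ell(q^s-1)+\tfrac{1}{\ell-1}+\epsilon\bigr)+O(1)$, so $v_n(\pi_n(p_{i,j}))+rn\to\infty$ exactly when $r>\tfrac{2}{s}\bigl(v_\ell(q^s-1)+\tfrac{1}{\ell-1}+\epsilon\bigr)$, which is the hypothesis. As the composition law and the antipode preserve the convergent Hopf groupoid (Proposition \ref{multiplication-convergent} and the remaining Hopf-groupoid axioms), composition with $p_{i,j}$ is a $\sigma$-equivariant isomorphism $\overline{\mathbb{Q}_\ell}[[\pi_1^{\text{\'et}, \ell}(X_{\bar k}, \bar x_i)]]^{\leq \ell^{-r}}\xrightarrow{\sim}\overline{\mathbb{Q}_\ell}[[\pi_1^{\text{\'et}, \ell}(X_{\bar k}; \bar x_i, \bar x_j)]]^{\leq \ell^{-r}}$ carrying $\mathscr{I}^n(x_i)$ onto $\mathscr{I}^n(x_i,x_j)$ and preserving weights, so it is enough to treat $x_i=x_j$.

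Now fix $x$. By Theorem \ref{semisimple-pi1} (together with Proposition \ref{weight-prop}, which identifies the $W$-adic and $\mathscr{I}$-adic topologies), $\sigma$ acts semisimply on each $\overline{\mathbb{Q}_\ell}[[\pi_1^{\text{\'et}, \ell}(X_{\bar k}, \bar x)]]/W^{-n}(x)$, and since $\sigma$ is a quasi-scalar of degree $q^{-1}$ it acts on $\on{gr}^{-i}_W$ through the single scalar $q^i$; these scalars are pairwise distinct because $q$ is not a root of unity. Hence the weight filtration on $\overline{\mathbb{Q}_\ell}[[\pi_1^{\text{\'et}, \ell}(X_{\bar k}, \bar x)]]/W^{-n}(x)$ splits uniquely $\sigma$-equivariantly (the $q^i$-eigenspace maps isomorphically onto $\on{gr}^{-i}_W$), and uniqueness makes these splittings compatible as $n$ varies. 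Theorem \ref{weight-2-basic-lifts}, applied for each $i\geq 0$, says precisely that the lift of $\on{gr}^{-i}_W$ into $\overline{\mathbb{Q}_\ell}[[\pi_1^{\text{\'et}, \ell}(X_{\bar k}, \bar x)]]/W^{-n}(x)$ has valuation bounded below by $-C_{q,\ell,n}$, uniformly in $i$. Fixing a $\overline{\mathbb{Q}_\ell}$-basis of each $\on{gr}^{-i}_W$ and passing to the inverse limit over $n$ produces $\sigma$-eigenvectors $\{\tilde v\}$ in $\overline{\mathbb{Q}_\ell}[[\pi_1^{\text{\'et}, \ell}(X_{\bar k}, \bar x)]]$, with $\tilde v$ of weight $-i$ when it lies over $\on{gr}^{-i}_W$, whose image modulo $W^{-m}(x)$ has valuation $\geq -C_{q,\ell,m}$ for all $m$. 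Exactly as above, $W^{-2n-1}(x)\subseteq\mathscr{I}^n(x)$ gives $v_n(\pi_n(\tilde v))\geq -C_{q,\ell,2n+1}$, so $\tilde v\in\overline{\mathbb{Q}_\ell}[[\pi_1^{\text{\'et}, \ell}(X_{\bar k}, \bar x)]]^{\leq \ell^{-r}}$ for $r>\tfrac{2}{s}\bigl(v_\ell(q^s-1)+\tfrac{1}{\ell-1}+\epsilon\bigr)$.

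It remains to check that the $\{\tilde v\}$ span $\overline{\mathbb{Q}_\ell}[[\pi_1^{\text{\'et}, \ell}(X_{\bar k}, \bar x)]]^{\leq \ell^{-r}}$ densely and to read off the refined statement. Density is immediate: by construction the images of the $\tilde v$ modulo $W^{-m}(x)$ form a $\overline{\mathbb{Q}_\ell}$-basis, and the $W$-adic and $\mathscr{I}$-adic topologies agree, so every element of the convergent ring is a limit of finite (hence convergent) $\overline{\mathbb{Q}_\ell}$-combinations of the $\tilde v$; this shows $\overline{\mathbb{Q}_\ell}[[\pi_1^{\text{\'et}, \ell}(X_{\bar k}, \bar x)]]^{\leq \ell^{-r}}$ is topologically spanned by $\sigma$-eigenvectors. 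For the weight refinement, $\mathscr{I}^n(x)\cap\overline{\mathbb{Q}_\ell}[[\pi_1^{\text{\'et}, \ell}(X_{\bar k}, \bar x)]]^{\leq \ell^{-r}}$ is a closed $\sigma$-stable subspace, hence—since $\sigma$ acts semisimply with the scalars $q^i$ distinct—the closure of the span of the $\sigma$-eigenvectors it contains, and each such eigenvector lies in $\mathscr{I}^n(x)\subseteq W^{-n}(x)$, so has weight $\leq -n$. Transporting everything back along $p_{i,j}$ recovers the statement for arbitrary $x_i,x_j$. The main obstacle is the point already flagged: the estimates of Theorem \ref{weight-2-basic-lifts} are naturally statements about the \emph{weight} filtration, while the convergent rings of Definition \ref{convergent-defn} are built from the \emph{augmentation} filtration, and the only comparison available, $W^{-2n-1}\subseteq\mathscr{I}^n$, is sharp as soon as $H^1(X_{\bar k},\mathbb{Q}_\ell)$ carries a weight-$2$ class (already for $\mathbb{G}_m$); this forced doubling is exactly the factor $2$ in the hypothesis, and the delicate part is verifying that the unique $\sigma$-equivariant splittings of the weight filtration inherit the period bounds of Theorem \ref{weight-2-basic-lifts} uniformly in the graded degree, so that the bounds survive the passage to the inverse limit.
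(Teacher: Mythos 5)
Your proof is correct and follows essentially the same route as the paper's: reduce to $x_i=x_j$ by checking the $\sigma$-canonical path lies in the convergent Hopf groupoid, construct $\sigma$-eigenvectors from the unique equivariant splittings of the weight filtration using the integral period bounds of Theorem \ref{weight-2-basic-lifts}, and convert weight-filtration estimates into augmentation-filtration estimates via $W^{-2n-1}\subseteq\mathscr{I}^n$, which is the source of the factor $2$. The paper simply says ``the proof is identical to that of Theorem \ref{diagonalizing-convergent-rings-pure}, replacing the $\mathscr{I}$-adic filtration with the weight filtration'' and records the resulting valuation estimate; your write-up fills in the same steps in more detail, including the density of the span and the weight refinement for $\mathscr{I}^n$.
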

\begin{proof}
The proof is identical to that of Theorem \ref{diagonalizing-convergent-rings-pure}, replacing the $\mathscr{I}$-adic filtration with the weight filtration.  By Proposition \ref{weight-prop}, we have that $W^{-2n-1}\subset \mathscr{I}^n$; we let $$p_n: \mathbb{Q}_\ell[[\pi_1^{\text{\'et}, \ell}(X_{\bar k}; \bar x_i, \bar x_j)]]/W^{-2n-1}\to \mathbb{Q}_\ell[[\pi_1^{\text{\'et}, \ell}(X_{\bar k}; \bar x_i, \bar x_j)]]/\mathscr{I}^n$$ be the natural quotient map.  Let $\alpha\in W^{-i}/W^{-i-1}$ be a $\sigma$-eigenvector.  By Theorem \ref{weight-2-basic-lifts}, for $n\gg 0$ the unique $\sigma$-eigenvector $\alpha_n$ in $\mathbb{Q}_\ell[[\pi_1^{\text{\'et}, \ell}(X_{\bar k}; \bar x_i, \bar x_j)]]/W^{-2n-1}$ lifting $v$ satisfies $$v_n(p_n(\alpha_n))>-\frac{2n}{s}\left(v_\ell(q^{s}-1)+\frac{1}{\ell-1}+\epsilon\right)+C$$ for some constant $C$.  Thus the sequence $p_n(\alpha_n)$ defines the desired eigenvector in $\overline{\mathbb{Q}_\ell}[[\pi_1^{\text{\'et}, \ell}(X_{\bar k}; \bar x_i, \bar x_j)]]^{\leq \ell^{-r}}$.
\end{proof}
\begin{remark}\label{bound-observation-2}
Observe that the bound on $r$ so that the action of our quasi-scalar $\sigma$ on the convergent group ring of radius $\ell^{-r}$ depends only on the degree of $\sigma$.
\end{remark}
\begin{remark}
The bounds in Theorems \ref{diagonalizing-convergent-rings-pure} and \ref{diagonalizing-convergent-rings-mixed} are clearly not optimal.  For example, let $X, k$ be as in \ref{diagonalizing-convergent-rings-pure}, and let $k'$ be a finite extension of $k$.  Suppose $\sigma\in G_k$ acts on $\overline{\mathbb{Q}_\ell}[[\pi_1^{\text{\'et}, \ell}(X_{\bar k}; \bar x_i, \bar x_j)]]^{\leq \ell^{-r}}$ via a quasi-scalar of degree $q^{-1}$ with respect to the weight filtration.  Then for any $r$ as in the theorem, $$\overline{\mathbb{Q}_\ell}[[\pi_1^{\text{\'et}, \ell}(X_{\bar k}; \bar x_i, \bar x_j)]]^{\leq \ell^{-r}}$$ admits a set of $\sigma$-eigenvectors whose span is dense.  But if we instead applied the theorem to $X_{k'}, k'$ (replacing $\sigma$ with $\sigma^n\in G_{k'}\subset k'$), we would obtain significantly worse bounds for $r$.  We formalize one solution to this problem with the following corollary.
\end{remark}
\begin{corollary}\label{trivial-corollary}
Let $k$ be a finite field and $X/k$ a variety.  Let $x_i, x_j$ be rational points or rational tangential basepoints of $X$.  Let $\sigma\in G_k$ be any element.  Suppose that for some $r>0$, $$\overline{\mathbb{Q}_\ell}[[\pi_1^{\text{\'et}, \ell}(X_{\bar k}; \bar x_i, \bar x_j)]]^{\leq \ell^{-r}}$$ admits a set of $\sigma$-eigenvectors whose span is dense.  Then for any finite extension $k'/k$, with $\sigma^n\in G_{k'}\subset G_k$, the same is true for $X_{k'}$, i.e. $$\overline{\mathbb{Q}_\ell}[[\pi_1^{\text{\'et}, \ell}(X_{\bar k}; \bar x_i, \bar x_j)]]^{\leq \ell^{-r}}$$ admits a set of $\sigma^n$-eigenvectors whose span is dense.
\end{corollary}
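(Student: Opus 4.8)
The content of the statement is that one and the same topological vector space, namely
$$\overline{\mathbb{Q}_\ell}[[\pi_1^{\text{\'et}, \ell}(X_{\bar k}; \bar x_i, \bar x_j)]]^{\leq \ell^{-r}},$$
admits a dense span of $\sigma^n$-eigenvectors once it admits a dense span of $\sigma$-eigenvectors, provided we check that the relevant Galois action is unchanged upon passing from $k$ to $k'$. We verify this in three steps.

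First, since $\bar{k'}=\bar k$, we have $X_{\bar k}=(X_{k'})_{\bar{k'}}$ as schemes, so the geometric pro-$\ell$ fundamental groupoid $\Pi_1(X_{\bar k})$ — and hence the Hopf groupoid $\overline{\mathbb{Q}_\ell}[[\Pi_1(X_{\bar k})]]$, its $\mathscr{I}$-adic filtration, the valuations $v_n$, and therefore the convergent subgroupoid of radius $\ell^{-r}$ of Definition \ref{convergent-defn} — depend only on the geometric object and are literally the same whether we regard $X$ as a $k$-variety or as a $k'$-variety.

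Second, we check that $\sigma^n$ acts the same way in both cases. The action of $G_{k'}$ on $\pi_1^{\text{\'et},\ell}(X_{\bar k};\bar x_i,\bar x_j)$ used when viewing $X$ over $k'$ is defined (as in Section \ref{Ql-galois-section}) by the formula $g\mapsto s_i'(\tau)\cdot g\cdot s_j'(\tau)^{-1}$, where $s_i':G_{k'}\to \pi_1^{\text{\'et}}(X_{k'},\bar x_i)$ is the section induced by the $k'$-point $x_i$. By functoriality of $\pi_1^{\text{\'et}}$ under the base-change morphism $X_{k'}\to X$, the composite of $s_i'$ with the natural map $\pi_1^{\text{\'et}}(X_{k'},\bar x_i)\to \pi_1^{\text{\'et}}(X,\bar x_i)$ agrees with the restriction to $G_{k'}$ of $s_i:G_k\to \pi_1^{\text{\'et}}(X,\bar x_i)$. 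Hence for $\tau=\sigma^n\in G_{k'}$ the induced action on the geometric fundamental groupoid agrees with the action of $\sigma^n$ regarded as an element of $G_k$, and the same therefore holds on $\overline{\mathbb{Q}_\ell}[[\pi_1^{\text{\'et},\ell}(X_{\bar k};\bar x_i,\bar x_j)]]^{\leq \ell^{-r}}$.

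Finally, if $v$ is a $\sigma$-eigenvector, say $\sigma v=\lambda v$ with $\lambda\in\overline{\mathbb{Q}_\ell}^\times$, then $\sigma^n v=\lambda^n v$, so $v$ is also a $\sigma^n$-eigenvector. Thus the closed span of the $\sigma^n$-eigenvectors contains the closed span of the $\sigma$-eigenvectors, which by hypothesis is all of $\overline{\mathbb{Q}_\ell}[[\pi_1^{\text{\'et},\ell}(X_{\bar k};\bar x_i,\bar x_j)]]^{\leq \ell^{-r}}$; hence it is dense, as claimed. The only point requiring any care is the compatibility of the two Galois actions in the second step, which is immediate from functoriality.
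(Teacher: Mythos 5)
Your proof is correct and follows the same essential route as the paper's: a $\sigma$-eigenvector is automatically a $\sigma^n$-eigenvector, so the dense spanning set is inherited. The paper's proof consists entirely of this one-line observation; your extra two paragraphs checking that the geometric object and the Galois action are unchanged under base change to $k'$ are harmless and correct, just making explicit compatibilities the paper takes as understood.
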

\begin{proof}
If $\{v_i\}\subset \overline{\mathbb{Q}_\ell}[[\pi_1^{\text{\'et}, \ell}(X_{\bar k}; \bar x_i, \bar x_j)]]^{\leq \ell^{-r}}$ is a set of eigenvectors for the $\sigma$  whose span is dense, then it is also a set of eigenvectors for $\sigma^n$, concluding the proof.
\end{proof}

\section{Applications to monodromy}\label{applications-section}
We now use the results of the preceding sections to study monodromy representations arising from geometry.  Let $k$ be a  field and $X$ a $k$-variety.  Let $\bar x$ be a geometric point of $X$.  Suppose that $$\rho: \pi_1^{\text{\'et}}(X_{\bar k}, \bar x)\to GL_n(\mathbb{Z}_\ell)$$ is a representation arising from geometry---i.e.~suppose $\rho$ arises as the monodromy representation on $R^i\pi_*\mathbb{Z}_\ell$ for some proper morphism $\pi: Y\to X_{\bar k}$ of varieties over $\on{Spec}(\bar k)$.  Then the first observation of this section is that there exists a finite extension $k\subset k'$, and a commutative diagram
$$\xymatrix{
\pi_1^{\text{\'et}}(X_{\bar k}, \bar x) \ar[r]^\rho \ar[d]& GL_n(\mathbb{Z}_\ell)\\
\pi_1^{\text{\'et}}(X_{k'}, \bar x) \ar[ru]_{\rho'}.
}$$
That is, the representation $\rho$ extends to a representation of $\pi_1^{\text{\'et}}(X_{k'}, \bar x)$.  By the results of the previous sections, this will place serious restrictions on $\rho$.

In fact, many of our results do not require that the representation $\rho$ come from geometry.  We define another notion --- that of arithmeticity --- which we will use in our proofs.

\begin{defn}
Let $k$ be a field and $X$ a $k$-variety with geometric point $\bar x$. Then we say that a continuous representation $$\rho: \pi_1^{\text{\'et}, \ell}(X_{\bar k}, \bar x)\to GL_n(\mathbb{Z}_\ell)$$ is \emph{arithmetic} if it arises as a subquotient of a representation $$\tilde{\rho}: \pi_1^{\text{\'et}, \ell}(X_{\bar k}, \bar x)\to GL_n(\mathbb{Z}_\ell),$$ such that $\tilde{\rho}$ extends to a continuous representation of $\pi_1^{\text{\'et}, \ell}(X_{k'}, \bar x)$ for some finite extension $k'$ of $k$.
\end{defn}
We first observe that for arithmetic $\rho$, we may choose $\tilde{\rho}$ as above having many of the same properties as $\rho$.  For example, if $\rho$ is trivial mod $\ell^r$, we may assume that $\tilde{\rho}|_{\pi_1^{\text{\'et}, \ell}(X_{\bar k}, \bar x)}$ is trivial mod $\ell^r$.
\begin{lemma}\label{subquotient-lemma}
Suppose that $$\rho: \pi_1^{\text{\'et}, \ell}(X_{\bar k}, \bar x)\to GL_n(\mathbb{Z}_\ell)$$ is an arithmetic representation, which is trivial mod $\ell^r$, and that $\rho\otimes \mathbb{Q}_\ell$ is irreducible.  Then a finite extension $k\subset k'$ and a representation $\beta: \pi_1^{\text{\'et}, \ell}(X_{k'}, \bar x)\to GL_n(\mathbb{Z})$ such that 
\begin{itemize}
\item $\rho$ is a subquotient of $\beta|_{\pi_1^{\text{\'et}, \ell}(X_{\bar k}, \bar x)}$, and
\item $\beta|_{\pi_1^{\text{\'et}, \ell}(X_{\bar k}, \bar x)}$ is trivial mod $\ell^r$.
\item $r_0(\beta|_{\pi_1^{\text{\'et}, \ell}(X_{\bar k}, \bar x)})\geq r_0(\rho)$, where $r_0(\rho)$ is defined as in Definition \ref{r-0-defn}.
\end{itemize}
\end{lemma}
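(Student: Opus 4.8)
The plan is to promote the weak hypothesis ``$\rho$ is a subquotient'' to a genuine extension over a finite extension of $k$, at the cost of replacing $\rho$ by an isotypic \emph{power} of itself; the hypothesis that $\rho\otimes\BQ_\ell$ is irreducible is what makes the relevant isotypic decomposition tractable.

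\emph{Reduction to the isotypic case.} Unwinding the definition of arithmeticity, fix a finite extension $k_1/k$, a finitely generated free $\BZ_\ell$-module $V$ carrying a continuous action $\tilde\rho$ of $\pi_1^{\text{\'et},\ell}(X_{\bar k},\bar x)$ that extends to $\pi_1^{\text{\'et},\ell}(X_{k_1},\bar x)$, and an identification of $\rho$ with a subquotient of $\tilde\rho|_{\pi_1^{\text{\'et},\ell}(X_{\bar k},\bar x)}$; so $\rho\otimes\BQ_\ell$, irreducible by hypothesis, is a composition factor of the geometric representation $V\otimes\BQ_\ell$. Replacing $V\otimes\BQ_\ell$ by its semisimplification as a $\pi_1^{\text{\'et},\ell}(X_{k_1})$-representation (which has the same geometric composition factors), Clifford's theorem, applied to the closed normal subgroup $\pi_1^{\text{\'et},\ell}(X_{\bar k})$ of the compact group $\pi_1^{\text{\'et},\ell}(X_{k_1})$, makes its restriction to $\pi_1^{\text{\'et},\ell}(X_{\bar k})$ semisimple, so $\rho\otimes\BQ_\ell$ is a geometric \emph{summand}. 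The finitely many geometric isomorphism types of summands are permuted by $G_{k_1}=\pi_1^{\text{\'et},\ell}(X_{k_1})/\pi_1^{\text{\'et},\ell}(X_{\bar k})$ through an action which is continuous for the discrete topology on the finite set of types; replacing $k_1$ by the finite extension cutting out the (open) stabilizer, I may assume the class of $\rho\otimes\BQ_\ell$ is fixed, so that its isotypic part $W$ is a $\pi_1^{\text{\'et},\ell}(X_{k_1})$-subrepresentation with $W|_{\pi_1^{\text{\'et},\ell}(X_{\bar k})}\cong(\rho\otimes\BQ_\ell)^{\oplus m}$. For readability I assume $\rho\otimes\BQ_\ell$ is absolutely irreducible, so that by Schur $\on{End}_{\pi_1^{\text{\'et},\ell}(X_{\bar k})}(\rho\otimes\BQ_\ell)=\BQ_\ell$; the general case is identical with $\BQ_\ell,\BZ_\ell$ replaced by the relevant division algebra and a maximal order.

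\emph{The integral normalization.} Evaluation gives a $\pi_1^{\text{\'et},\ell}(X_{k_1})$-equivariant isomorphism $W\cong M\otimes_{\BQ_\ell}(\rho\otimes\BQ_\ell)$ with $M=\on{Hom}_{\pi_1^{\text{\'et},\ell}(X_{\bar k})}(\rho\otimes\BQ_\ell,W)$, and conjugation makes $\pi_1^{\text{\'et},\ell}(X_{k_1})$ act through a pair of \emph{projective} representations $\gamma\mapsto U_\gamma$ on $M$ and $\gamma\mapsto T_\gamma$ on $\rho\otimes\BQ_\ell$, with $U_\gamma\otimes T_\gamma$ the genuine action on $W$, with $U|_{\pi_1^{\text{\'et},\ell}(X_{\bar k})}$ trivial and $T|_{\pi_1^{\text{\'et},\ell}(X_{\bar k})}$ the projectivization of $\rho$. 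The crux is that, after a further finite extension of $k_1$, each $T_\gamma$ can be normalized to lie in $\on{Aut}_{\BZ_\ell}(\rho)=GL_n(\BZ_\ell)$: the operator $T_\gamma$ conjugates $\rho$ to $\rho\circ c_\gamma$ (conjugation by $\gamma$ in $\pi_1^{\text{\'et},\ell}(X_{k_1})$), and both take values in $GL_n(\BZ_\ell)$, so it suffices to know $\rho\cong\rho\circ c_\gamma$ as $\BZ_\ell[\pi_1^{\text{\'et},\ell}(X_{\bar k})]$-modules; the set of $\gamma$ for which this holds is a subgroup containing $\pi_1^{\text{\'et},\ell}(X_{\bar k})$, and it is open because the isomorphism type of a lattice in the fixed representation $\rho\otimes\BQ_\ell$ is determined by a bounded amount of mod-$\ell$-power data, through which $\rho$, and hence $c_\gamma$, factor through finite quotients. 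Once this holds for all $\gamma$, the residual scalar ambiguity of $U_\gamma$ lies in $\BZ_\ell^\times$, so by compactness of $\pi_1^{\text{\'et},\ell}(X_{k_1})$ the $U_\gamma$ preserve a full $\BZ_\ell$-lattice $N\subset M$.

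\emph{Conclusion and the obstacle.} Put $\beta:=N\otimes_{\BZ_\ell}\rho\subset W$. The ambiguity $(cU_\gamma)\otimes(c^{-1}T_\gamma)$ cancels in the tensor product and each factor preserves its lattice, so $\beta$ is a $\pi_1^{\text{\'et},\ell}(X_{k_1})$-stable $\BZ_\ell$-lattice, i.e.\ a continuous representation of $\pi_1^{\text{\'et},\ell}(X_{k'},\bar x)$ with $k'=k_1$; and as a $\BZ_\ell[\pi_1^{\text{\'et},\ell}(X_{\bar k})]$-module $\beta|_{\pi_1^{\text{\'et},\ell}(X_{\bar k})}$ is simply $\rho^{\oplus m'}$ with $m'=\on{rk}_{\BZ_\ell}N$, since the geometric action on $M\otimes_{\BQ_\ell}(\rho\otimes\BQ_\ell)$ is $\on{id}_M\otimes\rho$. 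Hence $\rho$ is a direct summand, in particular a subquotient, of $\beta|_{\pi_1^{\text{\'et},\ell}(X_{\bar k})}$; the latter is block-diagonal with every block equal to $\rho$, so it is trivial mod $\ell^r$, and $\BZ_\ell[[\beta|_{\pi_1^{\text{\'et},\ell}(X_{\bar k})}]](\mathscr{I}^s)$ is block-diagonal with every block $\BZ_\ell[[\rho]](\mathscr{I}^s)$, whence $r_0(\beta|_{\pi_1^{\text{\'et},\ell}(X_{\bar k})})=r_0(\rho)$. All three bullet points follow. I expect the genuinely delicate step to be the integral normalization of the $T_\gamma$ in the previous paragraph: both the bookkeeping of the two projective representations and, above all, the passage to a finite extension over which the lattice $\rho$ becomes self-conjugate, which is exactly what guarantees that the lattice-dependent conditions ``trivial mod $\ell^r$'' and the $r_0$-bound survive the passage to an invariant lattice inside $W$. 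The Clifford-theoretic reduction and the final verification are routine by comparison; note $\beta$ has dimension $nm'$, which is in general larger than $n$, but this is harmless for the applications.
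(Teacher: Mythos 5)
Your argument follows a genuinely different route from the paper's, and while the strategic framing is sound, there is a gap in the lattice-normalization step that the paper's construction sidesteps entirely.

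The paper's proof is more elementary. After extracting an arithmetic representation $\gamma$ over some $k'$ with $\rho$ as a geometric subquotient, it passes to the \emph{socle filtration} of $\gamma|_{\pi_1^{\text{\'et},\ell}(X_{\bar k})}\otimes\mathbb{Q}_\ell$ (which is automatically $\pi_1^{\text{\'et},\ell}(X_{k'})$-stable by canonicity, so each graded piece remains arithmetic), locates $\rho\otimes\mathbb{Q}_\ell$ as a summand of one graded piece, and lets $W$ be the minimal arithmetic $\mathbb{Q}_\ell$-subrepresentation containing it. Geometrically $W$ decomposes as $\bigoplus(\rho^\sigma\otimes\mathbb{Q}_\ell)^{\alpha_\sigma}$, a sum of \emph{conjugates} of $\rho$, with no attempt to make these conjugates mutually isomorphic. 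It then sets $\beta:=\mathbb{Z}_\ell\text{-span of }\{\tau(\rho):\tau\in\pi_1^{\text{\'et},\ell}(X_{k'},\bar x)\}$, which is automatically a $\pi_1^{\text{\'et},\ell}(X_{k'})$-stable lattice by compactness, and the $\bmod\,\ell^r$ and $r_0$ bounds pass to this span because each $\tau(\rho)$ is geometrically isomorphic to a conjugate $\rho^{\tau^{-1}}$, and the automorphism $c_{\tau^{-1}}$ of $\pi_1^{\text{\'et},\ell}(X_{\bar k})$ preserves $\mathscr{I}^s$. No projective representations, no stabilization of lattice classes, no passage to a further extension beyond the one furnished by arithmeticity.

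You instead reach the isotypic component via Clifford theory (morally equivalent to the socle step, and fine), and then aim for the stronger structural conclusion $\beta|_{\pi_1^{\text{\'et},\ell}(X_{\bar k})}\cong\rho^{\oplus m'}$ by factoring $W\cong M\otimes(\rho\otimes\mathbb{Q}_\ell)$ and integrally normalizing the projective factor $T_\gamma$. The problematic claim is that the set of $\gamma\in\pi_1^{\text{\'et},\ell}(X_{k_1})$ with $\rho\cong\rho\circ c_\gamma$ as $\mathbb{Z}_\ell[\pi_1^{\text{\'et},\ell}(X_{\bar k})]$-lattices is \emph{open}. Finite index in $\pi_1^{\text{\'et},\ell}(X_{k_1})$ does not by itself imply open here (this group contains a copy of $G_{k_1}$, which is not topologically finitely generated when $k_1$ is a number field), so you really must argue continuity. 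Your continuity argument — that the lattice isomorphism class is determined by mod-$\ell^N$ data, ``through which $\rho$, and hence $c_\gamma$, factor through finite quotients'' — does not quite hold up: $\rho\bmod\ell^N$ factors through a finite quotient $Q$ of $\pi_1^{\text{\'et},\ell}(X_{\bar k})$, but $c_\gamma$ is an outer automorphism of $\pi_1^{\text{\'et},\ell}(X_{\bar k})$ that does not in general preserve $\ker(\pi_1^{\text{\'et},\ell}(X_{\bar k})\to Q)$, so $\rho\circ c_\gamma\bmod\ell^N$ does not obviously factor through $Q$ or depend continuously on $\gamma$. (You flagged this as the delicate step, correctly.) The absolute-irreducibility reduction is also glossed over; the division-algebra/maximal-order variant is genuinely more fiddly than ``identical.''

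The lesson is that the normalization you are attempting is unnecessary: once you have $W$, the paper's move of taking $\beta=\sum_\tau\tau(\rho)$ produces a stable lattice \emph{regardless} of whether the various $\tau(\rho)$ are $GL_n(\mathbb{Z}_\ell)$-conjugate to one another, because the $\bmod\,\ell^r$ triviality and the $r_0$ bound are invariant under the conjugation automorphisms $c_{\tau^{-1}}$ of the geometric group, and a $\mathbb{Z}_\ell$-span of conjugates inherits them. If you can close the open-stabilizer gap your argument would deliver the (slightly cleaner, but unneeded) isotypic conclusion, but as written it has a hole precisely where you predicted one.
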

\begin{proof}
By assumption, there exists a finite extension $k\subset k'$ and a representation $$\gamma: \pi_1^{\text{\'et}, \ell}(X_{k'}, \bar x)\to GL_n(\mathbb{Z}_\ell)$$ such that $\rho$ arises as a subquotient of $\gamma|_{\pi_1^{\text{\'et}, \ell}(X_{k'}, \bar x)}$.  Let $0=V^0\subset V^1 \subset \cdots V^r=\gamma$ be the socle filtration of $\gamma|_{\pi_1^{\text{\'et}, \ell}(X_{\bar k}, \bar x)}\otimes\mathbb{Q}_\ell$ (viewed as a representation of $\pi_1^{\text{\'et}, \ell}(X_{\bar k}, \bar x)$), i.e. $V^i$ is the largest subrepresentation of $\gamma|_{\pi_1^{\text{\'et},\ell}(X_{\bar k}, \bar x)}$ such that $V^i/V^{i-1}$ is semi-simple as a $\pi_1^{\text{\'et}, \ell}(X_{\bar k}, \bar x))$-representation.  As the socle filtration is canonical, each $V^i/V^{i-1}$ also extends to a representation of $\pi_1^{\text{\'et}, \ell}(X_{k'}, \bar k)$.  As $\rho\otimes \mathbb{Q}_\ell$ is irreducible, it arises as a direct summand of $V^i/V^{i-1}$ for some $i$ (viewed as a $\pi_1^{\text{\'et}, \ell}(X_{\bar k}, \bar x)$-representation).  Thus we may assume $\gamma|_{\pi_1^{\text{\'et}, \ell}(X_{\bar k}, \bar x)}\otimes \mathbb{Q}_\ell$ is semisimple.

Let $W\subset V^i/V^{i-1}$ be the minimal sub-$\pi_1^{\text{\'et}, \ell}(X_{k'}, \bar x)$-representation containing $\rho\otimes \mathbb{Q}_\ell$.  As a $\pi_1^{\text{\'et}, \ell}(X_{\bar k}, \bar x)$-representation, $W$ splits as a finite direct sum of the form $$W=\bigoplus (\rho^{\sigma}\otimes \mathbb{Q}_\ell)^{\alpha_\sigma},$$ where $\rho^{\sigma}$ denotes the representation obtained by pre-composing $\rho$ with an automorphism $\sigma$ of $\pi_1^{\text{\'et}, \ell}(X_{\bar k}, \bar x)$.  Let $\beta$ be the $\mathbb{Z}_\ell$-submodule of $W$ spanned by $\tau(\rho)$, for $\tau\in \pi_1^{\text{\'et}, \ell}(X_{k'}, \bar x).$  $\beta$ is a lattice by the compactness of $G_k$, and $\rho$ is clearly a subquotient of $\beta$.  Moreover $\beta$ is spanned by the modules $\tau(\rho)$, each of which is trivial mod $\ell^r$ (as representations of $\pi_1^{\text{\'et}, \ell}(X_{\bar k}, \bar x)$), and hence is trivial mod $\ell^r$ as desired.  The statement about $r_0$ follows identically (from the fact that $\beta$ is spanned by the $\tau(\rho)$).
\end{proof}
The main arguments of this section will  exploit the fact that in many cases, we have shown that if $k$ is a finitely generated field, there exists an element $\sigma\in G_k$ such that for $r\gg0$, $$\mathscr{I}^n(x_i, x_j)\cap \overline{\mathbb{Q}_\ell}[[\pi_1^{\text{\'et}, \ell}(X_{\bar k}, \bar x)]]^{\leq \ell^{-r}}$$ is spanned by $\sigma$-eigenvectors of weight $\leq -n$, and where $\sigma\in G_k$ is acts on $\mathbb{Q}_\ell[[\pi_1^{\text{\'et}, \ell}(X_{\bar k}, \bar x)]]$ via a quasi-scalar with respect to the weight filtration.  Recall that if $\rho: \pi_1^{\text{\'et}, \ell}(X_{\bar k}, \bar x)\to GL_n(\mathbb{Z}_\ell)$ is a representation, then for $r<r_0(\rho)$ (defined as in Definition \ref{r-0-defn}), $\rho$ induces a map $$\rho_r: \overline{\mathbb{Q}_\ell}[[\pi_1^{\text{\'et}, \ell}(X_{\bar k}, \bar x)]]^{\leq \ell^{-r}}\to \mathfrak{gl}_n(\mathbb{Q}_\ell).$$  But $\rho_r$ must kill any $\sigma$-eigenspace in $\overline{\mathbb{Q}_\ell}[[\pi_1^{\text{\'et}, \ell}(X_{\bar k}, \bar x)]]^{\leq \ell^{-r}}$ of weight not appearing in $\mathfrak{gl}_n(\mathbb{Q}_\ell)$.  Thus if $r$ is large enough that $\sigma$ acts on $\overline{\mathbb{Q}_\ell}[[\pi_1^{\text{\'et}, \ell}(X_{\bar k}, \bar x)]]^{\leq \ell^{-r}}$ diagonalizably, $\rho_r(W^{-N})=0$ for $N\gg 0$.
\begin{theorem}\label{general-monodromy}
Let $k$ be a field and $X/k$ a variety satisfying Assumption \ref{torsion-free-graded}.  Let $x\in X$ be a rational point or rational tangential basepoint, and suppose that $$\overline{\mathbb{Q}_\ell}[[\pi_1^{\text{\'et}, \ell}(X_{\bar k}, \bar x)]]^{\leq \ell^{-r}}$$ is spanned by $\sigma$-eigenvectors for some $r>0$, where $\sigma\in G_k$ is a quasi-Frobenius with respect to the weight filtration.  Then any arithmetic representation $$\rho: \pi_1^{\text{\'et}, \ell}(X_{\bar k}, \bar x)\to GL_n(\mathbb{Z}_\ell)$$ with $r_0(\rho)>r$ is unipotent, where $r_0(\rho)$ is defined as in Definition \ref{r-0-defn}.  If $\rho\otimes \mathbb{Q}_\ell$ is irreducible, then $\rho$ is trivial.
\end{theorem}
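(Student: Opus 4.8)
The plan is to reduce, via Jordan--H\"older factors and Lemma~\ref{subquotient-lemma}, to a representation that \emph{extends} to $\pi_1^{\text{\'et}, \ell}(X_{k'}, \bar x)$ for a finite extension $k'/k$, and then to run the argument sketched in Step~3 of the introduction: exploit $\sigma$-equivariance of the map induced on the convergent group ring together with finite-dimensionality of $\mathfrak{gl}_n$ to force vanishing on a tail of the weight filtration. First I would record two reductions. (i) $\rho$ is unipotent exactly when every Jordan--H\"older factor of $\rho\otimes\mathbb{Q}_\ell$ is trivial; and each such factor, equipped with the lattice induced from $\rho$, is a $\mathbb{Z}_\ell$-subquotient of $\rho$, hence again arithmetic, with irreducible base change, and with $r_0\geq r_0(\rho)>r$ (since $\mathbb{Z}_\ell[[\rho]]$ restricts to subquotients, $r_0$ only increases). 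So it suffices to treat an irreducible arithmetic $\rho$ with $r_0(\rho)>r$, and show it is trivial; if the original $\rho\otimes\mathbb{Q}_\ell$ is itself irreducible this is literally the claim. (ii) For such $\rho$, Lemma~\ref{subquotient-lemma} produces a finite $k'/k$ and $\beta\colon\pi_1^{\text{\'et}, \ell}(X_{k'}, \bar x)\to GL_N(\mathbb{Z}_\ell)$ with $\rho$ a subquotient of $\gamma:=\beta|_{\pi_1^{\text{\'et}, \ell}(X_{\bar k}, \bar x)}$ and $r_0(\gamma)\geq r_0(\rho)>r$ (the triviality-mod-$\ell^r$ hypothesis of that lemma is not needed here, as the construction of $\beta$ and the $r_0$-estimate do not use it). It then suffices to prove $\gamma$ is unipotent, for then the subquotient $\rho$ is unipotent and, being irreducible, trivial.

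Next I would set up the equivariance. Choose $m$ with $\sigma^m\in G_{k'}$; lifting $\sigma^m$ through the section $G_{k'}\to\pi_1^{\text{\'et}, \ell}(X_{k'}, \bar x)$ attached to $x$ and conjugating, the map $\mathbb{Z}_\ell[[\gamma]]\colon\mathbb{Z}_\ell[[\pi_1^{\text{\'et}, \ell}(X_{\bar k}, \bar x)]]\to\mathfrak{gl}_N(\mathbb{Z}_\ell)$ becomes $\sigma^m$-equivariant, where $\sigma^m$ acts on the target by conjugation by $\beta$ of that lift. Since $r<r_0(\gamma)$, Proposition~\ref{convergence-prop} extends this to a continuous $\sigma^m$-equivariant $\mathbb{Q}_\ell$-algebra map $\gamma_r\colon\mathbb{Q}_\ell[[\pi_1^{\text{\'et}, \ell}(X_{\bar k}, \bar x)]]^{\leq\ell^{-r}}\to\mathfrak{gl}_N(\mathbb{Q}_\ell)$ restricting to $\mathbb{Z}_\ell[[\gamma]]$ on the integral group ring.

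Now I would exploit the hypothesis and the quasi-Frobenius property. By assumption $\mathbb{Q}_\ell[[\pi_1^{\text{\'et}, \ell}(X_{\bar k}, \bar x)]]^{\leq\ell^{-r}}$ is topologically spanned by $\sigma$- (hence $\sigma^m$-) eigenvectors, each lying in a single graded piece $W^{-j}/W^{-j-1}$; and because $\sigma$ is a quasi-Frobenius for $H^1(X_{\bar k},\mathbb{Q}_\ell)$, the induced action on $\on{gr}^\bullet_W$ of the group ring is again a quasi-Frobenius (as in the proof of Theorem~\ref{semisimple-pi1}), so each $\sigma$-eigenvalue occurs in exactly one weight. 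Since $\mathfrak{gl}_N(\mathbb{Q}_\ell)$ is finite-dimensional, $\sigma^m$ has only finitely many eigenvalues there, and $\sigma^m$-equivariance forces $\gamma_r$ to kill every $\sigma$-eigenvector whose eigenvalue has $m$-th power not among them; as each of the finitely many target eigenvalues is an $m$-th power of at most $m$ scalars, each occupying a single weight, $\gamma_r$ annihilates all $\sigma$-eigenvectors of weight below some fixed $-N_0$. The weight filtration being $\sigma$-stable, $W^{-N}\cap\mathbb{Q}_\ell[[\pi_1^{\text{\'et}, \ell}(X_{\bar k}, \bar x)]]^{\leq\ell^{-r}}$ is (topologically) spanned by the $\sigma$-eigenvectors it contains, namely those of weight $\leq-N$; hence $\gamma_r\big(W^{-N}\cap\mathbb{Q}_\ell[[\pi_1^{\text{\'et}, \ell}(X_{\bar k}, \bar x)]]^{\leq\ell^{-r}}\big)=0$ for all $N\geq N_0$. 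Finally, Proposition~\ref{weight-prop} gives $\mathscr{I}^N\subseteq W^{-N}$, and the integral group ring lies in the convergent ring of every radius, so $\mathbb{Z}_\ell[[\gamma]](\mathscr{I}^N)=\gamma_r(\mathscr{I}^N)=0$ for $N\geq N_0$; unwinding, $(\gamma(g_1)-1)\cdots(\gamma(g_N)-1)=0$ for all $g_i$, so the image of the augmentation ideal under $\gamma$ is a nilpotent two-sided ideal and $\gamma$ is unipotent. This settles the irreducible case, and hence, via Jordan--H\"older factors, the general statement.

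The step I expect to be the main obstacle is the bridge in the third paragraph: passing from ``$\mathfrak{gl}_N(\mathbb{Q}_\ell)$ has finitely many $\sigma^m$-eigenvalues'' to ``$\gamma_r$ vanishes on $W^{-N}$ for $N$ large.'' This rests on verifying that the quasi-Frobenius condition on $H^1$ genuinely propagates to a well-defined weight for every $\sigma$-eigenvalue on $\on{gr}^\bullet_W$ of the full group ring — so that a fixed target eigenvalue can be produced from only boundedly many weights — together with the coincidence of the $W$-adic and $\mathscr{I}$-adic topologies on the convergent ring (Proposition~\ref{weight-prop}). A secondary subtlety is that the bookkeeping in the reductions is governed by $r_0$ rather than by ``trivial mod $\ell^N$'': one must confirm that $r_0$ is monotone under subquotients and that the lattice supplied by Lemma~\ref{subquotient-lemma} does not decrease it.
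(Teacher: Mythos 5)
Your proof is correct and follows essentially the same route as the paper: apply Lemma~\ref{subquotient-lemma} to replace $\rho$ by a representation $\beta$ that extends over a finite extension $k'/k$, extend $\beta|_{\pi_1^{\text{\'et},\ell}(X_{\bar k},\bar x)}$ to a $\sigma^m$-equivariant map on the convergent group ring via Proposition~\ref{convergence-prop}, and combine finite-dimensionality of $\mathfrak{gl}_N(\overline{\mathbb{Q}_\ell})$ with the quasi-Frobenius property to force vanishing on $\mathscr{I}^N$ for $N\gg 0$ (the paper invokes Lemma~\ref{i-adic-weights} directly where you pass through $W^{-N}$ and Proposition~\ref{weight-prop}, but this is the same content). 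The one place you are more careful than the text is the initial Jordan--H\"older reduction to irreducible $\rho\otimes\mathbb{Q}_\ell$ (which Lemma~\ref{subquotient-lemma} formally requires) together with the check that $r_0$ is monotone under saturated integral subquotients; the paper's proof applies the lemma to general $\rho$ without comment, so your version fills in this small step.
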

\begin{proof} 
By Lemma \ref{subquotient-lemma}, we may assume $\rho$ arises as a subquotient of $\beta|_{\pi_1^{\text{\'et}, \ell}(X_{\bar k}, \bar x)}$, for a representation $$\beta: \pi_1(X_{k'}, \bar x)\to GL_n(\mathbb{Z}_\ell)$$ satisfying $r_0(\beta|_{\pi_1^{\text{\'et}, \ell}(X_{\bar k}, \bar x)})\geq r_0(\rho)$, and where $k'/k$ is finite.  It suffices to show that $\beta|_{\pi_1^{\text{\'et}, \ell}(X_{\bar k}, \bar x)}\otimes\mathbb{Q}_\ell$ is unipotent.  Indeed, any subquotient of a unipotent representation is unipotent, and any irreducible unipotent representation is trivial.

By Proposition \ref{convergence-prop}, $\beta|_{\pi_1^{\text{\'et}, \ell}(X_{\bar k}, \bar x)}$ extends to a ring map $$\beta_r: \overline{\mathbb{Q}_\ell}[[\pi_1^{\text{\'et}, \ell}(X_{\bar k}, \bar x)]]^{\leq \ell^{-r}}\to \mathfrak{gl}_n(\overline{\mathbb{Q}_\ell}).$$  As $\beta$ extends to a representation of ${\pi_1^{\text{\'et}, \ell}(X_{k'}, \bar x)}$, there exists an action of $\sigma^m$ on $\mathfrak{gl}_n(\overline{\mathbb{Q}_\ell})$ for some $m$ so that $\beta_r$ is equivariant with respect to this action and the natural action of $\sigma^m$ on $\overline{\mathbb{Q}_\ell}[[\pi_1^{\text{\'et}, \ell}(X_{\bar k}, \bar x)]]^{\leq \ell^{-r}}$.

Let $q^{-1}$ be the degree of $\sigma$. Now $\mathfrak{gl}_n(\overline{\mathbb{Q}_\ell})$ splits into finitely many (generalized) eigenspaces under the action of $\sigma$, as $\mathfrak{gl}_n(\overline{\mathbb{Q}_\ell})$ is finite dimensional; in particular, for $N\gg0$, $q^N$ does not appear as a generalized eigenvalue of the $\sigma$-action on $\mathfrak{gl}_n(\overline{\mathbb{Q}_\ell})$.  But by Lemma \ref{i-adic-weights}, $\mathscr{I}^n$ is spanned by $\sigma$-eigenvectors of weight $\leq -n$. Hence $\beta_r$ must annihilate $\mathscr{I}^N$; in particular, $\beta$ must be unipotent (as for every $g\in \pi_1^{\text{\'et}, \ell}(X_{\bar k}, \bar x)$, we have that $g-1$ acts nilpotently).
\end{proof}
\subsection{Nilpotent Fundamental Groups}\label{nilpotent-pi1}
We begin by analyzing the possible geometric representations of nilpotent fundamental groups of algebraic varieties.  
\begin{theorem}\label{nilpotent-group-pro-l}
Let $k$ be a finitely generated field of characteristic different from $\ell$, and let $X/k$ be a smooth variety admitting a simple normal crossings compactification.  Let $x$ be a rational point or rational tangential basepoint of $X$, and $k\hookrightarrow \bar k$ an algebraic closure of $X$.  Suppose that $$\pi_1^{\text{\'et}, \ell}(X_{\bar k}, \bar x)$$ is torsion-free nilpotent.  Then any arithmetic representation $$\rho: \pi_1^{\text{\'et}, \ell}(X_{\bar k}, \bar x)\to GL_n(\mathbb{Z}_\ell)$$ is unipotent.  
\end{theorem}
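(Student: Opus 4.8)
The plan is to deduce Theorem~\ref{nilpotent-group-pro-l} by combining the diagonalizability of the convergent group ring of a nilpotent $\pi_1$ under a quasi-Frobenius (Theorem~\ref{diagonalizing-nilpotent-pi1}) with the general monodromy criterion (Theorem~\ref{general-monodromy}). The feature that makes the conclusion unconditional is that Theorem~\ref{diagonalizing-nilpotent-pi1} produces a dense set of $\sigma$-eigenvectors for \emph{every} radius $r>0$, so no lower bound on $r_0(\rho)$ is required. Essentially the only input not already recorded is the existence, over an arbitrary finitely generated field $k$ of characteristic $\ne\ell$, of a $\sigma\in G_k$ acting on $\mathbb{Q}_\ell[[\pi_1^{\text{\'et}, \ell}(X_{\bar k}, \bar x)]]$ as a quasi-Frobenius relative to the weight filtration.

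First I would dispose of the preliminaries. Since $X$ is of finite type with a simple normal crossings compactification $\overline X$, the group $\pi_1^{\text{\'et}, \ell}(X_{\bar k}, \bar x)$ is topologically finitely generated, so together with the hypothesis it is finitely generated, torsion-free, and nilpotent; consequently $\bigoplus_n \mathscr{I}^n(x)/\mathscr{I}^{n+1}(x)$ is torsion-free over $\mathbb{Z}_\ell$ (the associated graded of the augmentation filtration of the group algebra of a torsion-free nilpotent group is torsion-free), so Assumption~\ref{torsion-free-graded} holds and Proposition~\ref{convergence-prop} and Theorem~\ref{general-monodromy} are available. Also $X$ is smooth with an SNC compactification, so Lemma~\ref{H1-semisimple}, Theorem~\ref{semisimple-pi1}, and Proposition~\ref{weight-prop} all apply.

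Next I would construct $\sigma$. If $\on{char}(k)=0$, take $\sigma$ as in Theorem~\ref{H1-quasi-scalar}: it acts as a quasi-scalar --- hence a quasi-Frobenius --- on $H^1(X_{\bar k}, \mathbb{Q}_\ell)$ relative to the weight filtration, and by Corollary~\ref{pi1-quasi-scalar} it then acts as a quasi-scalar on $\mathbb{Q}_\ell[[\pi_1^{\text{\'et}, \ell}(X_{\bar k}, \bar x)]]$ as well. If $\on{char}(k)=p\ne\ell$, I would instead spread the pair $(\overline X, \overline X\setminus X)$ out over a finitely generated $\mathbb{F}_p$-algebra $R$ with $\on{Frac}(R)=k$, choose a closed point $\mathfrak p$ of good reduction together with a point of $\bar k$ over it, and let $\sigma=F_{\mathfrak p}\in G_k$ be the resulting Frobenius lift; smooth and proper base change for $\overline X$ together with the standard comparison for the complement identify the $F_{\mathfrak p}$-action on $H^1(X_{\bar k}, \mathbb{Q}_\ell)$ with the Frobenius action on $H^1((X_{\mathfrak p})_{\overline{\kappa(\mathfrak p)}}, \mathbb{Q}_\ell)$, which by Lemma~\ref{H1-semisimple} is semisimple and mixed of weights $1$ and $2$, i.e.~a quasi-Frobenius relative to the weight filtration. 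In either case, Theorem~\ref{semisimple-pi1} shows $\sigma$ acts semisimply on every $\mathbb{Q}_\ell[[\pi_1^{\text{\'et}, \ell}(X_{\bar k}, \bar x)]]/\mathscr{I}^n$, and since by Proposition~\ref{weight-prop} the weight-graded pieces are built multiplicatively out of $\on{gr}_W H^1(X_{\bar k}, \mathbb{Q}_\ell)$, no $\sigma$-eigenvalue occurs in two distinct weight-graded pieces; thus $\sigma$ is a quasi-Frobenius for $\mathbb{Q}_\ell[[\pi_1^{\text{\'et}, \ell}(X_{\bar k}, \bar x)]]$ relative to $W^\bullet$, as Theorem~\ref{diagonalizing-nilpotent-pi1} requires.

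Finally I would conclude. Given an arithmetic representation $\rho$, Proposition~\ref{r0-bound} gives $r_0(\rho)>0$, so fix any $r$ with $0<r<r_0(\rho)$. Since $\pi_1^{\text{\'et}, \ell}(X_{\bar k}, \bar x)$ is finitely generated nilpotent and $\sigma$ is a quasi-Frobenius for the weight filtration, Theorem~\ref{diagonalizing-nilpotent-pi1} shows $\overline{\mathbb{Q}_\ell}[[\pi_1^{\text{\'et}, \ell}(X_{\bar k}, \bar x)]]^{\leq\ell^{-r}}$ is spanned by $\sigma$-eigenvectors, with $\mathscr{I}^n(x)\cap\overline{\mathbb{Q}_\ell}[[\pi_1^{\text{\'et}, \ell}(X_{\bar k}, \bar x)]]^{\leq\ell^{-r}}$ spanned by eigenvectors of weight $\leq-n$. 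Theorem~\ref{general-monodromy} applied with this $r$ then yields that $\rho$ is unipotent. I expect the one genuinely delicate point to be the positive-characteristic construction of $\sigma$: one must verify that the decomposition-group Frobenius really acts on $H^1(X_{\bar k}, \mathbb{Q}_\ell)$ --- and hence, via Theorem~\ref{semisimple-pi1}, on the entire completed group algebra --- exactly as Lemma~\ref{H1-semisimple} predicts, which is where the spreading-out and base-change arguments enter; in characteristic zero the analogous work is already packaged into Theorem~\ref{H1-quasi-scalar}.
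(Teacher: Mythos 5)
Your proof is correct and tracks the paper's proof closely. The one noteworthy difference is that you split into cases by characteristic, whereas the paper handles all characteristics uniformly: it spreads $(\overline X, \overline X\setminus X)$ out over a finitely generated $\mathbb{Z}$- (or $\mathbb{F}_p$-) subalgebra $R\subset k$ with $\on{Frac}(R)=k$ and good reduction, then takes $\sigma$ to be Frobenius at any closed point of $\on{Spec}(R)$ --- exactly your positive-characteristic construction, which works equally well when $\on{char}(k)=0$. Your use of Theorem~\ref{H1-quasi-scalar} and Corollary~\ref{pi1-quasi-scalar} in characteristic zero is valid but is overkill: a quasi-scalar is a stronger object than the quasi-Frobenius that Theorem~\ref{diagonalizing-nilpotent-pi1} requires, and Theorem~\ref{H1-quasi-scalar} relies on Bogomolov's open-image theorem, which is an unnecessary ingredient here. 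Collapsing to the single Frobenius-at-a-closed-point argument makes the proof both simpler and characteristic-independent. Your justification of Assumption~\ref{torsion-free-graded} via torsion-freeness of the associated graded of the augmentation filtration for a finitely generated torsion-free nilpotent group is a reasonable addition; the paper asserts the assumption without comment, and its remark that Assumption~\ref{torsion-free-graded} holds ``if $X$ is a curve'' does not directly cover the general $X$ of this theorem, so your group-theoretic argument is the right one.
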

\begin{proof}
First, note that $X$ as in the theorem satisfies Assumption \ref{torsion-free-graded}, so we may freely apply Theorem \ref{general-monodromy}.

Let $r_0(\rho)$ be as in Definition \ref{r-0-defn}; by Proposition \ref{r0-bound}, $r_0(\rho)>0$.  Choose any $r$ with $0<r<r_0(\rho)$.  Then by Theorem \ref{diagonalizing-nilpotent-pi1}, $\overline{\mathbb{Q}_\ell}[[\pi_1^{\text{\'et}, \ell}(X_{\bar k}, \bar x)]]^{\leq \ell^{-r}}$ admits a dense set of quasi-Frobenius eigenvectors, for any quasi-Frobenius.  But there are elements in $G_k$ which act via a quasi-Frobenius on $\overline{\mathbb{Q}_\ell}[[\pi_1^{\text{\'et}, \ell}(X_{\bar k}, \bar x)]]$ with respect to the weight filtration.  Indeed, there exists a sub-algebra $R\subset k$ with $k=\on{Frac}(R)$ so that $X$ has good reduction over $R$ (i.e. there exists a smooth model of $X$ over $R$ so that the simple normal crossings compactification of $X$ extends to a relative simple normal crossings compactification).  Then any Frobenius at a closed point of $\on{Spec}(R)$ acts via a quasi-Frobenius on $\overline{\mathbb{Q}_\ell}[[\pi_1^{\text{\'et}, \ell}(X_{\bar k}, \bar x)]]$.    We may now conclude the result by Theorem \ref{general-monodromy}.
\end{proof}
We now deduce some geometric consequences of this theorem.
\begin{theorem}\label{no-abelian-schemes}
Let $k$ be a field, and let $X/k$ be a smooth variety admitting a simple normal crossings compactification, and let $\eta(X)$ be the generic point of $X$.  Suppose $$\pi_1^{\on{tame}}(X_{\bar k}, \bar x)$$ is torsion-free nilpotent.  Let $A$ be an Abelian scheme over $X$, and suppose that the monodromy representation $$\rho: \pi_1(X_{\bar k}, \bar x)\to GL(T_\ell(A))$$ factors through $\pi_1^{\on{tame}}(X_{\bar k}, \bar x)$ for some $\ell$ different from the characteristic of $k$.  Then $A_{\eta(X)}$ is  isogenous to an isotrivial Abelian variety.
\end{theorem}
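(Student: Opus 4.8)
The plan is to feed the hypothesis into Theorem~\ref{nilpotent-group-pro-l} after a finite base change and a finite \'etale cover, conclude that the geometric monodromy on $T_\ell(A)$ is unipotent, upgrade this to triviality using semisimplicity of the monodromy of an abelian scheme, and then translate triviality of the monodromy into isotriviality up to isogeny via the theorem of the fixed part and the Lang--N\'eron theorem.

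First, since $X$ and $A$ are of finite type, I would spread them out to assume $k$ finitely generated; the conclusion is insensitive to this. Next I pass to a finite cover. As $\pi_1^{\text{tame}}(X_{\bar k},\bar x)$ is torsion-free nilpotent it is pronilpotent, hence the internal product $\prod_q P_q$ of its pro-$q$ Sylow subgroups, with $P_\ell=\pi_1^{\text{\'et},\ell}(X_{\bar k},\bar x)$. A compact $\ell$-adic analytic group contains no infinite pro-$q$ subgroup for $q\neq\ell$, so $\rho\bigl(\prod_{q\neq\ell}P_q\bigr)$ is finite and $H:=P_\ell\times\prod_{q\neq\ell}\ker(\rho|_{P_q})$ is open of finite index; it corresponds to a connected finite cover $X'\to X_{\bar k}$ with $\pi_1^{\text{tame}}(X'_{\bar k},\bar x')=H$, hence $\pi_1^{\text{\'et},\ell}(X'_{\bar k},\bar x')=P_\ell$, a torsion-free nilpotent group. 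Since $\rho|_H$ kills the prime-to-$\ell$ factor of $H$, it descends to $\bar\rho'\colon\pi_1^{\text{\'et},\ell}(X'_{\bar k},\bar x')\to GL_n(\mathbb{Z}_\ell)$, which is the geometric monodromy on $T_\ell(A')$, $A':=A|_{X'}$. (In characteristic zero $X'$ has a simple normal crossings compactification by resolution of singularities; in positive characteristic this requires a minor extra argument with tamely ramified compactifications and alterations, which I do not belabor.)

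To apply Theorem~\ref{nilpotent-group-pro-l} I must check $\bar\rho'$ is arithmetic. Descend $X'$, $\bar x'$ to $X'_{k'}\to X_{k'}$ over a finite extension $k'/k$; the abelian scheme $A_{k'}|_{X'_{k'}}$ gives a representation of $\pi_1^{\text{\'et}}(X'_{k'},\bar x')$ restricting on the geometric part to $\rho|_{\pi_1^{\text{\'et}}(X'_{\bar k})}$, which kills the kernel of the maximal pro-$\ell$ quotient map; this kernel is characteristic in $\pi_1^{\text{\'et}}(X'_{\bar k})$, hence normal in $\pi_1^{\text{\'et}}(X'_{k'})$, so the representation factors through $\pi_1^{\text{\'et},\ell}(X'_{k'},\bar x')$ and extends $\bar\rho'$; thus $\bar\rho'$ is arithmetic. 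By Theorem~\ref{nilpotent-group-pro-l}, $\bar\rho'$ is unipotent, hence $\pi_1^{\text{\'et}}(X'_{\bar k},\bar x')$ acts unipotently on $V_\ell(A')$. On the other hand the geometric monodromy of a polarizable abelian scheme is semisimple: spreading $A'$ out over a finitely generated $\mathbb{Z}$-algebra and specializing to a closed point of good reduction, $R^1\pi_*\mathbb{Q}_\ell$ is lisse and pure of weight one, so its geometric monodromy is semisimple by Weil~II, and one descends back (in characteristic zero one may instead use that $R^1\pi_*\mathbb{Q}$ underlies a polarizable variation of Hodge structure). A unipotent semisimple representation is trivial, so $\pi_1^{\text{\'et}}(X'_{\bar k},\bar x')$ acts trivially on $T_\ell(A')$; in particular $A'$ has trivial geometric monodromy over $X'_{\bar k}$.

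It remains to deduce isotriviality. Triviality of the geometric monodromy means $V_\ell(A'_\eta)^{\pi_1^{\text{\'et}}(X'_{\bar k})}=V_\ell(A'_\eta)$, where $\eta$ is the generic point of $X'$; by the theorem of the fixed part --- the fact that the monodromy invariants of $H^1$ of an abelian scheme are, up to isogeny, the $H^1$ of the $\overline k$-trace of the generic fibre, a form of the Lang--N\'eron theorem --- the trace $\on{Tr}_{\overline k(X')/\overline k}(A'_\eta)$ has dimension $\dim A'$, so the canonical map $\on{Tr}_{\overline k(X')/\overline k}(A'_\eta)\times\overline k(X')\to A'_{\overline k(X')}$ is an isogeny. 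Hence $A_{\eta(X)}$, whose base change along the finite extension $\overline{k(X')}\supset\overline{k(X)}$ is isogenous to a constant abelian variety, is isogenous to an isotrivial one; descending the constant field via the Lang--N\'eron trace over $k(X)$, one may even take the witness to be the constant abelian variety $\on{Tr}_{k(X)/k}(A_\eta)\times_k k(X)$. The main obstacle is exactly this last deduction --- extracting isotriviality up to isogeny from triviality of the geometric monodromy, via the theorem of the fixed part (equivalently, the Tate/Zarhin/Faltings isogeny theorem over $k(X)$ combined with the Lang--N\'eron trace) and the descent of the constant field; secondary care is needed for the inheritance of a simple normal crossings compactification by $X'$ in positive characteristic, and for the passage between $\pi_1^{\text{\'et}}$, $\pi_1^{\text{tame}}$ and $\pi_1^{\text{\'et},\ell}$ in the first step.
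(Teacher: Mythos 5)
Your proof is correct, but it takes a genuinely different route at the decisive step, and a slightly more elaborate route in the preparatory reduction.

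The preparatory step: the paper simply passes to the finite tame cover $X'\to X$ killing the residual representation $\bar\rho\colon\pi_1^{\mathrm{tame}}(X_{\bar k})\to GL_n(\mathbb F_\ell)$, after which $\rho$ lands in the pro-$\ell$ congruence subgroup and hence factors through $\pi_1^{\text{\'et},\ell}(X'_{\bar k})$. You instead invoke the pronilpotent Sylow decomposition $\pi_1^{\mathrm{tame}}=\prod_q P_q$ and kill $\rho$ on all prime-to-$\ell$ factors. Both work; yours makes the factorization through the pro-$\ell$ quotient visible slightly more structurally, but it is longer and relies on facts (uniqueness and normality of the pro-$q$ Sylows, finiteness of $\rho(P_q)$ for $q\neq\ell$) that the paper's version bypasses.

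The decisive step is where the two proofs diverge most. Having obtained unipotence of the geometric monodromy from Theorem~\ref{nilpotent-group-pro-l}, you upgrade unipotent to \emph{trivial} by appealing to semisimplicity of the geometric monodromy of a polarizable abelian scheme (Weil~II after spreading out, or Deligne's semisimplicity theorem for polarizable VHS in characteristic zero), and then apply the theorem of the fixed part / Lang--N\'eron once to identify $A'_\eta$ up to isogeny with its trace. The paper instead never leaves the unipotent world: unipotence forces a nonzero $\pi_1$-fixed vector in $T_\ell$, hence infinite $\ell^\infty$-torsion over $\bar k(X)$, hence (Lang--N\'eron) a nonzero trace; quotient by the trace, which still has unipotent monodromy and smaller dimension, and induct. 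The paper's version is more self-contained (it needs only Lang--N\'eron and Poincar\'e reducibility, not Weil~II or Hodge theory), while yours is arguably more conceptual and avoids the induction. Both correctly produce the isogeny to a constant abelian variety, and you are right to flag the positive-characteristic subtleties about the snc compactification of $X'$ and about ``inseparably'' isogenous; the paper's wording acknowledges the latter too.
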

\begin{remark}
The condition that the monodromy representation be tame is automatic in characteristic zero or in characteristic $p$ if $p>2\dim(A/X)+1$.  It is a necessary condition---for example, there are non-isotrivial elliptic curves over $\mathbb{G}_m=\on{Spec}(k[t, t^{-1}])$, namely $$y^2=x^3+x^2-t$$ in characteristic $2$ and $$y^2+xy=x^3+t$$ in characteristic $3$.  These examples were suggested to the author by Noam Elkies.  Higher dimensional Abelian schemes over $\mathbb{G}_m$ (in small characteristic relative to their dimension) may be constructed as Jacobians of Drinfel'd modular curves.
\end{remark}
\begin{proof}[Proof of Theorem \ref{no-abelian-schemes}]
By passing to a finite (tame) cover $p: X'\to X$, we may assume that $\rho$ is residually trivial, and in particular factors through $\pi_1^{\text{\'et}, \ell}(X'_{\bar k}, \bar{x'})$.  As $p$ was tame, we have that $\pi_1^{\text{tame}}(X'_{\bar k}, \bar{x'})\subset \pi_1^{\on{tame}}(X_{\bar k}, \bar x)$ and hence $\pi_1^{\text{tame}}(X'_{\bar k}, \bar{x'})$ is torsion-free nilpotent.  Thus we may assume that $\rho$ factors through $\pi_1^{\text{\'et}, \ell}(X'_{\bar k}, \bar{x'})$.  Thus (renaming $X'$ as $X$) it suffices to show that if $\rho$ factors through $\pi_1^{\text{\'et}, \ell}(X_{\bar k}, \bar x)$, then $A_{\eta(X)}$ is inseparably isogenous to a \emph{constant} Abelian scheme (i.e.~one pulled back from $k$).  By the Lang-N\'eron theorem \cite{lang-neron}, it suffices to show that in this case the monodromy representation on $T_\ell(A)$ is unipotent.  In this case, the rational torsion subgroup of $A_{\eta(X)}$ is not finitely generated, so $A_{\eta(X)}$ has non-trivial trace; replacing $A_{\eta(X)}$ with the quotient by its trace, we are done by induction on the dimension of $A$

Now, there exists a finitely generated field $k'\subset k$ over which $X'$ and $A$ are defined, and so we are done by Theorem \ref{nilpotent-group-pro-l}, 
\end{proof}
\begin{remark}
We view Theorems \ref{nilpotent-group-pro-l} and \ref{no-abelian-schemes} as being analogous to the hyperbolicity of period domains.  This hyperbolicity rules out e.g.~maps from $\mathbb{C}^*$ or Abelian varieties to period domains---similarly, Theorem \ref{nilpotent-group-pro-l} shows that there are \emph{representation-theoretic} obstructions to the existence of such maps in the case that such maps arise from algebraic geometry; Theorem \ref{no-abelian-schemes} is a precise version of this for period domains arising as moduli of Abelian schemes with level structure.
\end{remark}
\begin{remark}
One may prove Theorem \ref{nilpotent-group-pro-l} by other means---for example, by using Deligne's theorem on semisimplicity of geometric monodromy (though we were unable to find Theorem \ref{nilpotent-group-pro-l} in the literature).  However, as we will see soon, the method used here generalizes to varieties with more interesting fundamental group.
\end{remark}
The proof also shows
\begin{theorem}
Let $k$ be a field, and let $X/k$ be a smooth variety admitting a simple normal crossings compactification.  Suppose $$\pi_1^{\text{tame}}(X_{\bar k}, \bar x)$$ is torsion-free nilpotent.  Let $$\rho: \pi_1^{\text{tame}}(X_{\bar k}, \bar x)\to GL_n(\mathbb{Z}_\ell)$$ be any arithmetic representation, where $\ell$ is different from the characteristic of $k$.  Then $\rho$ is quasi-unipotent (i.e.~its restriction to some finite index subgroup of $\pi_1^{\text{tame}}$ is unipotent).
\end{theorem}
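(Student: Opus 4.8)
The plan is to deduce the statement from Theorem \ref{nilpotent-group-pro-l} by passing to a finite tame cover, much as in the proof of Theorem \ref{no-abelian-schemes}. Since $\rho$ is arithmetic, there is a finite extension $k'/k$ and a representation $\tilde\rho$ of the arithmetic tame fundamental group $\pi_1^{\text{tame}}(X_{k'},\bar x)$ on a finite free $\mathbb{Z}_\ell$-module $V$ such that $\rho$ is a subquotient of $\tilde\rho_0:=\tilde\rho|_{\pi_1^{\text{tame}}(X_{\bar k},\bar x)}$. Fix $m$ large enough that $1+\ell^m M_n(\mathbb{Z}_\ell)$ is a pro-$\ell$ normal subgroup of $GL(V)(\mathbb{Z}_\ell)$ (any $m\geq 1$ if $\ell$ is odd, $m\geq 2$ if $\ell=2$), and let $p\colon X'\to X_{\bar k}$ be the connected finite tame cover corresponding to the open normal subgroup $\tilde\rho_0^{-1}(1+\ell^m M_n(\mathbb{Z}_\ell))\subset \pi_1^{\text{tame}}(X_{\bar k},\bar x)$. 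Because $\tilde\rho$ is defined over $k'$, this subgroup is stable under the outer action of $\operatorname{Gal}(\bar k/k')$, so the cover descends to a tame cover $X'_{k'}\to X_{k'}$; and over $X'$ the representation $\tilde\rho_0$ has pro-$\ell$ image, so $\tilde\rho_0|_{\pi_1^{\text{tame}}(X'_{\bar k})}$, together with all of its subquotients --- in particular $\rho':=p^*\rho$ --- factors through the maximal pro-$\ell$ quotient $\pi_1^{\text{\'et},\ell}(X'_{\bar k},\bar x')$. Moreover $\rho'$ is again arithmetic (over $k'$), since $\tilde\rho|_{\pi_1^{\text{tame}}(X'_{k'})}$ kills the characteristic subgroup defining $\pi_1^{\text{\'et},\ell}(X'_{\bar k})$ and hence extends to a representation of the arithmetic pro-$\ell$ fundamental group $\pi_1^{\text{\'et},\ell}(X'_{k'})$ of which $\rho'$ is a subquotient.

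Next I would record the purely group-theoretic input. Since $p$ is tame, $\pi_1^{\text{tame}}(X'_{\bar k},\bar x')$ is a finite-index subgroup of $\pi_1^{\text{tame}}(X_{\bar k},\bar x)$, so it is again torsion-free nilpotent; and its maximal pro-$\ell$ quotient $\pi_1^{\text{\'et},\ell}(X'_{\bar k},\bar x')$ is torsion-free nilpotent as well, because a nilpotent profinite group is the direct product of its Sylow pro-$p$ subgroups, each of which is a retract and hence torsion-free when the whole group is, and the maximal pro-$\ell$ quotient is precisely the $\ell$-Sylow factor. I would then choose a finitely generated subfield $k_0\subset k'$ over which $X'$, a smooth simple normal crossings compactification of $X'$ (available by Abhyankar's lemma together with resolution of singularities in characteristic zero, and automatic when $X$ is a curve), a rational point or rational tangential basepoint of $X'_{k_0}$, and the extension of $\rho'$ to $\pi_1^{\text{\'et},\ell}(X'_{k'})$ are all defined --- the same descent used in the proof of Theorem \ref{no-abelian-schemes}. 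Now $X'/k_0$ and $\rho'$ satisfy the hypotheses of Theorem \ref{nilpotent-group-pro-l}, so $\rho'$ is unipotent.

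Finally, $\rho'=p^*\rho$ is nothing but the restriction of $\rho$ to the finite-index subgroup $\pi_1^{\text{tame}}(X'_{\bar k},\bar x')\subset \pi_1^{\text{tame}}(X_{\bar k},\bar x)$, and we have shown it to be unipotent; hence $\rho$ is quasi-unipotent. I expect the only real friction to be bookkeeping rather than new mathematics: checking that the tame cover $X'$ and the witnessing arithmetic data descend compatibly to a finitely generated field, passing cleanly between the tame and pro-$\ell$ fundamental groups (and between basepoints) over non-algebraically-closed fields, and arranging that $X'$ --- after shrinking if necessary, which only enlarges $\pi_1^{\text{tame}}$ via a surjection and so does not affect unipotence of the image --- admits a smooth simple normal crossings compactification, so that Theorem \ref{nilpotent-group-pro-l}, and through it the production of quasi-Frobenii via Theorem \ref{diagonalizing-nilpotent-pi1}, applies. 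No new estimates on integral $\ell$-adic periods are required.
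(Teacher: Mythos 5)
Your proposal is correct and matches the paper's intended argument: the paper offers no separate proof of this theorem, remarking only that ``the proof also shows'' it after proving Theorem \ref{no-abelian-schemes}, and the steps you carry out --- pass to a finite tame cover $X'\to X$ on which the representation becomes residually trivial and hence factors through $\pi_1^{\text{\'et},\ell}(X'_{\bar k})$; observe that $\pi_1^{\text{tame}}(X'_{\bar k})$ is torsion-free nilpotent as a finite-index subgroup and that its pro-$\ell$ quotient inherits this via the Sylow decomposition of a pronilpotent group; check that arithmeticity descends to the cover; spread $X'$, its SNC compactification, a rational (tangential) basepoint, and the arithmetic extension of $\rho'$ out over a finitely generated subfield; and invoke Theorem \ref{nilpotent-group-pro-l} --- are exactly those implicit in the paper's proof of Theorem \ref{no-abelian-schemes}. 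You are also right to flag the existence of an SNC compactification of $X'$ in positive characteristic and dimension $\geq 2$ as the one genuine friction point; the paper elides it as well.
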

\subsection{Restrictions on monodromy in general}
We now turn to results for arbitrary normal varieties in characteristic zero.  Our main result is:
\begin{theorem}\label{main-arithmetic-result}
Let $k$ be a finitely generated field of characteristic zero, and $X/k$ a normal variety.  Let $\ell$ be a prime.  Then there exists $$N=N(X,\ell)$$ such that any arithmetic representation $$\rho: \pi_1(X_{\bar k})\to GL_n(\mathbb{Z}_\ell)$$ which is trivial mod $\ell^N$ is unipotent.
\end{theorem}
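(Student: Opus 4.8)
The plan is to reduce the statement to the case of a smooth, geometrically connected \emph{curve} carrying a rational point --- where Assumption \ref{torsion-free-graded} holds and the machinery of Sections \ref{Ql-galois-section}--\ref{convergent-section} applies --- and then to combine Corollary \ref{pi1-quasi-scalar}, Theorem \ref{diagonalizing-convergent-rings-mixed} and Theorem \ref{general-monodromy}. First, for the integer $N$ that we will produce (it will satisfy $N\geq 1$): if $\rho$ is trivial mod $\ell^N$ then its image lies in the pro-$\ell$ group $1+\ell^N\mathfrak{gl}_n(\mathbb{Z}_\ell)$, so $\rho$ factors through $\pi_1^{\text{\'et}, \ell}(X_{\bar k})$ and may be treated as a continuous representation of that group. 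Passing to a finite extension of $k$ changes none of the relevant data --- the geometric fundamental group, the property of being arithmetic, the property of being trivial mod $\ell^N$, or the conclusion of being unipotent --- so we may assume $X$ is geometrically connected. Since $X$ is normal, $\on{Sing}(X)$ has codimension $\geq 2$, so by Zariski--Nagata purity the map $\pi_1^{\text{\'et}}(X^{\mathrm{sm}}_{\bar k})\to\pi_1^{\text{\'et}}(X_{\bar k})$ is an isomorphism; we replace $X$ by the smooth variety $X^{\mathrm{sm}}$, which in characteristic zero admits a smooth compactification with simple normal crossings boundary.

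Next I would invoke the Lefschetz hyperplane theorem for fundamental groups: applied to such a compactification and iterated down to dimension one, it produces --- after a further finite extension of $k$ --- a smooth, geometrically connected curve $C$ and a morphism $C\to X$ for which $\pi_1^{\text{\'et}, \ell}(C_{\bar k})\twoheadrightarrow\pi_1^{\text{\'et}, \ell}(X_{\bar k})$. Pulling $\rho$ back along this surjection gives an arithmetic representation $\rho_C$ of $\pi_1^{\text{\'et}, \ell}(C_{\bar k})$ (arithmeticity descends by functoriality of the extension $\tilde\rho$ over a finite extension of the base), and $\rho_C$ is trivial mod $\ell^N$ iff $\rho$ is and unipotent iff $\rho$ is, by surjectivity. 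Hence it suffices to prove the theorem for $C$, with $N(X,\ell):=N(C,\ell)$. After one more finite extension $C$ acquires a rational point $x$; since $C$ is a curve and $\on{char}(k)=0\neq\ell$, Assumption \ref{torsion-free-graded} holds for it.

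Now for the curve $C$: by Corollary \ref{pi1-quasi-scalar} there is $\sigma\in G_k$ acting on $\mathbb{Q}_\ell[[\pi_1^{\text{\'et}, \ell}(C_{\bar k},\bar x)]]$ as a quasi-scalar of some degree $q^{-1}$ with respect to the weight filtration. With $s$ the order of $q$ in $\mathbb{F}_\ell^\times$ (in $(\mathbb{Z}/4\mathbb{Z})^\times$ if $\ell=2$) and $\epsilon=1$ if $\ell=2$, $\epsilon=0$ otherwise, set
\[
r_0=\frac{2}{s}\left(v_\ell(q^s-1)+\frac{1}{\ell-1}+\epsilon\right),\qquad N=\lfloor r_0\rfloor+1,
\]
quantities depending only on $X$ and $\ell$. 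By Theorem \ref{diagonalizing-convergent-rings-mixed}, for every real $r>r_0$ the topological vector space $\overline{\mathbb{Q}_\ell}[[\pi_1^{\text{\'et}, \ell}(C_{\bar k},\bar x)]]^{\leq\ell^{-r}}$ is spanned by $\sigma$-eigenvectors; fix such an $r$ with $r_0<r<N$. Since quasi-scalars are quasi-Frobenii, $\sigma$ is a quasi-Frobenius relative to the weight filtration. If $\rho_C$ is an arithmetic representation of $\pi_1^{\text{\'et}, \ell}(C_{\bar k})$ trivial mod $\ell^N$, then $r_0(\rho_C)\geq N>r$ by part (1) of Proposition \ref{r0-bound}, so Theorem \ref{general-monodromy} --- applied to $C$, the basepoint $x$, the quasi-Frobenius $\sigma$, and the radius $\ell^{-r}$ --- shows that $\rho_C$ is unipotent. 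This establishes the theorem for $C$, hence for $X$.

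The genuinely deep inputs here --- the construction of the quasi-scalar $\sigma$ (Theorem \ref{H1-quasi-scalar}, resting on Bogomolov's theorem that the Galois image is open in its Zariski closure) and the bound on the integral $\ell$-adic periods that controls $\overline{\mathbb{Q}_\ell}[[\Pi_1(C_{\bar k})]]^{\leq\ell^{-r}}$ (Theorem \ref{diagonalizing-convergent-rings-mixed}) --- are already in hand, so within this proof the step demanding the most care is the reduction to a curve: one must choose the Lefschetz section so that $\pi_1^{\text{\'et}, \ell}$ surjects, verify that arithmeticity descends along $C\to X$, and check that the mod-$\ell^N$ triviality and unipotence pass between $C$ and $X$ through that surjection. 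All of these are routine, and the bound produced is the explicit $N=\lfloor r_0\rfloor+1$ displayed above.
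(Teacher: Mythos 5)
Your proof follows the same route as the paper's (Lefschetz reduction to a curve, then apply Corollary \ref{pi1-quasi-scalar}, Theorem \ref{diagonalizing-convergent-rings-mixed}, Proposition \ref{r0-bound}, and Theorem \ref{general-monodromy}), and it is correct in its essentials. The one place where you misstate something is the reduction to the smooth locus: Zariski--Nagata purity gives that $\pi_1^{\text{\'et}}(U)\to\pi_1^{\text{\'et}}(Y)$ is an \emph{isomorphism} only when $Y$ is \emph{regular} and $Y\setminus U$ has codimension $\geq 2$; for a normal but singular $X$, the map $\pi_1^{\text{\'et}}(X^{\mathrm{sm}}_{\bar k})\to\pi_1^{\text{\'et}}(X_{\bar k})$ is generally only surjective, not injective (consider $X=\mathbb{A}^2/(\mathbb{Z}/n)$, where $\pi_1^{\text{\'et}}(X^{\mathrm{sm}}_{\mathbb{C}})=\mathbb{Z}/n$ while $\pi_1^{\text{\'et}}(X_{\mathbb{C}})=1$). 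Fortunately, your argument never uses injectivity: surjectivity, which holds automatically because $X^{\mathrm{sm}}$ is dense open in the normal scheme $X$, is exactly what is needed to pull $\rho$ back to $X^{\mathrm{sm}}$ and to transfer unipotence and mod-$\ell^N$ triviality back and forth. The paper sidesteps this entirely by citing \cite{deligne2} for the existence of a curve $C\to X$ with $\pi_1^{\text{\'et}}(C_{\bar k})\twoheadrightarrow\pi_1^{\text{\'et}}(X_{\bar k})$ directly, without first passing to the smooth locus. Your version, with ``isomorphism'' replaced by ``surjection,'' is fine; it simply makes explicit (at the cost of a slightly longer reduction) what the paper delegates to the reference. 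The explicit bound $N=\lfloor r_0\rfloor+1$ with $r_0=\frac{2}{s}\bigl(v_\ell(q^s-1)+\frac{1}{\ell-1}+\epsilon\bigr)$ is a nice touch that the paper leaves implicit.
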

\begin{remark}
We have suppressed the choice of basepoint in this theorem because the notion of arithmeticity is independent of basepoint --- we will choose a basepoint in the course of the proof.
\end{remark}
\begin{proof}
By \cite{deligne2}, there exists a curve $C/k$ and a map $C\to X$ such that $$\pi_1^{\text{\'et}}(C_{\bar k}, \bar x)\to \pi_1^{\text{\'et}}(X_{\bar k}, \bar x)$$ is surjective for any geometric point $\bar x$ of $C$.  Thus we may immediately replace $X$ with $C$; observe that $\pi_1(C_{\bar k}, \bar x)$ satisfies Assumption \ref{torsion-free-graded}.  After replacing $k$ with a finite extension $k'$, we may assume $\bar x$ comes from a rational point of $C$.  Note that the choice of $C$ and $k'$ does not depend on $\rho$.  Thus we replace $k$ with $k'$ and $X$ with $C$.

By Corollary \ref{pi1-quasi-scalar}, there exists $\sigma \in G_k$ such that $\sigma$ acts on $\mathbb{Q}_\ell[[\pi_1^{\text{\'et}, \ell}(C_{\bar k}, \bar x)]]$ via a quasi-scalar with respect to the weight filtration.  Then by Theorem \ref{diagonalizing-convergent-rings-mixed}, there exists $r>0$ such that $\sigma$ acts diagonalizably on $\mathbb{Q}_\ell[[\pi_1^{\text{\'et}, \ell}(C_{\bar k}, \bar x)]]^{\leq \ell^{-r}}$.  Set $N$ to be the least integer greater than $r$; note again that $N$ does not depend on $\rho$.  Now if $r_0(\rho)\geq N$, (which it is if $\rho$ is trivial mod $\ell^N$, using Proposition \ref{r0-bound}), we may conclude the theorem by Theorem \ref{general-monodromy}.
\end{proof}
\begin{remark}
Observe that $N$ only depends on $X$ and $\ell$, and not on any of the parameters of $\rho$ (for example, its dimension).  To our knowledge this was not expected.  
\end{remark}
We may now prove Theorem \ref{main-theorem-intro}, from the introduction.

Recall that we say a representation $\rho$ of $\pi_1(X)$ \emph{arises from geometry} if there exists a proper map $\pi: Y\to X$ so that $\rho$ arises as the monodromy representation of $\pi_1$ on a locally constant (resp.~lisse) subquotient of $R^i\pi_*\underline{\mathbb{Z}}$ (resp.~$R^i\pi_*\underline{\mathbb{Z}_\ell}$) for some $i$, where the pushforward is taken with respect to the analytic (resp.~pro-\'etale) topology.
\begin{corollary}\label{geometric-monodromy-corollary}
Let $X/\mathbb{C}$ be a normal variety, and $\ell$ a prime.  Then there exists $N=N(X,\ell)$ so that any representation $$\pi_1(X(\mathbb{C})^{\text{an}})\to GL_n(\mathbb{Z})$$ which arises from geometry, and is trivial modulo $\ell^N$, is unipotent.
\end{corollary}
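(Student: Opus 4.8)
The plan is to derive Corollary \ref{geometric-monodromy-corollary} from Theorem \ref{main-arithmetic-result} by a specialization argument, following the classical template of Grothendieck's proof of the quasi-unipotent monodromy theorem.

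First I would descend $X$ once and for all. Choose a finitely generated subfield $k_0\subset\mathbb{C}$ and a normal $k_0$-variety $X_0$ with $(X_0)_{\mathbb{C}}\simeq X$. Since the \'etale fundamental group of a variety over an algebraically closed field is invariant under extension of the (algebraically closed) base field, there is a canonical identification $\pi_1^{\text{\'et}}(X_{\mathbb{C}},\bar x)\simeq\pi_1^{\text{\'et}}((X_0)_{\overline{k_0}},\bar x)$, and by Riemann's existence theorem this group is the profinite completion of $\pi_1(X(\mathbb{C})^{\text{an}},\bar x)$. Let $N:=N(X_0,\ell)$ be the constant produced by Theorem \ref{main-arithmetic-result} applied to $X_0/k_0$; by construction it depends only on $X$ and $\ell$.

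Now suppose $\rho\colon\pi_1(X(\mathbb{C})^{\text{an}})\to GL_n(\mathbb{Z})$ arises from geometry and is trivial modulo $\ell^N$; say $\rho$ is the monodromy of a locally constant subquotient $\mathcal{L}$ of $R^i\pi^{\text{an}}_*\underline{\mathbb{Z}}$ for a proper morphism $\pi\colon Y\to X$. Tensoring with $\mathbb{Z}_\ell$ and applying Artin's comparison theorem, $\mathcal{L}\otimes\mathbb{Z}_\ell$ becomes a lisse $\mathbb{Z}_\ell$-sheaf on $X_{\mathbb{C}}$, realized as a subquotient of $R^i\pi_*\underline{\mathbb{Z}_\ell}$ in the \'etale topology; in particular $\rho\otimes\mathbb{Z}_\ell$ factors through $\pi_1^{\text{\'et}}(X_{\mathbb{C}},\bar x)$ and is a subquotient of a representation of geometric origin. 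I would then spread the data out: there is a finitely generated $k_0$-subalgebra $R\subset\mathbb{C}$ with $\Spec(R)$ integral, a proper morphism $\pi_R\colon\mathcal{Y}_R\to(X_0)_R=X_0\times_{k_0}\Spec(R)$, and a lisse $\mathbb{Z}_\ell$-subquotient $\mathcal{G}$ of $R^i\pi_{R,*}\underline{\mathbb{Z}_\ell}$ recovering $\pi$ and $\mathcal{L}\otimes\mathbb{Z}_\ell$ after a base change $R\to\mathbb{C}$ (shrinking $\Spec(R)$ so that $R^i\pi_{R,*}\underline{\mathbb{Z}_\ell}$ is lisse and $\mathcal{G}$ is defined over $R$). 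Pick a closed point $s\in\Spec(R)$; its residue field $k_0'=\kappa(s)$ is finite over $k_0$, and $(X_0)_R\times_R\kappa(s)=(X_0)_{k_0'}$. Restricting $\mathcal{G}$ to this fibre gives a lisse $\mathbb{Z}_\ell$-sheaf on $(X_0)_{k_0'}$, i.e.\ a continuous representation $\widetilde{\rho}\colon\pi_1^{\text{\'et}}((X_0)_{k_0'},\bar x)\to GL_n(\mathbb{Z}_\ell)$. Because the family $(X_0)_R\to\Spec(R)$ is constant, Grothendieck's specialization theorem for the fundamental group (\cite[Exp.\ X]{SGA1}; this is immediate here, since the family does not vary: in characteristic zero the specialization map is the canonical isomorphism $\pi_1^{\text{\'et}}((X_0)_{\overline{k_0}})\simeq\pi_1^{\text{\'et}}((X_0)_{\overline{k_0'}})$) identifies $\widetilde{\rho}|_{\pi_1^{\text{\'et}}((X_0)_{\overline{k_0}})}$ with $\rho\otimes\mathbb{Z}_\ell$, compatibly with its expression as a subquotient of $R^i(\pi_s)_*\underline{\mathbb{Z}_\ell}$. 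Hence $\rho\otimes\mathbb{Z}_\ell$ is an arithmetic representation of $\pi_1^{\text{\'et}}((X_0)_{\overline{k_0}},\bar x)$ — it extends to $(X_0)_{k_0'}$ with $k_0'/k_0$ finite — and it is trivial modulo $\ell^N$. By Theorem \ref{main-arithmetic-result}, $\rho\otimes\mathbb{Z}_\ell$ is unipotent; since unipotence of a representation into $GL_n(\mathbb{Z})$ is detected after the faithfully flat extension $\mathbb{Z}\hookrightarrow\mathbb{Z}_\ell$ (it amounts to every $\rho(g)$ having all eigenvalues equal to $1$, equivalently to unipotence over $\mathbb{Q}_\ell$, equivalently over $\mathbb{Q}$ by Kolchin's theorem), $\rho$ itself is unipotent, as claimed.

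The main obstacle is not any single estimate but the uniformity of $N$: the geometric family realizing $\rho$, and hence its field of definition, varies with $\rho$, so one cannot simply invoke Theorem \ref{main-arithmetic-result} over ``the field of definition of $Y$'' and expect an $N$ independent of $\rho$. The device of fixing $X_0/k_0$ first and then using spreading-out together with specialization at a closed point — whose residue field is automatically finite over $k_0$, which is exactly what the definition of arithmeticity asks for — is what circumvents this, and it is the only step requiring input beyond formal manipulation of comparison theorems. A secondary technical point is that $X$, hence $X_0$, is only assumed normal rather than smooth, so $(X_0)_R\to\Spec(R)$ need not be smooth; this is harmless, since after shrinking $\Spec(R)$ the fibres are geometrically normal and geometrically connected and the relevant higher direct image is lisse, which suffices for the specialization statements — alternatively one may first replace $X$ by its smooth locus, which has the same fundamental group, or pull back to a curve dominating $X$ as in \cite{deligne2}, matching the reduction carried out inside the proof of Theorem \ref{main-arithmetic-result}.
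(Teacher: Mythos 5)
Your proof is correct and follows essentially the same argument as the paper: descend $X$ once to a finitely generated field $k_0$ so that $N=N(X_0,\ell)$ is fixed before $\rho$ is chosen, spread the geometric family realizing $\rho\otimes\mathbb{Z}_\ell$ over a finitely generated $k_0$-algebra $R$, shrink so the relevant direct image becomes lisse, and specialize at a closed point of $\operatorname{Spec}(R)$ whose residue field is finite over $k_0$ to exhibit $\rho\otimes\mathbb{Z}_\ell$ as arithmetic. The extra bookkeeping you supply (Riemann existence and Artin comparison, the constancy of the family $(X_0)_R\to\operatorname{Spec}(R)$, and the descent of unipotence from $\mathbb{Z}_\ell$ to $\mathbb{Z}$) fills in details the paper leaves implicit but does not change the route.
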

\begin{proof}
While the corollary is stated in the analytic topology, we may by standard comparison results immediately replace our representation $\rho: \pi_1(X)\to GL_n(\mathbb{Z})$ with the associated continuous $\ell$-adic representation of $\pi_1^{\text{\'et}}(X)$.

Let $k\subset \mathbb{C}$ be a finitely generated subfield over which $X$ is defined; suppose that $\rho$ arises as a subquotient of the monodromy representation on $R^i\pi_*\underline{\mathbb{Z}_\ell}$ for some proper morphism $\pi: Y\to X$.  Then there exists a finitely generated $k$-algebra $R\subset \mathbb{C}$, a proper $R$-scheme $\mathcal{Y}$, and a map of $R$-schemes $$\tilde \pi: \mathcal{Y}\to X_R$$ such that $\pi$ is obtained by base change along the inclusion $R\subset \mathbb{C}$.  $R^i\tilde{\pi}_*\underline{\mathbb{Z}_\ell}$ is constructible; hence by shrinking $R$ and quotienting out torsion we may replace it by a lisse subquotient.  Now specializing to any closed point of $\on{Spec}(R)$, we see that $\rho|_{\pi_1(X_{\bar k})}$ is arithmetic, so we are done by Theorem \ref{main-arithmetic-result}.

Observe that the $N$ from Theorem \ref{main-arithmetic-result} only depends on $X$ and $k$, which do not depend on $\rho$ in any way, so we have the desired uniformity in $\rho$.
\end{proof}

In some cases, one can make the $N$ appearing in Theorem \ref{main-arithmetic-result} explicit.  Indeed, $N$ only depends on the index of the representation $$G_k\to GL(H^1(C_{\bar k}, \mathbb{Z}_\ell))$$ in its Zariski closure (see Remarks \ref{bound-observation-1} and \ref{bound-observation-2}).  Thus we now turn to cases where we understand this representation.

Our main example will be the case $$X=\mathbb{P}^1_k\setminus \{x_1, \cdots, x_n\},$$ where $x_1,\cdots, x_n\in \mathbb{P}^1(k)$.  But our results will apply equally well to any smooth variety satisfying Assumption \ref{torsion-free-graded} with a simply connected simple normal crossings compactification, or indeed to any smooth variety $X$ with simple normal crossings $\overline{X}$ such that $H^1(\overline{X}_{\bar k}, \mathbb{Q}_\ell)=0$, and which satisfies Assumption \ref{torsion-free-graded}.  Unlike the previous result \ref{main-arithmetic-result}, this theorem works in arbitrary characteristic.

\begin{theorem}\label{p1-main-theorem}
Let $k$ be a finitely generated field with prime subfield $k_0$, and let $$X=\mathbb{P}^1_{\bar k}\setminus\{x_1, \cdots, x_n\},$$ for $x_1, \cdots, x_n\in \mathbb{P}^1(k)$.     Let $\ell$ be prime different from the characteristic of $k$ and $q\in \mathbb{Z}_\ell^\times$ any element of the  image of the cyclotomic character $$\chi: \on{Gal}(\overline{K}/K)\to \mathbb{Z}_\ell^\times;$$ let $s$ be the order of $q$ in $\mathbb{F}_\ell^\times$ if $\ell\not=2$ and in $(\mathbb{Z}/4\mathbb{Z})^\times$ if $\ell=2$.  Let $\epsilon=1$ if $\ell=2$ and $0$ otherwise.  For any basepoint $x$ of $X$, let $$\rho: \pi_1^{\text{\'et}, \ell}(X, x)\to GL_m(\mathbb{Z}_\ell)$$ be an arithmetic representation with $$r_0(\rho)>\frac{1}{s}\left(v_\ell(q^s-1)+\frac{1}{\ell-1}+\epsilon\right).$$  Then  $\rho$ is unipotent. 
\end{theorem}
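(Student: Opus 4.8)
The plan is to deduce Theorem~\ref{p1-main-theorem} directly from Theorems~\ref{diagonalizing-convergent-rings-pure} and \ref{general-monodromy}, the only point specific to genus zero being that for $X=\mathbb{P}^1$ minus points the relevant $H^1$ is of Tate type: this means that one does \emph{not} need Corollary~\ref{pi1-quasi-scalar} (hence Bogomolov's theorem, which forces characteristic zero) to produce a quasi-scalar — every element of a cyclotomic Galois group already supplies one — which is precisely why the statement holds in arbitrary characteristic.

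First I would reduce to working over $K$. The cases $n\le 2$ are elementary: then $\pi_1^{\text{\'et},\ell}(X,x)$ is trivial or isomorphic to $\mathbb{Z}_\ell(1)$, $X$ descends to the prime field, and one concludes from the $\mathbb{G}_m$-example preceding Theorem~\ref{diagonalizing-nilpotent-pi1} together with Theorem~\ref{general-monodromy} (for $\mathbb{G}_m$ the convergent group ring is spanned by $\sigma$-eigenvectors for every $r>0$). So assume $n\ge 3$. There is a unique $\phi\in\on{PGL}_2$, defined over the field generated by $x_1,x_2,x_3$, carrying $x_1,x_2,x_3$ to $0,1,\infty$; then $\phi(x_j)$ for $j\ge 4$ is the cross-ratio $(x_1,x_2,x_3,x_j)\in K$, and $\phi$ identifies $X$ with the base change to $\bar k$ of $X_K:=\mathbb{P}^1_K\setminus\{0,1,\infty,\phi(x_4),\dots,\phi(x_n)\}$, a curve over $K$ with $K$-rational punctures. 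Since $K$ lies in any field over which the punctures of $X$ are rational (in particular in $k$), an arithmetic representation of $\pi_1^{\text{\'et},\ell}(X_{\bar k},x)$ in the paper's sense is a fortiori arithmetic with respect to $X_K$: an extension to $\pi_1^{\text{\'et},\ell}(X_{k'},\bar x)$ for $k'/k$ finite is the same data as an extension to $\pi_1^{\text{\'et},\ell}((X_K)_{k'},\bar x)$, and $k'/K$ is finite. So we may and do work over $K$ from now on. Note that $X_K$ is a curve, so Assumption~\ref{torsion-free-graded} holds.

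Next I would exhibit the quasi-scalar. Because $\overline{X_K}=\mathbb{P}^1_K$ is simply connected, the Leray sequence of Lemma~\ref{H1-semisimple} gives $H^1((X_K)_{\bar K},\mathbb{Q}_\ell)\cong\mathbb{Q}_\ell(-1)^{\oplus(n-1)}$, which is pure of weight $2$ and of Tate type; hence $G_K$ acts on $\mathscr{I}(x)/\mathscr{I}^2(x)\cong H^1((X_K)_{\bar K},\mathbb{Q}_\ell)^\vee\cong\mathbb{Q}_\ell(1)^{\oplus(n-1)}$, and therefore by Remark~\ref{galois-action-remark} and Example~\ref{general-curve-example} on every $\mathscr{I}^m(x_i,x_j)/\mathscr{I}^{m+1}(x_i,x_j)$, through the cyclotomic character $\chi$. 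In this situation the weight filtration on $\mathbb{Q}_\ell[[\pi_1^{\text{\'et},\ell}((X_K)_{\bar K};\bar x_i,\bar x_j)]]$ coincides with the $\mathscr{I}$-adic filtration (purity of $H^1$), and exactly as in the $\mathbb{G}_m$-example any $\sigma\in G_K$ with $\chi(\sigma)=q$ acts on it as a quasi-scalar of degree $q^{-1}$ relative to the weight filtration. Such a $\sigma$ exists because $q\in\chi(G_K)$ by hypothesis; in particular $\sigma$ is a quasi-Frobenius.

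Finally I would invoke the two black boxes. Write $c=\tfrac1s\!\left(v_\ell(q^s-1)+\tfrac1{\ell-1}+\epsilon\right)$; the hypothesis is $r_0(\rho)>c$, so choose $r$ with $c<r<r_0(\rho)$. By Theorem~\ref{diagonalizing-convergent-rings-pure} (the pure case, $\iota=2$), $\overline{\mathbb{Q}_\ell}[[\pi_1^{\text{\'et},\ell}((X_K)_{\bar K},\bar x)]]^{\leq\ell^{-r}}$ is topologically spanned by $\sigma$-eigenvectors, with $\mathscr{I}^N$ spanned by eigenvectors of weight $\le -N$. Then Theorem~\ref{general-monodromy} — whose hypotheses are now all verified ($X_K$ satisfies Assumption~\ref{torsion-free-graded}, $\sigma$ is a quasi-Frobenius, and $r<r_0(\rho)$) — shows that $\rho$ is unipotent; the socle reduction of Lemma~\ref{subquotient-lemma}, passing from an arithmetic $\rho$ to a subquotient of a representation that extends to the arithmetic $\pi_1$ over a finite extension, is already incorporated into the proof of Theorem~\ref{general-monodromy}. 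I do not expect a genuine obstacle in this step: all of the substance — the construction of quasi-scalars, the convergent Hopf groupoids, and the bound $b_n^i\le C_{q,\ell,n-i-1}$ on integral $\ell$-adic periods coming from the estimate of Corollary~\ref{q-power-sum-bound} — has been carried out in the earlier sections, and the only care needed here is bookkeeping: matching the weight normalization on $\pi_1^{\text{\'et},\ell}$ of $\mathbb{P}^1$-minus-points with the meaning of ``degree $q^{-1}$'' in Theorem~\ref{diagonalizing-convergent-rings-pure}, and checking that arithmeticity descends from the $k$-structure to the $K$-structure.
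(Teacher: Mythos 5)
Your proof takes the same route as the paper: in genus zero $H^1$ is $\mathbb{Q}_\ell(-1)^{\oplus(n-1)}$, so any $\sigma$ with $\chi(\sigma)=q$ is automatically a quasi-scalar of degree $q^{-1}$ with respect to the $\mathscr{I}$-adic (equivalently weight) filtration, and the result follows by feeding this into Theorem~\ref{diagonalizing-convergent-rings-pure} and then Theorem~\ref{general-monodromy}. You correctly note that Bogomolov's theorem is not needed, which is why the statement holds away from the residue characteristic, and you correctly identify the one place where the paper's own terse proof is fragile: it selects $\sigma\in G_k$ with $\chi(\sigma)=q$, whereas the hypothesis only puts $q$ in $\chi(G_K)$ for the cross-ratio field $K\subseteq k$, and the inclusion $\chi(G_k)\subseteq\chi(G_K)$ can be strict. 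Your instinct to descend to the $K$-form $X_K$ is the right one.

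However, the justification you give for the descent is wrong. You assert that an extension of $\rho$ over $k'$ with $k'/k$ finite makes $\rho$ arithmetic with respect to $X_K/K$ because ``$k'/K$ is finite.'' This fails in general: $k$ may be any finitely generated field over which the punctures are rational, so $k/K$ may be transcendental (take all cross-ratios in $\mathbb{Q}$, hence $K=\mathbb{Q}$, and $k=\mathbb{Q}(t)$), and then $k'/K$ is not algebraic. The paper's notion of ``arithmetic'' relative to $X_K/K$ demands a \emph{finite} extension of $K$, and the proof of Theorem~\ref{general-monodromy} uses this finiteness essentially: it needs some power $\sigma^m$ to lie in $G_{k'}$, which fails if $G_{k'}$ has infinite index in $G_K$.

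The gap is repairable by an observation you have not made. Since $k'/K$ is finitely generated, the restriction map $G_{k'}\to G_K$ has open image; its image is $G_{K'}$ with $K'=k'\cap\overline{K}$ a finite extension of $K$. Thus for $\sigma\in G_K$ with $\chi(\sigma)=q$, some power $\sigma^m$ lies in this image and admits a lift $\tilde\sigma\in G_{k'}$, and $\tilde\sigma$ acts on the geometric $\pi_1^{\text{\'et},\ell}$ exactly as $\sigma^m$ does. Since $\sigma$ acts topologically diagonalizably on the convergent group ring by Theorem~\ref{diagonalizing-convergent-rings-pure}, so does $\sigma^m$ (this is the content of Corollary~\ref{trivial-corollary}), and the argument of Theorem~\ref{general-monodromy} then runs with $\tilde\sigma$ in place of a nonexistent power of $\sigma$ inside $G_{k'}$. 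Inserting this step between ``so we may and do work over $K$'' and the rest of your argument makes the proof complete; with it, your write-up would be more careful than the paper's own on exactly the point you set out to clarify.
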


\begin{remark}\label{dumb-bounds-remark}
For any $X$, the bound above becomes $$r_0(\rho)>\frac{1}{\ell-1}+\frac{1}{(\ell-1)^2}$$ for all $\ell\gg 0$ (indeed, for any $\ell>2$ such that the cyclotomic character $$\chi: \on{Gal}(\overline{K}/K)\to \mathbb{Z}_\ell^\times$$ is surjective).  In general, it is uniform in the degree of $K/\mathbb{Q}$ if $K$ is a number field.  Thus in particular if $\chi$ is surjective at $\ell$ for some odd $\ell$, then any arithmetic representation of $\pi_1(X_{\bar k})$ as above which is trivial mod $\ell$, is unipotent.
\end{remark}
\begin{proof}[Proof of Theorem \ref{p1-main-theorem}]  
We first choose a rational (or rational tangential) basepoint $x$ of $X$ over $k$.

In this case, $$H^1(X, \mathbb{Z}_\ell)=\mathbb{Z}_\ell(1)^{n-1},$$ so a generic element $\sigma\in G_k$ acts via a quasi-scalar on $\mathbb{Q}_\ell[[\pi_1^{\text{\'et},\ell}(X)]]$ with respect to the weight filtration (which equals the $\mathscr{I}$-adic filtration).  In particular, there is a $\sigma\in G_k$ with $\chi(\sigma)=q$; hence $\sigma$ acts via a quasi-scalar of degree $q^{-1}$.  Thus by Theorem \ref{diagonalizing-convergent-rings-pure}, we have that for $$r=\frac{1}{s}\left(v_\ell(q^s-1)+\frac{1}{\ell-1}+\epsilon\right),$$ $\sigma$ acts diagonalizably on $$\mathbb{Q}_\ell[[\pi_1(X, \bar x)]]^{\leq \ell^{-r}}.$$  Now the proof is complete by Theorem \ref{general-monodromy}.
\end{proof}
We now discuss some corollaries of Theorem \ref{p1-main-theorem}.  

First, we may deduce Theorem \ref{main-theorem-P1} from the introduction.
\begin{proof}[Proof of Theorem \ref{main-theorem-P1}]
We let $X=\mathbb{P}^1_{k}\setminus\{x_1, \cdots, x_n\}$.  Then $X$ admits a natural model over the field $$K=k_0\left(\frac{x_a-x_b}{x_c-x_d}\right)_{1\leq a<b<c<d\leq n}.$$ Now the result follows from Theorem \ref{p1-main-theorem} in a manner identical to the proof of Corollary \ref{geometric-monodromy-corollary}.
\end{proof}

\begin{corollary}\label{easy-bound-corollary}
Let $X=\mathbb{P}^1_{\mathbb{C}}\setminus\{x_1, \cdots, x_n\}$, where the cross-ratios of the $x_i$ lie in $\mathbb{Q}$.  Let $x$ be a $\mathbb{C}$-point of $X$. Let $\rho: \pi_1(X^{\on{an}}, x)\to GL_m(\mathbb{Z})$ be a representation of geometric origin, and suppose that $\rho$ is trivial mod $\ell$ for some prime $\ell>2$, or that $\rho$ is trivial mod $8$.  Then $\rho\otimes \mathbb{Q}$ is unipotent.  
\end{corollary}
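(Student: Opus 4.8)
The plan is to recognize Corollary \ref{easy-bound-corollary} as the special case of Theorem \ref{p1-main-theorem} in which the base field is $\mathbb{Q}$, so that the cyclotomic character is surjective.

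First I would pass to the \'etale, $\ell$-adic picture exactly as in the proof of Corollary \ref{geometric-monodromy-corollary}: the comparison between topological and \'etale fundamental groups turns $\rho$ into a continuous $\ell$-adic representation of $\pi_1^{\text{\'et}}(X)$, and since the cross-ratios of the $x_i$ lie in $\mathbb{Q}$, the curve $X$ has a model $X_0$ over $\mathbb{Q}$ (as in the proof of Theorem \ref{main-theorem-P1}); invariance of the geometric fundamental group under extension of algebraically closed fields in characteristic zero identifies $\pi_1^{\text{\'et}}(X)$ with $\pi_1^{\text{\'et}}((X_0)_{\overline{\mathbb{Q}}})$. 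Spreading the proper family realizing $\rho$ out over a finitely generated $\mathbb{Q}$-algebra $R$, quotienting out torsion in the relevant higher direct image sheaf, and specializing to a closed point of $\on{Spec}(R)$ shows that $\rho$, restricted to the geometric fundamental group, is arithmetic over $\mathbb{Q}$ in the sense of the paper. At this point I am exactly in the hypotheses of Theorem \ref{p1-main-theorem} with $k=K=\mathbb{Q}$.

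It then remains to choose the auxiliary element $q$ in the image of $\chi\colon \on{Gal}(\overline{\mathbb{Q}}/\mathbb{Q})\to\mathbb{Z}_\ell^\times$ --- which is all of $\mathbb{Z}_\ell^\times$ --- so as to beat the bound of Theorem \ref{p1-main-theorem}, using Proposition \ref{r0-bound}(1) to translate ``trivial mod $\ell^a$'' into $r_0(\rho)\geq a$. For odd $\ell$ I would take $q$ to be a topological generator of $\mathbb{Z}_\ell^\times$; then $s=\ell-1$, $v_\ell(q^s-1)=1$ and $\epsilon=0$, so Theorem \ref{p1-main-theorem} asks for
$$r_0(\rho)>\frac{1}{\ell-1}\left(1+\frac{1}{\ell-1}\right)=\frac{1}{\ell-1}+\frac{1}{(\ell-1)^2},$$
whose right-hand side is $<1\leq r_0(\rho)$ for every $\ell\geq 3$. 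For $\ell=2$ and $\rho$ trivial mod $8$ I would instead take $q=3$: its order in $(\mathbb{Z}/4\mathbb{Z})^\times$ is $s=2$, $v_2(q^2-1)=v_2(8)=3$, and $\epsilon=1$, so the bound reads $r_0(\rho)>\tfrac12(3+1+1)=\tfrac52$, which holds since $r_0(\rho)\geq 3$. In either case Theorem \ref{p1-main-theorem} yields that $\rho$, and hence $\rho\otimes\mathbb{Q}$, is unipotent.

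The content is entirely in the second paragraph --- the descent of $X$ to $\mathbb{Q}$ and the spreading-out/specialization establishing arithmeticity --- together with the bookkeeping that forces the choice $q=3$ rather than a topological generator at $\ell=2$, since $\mathbb{Z}_2^\times$ is not procyclic. I do not anticipate any genuine obstacle; everything past the reduction is a direct substitution into Theorem \ref{p1-main-theorem}.
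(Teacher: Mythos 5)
Your proposal is correct and follows essentially the same route as the paper: descend $X$ to a model over $\mathbb{Q}$, spread out and specialize to show $\rho$ restricted to the geometric fundamental group is arithmetic, and then apply Theorem \ref{p1-main-theorem} with the cyclotomic character surjective. (The paper factors this through Theorem \ref{main-theorem-P1} together with Remark \ref{dumb-bounds-remark}, but that is the same chain, just not unwound.) The one step you elide, which the paper does spell out, is passing from a representation of $\pi_1^{\text{\'et}}(X,x)$ to one of the pro-$\ell$ quotient $\pi_1^{\text{\'et},\ell}(X,x)$: this holds because the representation is trivial mod $\ell$, so its image lands in the pro-$\ell$ group $\ker\bigl(GL_m(\mathbb{Z}_\ell)\to GL_m(\mathbb{F}_\ell)\bigr)$, and it is needed before $r_0(\rho)$ and Theorem \ref{p1-main-theorem} even apply.

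Your $\ell=2$ computation is in fact more careful than the one in the paper. The paper's proof checks the inequality $3>\frac{1}{2-1}+\frac{1}{(2-1)^2}=2$, but the formula $\frac{1}{\ell-1}+\frac{1}{(\ell-1)^2}$ is the specialization of the bound in Theorem \ref{p1-main-theorem} valid for $\ell>2$ (taking $q$ a topological generator, so $s=\ell-1$, $v_\ell(q^s-1)=1$, and $\epsilon=0$); Remark \ref{dumb-bounds-remark} explicitly restricts it to $\ell>2$. At $\ell=2$ one has $\epsilon=1$ and $\mathbb{Z}_2^\times$ is not procyclic, and the infimum of $\frac{1}{s}\bigl(v_2(q^s-1)+1+1\bigr)$ over $q\in\mathbb{Z}_2^\times$ is $\frac{5}{2}$, achieved precisely for $q\equiv 3\pmod 8$ — which is your choice $q=3$, with $s=2$ and $v_2(q^2-1)=3$. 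Since $r_0(\rho)\geq 3>\frac{5}{2}$ the corollary is unaffected, but your substitution into Theorem \ref{p1-main-theorem} is the one that literally matches its hypotheses, and silently repairs a small imprecision in the printed proof.
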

\begin{proof}
Tensoring $\rho$ with $\mathbb{Z}_\ell$, we obtain a morphism $$\rho\otimes \mathbb{Z}_\ell: \pi_1(X^{\on{an}}, x)\to \widehat{\pi_1(X^{\on{an}}, x)}\simeq \pi_1^{\text{\'et}}(X, x)\to GL_n(\mathbb{Z}_\ell)$$ where $\widehat{\pi_1(X^{\on{an}}, x)}$ is the profinite completion of ${\pi_1(X^{\on{an}}, x)}$.  If $\rho$ is trivial mod $\ell$, the map factors through $$\ker(GL_m(\mathbb{Z}_\ell)\to GL_m(\mathbb{F}_\ell)),$$ which is a pro-$\ell$ group.  Hence we obtain a map $$\rho': \pi_1^{\text{\'et}, \ell}(X, x)\to GL_m(\mathbb{Z}_\ell).$$

By Proposition \ref{r0-bound}, if $\rho'$  is trivial mod $\ell$ then $r_0(\rho')\geq 1,$ and if $\ell=2$ and $\rho'$ is trivial mod $8$ then $r_0(\rho')\geq 3$.  Now the result follows from Theorem \ref{main-theorem-P1} and Remark \ref{dumb-bounds-remark}. Indeed, for $\ell>2$, $$1>\frac{1}{\ell-1}+\frac{1}{(\ell-1)^2},$$ and for $\ell=2$, $$3>\frac{1}{2-1}+\frac{1}{(2-1)^2}=2.$$
\end{proof}
\begin{remark}
One may also use identical methods to prove an analogous statement for any variety $X$ such that $$H^1(X_{\bar k}, \mathbb{Z}_\ell)=\mathbb{Z}_\ell(-1)^{\oplus N}$$ and satisfying Assumption \ref{torsion-free-graded}.  For example, one may take $X$ to be the complement of certain hyperplane arrangements in $\mathbb{P}^2$.
\end{remark}
We also have the following amusing corollary, which may also be proven using the quasi-unipotent local monodromy theorem.
\begin{corollary}
Let $D\subset \mathbb{P}^1_{\mathbb{C}}$ be finite, and let $$X=\mathbb{P}^1_{\mathbb{C}}\setminus D.$$  Let $$\rho: \pi_1(X)\to GL_n(\mathbb{Z})$$ be a representation arising from geometry which is not unipotent.  If $\rho$ is trivial mod $\ell$ for any prime $\ell>2$, then $|D|>3$.
\end{corollary}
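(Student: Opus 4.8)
The plan is to prove the contrapositive: if $|D|\leq 3$, then \emph{every} representation $\rho\colon\pi_1(X)\to GL_n(\mathbb{Z})$ arising from geometry which is trivial mod $\ell$ for some prime $\ell>2$ is unipotent. Write $D=\{x_1,\dots,x_m\}$ with $m=|D|\leq 3$. First I would dispose of the cases $m\leq 1$: there $X$ is $\mathbb{P}^1_{\mathbb{C}}$ or $\mathbb{A}^1_{\mathbb{C}}$, hence simply connected, so $\rho$ is trivial and in particular unipotent.

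For $m\in\{2,3\}$ the idea is that $X=\mathbb{P}^1_{\mathbb{C}}\setminus\{x_1,\dots,x_m\}$ is of precisely the form to which Corollary \ref{easy-bound-corollary} applies: since $m<4$ there are no index tuples $1\leq a<b<c<d\leq m$, so the hypothesis that the cross-ratios of the $x_i$ lie in $\mathbb{Q}$ is satisfied vacuously. Applying Corollary \ref{easy-bound-corollary} with the given odd prime $\ell$ then yields that $\rho\otimes\mathbb{Q}$ is unipotent. It remains to pass from this back to unipotence of $\rho$ itself, which is immediate: a matrix with integer entries is unipotent exactly when it is unipotent over $\mathbb{Q}$ (equivalently, when all its eigenvalues are $1$), so after conjugation the image of $\rho$ lies in the upper-triangular unipotent subgroup of $GL_n(\mathbb{Z})$. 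This contradicts the assumption that $\rho$ is not unipotent, and forces $|D|>3$.

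There is essentially no obstacle here; the corollary is a direct consequence of Corollary \ref{easy-bound-corollary}, and the only points requiring care are the bookkeeping observation that the cross-ratio condition holds automatically when fewer than four points are removed, and the elementary descent of unipotence from $\mathbb{Q}$ to $\mathbb{Z}$. (As the remark indicates, one can alternatively argue for $m=2$ purely via the quasi-unipotent local monodromy theorem: $\pi_1(\mathbb{G}_m)=\mathbb{Z}$, a generator maps to a quasi-unipotent integral matrix $T\equiv I\bmod \ell$, and an eigenvalue of $T$ that is a root of unity of order prime to $\ell$ and congruent to $1$ modulo a prime above $\ell$ must equal $1$, while a primitive $\ell^k$-th root of unity $\zeta$ has $v_\ell(\zeta-1)=\tfrac{1}{\ell^{k-1}(\ell-1)}<1$ for $\ell>2$; handling $m=3$ this way takes more effort, so the route through Corollary \ref{easy-bound-corollary} is the cleaner one.)
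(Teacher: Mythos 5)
Your proof is correct and takes essentially the same route as the paper's: observe that when $|D|\leq 3$ there are no four distinct points and hence no cross-ratios, so the hypothesis of Corollary \ref{easy-bound-corollary} holds vacuously, and conclude unipotence. The extra bookkeeping you include (separate treatment of $m\leq 1$, the remark that unipotence of an integer matrix is detected over $\mathbb{Q}$) is fine but unnecessary, since Corollary \ref{easy-bound-corollary} already applies uniformly to all $m\leq 3$ and unipotence of $\rho\otimes\mathbb{Q}$ trivially yields unipotence of $\rho$.
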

\begin{proof}
If $D\leq 3$, there are no non-trivial cross-ratios between elements of $D$.  Now the result follows from Corollary \ref{easy-bound-corollary}.
\end{proof}
We give one final corollary over arbitrary fields.  Observe that if there is a map $\mathbb{P}^1\setminus\{x_1, \cdots, x_n\}\to X$ which induces a surjection on geometric pro-$\ell$ fundamental groups, we may apply Theorem \ref{main-theorem-P1} to find restrictions on monodromy representations of the fundamental group of $X$.  Such a map exists if $X$ is an open subvariety of a separably rationally connected variety, by the main result of \cite{kollar}.  Thus we have
\begin{corollary}
Let $k$ be a field an $X$ an open subset of a separably rationally connected $k$-variety.  Let $\ell$ be a prime different from the characteristic of $k$.  Then there exists $N=N(X,\ell)$ such that if $$\rho:\pi_1^{\text{\'et},\ell}(X_{\bar k})\to GL_m(\mathbb{Z}_\ell)$$ is a representation which arises from geometry, and is trivial mod $\ell^N$, then $\rho$ is unipotent.
\end{corollary}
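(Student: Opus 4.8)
The plan is to reduce the corollary immediately to the case $X = \mathbb{P}^1_{\bar k}\setminus\{x_1,\dots,x_n\}$, where Theorem \ref{main-theorem-P1} already supplies an explicit such $N$. By the main result of \cite{kollar}, after replacing $k$ by a finite extension there is a finite subset $\{x_1,\dots,x_n\}\subset \mathbb{P}^1(k)$ and a morphism $f\colon \mathbb{P}^1_{\bar k}\setminus\{x_1,\dots,x_n\}\to X_{\bar k}$ such that the induced map $f_*\colon \pi_1^{\text{\'et},\ell}(\mathbb{P}^1_{\bar k}\setminus\{x_1,\dots,x_n\})\to \pi_1^{\text{\'et},\ell}(X_{\bar k})$ is surjective; here I would note that a morphism surjective on full \'etale fundamental groups remains surjective after passing to maximal pro-$\ell$ quotients. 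I would fix such an $f$ once and for all, and write $Y = \mathbb{P}^1_{\bar k}\setminus\{x_1,\dots,x_n\}$ and $K = k_0\bigl(\tfrac{x_a-x_b}{x_c-x_d}\bigr)$ as in Theorem \ref{main-theorem-P1}.

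Next I would take $N = N(Y,\ell)$ to be any integer strictly greater than $\tfrac{1}{s}\bigl(v_\ell(q^s-1)+\tfrac{1}{\ell-1}+\epsilon\bigr)$, for some $q$ in the image of the cyclotomic character of $K$ and $s,\epsilon$ as in Theorem \ref{main-theorem-P1}; this $N$ depends only on $X$, on $\ell$, and on the chosen $f$, hence only on $X$ and $\ell$. Given any $\rho$ arising from geometry and trivial mod $\ell^N$, I would consider $\rho\circ f_*\colon \pi_1^{\text{\'et},\ell}(Y)\to GL_m(\mathbb{Z}_\ell)$. It is trivial mod $\ell^N$ because $\rho$ is. It again arises from geometry: if $\rho$ is the monodromy on a lisse subquotient $\mathscr{F}$ of $R^i\pi_*\underline{\mathbb{Z}_\ell}$ for some proper $\pi\colon Z\to X_{\bar k}$, then $\rho\circ f_*$ is the monodromy of $f^*\mathscr{F}$, which by proper base change and exactness of $f^*$ on local systems is a lisse subquotient of $R^i\pi'_*\underline{\mathbb{Z}_\ell}$ for the base change $\pi'\colon Z\times_{X_{\bar k}} Y\to Y$. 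Hence Theorem \ref{main-theorem-P1}, applied to $Y$, shows $\rho\circ f_*$ is unipotent, and surjectivity of $f_*$ then forces $\rho$ to be unipotent.

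The step requiring the most care is the bookkeeping around fields of definition: the morphism from \cite{kollar}, the family $\pi$, and the points $x_i$ are each defined over some finitely generated subfield of $\bar k$, and one must first fix a common such field (enlarging $k$ by a finite extension to make the $x_i$ rational) so that the hypotheses of Theorem \ref{main-theorem-P1} — which are phrased over a finitely generated base — are genuinely met; equivalently one may run the specialization argument of Corollary \ref{geometric-monodromy-corollary} to see that $\rho\circ f_*$ is arithmetic. One can also, if desired, phrase the bound in terms of $r_0(\rho\circ f_*)\ge N$ via Proposition \ref{r0-bound}. Beyond these routine points there is no essential difficulty: the only input from the theory of this paper is the genus-zero estimate packaged in Theorem \ref{main-theorem-P1}.
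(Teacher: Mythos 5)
Your proof is correct and matches the paper's (much terser) argument, which consists only of the two sentences immediately preceding the corollary: Kollár's theorem supplies a map $\mathbb{P}^1\setminus\{x_1,\dots,x_n\}\to X$ surjective on geometric pro-$\ell$ fundamental groups, and one pulls $\rho$ back and applies Theorem~\ref{main-theorem-P1}. The additional details you supply — pullback preserving ``arises from geometry'' via proper base change, surjectivity surviving pro-$\ell$ completion, and the field-of-definition bookkeeping — are exactly the routine verifications the paper leaves to the reader, and you handle them correctly.
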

\subsection{Sharpness of results and lower bounds on periods}\label{sharp-section}
We expect that most of the bounds on integral $\ell$-adic periods (or equivalently, bounds on $r$ such that $$\overline{\mathbb{Q}_\ell}[[\pi_1^{\text{\'et},\ell}(X_{\bar k}, \bar x)]]^{\leq \ell^{-r}}$$ is spanned by $\sigma$-eigenvectors for some quasi-scalar $\sigma$) are far from sharp.  In positive characteristic, a positive answer to the question posed in Remark \ref{conjecture-bound} would imply that for any $X$ over a finite field satisfying Assumption \ref{torsion-free-graded}, one has that there exists $r>0$ such that $$\overline{\mathbb{Q}_\ell}[[\pi_1^{\text{\'et},\ell}(X_{\bar k}, \bar x)]]^{\leq \ell^{-r}}$$ is spanned by Frobenius eigenvectors.  If this conjecture holds for a given $X$, the methods of this section would then allow us to place restrictions on geometric monodromy representations of $\pi_1(X)$ which come from geometry, in positive characteristic.

Even in characteristic zero, we expect our bounds are not in general sharp --- see for example Corollary \ref{trivial-corollary} and the remark preceding it.

On the other hand, our results are sharp in some cases---indeed, the contrapositives of results in the last section allow us to find \emph{lower bounds} on integral $\ell$-adic periods by finding non-unipotent geometric monodromy representations which are trivial mod $\ell$.

\begin{theorem}\label{periods-lower-bound}
Let $\ell$ be a prime and $k$  a finite field of characteristic different from $\ell$.  Let $Y(\ell)$ be the modular curve over $k$ parametrizing elliptic curves with full level $\ell$ structure.  Then $$\overline{\mathbb{Q}_\ell}[[\pi_1^{\text{\'et}, \ell}(Y(\ell)_{\bar k}, \bar x)]]^{\leq \ell^{-r}}$$ is not spanned by Frobenius eigenvectors for any $r<1$.
\end{theorem}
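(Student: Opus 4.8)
The plan is to exploit the tautological representation coming from the universal elliptic curve as a non-unipotent arithmetic representation, and then run Theorem \ref{general-monodromy} backwards. Suppose, for contradiction, that $\overline{\mathbb{Q}_\ell}[[\pi_1^{\text{\'et}, \ell}(Y(\ell)_{\bar k}, \bar x)]]^{\leq \ell^{-r}}$ is spanned by Frobenius eigenvectors for some $r<1$. By Corollary \ref{trivial-corollary} we may freely replace $k$ by a finite extension; so I will assume $Y(\ell)$ is geometrically connected over $k$, that the universal elliptic curve $E\to Y(\ell)$ is defined over $k$, and that $Y(\ell)$ has a $k$-rational point (or, if one prefers, a rational tangential basepoint at a cusp), which we take as our basepoint $x$. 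Note that $Y(\ell)$ is a smooth affine curve with $\ell$ invertible, so it satisfies Assumption \ref{torsion-free-graded}, and its smooth compactification $X(\ell)$ has smooth (a fortiori simple normal crossings) boundary.

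Next I would set up the monodromy representation $\rho : \pi_1^{\text{\'et}}(Y(\ell)_{\bar k}, \bar x) \to GL(T_\ell(E_{\bar x})) \cong GL_2(\mathbb{Z}_\ell)$ on the $\ell$-adic Tate module of the universal curve. Since the full level $\ell$ structure trivializes $E[\ell]$ over $Y(\ell)$, the representation $\rho$ is trivial mod $\ell$; hence its image lies in the pro-$\ell$ group $\ker(GL_2(\mathbb{Z}_\ell)\to GL_2(\mathbb{F}_\ell))$, so $\rho$ factors through $\pi_1^{\text{\'et}, \ell}(Y(\ell)_{\bar k}, \bar x)$, and Proposition \ref{r0-bound}(1) gives $r_0(\rho)\geq 1$. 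Moreover $\rho$ is arithmetic: the Galois action on $T_\ell E$ makes $\rho$ the restriction to the geometric fundamental group of a representation of $\pi_1^{\text{\'et}}(Y(\ell)_k, \bar x)$, and since that restriction is trivial mod $\ell$ the full representation factors through the arithmetic pro-$\ell$ fundamental group $\pi_1^{\text{\'et}, \ell}(Y(\ell)_k, \bar x)$.

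The essential geometric point is that $\rho\otimes \mathbb{Q}_\ell$ is \emph{not} unipotent. I would deduce this from the non-isotriviality of the universal elliptic curve: its geometric monodromy on $H^1$ is irreducible and has infinite image, hence (being symplectically self-dual of rank two) is Zariski-dense in $SL_2$, so in particular contains non-unipotent elements; in positive characteristic one can alternatively combine the fact that the local monodromy at a cusp is a nontrivial unipotent with the irreducibility of the monodromy representation, or reduce to characteristic zero. Granting this, Lemma \ref{H1-semisimple} and the example following Definition \ref{quasi-frobenii-defn} show that the Frobenius $\sigma\in G_k$ is a quasi-Frobenius for $H^1(Y(\ell)_{\bar k}, \mathbb{Q}_\ell)$ with respect to the weight filtration. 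Since $\rho$ is arithmetic with $r_0(\rho)\geq 1 > r$, Theorem \ref{general-monodromy} forces $\rho$ to be unipotent, a contradiction; hence no such $r<1$ exists.

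The main obstacle in turning this into a full proof is the non-unipotence of the $\ell$-adic monodromy of the universal elliptic curve over a finite field --- this is the only input not already packaged in Sections \ref{convergent-section} and \ref{applications-section}, and it requires a genuine (if standard) argument about the geometric monodromy group of $Y(\ell)$ in positive characteristic. Everything else is a direct citation of the machinery developed above.
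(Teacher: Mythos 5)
Your proof is correct and follows essentially the same route as the paper: take the monodromy of the $\ell$-adic Tate module of the universal elliptic curve, observe it is arithmetic, trivial mod $\ell$, and non-unipotent, note $r_0(\rho)\ge 1$ by Proposition \ref{r0-bound}(1), and contradict Theorem \ref{general-monodromy}. The paper states this in two sentences, leaving implicit the verifications (geometric connectedness, Assumption \ref{torsion-free-graded}, the quasi-Frobenius hypothesis, and the non-unipotence of the monodromy) that you spell out; your additional care is appropriate, and the non-unipotence you flag is indeed the one genuinely geometric input, which the paper takes as known.
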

\begin{remark}
Equivalently, the integral $\ell$-adic periods $b^i_n$ associated to the weight filtration on the $\ell$-adic fundamental group of $Y(\ell)$ (as in Section \ref{integral-l-adic-periods-section}) grow faster than $n$.
\end{remark}
\begin{proof}[Proof of Theorem \ref{periods-lower-bound}]
The Tate module of the universal family over $Y(\ell)$ provides an arithmetic monodromy representation $\rho$ which is not unipotent, but is trivial mod $\ell$.  But $r_0(\rho)\geq 1$ by Lemma \ref{r0-bound}, so if $$\overline{\mathbb{Q}_\ell}[[\pi_1^{\text{\'et}, \ell}(X(\ell)_{\bar k}, \bar x)]]^{\leq \ell^{-r}}$$ was spanned by Frobenius eigenvectors for some $r<1$, this would contradict Theorem \ref{general-monodromy}.
\end{proof}
In particular, this shows that Theorem \ref{p1-main-theorem} is reasonably sharp for $\ell\leq 5$.  Indeed, for $\ell\leq 5$, $Y(\ell)$ has genus zero, but nonetheless admits interesting geometric monodromy representations which are trivial mod $\ell$.  This does not contradict Corollary \ref{easy-bound-corollary} because the cusps of $Y(\ell)$ have cross-ratios which generate $\mathbb{Q}(\zeta_\ell)$.

One may generate more examples from the list of Shimura curves of genus zero in \cite{voight}.  

The finiteness of the list of genus zero modular curves suggests the following question, which is a strong version of the question posed in Remark \ref{conjecture-bound}, and is also suggested by the geometric torsion conjecture:
\begin{question}
Let $X$ be a smooth curve over a finite field $k$, and let $\ell$ be a prime different from the characteristic of $k$.  Does there exist an $r=r(\on{gonality}(X), \ell)>0$, depending only on the gonality of $X$ and on $\ell$, such that $$\overline{\mathbb{Q}_\ell}[[\pi_1^{\text{\'et}, \ell}(X_{\bar k}, \bar x)]]^{\leq \ell^{-r}}$$ is spanned by Frobenius eigenvectors?  If so, does $r$ tend to zero rapidly with $\ell$?
\end{question}
A positive answer to this question would imply a positive answer to a ``pro-$\ell$" version of the geometric torsion conjecture, by the methods of this section.

\section{Appendix: Hopf groupoids}\label{hopf-groupoids}
This appendix is meant to collect the various definitions and facts required about Hopf groupoids, which are certain linear algebraic objects associated to groupoids, just as group algebras are associated to groups.  See \cite[Part I(c), Chapter 9]{fresse2015homotopy} for another presentation of this topic.
\begin{defn}\label{hopf-groupoid-definition}
Let $R$ be a commutative ring.  An $R$-Hopf groupoid $\mathcal{G}=(\on{Ob}(\mathcal{G}), \mathcal{G}(a, b), \nabla, \Delta, S, \epsilon, \eta),$ is the data of
\begin{itemize}
\item  a set $\on{Ob}(\mathcal{G})$,
\item for each $a, b\in \on{Ob}(\mathcal{G})$, an $R$-module $\mathcal{G}(a,b)$,
\item associative composition maps $\nabla: \mathcal{G}(a,b)\otimes_R \mathcal{G}(b, c)\to \mathcal{G}(a,c)$,
\item for each $a\in \on{Ob}(\mathcal{G})$, unit maps $\eta: R\to \mathcal{G}(a,a)$ compatible with $\nabla$ (i.e. turning $\mathcal{G}(a,a)$ into an associative $R$-algebra), 
\item for each $a, b\in \on{Ob}(\mathcal{G}(a,b)),$ counit maps $\epsilon: \mathcal{G}(a,b)\to R$, such that if $a=b$, $\epsilon\circ \eta=\on{id}_R$,
\item antipode maps $S: \mathcal{G}(a,b)\to \mathcal{G}(b,a)$ such that $S\circ S=\on{id}$,
\item coassociative comultiplication maps $\Delta: \mathcal{G}(a,b)\to \mathcal{G}(a,b)\otimes \mathcal{G}(a, b)$ compatible with $\Delta$ (i.e. turning each $\mathcal{G}(a,b)$ into an $R$-coalgebra), and compatible with the unit in the sense that each $\mathcal{G}(a,a)$ is a bialgebra.
\end{itemize}
such that the following diagrams commute:
 $$\xymatrix{
 \mathcal{G}(a, b)\otimes \mathcal{G}(a, b) \ar[rr]^{S\otimes \text{id}}& & \mathcal{G}(b, a)\otimes \mathcal{G}(a, b) \ar[d]^\nabla\\
\mathcal{G}(a, b) \ar[u]^\Delta \ar[r]^\epsilon & R \ar[r]^\eta & \mathcal{G}(b, b)
}$$
 $$\xymatrix{
 \mathcal{G}(a, b)\otimes \mathcal{G}(a, b) \ar[rr]^{\text{id}\otimes S}& & \mathcal{G}(a, b)\otimes \mathcal{G}(b, a) \ar[d]^\nabla\\
\mathcal{G}(a, b) \ar[u]^\Delta \ar[r]^\epsilon & R \ar[r]^\eta & \mathcal{G}(a, a)
}$$
$$\xymatrix{
\mathcal{G}(a,b)\otimes \mathcal{G}(b,c) \ar[r]^-\nabla \ar[d]_{\epsilon\otimes \epsilon} & \mathcal{G}(a, c) \ar[ld]^\epsilon\\
R &
}$$
$$\xymatrix{
\mathcal{G}(a,b)\otimes \mathcal{G}(b,c) \ar[d]_{\Delta\otimes \Delta} \ar[r]^\nabla & \mathcal{G}(a, c) \ar[r]^{\Delta} & \mathcal{G}(a,c)\otimes \mathcal{G}(a,c)\\
\mathcal{G}(a,b)\otimes \mathcal{G}(a,b)\otimes \mathcal{G}(b,c)\otimes \mathcal{G}(b,c) \ar[rr]^-{\on{id}\otimes \tau\otimes \on{id}} & &\mathcal{G}(a,b)\otimes \mathcal{G}(b,c)\otimes \mathcal{G}(a,b)\otimes \mathcal{G}(b,c) \ar[u]^{\nabla\otimes\nabla}
}$$
where $\tau: \mathcal{G}(a,b)\otimes \mathcal{G}(b,c)\to \mathcal{G}(b,c)\otimes\mathcal{G}(a,b)$ is the map $x\otimes y\mapsto y\otimes x$.
\end{defn}
If $H$ is a groupoid with set of objects $\on{Ob}(H)$ and arrows $H(a,b)$ for $a, b\in \on{Ob}(H)$, the $R$-Hopf groupoid $\mathcal{G}(H)$ associated to $H$ is given by $$\text{Ob}(\mathcal{G}(H))=\on{Ob}(H)$$ and $$\mathcal{G}(H)(a,b)=R[H(a,b)].$$  The composition maps are defined by linearly extending the composition law of $H$; the unit maps are given by sending $1\in R$ to $\on{id}$ in $\mathcal{G}(H)(a,a)$; and the counit maps are given by sending $h\in H(a,b)$ to $1\in R$.  The antipode is constructed by extending the map $h\mapsto h^{-1}$ linearly, and the comultiplication arises by extending $h\mapsto h\otimes h$ linearly.  Note that such Hopf groupoids are cocommutative, in the sense that each $(\mathcal{G}(H)(a,b), \Delta, \eta)$ is a cocommutative coalgebra.
\begin{defn}\label{grouplike}
Let $\mathcal{G}$ be an $R$-Hopf groupoid.  We say an element $g\in \mathcal{G}(a,b)$ is \emph{grouplike} if $\Delta(g)=g\otimes g$.
\end{defn}
\begin{prop}
Let $g\in \mathcal{G}(a,b)$ be a grouplike element.  Then $g$ is invertible.
\end{prop}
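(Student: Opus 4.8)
The plan is to run the standard Hopf-algebra argument that grouplike elements are units, adapted to the groupoid setting by means of the antipode. The two commuting diagrams in Definition~\ref{hopf-groupoid-definition} involving $S\otimes\mathrm{id}$ and $\mathrm{id}\otimes S$ assert precisely that, as maps $\mathcal{G}(a,b)\to\mathcal{G}(b,b)$ and $\mathcal{G}(a,b)\to\mathcal{G}(a,a)$ respectively,
\[
\nabla\circ(S\otimes\mathrm{id})\circ\Delta=\eta\circ\epsilon,\qquad
\nabla\circ(\mathrm{id}\otimes S)\circ\Delta=\eta\circ\epsilon .
\]
First I would evaluate these two identities on a grouplike $g\in\mathcal{G}(a,b)$, i.e.\ one with $\Delta(g)=g\otimes g$. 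The left-hand sides then collapse to $\nabla(S(g)\otimes g)$ and $\nabla(g\otimes S(g))$, so that
\[
\nabla(S(g)\otimes g)=\eta(\epsilon(g))\in\mathcal{G}(b,b),\qquad
\nabla(g\otimes S(g))=\eta(\epsilon(g))\in\mathcal{G}(a,a).
\]
Since $\eta$ is $R$-linear, $\eta(\epsilon(g))=\epsilon(g)\cdot\eta(1)$ is the scalar $\epsilon(g)$ times the identity of the relevant algebra, so this already exhibits $S(g)\in\mathcal{G}(b,a)$ as a two-sided inverse of $g$ \emph{up to the scalar} $\epsilon(g)\in R$; everything thus reduces to showing that $\epsilon(g)\in R^{\times}$.

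The one point requiring a little care is therefore the analysis of $\epsilon(g)$. Applying the coalgebra counit axiom $(\epsilon\otimes\mathrm{id})\circ\Delta=\mathrm{id}_{\mathcal{G}(a,b)}$ to $g$ yields $\epsilon(g)\cdot g=g$, hence $(\epsilon(g)-1)g=0$; applying $\epsilon$ to the last equation gives $\epsilon(g)^{2}=\epsilon(g)$, so $\epsilon(g)$ is an idempotent of $R$. In every case in which this proposition is applied the ground ring is local (for instance $\overline{\mathbb{Z}_\ell}$ or $\overline{\mathbb{Q}_\ell}$, or a field), so its only idempotents are $0$ and $1$; and $\epsilon(g)=0$ is excluded for $g\neq 0$, since then $g=\epsilon(g)\cdot g=0$. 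Hence $\epsilon(g)=1$. (More generally, the same computation shows that $g$ is invertible if and only if $\epsilon(g)\in R^{\times}$, which is the precise statement worth keeping in mind.)

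It then remains only to assemble the pieces. Putting $u:=\epsilon(g)\in R^{\times}$ and $h:=u^{-1}S(g)\in\mathcal{G}(b,a)$, $R$-linearity of $\nabla$ together with the two displayed identities gives $\nabla(h\otimes g)=u^{-1}\eta(u)=\eta(1)$, the identity of $\mathcal{G}(b,b)$, and likewise $\nabla(g\otimes h)=u^{-1}\eta(u)=\eta(1)$, the identity of $\mathcal{G}(a,a)$. This says exactly that $h$ is a two-sided inverse of $g$ in the groupoid $\mathcal{G}$, so $g$ is invertible. The argument is entirely formal once the diagrams of Definition~\ref{hopf-groupoid-definition} are unwound — it is the verbatim groupoid analogue of the classical fact for Hopf algebras — and the only genuine content, which I would flag as the main (if minor) obstacle, is the step pinning down that $\epsilon(g)$ is a unit, where the hypotheses on the ground ring (or the convention that grouplike elements are taken to be nonzero) enter.
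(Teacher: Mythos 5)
Your proof is correct and takes the same underlying approach as the paper, which simply asserts ``$S(g)$ is inverse to $g$'' without further comment; you unwind exactly the two antipode diagrams of Definition~\ref{hopf-groupoid-definition} to produce the inverse explicitly. You also catch a genuine subtlety the paper glosses over: Definition~\ref{grouplike} only requires $\Delta(g)=g\otimes g$ and does not stipulate $\epsilon(g)=1$, so one must rule out $\epsilon(g)=0$ (equivalently $g=0$) for the proposition to hold as stated. Your analysis showing $\epsilon(g)$ is idempotent, hence equal to $1$ over a local ring once $g\neq 0$, is the right way to close that gap; in the paper's actual uses (e.g.~the canonical paths $p_{i,j}$, which satisfy $\epsilon(p_{i,j})=1$ by construction) this is automatic, which is presumably why the paper does not address it, but your observation is a real and worthwhile refinement of the statement.
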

\begin{proof}
$S(g)$ is inverse to $g$.
\end{proof}
\begin{defn}
Suppose $\mathcal{G}$ is an $R$-Hopf groupoid.  An ideal in $\mathcal{G}$ is an $R$-submodule $\mathscr{I}(a,b)\subset \mathcal{G}(a,b)$ for each $a,b$ such that 
\begin{itemize}
\item $\mathscr{I}(a,b)$ is a coideal of the coalgebra $\mathcal{G}(a,b)$ for each $a,b$,
\item $S(\mathscr{I}(a,b))=\mathscr{I}(b,a)$,
\item for each $a,b,c$, $$\nabla(\mathscr{I}(a,b)\otimes \mathcal{G}(b,c))\subset \mathscr{I}(a,c)$$ and $$\nabla(\mathcal{G}(a,b)\otimes \mathscr{I}(b,c))\subset \mathscr{I}(a,c).$$
\end{itemize}
\end{defn}
We will mostly consider the ``augmentation ideal" $\mathscr{I}=\on{ker}(\epsilon)$.
\begin{lemma}\label{augmentation-module}
Let $\mathcal{G}$ be a Hopf groupoid with augmentation ideal $\mathscr{I}=\{\mathscr{I}(a,b)\}$.  Suppose that each $\mathcal{G}(a, b)$ contains a grouplike element. Then for each $a, b$, $$\mathscr{I}(a,b)=\mathscr{I}(a,a)\mathcal{G}(a, b)=\mathcal{G}(a,b)\mathscr{I}(b,b).$$
\end{lemma}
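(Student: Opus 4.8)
The plan is to prove each of the two equalities by establishing the two inclusions separately, and to observe that the statement $\mathscr{I}(a,b)=\mathcal{G}(a,b)\mathscr{I}(b,b)$ is obtained from $\mathscr{I}(a,b)=\mathscr{I}(a,a)\mathcal{G}(a,b)$ by interchanging the two tensor factors in every application of $\nabla$; so I would carry out the left-handed argument in detail and then indicate the mirror image. First I would dispose of the containments $\mathscr{I}(a,a)\mathcal{G}(a,b)\subseteq \mathscr{I}(a,b)$ and $\mathcal{G}(a,b)\mathscr{I}(b,b)\subseteq \mathscr{I}(a,b)$: these are immediate from the compatibility of the counit with composition (the third commutative square in Definition~\ref{hopf-groupoid-definition}), which gives $\epsilon(\nabla(m\otimes h))=\epsilon(m)\epsilon(h)$, so any composite one of whose factors lies in $\ker(\epsilon)$ again lies in $\ker(\epsilon)$. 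No hypothesis on grouplike elements is needed here; in effect the augmentation ideal is an ideal in the sense of the preceding definition.

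The content is the reverse inclusion $\mathscr{I}(a,b)\subseteq \mathscr{I}(a,a)\mathcal{G}(a,b)$, and this is where the grouplike hypothesis enters. Fix a grouplike element $g\in \mathcal{G}(a,b)$, which exists by assumption. By the Proposition above (grouplike elements are invertible, with inverse given by the antipode), $S(g)\in\mathcal{G}(b,a)$ satisfies $\nabla(g\otimes S(g))=\eta(1)$ in $\mathcal{G}(a,a)$ and $\nabla(S(g)\otimes g)=\eta(1)$ in $\mathcal{G}(b,b)$. Now, given $x\in\mathscr{I}(a,b)$, set $m=\nabla(x\otimes S(g))\in\mathcal{G}(a,a)$. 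By associativity of $\nabla$ and the unit axiom, $\nabla(m\otimes g)=\nabla\bigl(x\otimes\nabla(S(g)\otimes g)\bigr)=\nabla(x\otimes\eta(1))=x$; and $\epsilon(m)=\epsilon(x)\epsilon(S(g))=0$ since $\epsilon(x)=0$, so $m\in\mathscr{I}(a,a)$. Thus $x=\nabla(m\otimes g)\in\mathscr{I}(a,a)\mathcal{G}(a,b)$, giving the inclusion. The right-handed statement follows by the symmetric computation with $m$ replaced by $m'=\nabla(S(g)\otimes x)\in\mathscr{I}(b,b)$ and the relation $\nabla(g\otimes m')=\nabla\bigl(\nabla(g\otimes S(g))\otimes x\bigr)=x$.

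The step I expect to be the main (indeed the only) obstacle is the bookkeeping of source and target objects through the composition maps: one must keep track that $S(g)$ genuinely lands in $\mathcal{G}(b,a)$, so that $\nabla(x\otimes S(g))$ is defined and lies in $\mathcal{G}(a,a)$ rather than some other hom-module, and one must confirm that "invertible with inverse $S(g)$" unwinds to exactly the pair of collapsing identities $\nabla(g\otimes S(g))=\eta(1)$, $\nabla(S(g)\otimes g)=\eta(1)$ used above. Beyond that the proof uses only $\epsilon\circ\nabla=\epsilon\otimes\epsilon$, the unit and associativity axioms, and $\epsilon(x)=0$ for $x\in\mathscr{I}(a,b)$ — in particular no facts about the comultiplication $\Delta$ or about $\epsilon\circ S$ on all of $\mathcal{G}(a,b)$ are required, so the argument should be quite short.
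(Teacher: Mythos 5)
Your proof is correct and is essentially the paper's argument spelled out in full: fix a grouplike $g\in\mathcal{G}(a,b)$, observe $x\circ S(g)\in\mathscr{I}(a,a)$ and recover $x=(x\circ S(g))\circ g$, with the mirror-image computation for the right-module statement. The paper states the two easy inclusions as "clear" and leaves the associativity and counit bookkeeping implicit; you have simply unwound those steps, and your observation that only $\epsilon\circ\nabla=\epsilon\otimes\epsilon$, the unit/associativity axioms, and the invertibility of $g$ (via $S$) are needed is accurate.
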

\begin{proof}
It is clear that $$\mathscr{I}(a,a)\mathcal{G}(a, b)\subset \mathscr{I}(a,b) \text{ and } \mathcal{G}(a,b)\mathscr{I}(b,b)\subset \mathscr{I}(a,b).$$
For the converse inclusions, let $g\in \mathcal{G}(a,b)$ be a grouplike element; we denote its inverse $S(g)$ by $g^{-1}$.  Then for any $x\in \mathscr{I}(a, b)$, $x\circ g^{-1}\in \mathscr{I}(a,a)$, and $x=(x\circ g^{-1})\circ g\in \mathscr{I}(a,a)\mathcal{G}(a, b)$.  Analogously, $x=g\circ (g^{-1}\circ x)\in \mathcal{G}(a, b)\mathscr{I}(b,b).$
\end{proof}
\begin{corollary}\label{I-adic-filtration}
Let $\mathcal{G}=\{\mathcal{G}(a, b)\}$ be an $R$-Hopf groupoid such that each non-empty $\mathcal{G}(a,b)$ contains a grouplike element.  Let $\mathscr{I}$ be the augmentation ideal, and let $\mathscr{I}^n$ be the ideal generated by $x\circ y$ where $x\in \mathscr{I}^{n-1}(a, c)$ and $y\in \mathscr{I}^{n}(c, b)$.  Then the filtration $\{\mathscr{I}^n(a,a)\}$ of $\mathcal{G}(a, a)$ is the $\mathscr{I}$-adic filtration of $\mathcal{G}(a,a)$ viewed as a ring; the filtration $\{\mathscr{I}^n(a,b)\}$ of $\mathcal{G}(a,b)$ is the $\mathscr{I}(a,a)$-adic filtration (resp.~$\mathscr{I}(b,b)$-adic filtration) obtained by viewing $\mathcal{G}(a,b)$ as a left $\mathcal{G}(a,a)$-module (resp.~right $\mathcal{G}(b,b)$-module). 
\end{corollary}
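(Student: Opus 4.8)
The plan is to reduce everything to the single identity
\[
\mathscr{I}^n(a,b) = \mathscr{I}(a,a)^n\,\mathcal{G}(a,b) = \mathcal{G}(a,b)\,\mathscr{I}(b,b)^n,
\]
valid for all $a,b\in\on{Ob}(\mathcal{G})$ with $\mathcal{G}(a,b)$ non-empty (when $\mathcal{G}(a,b)$ is empty both sides vanish and there is nothing to prove). Granting this identity, the two assertions of the corollary are immediate: setting $b=a$ gives $\mathscr{I}^n(a,a)=\mathscr{I}(a,a)^n$, which is by definition the $\mathscr{I}$-adic filtration of the ring $\mathcal{G}(a,a)$, while for general $b$ the identity says exactly that $\{\mathscr{I}^n(a,b)\}$ coincides with the $\mathscr{I}(a,a)$-adic filtration of $\mathcal{G}(a,b)$ regarded as a left $\mathcal{G}(a,a)$-module, and with the $\mathscr{I}(b,b)$-adic filtration regarded as a right $\mathcal{G}(b,b)$-module.

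I would prove the identity by induction on $n$. The case $n=1$ is precisely Lemma \ref{augmentation-module}; this is where the hypothesis that each non-empty $\mathcal{G}(a,b)$ contains a grouplike element $g$ (with inverse $S(g)$) is used, the point being that translating by such a $g$ identifies the augmentation ideals of $\mathcal{G}(a,a)$, $\mathcal{G}(b,b)$ and $\mathcal{G}(a,b)$ up to the left, resp.\ right, $\mathcal{G}(a,a)$- and $\mathcal{G}(b,b)$-actions. For the inductive step I would check the two inclusions of the left-module description separately; the right-module description follows by the mirror-image argument using the other equality in Lemma \ref{augmentation-module}, and the two then agree. For $\supseteq$: a generator of $\mathscr{I}(a,a)^n\,\mathcal{G}(a,b)$ has the form $(x_1\cdots x_{n-1})(x_n z)$ with $x_i\in\mathscr{I}(a,a)$ and $z\in\mathcal{G}(a,b)$; by induction $x_1\cdots x_{n-1}\in\mathscr{I}(a,a)^{n-1}=\mathscr{I}^{n-1}(a,a)$, and by Lemma \ref{augmentation-module} $x_n z\in\mathscr{I}(a,a)\,\mathcal{G}(a,b)=\mathscr{I}(a,b)$, so the product lies in $\mathscr{I}^{n-1}(a,a)\,\mathscr{I}(a,b)\subseteq\mathscr{I}^n(a,b)$ by the defining recursion for $\mathscr{I}^n$ (taking the intermediate object to be $a$). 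For $\subseteq$: $\mathscr{I}^n(a,b)$ is spanned by composites $x y$ with $x\in\mathscr{I}^{n-1}(a,c)$, $y\in\mathscr{I}(c,b)$ and $c\in\on{Ob}(\mathcal{G})$; writing $x=\sum_i x_i z_i$ with $x_i\in\mathscr{I}(a,a)^{n-1}$, $z_i\in\mathcal{G}(a,c)$ by induction, we get $x y=\sum_i x_i(z_i y)$, and each $z_i y\in\nabla(\mathcal{G}(a,c)\otimes\mathscr{I}(c,b))\subseteq\mathscr{I}(a,b)=\mathscr{I}(a,a)\,\mathcal{G}(a,b)$ by the ideal axioms of Definition \ref{hopf-groupoid-definition} together with Lemma \ref{augmentation-module}; hence $x y\in\mathscr{I}(a,a)^{n-1}\,\mathscr{I}(a,a)\,\mathcal{G}(a,b)=\mathscr{I}(a,a)^n\,\mathcal{G}(a,b)$.

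A minor point to record along the way is that the ``ideal generated by'' in the definition of $\mathscr{I}^n$ adds nothing beyond the $R$-span of the indicated composites: since $\mathscr{I}^{n-1}$ and $\mathscr{I}$ are already ideals, composing a composite $x y$ on the left or right with an arbitrary morphism can be absorbed into the factor $x$ or $y$, so the $R$-span is automatically closed under the ideal operations. Beyond this, there is no real obstacle: the argument is a routine induction on $n$, and the only thing demanding attention is keeping the left- versus right-module bookkeeping consistent and confirming at each step that every composite lands in the intended ideal, which is exactly what the ideal axioms of a Hopf groupoid and Lemma \ref{augmentation-module} are there to guarantee.
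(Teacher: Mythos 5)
Your proof is correct and takes essentially the same route as the paper: reduce to the identity $\mathscr{I}^n(a,b)=\mathscr{I}(a,a)^n\mathcal{G}(a,b)$ and induct on $n$, using Lemma \ref{augmentation-module} both as the base case and within the inductive step. You are somewhat more thorough than the paper (which only writes out the inclusion $\mathscr{I}^n(a,b)\subseteq\mathscr{I}(a,a)^n\mathcal{G}(a,b)$, leaving the reverse and the ``ideal generated by equals span'' point implicit), but the underlying argument is the same.
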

\begin{proof}
The assertion about $\mathcal{G}(a,a)$ follows from the assertion for $\mathcal{G}(a,b)$; we check the assertion comparing the filtration $\{\mathscr{I}^n(a,b)\}$ to the $\mathscr{I}(a,a)$-adic filtration, as the other statement follows identically.  

Lemma \ref{augmentation-module} shows that $\mathscr{I}(a,b)=\mathscr{I}(a,a)\mathcal{G}(a,b)$.  In general, we may assume by induction that $\mathscr{I}^{n-1}(a,c)=\mathscr{I}(a,a)^{n-1}\mathcal{G}(a, c)$ for all $c$.  Then we have for any $x\in \mathscr{I}^{n-1}(a,c), y\in \mathscr{I}(c, b)$ that $$x\circ y \in \mathscr{I}(a,a)^{n-1}\mathcal{G}(a,c)\mathscr{I}(c,c)\mathcal{G}(c,b).$$  Applying Lemma \ref{augmentation-module} again, we see that $x\circ y\in \mathscr{I}(a,a)^n\mathcal{G}(a,c)\mathcal{G}(c,b)$ as desired.
\end{proof}
Thus when we refer to the $\mathscr{I}$-adic filtration on $\mathcal{G}(a, b)$, there is no ambiguity as to what is meant.
\begin{lemma}\label{augmentation-abelianization-groupoid}
Let $\mathcal{G}$ be a groupoid and $R$ a commutative ring.  Let $\mathscr{I}$ be the augmentation ideal of the $R$-Hopf groupoid $\mathcal{G}(H)$.  Then for each $a, b$ with $\mathcal{G}(a,b)$ non-empty, there is are natural isomorphisms $$\mathscr{I}(a,b)/\mathscr{I}^2(a,b)\simeq \mathcal{G}(a,a)^{\text{ab}}\otimes R\simeq \mathcal{G}(b,b)^{\text{ab}}\otimes R.$$
\end{lemma}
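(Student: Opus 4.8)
The plan is to reduce to the case $a=b$, where the assertion is the discrete analogue of the group-ring statement, and then to transport the result along a morphism of $\mathcal{G}$, finally checking that the transport is independent of the morphism chosen.

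First I would record the discrete analogue of Lemma~\ref{augmentation-abelianization-group-ring}: for any group $G$ and any commutative ring $R$, the map $g\mapsto g-1$ induces an isomorphism $G^{\text{ab}}\otimes_{\mathbb{Z}}R\xrightarrow{\sim}\mathscr{I}_G/\mathscr{I}_G^2$, with inverse induced by $g-1\mapsto g$. Well-definedness of both maps rests on the identity $(g-1)(h-1)=(gh-1)-(g-1)-(h-1)$, exactly as in the proof of Lemma~\ref{augmentation-abelianization-group-ring}; no completeness is used. Applying this with $G=\mathcal{G}(a,a)$ gives a canonical identification $\mathcal{G}(a,a)^{\text{ab}}\otimes R\cong\mathscr{I}(a,a)/\mathscr{I}^2(a,a)$, and likewise for $b$. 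So it suffices to produce canonical isomorphisms $\mathscr{I}(a,b)/\mathscr{I}^2(a,b)\cong\mathscr{I}(a,a)/\mathscr{I}^2(a,a)$ and $\mathscr{I}(a,b)/\mathscr{I}^2(a,b)\cong\mathscr{I}(b,b)/\mathscr{I}^2(b,b)$.

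Next I would fix a morphism $g\in\mathcal{G}(a,b)$ (the hom-set is nonempty by hypothesis); as an element of the associated $R$-Hopf groupoid it is grouplike, hence invertible with inverse $S(g)$. Right composition with $g$, namely $x\mapsto\nabla(x\otimes g)$, is an isomorphism of left $R[\mathcal{G}(a,a)]$-modules $R[\mathcal{G}(a,a)]\xrightarrow{\sim}R[\mathcal{G}(a,b)]$, with inverse $y\mapsto\nabla(y\otimes S(g))$. Since $\epsilon$ is compatible with $\nabla$ and $\epsilon(g)=1$, this carries $\mathscr{I}(a,a)=\ker\epsilon$ onto $\mathscr{I}(a,b)=\ker\epsilon$; and since $R[\mathcal{G}(a,b)]=R[\mathcal{G}(a,a)]\cdot g$, it carries $\mathscr{I}(a,a)^n=\mathscr{I}(a,a)^n\cdot R[\mathcal{G}(a,a)]$ onto $\mathscr{I}(a,a)^n\cdot R[\mathcal{G}(a,b)]$, which is exactly $\mathscr{I}^n(a,b)$ by Corollary~\ref{I-adic-filtration} (using the left $R[\mathcal{G}(a,a)]$-module description of the $\mathscr{I}$-adic filtration). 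Passing to quotients yields $\mathscr{I}(a,a)/\mathscr{I}^2(a,a)\xrightarrow{\sim}\mathscr{I}(a,b)/\mathscr{I}^2(a,b)$, and composing with the group-ring isomorphism above gives $\mathcal{G}(a,a)^{\text{ab}}\otimes R\cong\mathscr{I}(a,b)/\mathscr{I}^2(a,b)$. The isomorphism with $\mathcal{G}(b,b)^{\text{ab}}\otimes R$ is obtained identically, using left composition with $g$, the right $R[\mathcal{G}(b,b)]$-module structure on $R[\mathcal{G}(a,b)]$, and the right-module clause of Corollary~\ref{I-adic-filtration}.

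Finally I would verify independence of the chosen morphism, which gives the asserted naturality. Any $g'\in\mathcal{G}(a,b)$ has the form $g'=\nabla(h\otimes g)$ for a unique $h\in\mathcal{G}(a,a)$ (take $h=\nabla(g'\otimes S(g))$), so by associativity of $\nabla$ the transport map attached to $g'$ equals that attached to $g$ precomposed with $x\mapsto\nabla(x\otimes h)$ on $R[\mathcal{G}(a,a)]$. For $x\in\mathscr{I}(a,a)$ one has $\nabla(x\otimes h)-x=\nabla(x\otimes(h-1))\in\mathscr{I}(a,a)^2$ since $h-1\in\mathscr{I}(a,a)$, so $x\mapsto\nabla(x\otimes h)$ induces the identity on $\mathscr{I}(a,a)/\mathscr{I}^2(a,a)$; hence the isomorphism of the lemma does not depend on $g$, and functoriality under morphisms of groupoids follows formally. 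I do not anticipate a genuine obstacle: the content is the group-ring case together with Corollary~\ref{I-adic-filtration}, and the only point requiring care is to keep the left- and right-$R[\mathcal{G}]$-module structures straight when applying that corollary.
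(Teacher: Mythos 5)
Your proof is correct and takes essentially the same route as the paper: reduce to the group-ring case for $a=b$, transport the isomorphism along a chosen $g\in\mathcal{G}(a,b)$, and then check independence of $g$ by showing the difference of two transports lands in $\mathscr{I}^2$. The paper phrases the independence step slightly differently (observing directly that $g-h\in\mathscr{I}(a,b)$, so composition with $g-h$ kills $\mathscr{I}/\mathscr{I}^2$), but this is the same calculation as your $h-1\in\mathscr{I}(a,a)$ argument; your explicit appeal to Corollary~\ref{I-adic-filtration} to match up the filtrations is a detail the paper leaves implicit.
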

\begin{proof}
The case $a=b$ is follows from a proof identical to that in Lemma \ref{augmentation-abelianization-group-ring}.  For $a\not=b$, let $g\in \mathcal{G}(a,b)$ be any element.  Then the maps $$\mathscr{I}(a,b)/\mathscr{I}^2(a,b)\to \mathscr{I}(a,a)/\mathscr{I}^2(a,a)\simeq \mathcal{G}(a,a)^{\text{ab}}\otimes R$$ $$\mathscr{I}(a,b)/\mathscr{I}^2(a,b)\to \mathscr{I}(b,b)/\mathscr{I}^2(b,b)\simeq \mathcal{G}(b,b)^{\text{ab}}\otimes R$$ induced by composition with $g$ (resp.~$g^{-1}$) are isomorphisms with inverse given by composition with $g^{-1}$ (resp.~$g$).  

If $h\in \mathcal{G}(a,b)$ is any other element, then $g-h\in \mathscr{I}(a,b)$, so the map induced by composition with $g-h$ is identically zero.  Thus the maps above do not depend on the choice of $g$.
\end{proof}
We also require the following definition:
\begin{defn}\label{hopf-groupoid-reps}
Let $\mathcal{G}$ be an $R$-Hopf groupoid.  A representation of $\mathcal{G}$ is a choice of $R$-module $M_a$ for each object $a$ of $\mathcal{G}$, and an $R$-linear map $$\mathcal{G}(a,b)\to \on{Hom}_{R\text{-mod}}(M_a, M_b)$$ for each $a, b$ compatible with composition.  
\end{defn}
As in the case of Hopf algebras, $R$-Hopf groupoids have the property that if $\{M_a\}, \{N_a\}$ are representations of $\mathcal{G}$, then $\{M_a\otimes_R N_a\}$ naturally has the structure of a representation.  Indeed, if $\mathcal{G}$ had only been a category enriched over $R$-modules, $\{M_a\otimes_R N_a\}$ would naturally have the structure of a $\mathcal{G}\otimes \mathcal{G}$-representation (where here $(\mathcal{G}\otimes \mathcal{G})(a, b)=\mathcal{G}(a,b)\otimes \mathcal{G}(a,b)$).  But the coproduct gives us a morphism $\mathcal{G}\to \mathcal{G}\otimes\mathcal{G}$.

\bibliographystyle{alpha}
\bibliography{monodromy-bibtex}

\begin{thebibliography}{SVdV86}

\bibitem[AI88]{anderson-ihara1}
Greg Anderson and Yasutaka Ihara.
\newblock Pro-{$l$} branched coverings of {${\bf P}^1$} and higher circular
  {$l$}-units.
\newblock {\em Ann. of Math. (2)}, 128(2):271--293, 1988.

\bibitem[AI90]{anderson-ihara2}
Greg~W. Anderson and Yasutaka Ihara.
\newblock Pro-{$l$} branched coverings of {${\bf P}^1$} and higher circular
  {$l$}-units. {II}.
\newblock {\em Internat. J. Math.}, 1(2):119--148, 1990.

\bibitem[AIK15]{kim-andreatta}
Fabrizio Andreatta, Adrian Iovita, and Minhyong Kim.
\newblock A {$p$}-adic nonabelian criterion for good reduction of curves.
\newblock {\em Duke Math. J.}, 164(13):2597--2642, 2015.

\bibitem[AMV16]{abr1}
D.~{Abramovich}, K.~{Madapusi Pera}, and A.~{V{\'a}rilly-Alvarado}.
\newblock {Level structures on abelian varieties and Vojta's conjecture}.
\newblock {\em ArXiv e-prints}, April 2016.

\bibitem[AP{\v{S}}11]{rational-associators}
Anton Alekseev, Masha Podkopaeva, and Pavol {\v{S}}evera.
\newblock On rational {D}rinfeld associators.
\newblock {\em Selecta Math. (N.S.)}, 17(1):47--65, 2011.

\bibitem[AT12]{alekseev-torossian}
Anton Alekseev and Charles Torossian.
\newblock The {K}ashiwara-{V}ergne conjecture and {D}rinfeld's associators.
\newblock {\em Ann. of Math. (2)}, 175(2):415--463, 2012.

\bibitem[AV16]{abr2}
D.~{Abramovich} and A.~{V{\'a}rilly-Alvarado}.
\newblock {Level structures on abelian varieties, Kodaira dimensions, and
  Lang's conjecture}.
\newblock {\em ArXiv e-prints}, January 2016.

\bibitem[Bog80]{bogomolov}
Fedor~Alekseivich Bogomolov.
\newblock Sur l'alg\'ebricit\'e des repr\'esentations {$l$}-adiques.
\newblock {\em C. R. Acad. Sci. Paris S\'er. A-B}, 290(15):A701--A703, 1980.

\bibitem[BT15]{bak1}
B.~{Bakker} and J.~{Tsimerman}.
\newblock {The geometric torsion conjecture for abelian varieties with real
  multiplication}.
\newblock {\em ArXiv e-prints}, April 2015.

\bibitem[BT16]{bak2}
Benjamin Bakker and Jacob Tsimerman.
\newblock {$p$}-torsion monodromy representations of elliptic curves over
  geometric function fields.
\newblock {\em Ann. of Math. (2)}, 184(3):709--744, 2016.

\bibitem[Cad12]{cadoret-tamagawa}
Anna Cadoret.
\newblock Note on the gonality of abstract modular curves.
\newblock In {\em The arithmetic of fundamental groups---{PIA} 2010}, volume~2
  of {\em Contrib. Math. Comput. Sci.}, pages 89--106. Springer, Heidelberg,
  2012.

\bibitem[Cam95]{campana}
F.~Campana.
\newblock Remarques sur les groupes de {K}\"ahler nilpotents.
\newblock {\em Ann. Sci. \'Ecole Norm. Sup. (4)}, 28(3):307--316, 1995.

\bibitem[Con06]{lang-neron}
Brian Conrad.
\newblock Chow's {$K/k$}-image and {$K/k$}-trace, and the {L}ang-{N}\'eron
  theorem.
\newblock {\em Enseign. Math. (2)}, 52(1-2):37--108, 2006.

\bibitem[Del81]{deligne2}
Pierre Deligne.
\newblock Le groupe fondamental du compl\'ement d'une courbe plane n'ayant que
  des points doubles ordinaires est ab\'elien (d'apr\`es {W}. {F}ulton).
\newblock In {\em Bourbaki {S}eminar, {V}ol. 1979/80}, volume 842 of {\em
  Lecture Notes in Math.}, pages 1--10. Springer, Berlin-New York, 1981.

\bibitem[Del89]{deligne}
P.~Deligne.
\newblock Le groupe fondamental de la droite projective moins trois points.
\newblock In {\em Galois groups over {${\bf Q}$} ({B}erkeley, {CA}, 1987)},
  volume~16 of {\em Math. Sci. Res. Inst. Publ.}, pages 79--297. Springer, New
  York, 1989.

\bibitem[dJ96]{alterations}
A.~J. de~Jong.
\newblock Smoothness, semi-stability and alterations.
\newblock {\em Inst. Hautes \'Etudes Sci. Publ. Math.}, (83):51--93, 1996.

\bibitem[Dri89]{drinfeld1}
V.~G. Drinfel{'}d.
\newblock Quasi-{H}opf algebras.
\newblock {\em Algebra i Analiz}, 1(6):114--148, 1989.

\bibitem[Dri90]{drinfeld2}
V.~G. Drinfel{'}d.
\newblock On quasitriangular quasi-{H}opf algebras and on a group that is
  closely connected with {${\rm Gal}(\overline{\bf Q}/{\bf Q})$}.
\newblock {\em Algebra i Analiz}, 2(4):149--181, 1990.

\bibitem[Fre15]{fresse2015homotopy}
Benoit Fresse.
\newblock Homotopy of operads and grothendieck-teichm{\"u}ller groups.
\newblock {\em preparation. Document draft available at the url:
  \url{http://math. univ-lille1. fr/\~{} fresse/OperadHomotopyBook}}, 2015.

\bibitem[Fur04]{furosho1}
Hidekazu Furusho.
\newblock {$p$}-adic multiple zeta values. {I}. {$p$}-adic multiple
  polylogarithms and the {$p$}-adic {KZ} equation.
\newblock {\em Invent. Math.}, 155(2):253--286, 2004.

\bibitem[Fur07]{furosho2}
Hidekazu Furusho.
\newblock {$p$}-adic multiple zeta values. {II}. {T}annakian interpretations.
\newblock {\em Amer. J. Math.}, 129(4):1105--1144, 2007.

\bibitem[GM71]{tame-pi1}
Alexander Grothendieck and Jacob~P. Murre.
\newblock {\em The tame fundamental group of a formal neighbourhood of a
  divisor with normal crossings on a scheme}.
\newblock Lecture Notes in Mathematics, Vol. 208. Springer-Verlag, Berlin-New
  York, 1971.

\bibitem[Gon01]{goncharov}
A.~B. Goncharov.
\newblock The dihedral {L}ie algebras and {G}alois symmetries of
  {$\pi_1^{(l)}(\Bbb P^1-(\{0,\infty\}\cup\mu_N))$}.
\newblock {\em Duke Math. J.}, 110(3):397--487, 2001.

\bibitem[HT06]{hwang-to}
Jun-Muk Hwang and Wing-Keung To.
\newblock Uniform boundedness of level structures on abelian varieties over
  complex function fields.
\newblock {\em Math. Ann.}, 335(2):363--377, 2006.

\bibitem[Iha91]{ihara2}
Yasutaka Ihara.
\newblock Braids, {G}alois groups, and some arithmetic functions.
\newblock In {\em Proceedings of the {I}nternational {C}ongress of
  {M}athematicians, {V}ol.\ {I}, {II} ({K}yoto, 1990)}, pages 99--120. Math.
  Soc. Japan, Tokyo, 1991.

\bibitem[Iha02]{ihara1}
Yasutaka Ihara.
\newblock Some arithmetic aspects of {G}alois actions in the pro-{$p$}
  fundamental group of {$\mathbb{P}^1-\{0,1,\infty\}$}.
\newblock In {\em Arithmetic fundamental groups and noncommutative algebra
  ({B}erkeley, {CA}, 1999)}, volume~70 of {\em Proc. Sympos. Pure Math.}, pages
  247--273. Amer. Math. Soc., Providence, RI, 2002.

\bibitem[Kam92]{kamienny}
S.~Kamienny.
\newblock Torsion points on elliptic curves and {$q$}-coefficients of modular
  forms.
\newblock {\em Invent. Math.}, 109(2):221--229, 1992.

\bibitem[Kim05]{kim}
Minhyong Kim.
\newblock The motivic fundamental group of
  {$\mathbb{P}^1\setminus\{0,1,\infty\}$} and the theorem of {S}iegel.
\newblock {\em Invent. Math.}, 161(3):629--656, 2005.

\bibitem[Kol03]{kollar}
J{\'a}nos Koll{\'a}r.
\newblock Rationally connected varieties and fundamental groups.
\newblock In {\em Higher dimensional varieties and rational points ({B}udapest,
  2001)}, volume~12 of {\em Bolyai Soc. Math. Stud.}, pages 69--92. Springer,
  Berlin, 2003.

\bibitem[Maz77]{mazur}
B.~Mazur.
\newblock Modular curves and the {E}isenstein ideal.
\newblock {\em Inst. Hautes \'Etudes Sci. Publ. Math.}, (47):33--186 (1978),
  1977.

\bibitem[Mer96]{merel}
Lo{\"{\i}}c Merel.
\newblock Bornes pour la torsion des courbes elliptiques sur les corps de
  nombres.
\newblock {\em Invent. Math.}, 124(1-3):437--449, 1996.

\bibitem[Nad89]{nadel}
Alan~Michael Nadel.
\newblock The nonexistence of certain level structures on abelian varieties
  over complex function fields.
\newblock {\em Ann. of Math. (2)}, 129(1):161--178, 1989.

\bibitem[Nog91]{noguchi}
Junjiro Noguchi.
\newblock Moduli space of abelian varieties with level structure over function
  fields.
\newblock {\em Internat. J. Math.}, 2(2):183--194, 1991.

\bibitem[Poo07]{poonen}
Bjorn Poonen.
\newblock Gonality of modular curves in characteristic {$p$}.
\newblock {\em Math. Res. Lett.}, 14(4):691--701, 2007.

\bibitem[Qui69]{quillen-rational}
Daniel Quillen.
\newblock Rational homotopy theory.
\newblock {\em Ann. of Math. (2)}, 90:205--295, 1969.

\bibitem[S.]{lisa}
Lisa S.
\newblock Are there nonisotrivial elliptic curves over $\mathbb{G}_m$?
\newblock MathOverflow.
\newblock URL:http://mathoverflow.net/q/190530 (version: 2014-12-12).

\bibitem[Saw16]{sawin}
Will Sawin.
\newblock Personal communication, October 2016.

\bibitem[Ser13]{serre}
Jean-Pierre Serre.
\newblock {\em Oeuvres/{C}ollected papers. {IV}. 1985--1998}.
\newblock Springer Collected Works in Mathematics. Springer, Heidelberg, 2013.
\newblock Reprint of the 2000 edition [MR1730973].

\bibitem[SGA03]{SGA1}
{\em Rev\^etements \'etales et groupe fondamental ({SGA} 1)}.
\newblock Documents Math\'ematiques (Paris) [Mathematical Documents (Paris)],
  3. Soci\'et\'e Math\'ematique de France, Paris, 2003.
\newblock S{\'e}minaire de g{\'e}om{\'e}trie alg{\'e}brique du Bois Marie
  1960--61. [Algebraic Geometry Seminar of Bois Marie 1960-61], Directed by A.
  Grothendieck, With two papers by M. Raynaud, Updated and annotated reprint of
  the 1971 original [Lecture Notes in Math., 224, Springer, Berlin; MR0354651
  (50 \#7129)].

\bibitem[ST68]{serre-tate}
Jean-Pierre Serre and John Tate.
\newblock Good reduction of abelian varieties.
\newblock {\em Ann. of Math. (2)}, 88:492--517, 1968.

\bibitem[SVdV86]{sommese-vandeven}
Andrew~John Sommese and A.~Van~de Ven.
\newblock Homotopy groups of pullbacks of varieties.
\newblock {\em Nagoya Math. J.}, 102:79--90, 1986.

\bibitem[Tat66]{tate2}
John Tate.
\newblock Endomorphisms of abelian varieties over finite fields.
\newblock {\em Invent. Math.}, 2:134--144, 1966.

\bibitem[Voi09]{voight}
John Voight.
\newblock Shimura curves of genus at most two.
\newblock {\em Math. Comp.}, 78(266):1155--1172, 2009.

\bibitem[Wic12a]{wickelgren2}
Kirsten Wickelgren.
\newblock {$n$}-nilpotent obstructions to {$\pi_1$} sections of
  {$\mathbb{P}^1-\{0,1,\infty\}$} and {M}assey products.
\newblock In {\em Galois-{T}eichm\"uller theory and arithmetic geometry},
  volume~63 of {\em Adv. Stud. Pure Math.}, pages 579--600. Math. Soc. Japan,
  Tokyo, 2012.

\bibitem[Wic12b]{wickelgren}
Kirsten Wickelgren.
\newblock On 3-nilpotent obstructions to {$\pi_1$} sections for
  {$\mathbb{P}^1_\mathbb{Q}-\{0,1,\infty\}$}.
\newblock In {\em The arithmetic of fundamental groups---{PIA} 2010}, volume~2
  of {\em Contrib. Math. Comput. Sci.}, pages 281--328. Springer, Heidelberg,
  2012.

\bibitem[Woj04]{wojtkowiakI}
Zdzis{\l}aw Wojtkowiak.
\newblock On {$l$}-adic iterated integrals. {I}. {A}nalog of {Z}agier
  conjecture.
\newblock {\em Nagoya Math. J.}, 176:113--158, 2004.

\bibitem[Woj05a]{wojtkowiakII}
Zdzis{\l}aw Wojtkowiak.
\newblock On {$l$}-adic iterated integrals. {II}. {F}unctional equations and
  {$l$}-adic polylogarithms.
\newblock {\em Nagoya Math. J.}, 177:117--153, 2005.

\bibitem[Woj05b]{wojtkowiakIII}
Zdzislaw Wojtkowiak.
\newblock On {$l$}-adic iterated integrals. {III}. {G}alois actions on
  fundamental groups.
\newblock {\em Nagoya Math. J.}, 178:1--36, 2005.

\bibitem[Woj09]{wojtkowiakIV}
Zdzis{\l}aw Wojtkowiak.
\newblock On {$l$}-adic iterated integrals. {IV}. {R}amification and generators
  of {G}alois actions on fundamental groups and torsors of paths.
\newblock {\em Math. J. Okayama Univ.}, 51:47--69, 2009.

\bibitem[Woj12]{wojtkowiakV}
Zdzis{\l}aw Wojtkowiak.
\newblock On {$\ell$}-adic iterated integrals {V}: linear independence,
  properties of {$\ell$}-adic polylogarithms, {$\ell$}-adic sheaves.
\newblock In {\em The arithmetic of fundamental groups---{PIA} 2010}, volume~2
  of {\em Contrib. Math. Comput. Sci.}, pages 339--374. Springer, Heidelberg,
  2012.

\bibitem[Yu07]{yu-baker}
Kunrui Yu.
\newblock {$p$}-adic logarithmic forms and group varieties. {III}.
\newblock {\em Forum Math.}, 19(2):187--280, 2007.

\end{thebibliography}

\end{document}